\newcommand{\C}{\ensuremath{\mathbb{C}}}
\newcommand{\E}{\ensuremath{\mathbb{E}}}
\newcommand{\N}{\ensuremath{\mathbb{N}}}
\newcommand{\R}{\ensuremath{\mathbb{R}}}
\newcommand{\W}{\ensuremath{\mathcal{W}}}
\newcommand{\Z}{\ensuremath{\mathbb{Z}}}
\renewcommand{\leq}{\ensuremath{\leqslant}}
\renewcommand{\geq}{\ensuremath{\geqslant}}
\newcommand{\qed}{\hfill \vrule height6pt  width6pt depth0pt}
\newcommand{\supp}{\mathrm{supp}}
\newcommand{\dist}{\mathrm{dist}}
\newcommand{\Id}{\mathrm{Id}}
\newcommand{\Prob}{\mathrm{Prob}}
\newcommand{\HI}{H^\infty}
\newcommand{\Hor}{\mathcal{H}}
\newtheorem{thm}{Theorem}[section]
\newtheorem{defi}[thm]{Definition}
\newtheorem{prop}[thm]{Proposition}
\newtheorem{cor}[thm]{Corollary}
\newtheorem{lemma}[thm]{Lemma}
\newtheorem{remark}[thm]{Remark}
\newtheorem{ass}[thm]{Assumption}
\newenvironment{proof}[1][]{\noindent {\it Proof #1} : }{\hbox{~}\qed
\smallskip
}
\numberwithin{equation}{section}
\begin{document}
\selectlanguage{english}
\title{\bfseries{H\"ormander functional calculus on UMD lattice valued $L^p$ spaces under generalised Gaussian estimates}}
\date{January 2018}
\author{\bfseries{Luc Deleaval, Mikko Kemppainen and Christoph Kriegler}}

\maketitle

%%%%%%%%%%%%%%%%%%%%%%%%%%%%%%%%%%%%%%%%%%%%%%%%%%%%%%%%%%%%%%%%
%%%%%%%%%%%%%%%%%%%%%%%%%%%%%%%%%%%%%%%%%%%%%%%%%%%%%%%%%%%%%%%%
\begin{abstract}
We consider self-adjoint semigroups $T_t = \exp(-tA)$ acting on $L^2(\Omega)$ and satisfying (generalised) Gaussian estimates, where $\Omega$ is a metric measure space of homogeneous type of dimension $d$.
The aim of the article is to show that $A \otimes \Id_Y$ admits a H\"ormander type $\Hor^\beta_2$ functional calculus on $L^p(\Omega;Y)$ where $Y$ is a UMD lattice, thus extending the well-known H\"ormander calculus of $A$ on $L^p(\Omega)$.
We show that if $T_t$ is lattice positive (or merely admits an $\HI$ calculus on $L^p(\Omega;Y)$) then this is indeed the case.
Here the derivation exponent has to satisfy $\beta > \alpha \cdot d + \frac12$, where $\alpha \in (0,1)$ depends on $p$, and on convexity and concavity exponents of $Y$.
A part of the proof is the new result that the Hardy-Littlewood maximal operator is bounded on $L^p(\Omega;Y)$.
Moreover, our spectral multipliers satisfy square function estimates in $L^p(\Omega;Y)$.
In a variant, we show that if $e^{itA}$ satisfies a dispersive $L^1(\Omega) \to L^\infty(\Omega)$ estimate, then $\beta > \frac{d+1}{2}$ above is admissible independent of convexity and concavity of $Y$.
Finally, we illustrate these results in a variety of examples.
\end{abstract}

%%%%%%%%%%%%%%%%%%%%%%%%%%%%%%%%%%%%%%%%%%%%%%%%%%%%%%%%%%%%%%%%
%%%%%%%%%%%%%%%%%%%%%%%%%%%%%%%%%%%%%%%%%%%%%%%%%%%%%%%%%%%%%%%%

\makeatletter
 \renewcommand{\@makefntext}[1]{#1}
 \makeatother
 \footnotetext{
 {\it Mathematics subject classification:}
 42A45, 42B25, 47A60, 47A80.
%  42A45 Multipliers.
%  42B25 Maximal functions, Littlewood-Paley theory.
%  47A60 Functional calculus.
% 47A80 Tensor products of operators.
\\
{\it Key words}: Spectral multiplier theorems, UMD valued $L^p$ spaces, Gaussian estimates.}

 \tableofcontents

\section{Introduction}

Let $f$ be a bounded function on $(0,\infty)$ and $u(f)$ the operator on $L^p(\R^d)$ defined by $[f(-\Delta)g]\hat{\phantom{i}} = [u(f)g]\hat{\phantom{i}}=f(\|\xi\|^2)\hat{g}(\xi).$
H\"ormander's theorem on Fourier multipliers \cite[Theorem 2.5]{Ho60} asserts that $u(f) : L^p(\R^d) \to L^p(\R^d)$ is bounded for any $p \in (1,\infty)$ provided that for some integer $\beta$ strictly larger than $\frac{d}{2},$
\begin{equation}
\label{equ-intro-classical-Hormander}
\|f\|_{\Hor^\beta_2}^2 := \max_{k=0,1,\ldots,\beta} \sup_{R > 0} \frac{1}{R}\int_{R}^{2R} \Bigl| t^k \frac{d^k}{dt^k} f(t) \Bigr|^2 \,dt < \infty.
\end{equation}
This theorem has many refinements and generalisations to various similar contexts.
Namely, one can generalise to non-integer $\beta$ in \eqref{equ-intro-classical-Hormander} to get larger (for smaller $\beta$) admissible classes $\Hor^\beta_2 = \{ f : (0,\infty) \to \C \text{ bounded and continuous} :\: \|f\|_{\Hor^\beta_2} < \infty \}$ of multiplier functions $f$ (see Subsection \ref{subsec-R-bdd}).
Moreover, it has been a deeply studied question over the last years to know to what extent one can replace the ordinary Laplacian subjacent to H\"ormander's theorem by other operators $A$ acting on some $L^p(\Omega)$ space.
A theorem of H\"ormander type holds true for many elliptic differential operators $A,$ including sub-Laplacians on Lie groups of polynomial growth, Schr\"odinger operators and elliptic operators on Riemannian manifolds, see \cite{Alex,Christ,Duong,DuOS}.
More recently, spectral multipliers have been studied for operators acting on $L^p(\Omega)$ only for a strict subset of $(1,\infty)$ of exponents \cite{Bl,CDY,CO,COSY,KuUhl,KU2,SYY} and for abstract operators acting on Banach spaces \cite{KrW3}.
A spectral multiplier theorem means then that the linear and multiplicative mapping
\begin{equation}
\label{equ-intro-homomorphism}
\Hor^\beta_2 \to B(X), \: f \mapsto f(A),
\end{equation}
is bounded, where typically $X = L^p(\Omega).$

The main topic of the present article is to determine in which cases \eqref{equ-intro-homomorphism} holds with $X = L^p(\Omega;Y)$, i.e. when does the tensor extension $f(A) \otimes \Id_Y$ of $f(A) : L^p(\Omega) \to L^p(\Omega)$ extend to a bounded operator on the Bochner space $L^p(\Omega;Y)$.
It is well-known that if $Y$ is a Hilbert space, or if $f(A)$ is lattice positive, or if $f(A)$ is both bounded on $L^\infty(\Omega)$ and on $L^1(\Omega)$, then this tensor extension is possible, but in general, this is a difficult task, e.g. for a multiplier $f(A)$ with singular integral kernel having a cancellation effect.
As a motivation for this question, take the following abstract hyperbolic PDE
\begin{align*}
\partial_t^2 u(x,y,t) & = - A_x u(x,y,t) & \quad (x \in \Omega, \: y \in \Omega' \: t > 0) \\
u(x,y,0) & = f(x,y) & \quad (x \in \Omega, \: y \in \Omega') ,
\end{align*}
which is solved formally by $u(x,y,t) = \exp(it\sqrt{A})(f)(x,y)$.
Noting that \[f(\lambda) = (1 + \lambda)^{-\delta} \exp(it \sqrt{\lambda})\] belongs to the class $\Hor^\beta_2$ for $\delta \geq \frac{\beta}{2}$ \cite[Lemma 3.9]{KrW3}, \cite[Prop 4.8 (4)]{KrPhD} yields that $\|u(t)\|_X \leq C (1+|t|)^{2 \delta} \|(1+A)^{\delta}f\|_X$ provided that \eqref{equ-intro-homomorphism} holds.
One thus obtains a norm estimate of the solution $u$ in terms of fractional domain space norms $D((1+A)^\delta) \subset X$ of the initial value $f$.
As an example, we can take $X = L^p(\Omega;L^s(\Omega'))$.
We refer to \cite[Sections 5 and 6]{HiPr} and \cite[Section 4]{DeKr1} for further applications of the functional calculus to equations on such $X$.

Even in the case that $A = - \Delta$, which can be considered as our basic example and starting point for further considerations, one cannot take any Banach space $Y$ in $X = L^p(\Omega;Y)$ of \eqref{equ-intro-homomorphism}, but is restricted to take a UMD space $Y$ \cite[10.3 Remark]{KW04}, for a definition of the UMD property see Subsection \ref{subsec-UMD}.
In establishing a functional calculus on $X = L^p(\Omega)$, square function estimates such as 
\begin{equation}
\label{equ-square-function-intro}
\Bigl\| \Bigl( \sum_k |T_{t_k} f_k |^2 \Bigr)^{\frac12} \Bigr\|_X \leq C \Bigl\| \Bigl( \sum_k |f_k|^2 \Bigr)^{\frac12} \Bigr\|_X 
\end{equation}
are known to play an important role, where $T_t$ is a spectral multiplier of $A$, typically the semigroup generated by $A$.
We will prove such square function estimates also for $X = L^p(\Omega;Y)$.
In order to do so, we will need a maximal estimate, which in the simplest form states as $|T_tf| \leq c M_{HL}(f)$ for all $t \geq 0$. 
A natural framework for us will be that $Y = Y(\Omega')$ is a \textit{UMD lattice} over some measure space $\Omega'$ and $\Omega$ a metric measure space, in fact, a space of homogeneous type, see Subsection \ref{subsec-GE}.
Then $M_{HL}$ stands for the Hardy-Littlewood maximal operator, which is 
\[ M_{HL} (f)(x,\omega') = \sup_{r > 0} \frac{1}{V(x,r)} \int_{B(x,r)} |f(y,\omega')| \, d\mu(y) \quad ( x \in \Omega,\: \omega' \in \Omega') \]
where $B(x,r)$ stands for the closed ball centered in $x$ of radius $r$ and $V(x,r)$ stands for the volume of that ball.
Our first main result reads then as:

\begin{thm}
\label{thm-MHL-intro}
If $\Omega$ is a space of homogeneous type, then $M_{HL}$ is bounded on $L^p(\Omega;Y)$ for any $p \in (1,\infty)$ and for every UMD lattice $Y$.
\end{thm}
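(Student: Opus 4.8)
The plan is to reduce the vector-valued maximal inequality on $L^p(\Omega;Y)$ to known scalar-valued and sequence-valued results, exploiting the lattice structure of $Y$ together with its UMD property. The starting observation is that for a Banach lattice $Y$, the maximal operator $M_{HL}$ acts in a pointwise-monotone fashion fibrewise: if $|f| \leq |g|$ as elements of $Y$ (i.e. $|f(x,\cdot)| \leq |g(x,\cdot)|$ in $Y$ for a.e.\ $x$), then $M_{HL}(f)(x,\cdot) \leq M_{HL}(g)(x,\cdot)$. So it suffices to control $\|M_{HL}\|$ on positive elements, and the real content is a Fefferman--Stein type inequality with $Y$ in place of $\ell^2$.

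The key input I would invoke is the characterisation of UMD lattices due to Rubio de Francia and Bourgain: a Banach lattice $Y$ is UMD if and only if both $Y$ and its Köthe dual $Y'$ have finite cotype and the Hardy--Littlewood maximal operator (for, say, the dyadic filtration or for $\R^d$) is bounded on $L^p(\R^d;Y)$ and on $L^p(\R^d;Y')$; equivalently, $Y$ is UMD iff it admits an equivalent lattice norm that is $q$-convex and $r$-concave for some $1 < q \leq r < \infty$ together with the UMD property of the "core". Concretely, what I would actually use is the Rubio de Francia theorem that for a UMD lattice $Y$, the lattice Hardy--Littlewood maximal operator is bounded on $L^p(\R^d;Y)$. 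The task then is to transfer this from $\R^d$ to a general space of homogeneous type $\Omega$. First I would handle the geometry: using a system of dyadic cubes on $\Omega$ (Christ's construction) and the doubling property, one reduces the centred ball maximal operator to a dyadic maximal operator up to a bounded number of shifted dyadic systems, exactly as in the scalar theory. Second, for the dyadic maximal operator $M_\D f = \sup_{Q \ni x} \frac{1}{\mu(Q)}\int_Q |f|\,d\mu$ on a homogeneous space, I would run the standard stopping-time / Calderón--Zygmund argument, but carried out at the level of the lattice: the averages $\langle |f| \rangle_Q \in Y$ are conditional expectations, and the boundedness of the lattice-valued dyadic maximal function is a general martingale fact valid on any $\sigma$-finite filtered measure space whenever it holds for $\R$ with the dyadic filtration — and this abstract transfer is precisely where the UMD-lattice hypothesis is consumed.

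A cleaner route, which is probably the one to write, is the following. Any space of homogeneous type $(\Omega,\mu)$ carries a dyadic filtration $(\F_n)$ with the property that $M_\D f \leq C \sup_n \E(|f| \mid \F_n)$, and conversely. The lattice-valued maximal inequality $\|\sup_n \E(|f_n| \mid \F_n)\|_{L^p(\Omega;Y)} \lesssim \|\sup_n |f_n|\|_{L^p(\Omega;Y)}$ for an \emph{arbitrary} $\sigma$-finite filtration is equivalent, by a result of García-Cuerva--Macías--Torrea / Rubio de Francia, to the same inequality for the dyadic filtration on $[0,1)$, hence to $Y$ being a UMD lattice (this is the "Hardy--Littlewood property" characterisation). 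So one only needs: (i) the dyadic cube structure on $\Omega$; (ii) the comparison of $M_{HL}$ with the maximum over boundedly many shifted dyadic maximal operators; (iii) quote the abstract martingale maximal inequality for UMD lattices. Step (iii) is the place where the real theorem lives, but it is available in the literature (Rubio de Francia, and the lattice Fefferman--Stein inequality), so the job here is genuinely the transfer of geometry.

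The main obstacle I anticipate is step (ii): on a general space of homogeneous type the passage from the centred ball maximal operator to a finite family of dyadic maximal operators is not completely automatic — one must produce finitely many "adjacent dyadic systems" $\D^1,\ldots,\D^K$ such that every ball $B(x,r)$ is contained in some cube $Q \in \D^j$ with $\mu(Q) \leq C\mu(B(x,r))$. This is known (Hytönen--Kairema adjacent dyadic systems), and combined with doubling it gives $M_{HL} f \leq C \sum_{j=1}^{K} M_{\D^j} f$ pointwise, after which the lattice structure lets one sum the $K$ terms in $L^p(\Omega;Y)$ directly. Once that pointwise domination is in hand, the only remaining subtlety is that the dyadic system $\D^j$ need not refine to generate all of $\Omega$ down to points, but for the maximal inequality one only needs it as an increasing (in the relevant direction) family, and the martingale maximal inequality for UMD lattices applies to any $\sigma$-finite filtration, so this causes no trouble. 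I would therefore expect the write-up to be short modulo the two cited black boxes (adjacent dyadic systems on spaces of homogeneous type, and the UMD-lattice martingale maximal inequality).
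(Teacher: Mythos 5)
Your proposal is correct and follows essentially the same route as the paper: Bourgain's characterisation of UMD lattices via the Hardy--Littlewood property, the Hyt\"onen--Kairema adjacent dyadic systems to dominate $M_{HL}$ pointwise by finitely many dyadic maximal operators, and a transference of the lattice martingale maximal inequality from the dyadic filtration on $[0,1)$ to arbitrary filtrations on $\sigma$-finite measure spaces. The only difference is that you cite this last transference as a black box, whereas the paper proves it in full via Burkholder's concave-function method (its Lemma \ref{lem-concave-U}); but you have correctly identified it as the step where the UMD hypothesis is consumed.
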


By an abstract machinery from \cite{KrW3}, square functions as in \eqref{equ-square-function-intro} for $T_t$ the semigroup for complex times $t \in \C_+$ plus an a priori $\HI$ calculus of $A$ on $L^p(\Omega;Y)$ are sufficient for the H\"ormander calculus \eqref{equ-intro-homomorphism}.
Our task thus becomes to verify \eqref{equ-square-function-intro} and the $\HI$ calculus of $A$, which means that \eqref{equ-intro-homomorphism} holds with the class $\Hor^\beta_2$ replaced by the smaller class $\HI(\Sigma_\omega)$ consisting of bounded and analytic functions, see Subsection \ref{subsec-R-bdd} for a precise definition.
For the $\HI$ calculus, there are several strategies of extrapolation, or lattice positivity of the semigroup, or $L^\infty(\Omega)$ and $L^1(\Omega)$ contractivity of the semigroup, see Theorem \ref{thm-HI-extrapolation}, Corollary \ref{cor-HI-positive} or Proposition \ref{prop-HI-diffusion} respectively.
When it comes to the square function estimate \eqref{equ-square-function-intro}, we have chosen as a starting point that $\Omega$ is a space of homogeneous type, $T_t$ is self-adjoint on $L^2(\Omega)$, has a representation 
\[T_tf(x) = \int_\Omega p_t(x,y) f(y) \, d\mu(y)\]
and its integral kernel $p_t(x,y)$ satisfies Gaussian estimates, that is, for some $m \geq 2,\:C ,c > 0$
\begin{equation}
\label{equ-GE-intro}
|p_t(x,y)| \leq C \frac{1}{V(x,r_t)} \exp\Biggl(-c \biggl(\frac{\dist(x,y)}{r_t}\biggr)^{\frac{m}{m-1}} \Biggr) \quad (x,y \in \Omega,\: t > 0),
\end{equation}
where $r_t = t^{\frac1m}$.
Such estimates are by now a well-established property for semigroups generated by differential operators (see Section \ref{sec-examples}).
These are not the most general assumptions for our purposes and the more general generalised Gaussian estimates, see  \eqref{equ-GGE} below, for semigroups acting only on $L^p(\Omega)$ for $p$ belonging to a subinterval $(p_0,p_0') \subseteq (1,\infty)$ fit equally well for our methods.
See Subsection \ref{subsec-GGE} for examples.
We have chosen a presentation of our method in Section \ref{sec-main} for generalised Gaussian estimates and deduce the classical Gaussian estimates case as corollaries.
Then our main result reads as follows.

\begin{thm}
\label{thm-Hoermander-intro}
Let $(\Omega,\dist,\mu)$ be a space of homogeneous type with a dimension $d$.
Let $A$ be a self-adjoint operator on $L^2(\Omega)$ generating the semigroup $(T_t)_{t \geq 0}$.
Assume that $(T_t)_{t \geq 0}$ satisfies Gaussian estimates \eqref{equ-GE-intro} with parameter $m \geq 2$.
Let $Y$ be a UMD lattice.
Finally, assume that $T_t$ is lattice positive, i.e $T_tf \geq 0$ for all $f \geq 0$ and all $t \geq 0$, or merely that $A$ has a bounded $\HI(\Sigma_\omega)$ calculus on $L^p(\Omega;Y)$ for some fixed $p \in (1,\infty)$ and $\omega \in (0,\pi)$.
Then $A$ has a H\"ormander $\Hor^\beta_2$ calculus on $L^p(\Omega;Y)$ with
\[ \beta > \alpha \cdot d + \frac12.\]
\end{thm}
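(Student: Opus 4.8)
I would deduce the theorem from its generalised–Gaussian–estimates counterpart proved in Section~\ref{sec-main}, of which the Gaussian bound \eqref{equ-GE-intro} is the special case $p_0 = 1$ and for which $\alpha$ is defined explicitly; it thus suffices to describe the argument in that generalised setting. The backbone is the abstract machinery of \cite{KrW3}: to upgrade an \emph{a priori} bounded $\HI(\Sigma_\omega)$ calculus of $A$ on $X := L^p(\Omega;Y)$ to a $\Hor^\beta_2$ calculus, it is enough to establish on $X$ the version of \eqref{equ-square-function-intro} in which each $T_{t_k}$ is replaced by $T_{z_k}$ with $z_k$ ranging over a subsector $\Sigma_\theta$ of the right half-plane, together with an explicit rate of blow-up of the constant as $\theta \uparrow \tfrac{\pi}{2}$; that rate is precisely what fixes the admissible $\beta$. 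So the plan splits into two tasks: producing the a priori $\HI$ calculus, and proving the complex-time square function estimate with a good rate.

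The first task is immediate. If the $\HI(\Sigma_\omega)$ calculus on $L^p(\Omega;Y)$ is assumed, there is nothing to prove. If instead $T_t$ is lattice positive, then $A$ has a bounded $\HI$ calculus on $L^p(\Omega)$ — a by now classical consequence of the Gaussian bound \eqref{equ-GE-intro} — and Corollary~\ref{cor-HI-positive} transfers this to a bounded $\HI(\Sigma_\omega)$ calculus on $L^p(\Omega;Y)$ for every $p \in (1,\infty)$ and every UMD lattice $Y$. In either case the hypothesis of \cite{KrW3} holds.

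For the second task I would first record the crude estimate. The bound \eqref{equ-GE-intro} extends by analyticity to complex times: for $|\arg z| \leq \tfrac{\pi}{2} - \epsilon$ the kernel $p_z$ obeys a Gaussian-type bound whose on-diagonal size is unchanged but whose off-diagonal decay rate degrades like a power of $\epsilon$, and integrating against $|f|$ then yields the pointwise domination $|T_z f| \leq c(\epsilon)\,M_{HL}(f)$ with $c(\epsilon) \lesssim \epsilon^{-\kappa d}$ for an exponent $\kappa$ depending on $m$ (and, in the generalised setting, on $p_0$). Hence, for $z_1,\dots,z_n \in \Sigma_{\pi/2-\epsilon}$,
\[ \Bigl( \sum_k |T_{z_k} f_k|^2 \Bigr)^{\frac12} \leq c(\epsilon)\, \Bigl( \sum_k M_{HL}(f_k)^2 \Bigr)^{\frac12} , \]
and applying Theorem~\ref{thm-MHL-intro} to the UMD lattice $Y(\ell^2)$ — on which $M_{HL}$ acts coordinatewise in the $\ell^2$-variable — gives the Fefferman–Stein type inequality $\bnorm{ \bigl( \sum_k M_{HL}(f_k)^2 \bigr)^{1/2} }_X \leq C \bnorm{ \bigl( \sum_k |f_k|^2 \bigr)^{1/2} }_X$. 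This already delivers \eqref{equ-square-function-intro} on every proper subsector with constant $\lesssim \epsilon^{-\kappa d}$, hence a $\Hor^\beta_2$ calculus for some finite $\beta$.

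To reach the sharp bound $\beta > \alpha d + \tfrac12$ one cannot afford the crude per-dimension rate, and I would interpolate. On a Hilbert endpoint $L^2(\Omega;L^2(\nu))$ the operators $T_z = e^{-zA}$, $\Re z \geq 0$, are contractions by self-adjointness and the spectral theorem, so — the space being Hilbertian — $\{T_z : \Re z \geq 0\}$ satisfies \eqref{equ-square-function-intro} with a constant independent of the subsector; on a rough endpoint $L^{p_1}(\Omega;Y_1)$, with $Y_1$ again a UMD lattice, one has the bound $\lesssim \epsilon^{-\kappa d}$ just obtained. Writing $L^p(\Omega;Y)$ as a complex interpolation space between these two endpoints and interpolating the diagonal operators $(f_k)_k \mapsto (T_{z_k} f_k)_k$ on the corresponding $\ell^2$-valued scales then yields \eqref{equ-square-function-intro} on $\Sigma_{\pi/2-\epsilon}$ with constant $\lesssim \epsilon^{-\alpha d}$, where the interpolation parameter $\alpha = \theta\kappa \in (0,1)$ is governed by $p$ and by the convexity and concavity exponents of $Y$. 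Feeding this rate, together with the a priori $\HI$ calculus, into \cite{KrW3} produces the $\Hor^\beta_2$ calculus for every $\beta > \alpha d + \tfrac12$. I expect this last, quantitative, step to be the main obstacle: one must identify the interpolation scale so that the two endpoint square function estimates hold \emph{uniformly over the tuples} $(z_k)$, pin down $\alpha$ in terms of $p$ and the lattice geometry of $Y$, and match the polynomial rate $\epsilon^{-\alpha d}$ to the exact input demanded by the abstract machinery of \cite{KrW3}, so that the output derivation exponent comes out as $\alpha d + \tfrac12$ and not larger — all while checking that $Y(\ell^2)$, $Y_1$ and the Hilbert endpoint remain admissible UMD lattices throughout.
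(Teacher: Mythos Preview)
Your overall architecture is correct and matches the paper: reduce to Theorem~\ref{thm-KrW3} by supplying (i) an a priori $\HI$ calculus on $L^p(\Omega;Y)$ and (ii) an $R$-bound $\lesssim (\cos\theta)^{-\alpha d}$ for $\{T_{te^{i\theta}}:t>0\}$. Your handling of (i) via Corollary~\ref{cor-HI-positive} is exactly what the paper does. Your crude route to (ii) --- the pointwise domination $|T_z f|\lesssim (\cos\theta)^{-d}M_{HL}(f)$ followed by Fefferman--Stein on $Y(\ell^2)$ --- is also in the paper, as the ``simpler approach'' of Subsection~\ref{subsec-simpler-approach}, and yields only $\beta>d+\tfrac12$.

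Where you diverge is in the sharpening step. The paper does \emph{not} interpolate between a Hilbert endpoint and a rough UMD endpoint. Instead it obtains the exponent $\alpha(p,p_Y,q_Y)$ directly, as follows (Theorem~\ref{thm-R-bounded-semigroup}). One first applies Blunck's extrapolation \cite{Bl07} to the generalised Gaussian estimates: for any $q_0\in(p_0,2)$, $q_1\in(2,p_0')$ this produces off-diagonal $L^{q_0}\to L^{q_1}$ bounds for $T_z$ with the explicit weight $(\cos\arg z)^{-d(1/q_0-1/q_1)}$. Feeding these into Lemma~\ref{lem-Ku08-Prop23} gives a pointwise inequality $N_{q_1,\rho(z)}\bigl((\cos\arg z)^{d(1/q_0-1/q_1)}T_z f\bigr)\leq C\,M_{HL}^{q_0}f$, where $N_{q_1,r}$ is the local $L^{q_1}$-average of Definition~\ref{defi-local-Lq-average}. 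The $R$-bound then follows by sandwiching: the family $\{N_{q_1,r}:r>0\}$ is \emph{lower} $R$-bounded on $L^p(\Omega;Y)$ when $q_1\geq q_Y$ and $q_1>p$ (Lemma~\ref{lem-lower-R-bounded}, proved by a duality argument on an auxiliary measure space built from dyadic cubes), while $M_{HL}^{q_0}$ is upper $R$-bounded when $q_0\leq p_Y$ and $q_0<p$ (Proposition~\ref{prop-MHL-q} applied to $Y(\ell^2)$). Letting $1/q_0\downarrow\max(1/p,1/p_Y,1/2)$ and $1/q_1\uparrow\min(1/p,1/q_Y,1/2)$ makes $d(1/q_0-1/q_1)$ approach $\alpha(p,p_Y,q_Y)\cdot d$.

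Your interpolation proposal is not wrong in spirit --- the paper itself sketches a version of it in Remark~\ref{rem-interpolation} --- but it is genuinely a different argument, and as you suspect, matching the interpolation parameter simultaneously to $p$, $p_Y$, $q_Y$ so as to recover exactly $\alpha(p,p_Y,q_Y)$ in all regimes is delicate: writing $Y=[Y_1,L^2]_\theta$ fixes $\theta$ via $p_Y,q_Y$, and then $[L^{p_1},L^2]_\theta=L^p$ constrains $p$ to lie in $(p_Y,q_Y)$ unless one varies $p_1$ and $\theta$ together. The paper's remark indicates that the exponent one gets this way is comparable but not identical to $\alpha(p,p_Y,q_Y)$. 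The $N_{q,r}$--and--duality route avoids this bookkeeping entirely and reads the exponent off from the choice of $q_0,q_1$.
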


Here, $\alpha \in (0,1)$ is a parameter depending on $p$ and $Y$, and will be close to the best value $0$ if $p$ is close to $2$ and $Y$ is close to a Hilbert space, e.g. $Y = L^s(\Omega')$ with $s$ close to $2$.
More generally, $\alpha$ is a function of $p$ and of the convexity and concavity index of the lattice $Y$ in the sense of \cite{LTz} (see \eqref{equ-defi-alpha}, and Subsection \ref{subsec-UMD} for the definition of these notions).
Theorem \ref{thm-Hoermander-intro} is proved in Corollaries \ref{cor-Hoermander} and \ref{cor-HI-positive}.
We refer to Theorem \ref{thm-Hoermander} for the version with $(p_0,m)$ generalised Gaussian estimates, which needs convexity and concavity exponents of $Y$ compatible with $p_0$.
In the scalar case $Y = \C$ (or Hilbert space case), the derivation index of Theorem \ref{thm-Hoermander-intro} becomes $\beta > |\frac1p - \frac12| d + \frac12$, compared to the better $\beta > \max(|\frac1p - \frac12| d,\frac12)$ known in many cases, see e.g. \cite[Theorem 4.1]{COSY} and Remark \ref{rem-comparison}.
This price of higher differentiation order is justified not only by the fact that $Y$ can be a lattice for us, but also by the following strengthening of Theorem \ref{thm-Hoermander-intro} to square function estimates that we obtain:

\begin{thm}
\label{thm-Hoermander-square-intro}
Assume that the hypotheses of Theorem \ref{thm-Hoermander-intro} above hold and let $\beta > \alpha \cdot d + \frac12$ be as in the conclusion.
Then there is $C < \infty$ such that
\begin{equation}
\label{equ-R-bdd-Hor-intro}
\biggl\| \biggl( \sum_k |m_k(A) f_k|^2 \biggr)^{\frac12} \biggr\|_{L^p(\Omega;Y)} \leq C \sup_{k} \|m_k\|_{\Hor^\beta_2} \biggl\| \biggl( \sum_k |f_k|^2 \biggr)^{\frac12} \biggr\|_{L^p(\Omega;Y)} .
\end{equation}
\end{thm}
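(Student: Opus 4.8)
The plan is to combine the abstract $R$-boundedness machinery of \cite{KrW3} with the concrete square function estimates that will have already been established in the proof of Theorem \ref{thm-Hoermander-intro}. Recall that by the discussion following Theorem \ref{thm-MHL-intro}, the H\"ormander calculus in Theorem \ref{thm-Hoermander-intro} is obtained by verifying, for $T_t$ the semigroup extended to complex times $t \in \C_+$, a square function estimate of the type \eqref{equ-square-function-intro} together with an a priori $\HI(\Sigma_\omega)$ calculus of $A$ on $L^p(\Omega;Y)$. The key observation is that this same input in fact yields not merely the \emph{boundedness} of the homomorphism \eqref{equ-intro-homomorphism}, but its \emph{$R$-boundedness}: the set $\{m(A) : \|m\|_{\Hor^\beta_2} \leq 1\}$ is $R$-bounded in $B(L^p(\Omega;Y))$. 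Indeed, the abstract theorem from \cite{KrW3} that we invoke produces $R$-bounded functional calculus whenever the auxiliary operators (the semigroup at complex times, or more generally the relevant resolvent/analytic family) satisfy square function estimates plus the $\HI$ calculus hypothesis; this is the natural strengthening available in that framework, and no new analytic input beyond what Theorem \ref{thm-Hoermander-intro} already requires is needed.

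From the $R$-boundedness of $\{m(A) : \|m\|_{\Hor^\beta_2} \leq 1\}$, the square function estimate \eqref{equ-R-bdd-Hor-intro} follows by the standard equivalence between $R$-boundedness of a family $\{S_k\}$ and the two-sided Rademacher/square-function inequality. Concretely, since $Y$ is a UMD lattice, by the Khintchine-type inequality in Banach lattices (see Subsection \ref{subsec-UMD}) one has, for $L^p(\Omega;Y)$, the equivalence
\[
\Bigl\| \Bigl( \sum_k |g_k|^2 \Bigr)^{\frac12} \Bigr\|_{L^p(\Omega;Y)} \eqsim \Bigl( \E \Bigl\| \sum_k \epsilon_k g_k \Bigr\|_{L^p(\Omega;Y)}^p \Bigr)^{\frac1p},
\]
where $(\epsilon_k)$ is a sequence of independent Rademacher variables. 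Applying $R$-boundedness of $\{m_k(A)\}$ (after normalising $\|m_k\|_{\Hor^\beta_2} \leq 1$ by scaling out $\sup_k \|m_k\|_{\Hor^\beta_2}$) to the Rademacher averages on the right-hand side, and then converting back to square functions, yields \eqref{equ-R-bdd-Hor-intro} with the constant $C$ depending only on the $R$-bound, the UMD constant of $Y$, and $p$.

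The main obstacle, and the step requiring the most care, is ensuring that the output of the \cite{KrW3} machinery is genuinely $R$-bounded rather than merely bounded: one must check that the square function estimates proved in the course of Theorem \ref{thm-Hoermander-intro} — which involve the semigroup $T_z$ for $z$ in a sector and are derived from the (generalised) Gaussian estimates via the Hardy–Littlewood maximal bound of Theorem \ref{thm-MHL-intro} — are of precisely the form demanded by the $R$-bounded functional calculus criterion, with uniform control of constants as the relevant parameters vary. Once this is in place, the passage to \eqref{equ-R-bdd-Hor-intro} is the routine lattice Khintchine argument sketched above, and the admissible range $\beta > \alpha \cdot d + \frac12$ is inherited verbatim from Theorem \ref{thm-Hoermander-intro} since no further loss of derivatives occurs in the $R$-bounded strengthening.
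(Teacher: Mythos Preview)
Your proposal is correct and follows essentially the same route as the paper: the abstract result \cite[Theorem 7.1]{KrW3} already delivers an $R$-bounded (not merely bounded) $\Hor^\beta_2$ calculus from the same input used for Theorem \ref{thm-Hoermander-intro}, and the passage from $R$-boundedness to the square-function form \eqref{equ-R-bdd-Hor-intro} is exactly the lattice Khintchine equivalence \eqref{equ-Rademacher-square}. The one point the paper makes explicit that you leave implicit is Pisier's property $(\alpha)$: the $R$-bounded conclusion in \cite[Theorem 7.1]{KrW3} requires the underlying space to have property $(\alpha)$, and the paper notes (via Lemma \ref{lem-property-alpha}) that $L^p(\Omega;Y)$ has it for any UMD lattice $Y$ --- this is already used in the proof of Theorem \ref{thm-Hoermander-intro}, so your claim that ``no new analytic input is needed'' is accurate, but it is worth naming property $(\alpha)$ as the structural hypothesis that upgrades boundedness to $R$-boundedness.
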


This can be rephrased as $\{ m(A) : \: \|m\|_{\Hor^\beta_2} \leq 1 \}$ is $R$-bounded in $L^p(\Omega;Y)$ (see Subsection \ref{subsec-R-bdd} for the definition).
Concerning our method, there might also be weaker Poisson estimates as e.g. in \cite{DuRo,Kr1} sufficient, which have a polynomial decay at $\dist(x,y) \to \infty$ in place of the exponential decay in the Gaussian estimates.
But since they are rarely known for complex times that we would need (see \cite[Section 4]{Kr1} for examples), we have chosen not to pursue this case in our presentation.

In case that one does not know the convexity and concavity exponents of $Y$, there is an alternate approach to get to \eqref{equ-square-function-intro}, which has as additional assumption a dispersive estimate \eqref{equ-intro-dispersive}.
Namely we shall show the following.

\begin{thm}
\label{thm-dispersive-intro}
Let $(\Omega,\dist,\mu)$ be a space of homogeneous type of dimension $d$, let $Y$ be any UMD lattice and let $1 < p < \infty$.
Assume that $A$ generates the self-adjoint semigroup $(T_t)_{t \geq 0}$ on $L^2(\Omega)$ satisfying the Gaussian estimate \eqref{equ-GE-intro} with $m = 2$ and that $(T_t)_{t \geq 0}$ is lattice positive.
Assume moreover that there is a polynomial volume growth $V(x,r) \leq C |r|^d$ ($x \in \Omega,\: r > 0$) and that $A$ satisfies the dispersive estimate
\begin{equation}
\label{equ-intro-dispersive}
 \| \exp(itA) \|_{L^1(\Omega) \to L^\infty(\Omega)} \leq C |t|^{-\frac{d}{2}} \quad (t \in \R \backslash \{ 0 \} ).
\end{equation}
Then $A$ has a $\Hor^\beta_2$ calculus on $L^p(\Omega;Y)$ for any exponent $\beta > \frac{d}{2} + \frac12$.
Moreover, \eqref{equ-R-bdd-Hor-intro} holds.
\end{thm}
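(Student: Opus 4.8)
The idea is to run the scheme of Section~\ref{sec-main} and, at the single point where it uses the convexity and concavity exponents of $Y$, to replace those quantitative inputs by the dispersive estimate \eqref{equ-intro-dispersive}. Concretely, we first reduce, via the abstract transference principle of \cite{KrW3}, the $\Hor^\beta_2$ calculus together with the $R$-bounded conclusion \eqref{equ-R-bdd-Hor-intro} to two facts on $L^p(\Omega;Y)$: a bounded $\HI(\Sigma_\omega)$ calculus of $A$, and the square function estimates \eqref{equ-square-function-intro} for the semigroup at complex times together with a pointwise domination of the dyadic spectral pieces of $A$ by the Hardy--Littlewood maximal operator. The $\HI$ calculus is for free here: since $(T_t)_{t\geq 0}$ is lattice positive, Corollary~\ref{cor-HI-positive} gives a bounded $\HI$ calculus of $A$ on $L^p(\Omega;Y)$. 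The real-variable part of the second fact is handled exactly as in Theorem~\ref{thm-Hoermander}: the Gaussian estimate \eqref{equ-GE-intro} with $m=2$ yields $|T_t f|\leq c\,M_{HL} f$ pointwise, and Theorem~\ref{thm-MHL-intro} then makes $M_{HL}$ (and the square functions built from it) bounded on $L^p(\Omega;Y)$. So the whole machinery of Section~\ref{sec-main} applies, and the only estimate that must be reconsidered is the one controlling a single dyadic Hörmander piece $\phi(A)$, $\supp\phi\subseteq[\tfrac14,4]$, uniformly in $\phi$ with $\|\phi\|_{\Hor^\beta_2}\leq 1$; in Theorem~\ref{thm-Hoermander} this step produced the loss $\alpha\cdot d$, with $\alpha$ a function of $p$ and of the convexity and concavity exponents of $Y$.

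The replacement input is the observation that \eqref{equ-intro-dispersive} costs nothing when tensored with $\Id_Y$. Indeed, under the Gaussian estimate the operators $e^{itA}$ have bounded integral kernels $k_t$, and \eqref{equ-intro-dispersive} is then equivalent to the pointwise bound $|k_t(x,y)|\leq C|t|^{-d/2}$; being a pointwise estimate on the kernel it tensorises with no loss, $\|e^{itA}\otimes\Id_Y\|_{L^1(\Omega;Y)\to L^\infty(\Omega;Y)}\leq C|t|^{-d/2}$, for every Banach space $Y$ and every $t\neq 0$. The same is true, trivially, for the positive heat kernel $p_t$. Thus on $L^p(\Omega;Y)$ one has the full strength of the dispersive and Gaussian bounds with constants that do not see $Y$; analytic interpolation (Stein) along $z\mapsto e^{-zA}$, using also the polynomial volume growth $V(x,r)\leq C|r|^d$, upgrades them to $\|e^{-zA}\otimes\Id_Y\|_{L^1(\Omega;Y)\to L^\infty(\Omega;Y)}\leq C|z|^{-d/2}$ throughout the closed right half-plane. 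Morally, $e^{itA}\otimes\Id_Y$ now behaves on $L^p(\Omega;Y)$ like the Schrödinger group on scalar $L^p(\R^d)$ at the endpoint $|\tfrac1p-\tfrac12|=\tfrac12$, which is exactly the value $\alpha=\tfrac12$ accounting for $\beta>\tfrac d2+\tfrac12$.

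To turn this into the dyadic estimate one argues as in Section~\ref{sec-main}: after a dyadic decomposition $m=\sum_j m\,\eta(2^{-j}\cdot)$ and the rescaling $A\rightsquigarrow 2^{-j}A$, it suffices to dominate, uniformly, $\phi(A)$ with $\supp\phi\subseteq[\tfrac14,4]$ and $\|\phi\|_{\Hor^\beta_2}\leq 1$, in the appropriate weighted $Y$-valued sense, by a constant times $M_{HL}$; the summation over $j$ and the passage to \eqref{equ-R-bdd-Hor-intro} are the abstract steps already supplied by \cite{KrW3} and Section~\ref{sec-main}. Writing $\phi(A)=\frac1{2\pi}\int_{\R}\widehat\phi(\tau)\,e^{i\tau A}\,d\tau$ and splitting at $|\tau|=1$, on $\{|\tau|\geq 1\}$ one uses the uniform-in-$Y$ bound $\|e^{i\tau A}\otimes\Id_Y\|_{L^1(\Omega;Y)\to L^\infty(\Omega;Y)}\lesssim|\tau|^{-d/2}$ together with the decay of $\widehat\phi$ coming from the $\Hor^\beta_2$ regularity of $\phi$ --- this is precisely where $\beta>\tfrac d2+\tfrac12$ enters, as it makes the relevant weighted integral $\int_{|\tau|\geq 1}(1+|\tau|)^{\gamma}|\widehat\phi(\tau)|\,d\tau$ finite and bounded by $\|\phi\|_{\Hor^\beta_2}$; on $\{|\tau|\leq 1\}$, where the dispersive bound is useless, one falls back on the $L^2$-unitarity of $e^{i\tau A}$ and on heat-kernel localisation at the unit scale (inserting factors $T_{r^2}$, $r\sim 1$, and estimating the correction by \eqref{equ-GE-intro}) to control this part of the kernel; the off-diagonal spatial decay needed for the $M_{HL}$ domination is provided, as in Section~\ref{sec-main}, by the finite propagation speed of the wave propagator associated with $A$, itself a consequence of \eqref{equ-GE-intro}. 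Feeding the resulting domination and weighted $L^2$ (Plancherel) kernel bound, both with $Y$-independent constants, into Theorem~\ref{thm-MHL-intro} and the machinery of \cite{KrW3} yields \eqref{equ-R-bdd-Hor-intro}, hence the $\Hor^\beta_2$ calculus, for every $\beta>\tfrac d2+\tfrac12$.

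The main obstacle is the small-time range $|\tau|\leq 1$: there the dispersive estimate contributes nothing (its $|\tau|^{-d/2}$ bound fails to be integrable for $d\geq 2$), so the dyadic kernel estimate with the sharp exponent $\beta>\tfrac d2+\tfrac12$ has to be squeezed out of the Gaussian heat-kernel structure and $L^2$-unitarity alone, while carefully matching the spectral and the metric scales. A second, more bookkeeping point is that every estimate entering the dyadic piece must be produced with a constant independent of $Y$; this forces one to work exclusively with $L^1\to L^\infty$ (equivalently, pointwise kernel) bounds, the only ones that tensorise freely, so that the dependence on $Y$ in the final statement survives only through the UMD property --- via the maximal theorem and the $\HI$ calculus --- and not through any convexity or concavity exponent.
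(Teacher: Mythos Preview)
Your plan misidentifies where the exponent $\alpha\cdot d$ enters in the paper's scheme, and as a result you propose a harder and gappier argument than is needed. In the paper, Theorem~\ref{thm-Hoermander} is obtained by combining the $\HI$ calculus with the $R$-boundedness of the complex-time \emph{semigroup} $\{e^{-te^{i\theta}A}:t>0\}$ (Theorem~\ref{thm-R-bounded-semigroup}) and then invoking the abstract Theorem~\ref{thm-KrW3}. There is no separate ``dyadic piece $\phi(A)$'' estimate to be reconsidered; the only quantitative input is the growth of the $R$-bound in $(\cos\theta)^{-\alpha}$. The paper's proof of Theorem~\ref{thm-dispersive-intro} (via Corollary~\ref{cor-GE-Hoermander}~2.) therefore does not touch the Schr\"odinger group directly at all: it uses a Phragm\'en--Lindel\"of argument of Coulhon--Sikora \cite{CouSi} to upgrade the real-time Gaussian bound plus the dispersive estimate to the \emph{complex-time pointwise kernel bound}
\[
|p_z(x,y)|\;\leq\;C\,|z|^{-d/2}\exp\Bigl(-\Re\bigl[\tfrac{\dist(x,y)^2}{4z}\bigr]\Bigr),
\]
valid for all $z\in\C_+$. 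With this in hand one argues exactly as in Proposition~\ref{prop-pure-laplacian}: a substitution $t\mapsto t\cos\theta$ gives the pointwise domination $M_\theta f\lesssim(\cos\theta)^{-d/2}M_{HL}f$, hence (applying this on $Y(\ell^2)$) the $R$-bound $\lesssim(\cos\theta)^{-d/2}$, and Theorem~\ref{thm-KrW3} with $\alpha=d/2$ yields $\Hor^\beta_2$ for $\beta>\tfrac{d}{2}+\tfrac12$ together with \eqref{equ-R-bdd-Hor-intro}. The convexity/concavity of $Y$ never appear because one never needs Lemma~\ref{lem-lower-R-bounded}; the $M_{HL}$ domination is already pointwise.

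Your alternative route --- write $\phi(A)=\tfrac{1}{2\pi}\int_\R\widehat\phi(\tau)e^{i\tau A}\,d\tau$ and split at $|\tau|=1$ --- has two concrete gaps. First, the dispersive bound $|k_\tau(x,y)|\leq C|\tau|^{-d/2}$ is spatially \emph{uniform}; by itself it cannot yield a pointwise domination of $\phi(A)f$ by $M_{HL}f$, which is what the $Y$-valued square-function machinery needs. You try to supply the missing spatial localisation by ``finite propagation speed of the wave propagator'', but you are integrating the Schr\"odinger propagator $e^{i\tau A}$, not the wave propagator $\cos(\tau\sqrt{A})$; the former has no finite speed, so this step does not go through as written. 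Second, on $\{|\tau|\leq 1\}$ you invoke ``$L^2$-unitarity and heat-kernel localisation at unit scale'', but in the $L^p(\Omega;Y)$ setting you cannot appeal to $L^2$-unitarity, and you do not explain how this yields an $M_{HL}$-type domination with a $Y$-independent constant. The paper avoids all of this by never passing through the Schr\"odinger group on $L^p(\Omega;Y)$: the dispersive estimate is used only once, inside Coulhon--Sikora, to produce the complex-time kernel bound above, after which everything is a pointwise heat-kernel argument.
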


For a proof of this theorem, we refer to Corollaries \ref{cor-HI-positive} and \ref{cor-GE-Hoermander}, where a slightly more general formulation than Theorem \ref{thm-dispersive-intro} is used.
Let us remark that the above Theorems \ref{thm-Hoermander-intro}, \ref{thm-Hoermander-square-intro} and \ref{thm-dispersive-intro} are valid if $Y$ is a UMD space \textit{isomorphic} to a (UMD) lattice (having convexity and concavity if applicable).
This is e.g. the case if $Y$ is a UMD space with an unconditional basis \cite[p.~19]{LTz1}.
Then for Theorem \ref{thm-Hoermander-square-intro}, one has to rewrite the square functions by Rademacher sums as in \eqref{equ-Rademacher-square} below.

We conclude this introduction with an overview of the sections.
In Section \ref{sec-prelims}, we introduce the necessary background of the mathematical objects we study.
In particular, in Subsection \ref{subsec-R-bdd}, we introduce square function estimates as used in \eqref{equ-square-function-intro} in Banach spaces, define the H\"ormander function space and give some simple properties.
Moreover, we indicate how one can define the H\"ormander calculus \eqref{equ-intro-homomorphism} for a semigroup generator, without using the self-adjoint calculus as a starting point, which is missing for Bochner spaces $L^p(\Omega;Y)$ unless $Y$ itself is a Hilbert space.
Moreover, we give in Theorem \ref{thm-KrW3} the above mentioned abstract criterion for the H\"ormander calculus from \cite{KrW3}.
In Subsection \ref{subsec-UMD}, we discuss the framework for the UMD lattice $Y$ and the needed properties, e.g. related to convexity and concavity.
Next, we give in Subsection \ref{subsec-tensor} the indications how a tensor amplificated operator $T \otimes \Id_Y : L^p(\Omega) \otimes Y \to L^p(\Omega) \otimes Y$ extends properly to $L^p(\Omega;Y)$.
In Subsection \ref{subsec-GE}, we recall the class of metric measure spaces $\Omega$ that we use throughout as well as the above mentioned (generalised) Gaussian estimates \eqref{equ-GE-intro}.
Section \ref{sec-MHL} is entirely devoted to prove Theorem \ref{thm-MHL-intro}.
Then Section \ref{sec-main} contains the main material to prove the above Theorems \ref{thm-Hoermander-intro} and \ref{thm-Hoermander-square-intro}, and we also give several sufficient criteria when $A$ admits the a priori needed $\HI$ calculus on $L^p(\Omega;Y)$.
For the reader's convenience, we spell out the H\"ormander theorem in the important particular cases of Gaussian estimates and of the case $Y = L^s(\Omega')$ with $\Omega'$ a further measure space (Corollary \ref{cor-Hoermander}). 
We compare our results with those from other literature (Remark \ref{rem-comparison}).
Moreover, we indicate some consequences of Theorem \ref{thm-Hoermander-intro} on Bochner-Riesz means and Paley-Littlewood decompositions in Bochner spaces.
The proof of Theorem \ref{thm-Hoermander-intro} is based on extrapolation of (generalised) off-diagonal estimates from real to complex time of the self-adjoint semigroup generated by $A$ stemming from Blunck's and Kunstmann's work, combined with the Hardy-Littlewood maximal operator theorem \ref{thm-MHL-intro}, and uses the non-trivial convexity and concavity of $Y$.
A different approach without the extrapolation procedure and hence without referring to convexity and concavity is given in Subsection \ref{subsec-simpler-approach}.
Here, we present three cases in which complex time estimates of the semigroup are known: first, the pure Laplacian case, which is one of the rare cases where the complex time integral kernel is explicitly known, second, the extrapolation of Gaussian estimates from \cite{CaCoOu} and third, the case of Gaussian estimates combined with a dispersive estimate, see Corollary \ref{cor-GE-Hoermander}.
In the first and the third case, a H\"ormander calculus theorem is derived with an exponent $\frac{d+1}{2}$, which is worse than the one from Theorem \ref{thm-Hoermander-intro} in case that $L^p(\Omega;Y)$ is close to being Hilbert, but is better in case that no particular information on convexity and concavity of $Y$ is given.
Finally in Section \ref{sec-examples}, we illustrate a variety of cases when our Theorems \ref{thm-Hoermander-intro}, \ref{thm-Hoermander-square-intro} and \ref{thm-dispersive-intro} apply.

\section{Preliminaries}
\label{sec-prelims}
In this section, we define and recall the central notions of the article and we prove several lemmas and results which will be relevant for the sequel. We first begin with preliminaries on $R$-boundedness, $\HI$ functional calculus and H\"ormander functional calculus.

\subsection{$R$-boundedness, $\HI$ functional calculus and H\"ormander functional calculus}
\label{subsec-R-bdd}

\begin{defi}
Let $X$ be a Banach space and $\tau \subset B(X).$
Then $\tau$ is called $R$-bounded if there is some $C < \infty$ such that for any $n \in \N,$ any $x_1,\ldots,x_n \in X$ and any $T_1,\ldots,T_n \in \tau,$ we have
\[ \E \biggl\|\sum_{k = 1}^n \epsilon_k T_k x_k \biggr\|_X \leq C \E \biggl\|\sum_{k = 1}^n \epsilon_k x_k \biggr\|_X,\]
where the $\epsilon_k$ are i.i.d. Rademacher variables on some probability space, that is, $\Prob(\epsilon_k = \pm 1) = \frac12.$
The least admissible constant $C$ is called $R$-bound of $\tau$ and is denoted by $R(\tau).$
\end{defi}

Note that trivially, we always have $R(\{ T \}) = \|T\|$ for any $T \in B(X).$
Although the notion of $R$-boundedness is stated in the literature usually only for families of linear operators, it makes literally perfectly sense for non-linear mappings $X \to X$, and we shall use it later for sublinear operators on Banach lattices.

\begin{defi}
\label{defi-lower-R-bounded}
Let $X$ be a Banach space.
We call a family $\tau$ of (in general non-linear) mappings $X \to X$ lower $R$-bounded if there exists a $C < \infty$ such that for any $x_1,x_2,\ldots, x_n \in X$ and $T_1,T_2,\ldots, T_n \in \tau,$ we have
\[ \E \biggl\|  \sum_{k = 1}^n \epsilon_k x_k \biggr\|_{X} \leq C \biggl\| \sum_{k = 1}^n \epsilon_k T_k x_k \biggr\|_{X}. \]
\end{defi}

We next recall the necessary background on functional calculus that we will treat in this article.
Let $-A$ be a generator of an analytic semigroup $(T_z)_{z \in \Sigma_\delta}$ on some Banach space $X,$ that is, $\delta \in (0,\frac{\pi}{2}],$ $\Sigma_\delta = \{ z \in \C \backslash \{ 0 \} :\: | \arg z | < \delta \},$ the mapping $z \mapsto T_z$ from $\Sigma_\delta$ to $B(X)$ is analytic, $T_{z+w} = T_z T_w$ for any $z,w \in \Sigma_\delta,$ and $\lim_{z \in \Sigma_{\delta'},\:|z| \to 0 } T_zx = x$ for any strict subsector $\Sigma_{\delta'}.$
We assume that $(T_z)_{z \in \Sigma_\delta}$ is a bounded analytic semigroup, which means $\sup_{z \in \Sigma_{\delta'}} \|T_z\| < \infty$ for any $\delta' < \delta.$

It is well-known \cite[Theorem 4.6, p. 101]{EN} that this is equivalent to $A$ being pseudo-$\omega$-sectorial for $\omega = \frac{\pi}{2} - \delta,$ that is,
\begin{enumerate}
\item $A$ is closed and densely defined on $X;$
\item The spectrum $\sigma(A)$ is contained in $\overline{\Sigma_\omega}$ (in $[0,\infty)$ if $\omega = 0$);
\item For any $\omega' > \omega,$ we have $\sup_{\lambda \in \C \backslash \overline{\Sigma_{\omega'}}} \| \lambda (\lambda - A)^{-1} \| < \infty.$
\end{enumerate}
We say that $A$ is $\omega$-sectorial if it is pseudo-$\omega$-sectorial and has moreover dense range.
If $A$ is pseudo-$\omega$-sectorial and does not have dense range, but $X$ is reflexive, which will always be the case in this article, then we may take the injective part $A_0$ of $A$ on $\overline{R(A)} \subseteq X$ \cite[Proposition 15.2]{KW04}, which then does have dense range and is $\omega$-sectorial.
Here, $R(A)$ stands for the range of $A.$
Then $-A$ generates an analytic semigroup on $X$ if and only if so does $-A_0$ on $\overline{R(A)}.$
This parallel will continue this section, i.e. the functional calculus for $A_0$ can be extended to $A$ in an obvious way, see \cite[Illustration 4.87]{KrPhD}.

For $\theta \in (0,\pi),$ let 
\[ \HI(\Sigma_\theta) = \{ f : \Sigma_\theta \to \C :\: f \text{ analytic and bounded} \} \] equipped with the uniform norm $\|f\|_{\infty,\theta}.$
Let further 
\[ \HI_0(\Sigma_\theta) = \bigl\{ f \in \HI(\Sigma_\theta):\: \exists \: C ,\epsilon > 0 :\: |f(z)| \leq C \min(|z|^\epsilon,|z|^{-\epsilon}) \bigr\}.\]
For a pseudo-$\omega$-sectorial operator $A$ and $\theta \in (\omega,\pi),$ one can define a functional calculus $\HI_0(\Sigma_\theta) \to B(X),\: f \mapsto f(A)$ extending the ad hoc rational calculus, by using a Cauchy integral formula.
If moreover, there exists a constant $C < \infty$ such that $\|f(A)\| \leq C \| f \|_{\infty,\theta},$ then $A$ is said to have bounded $\HI(\Sigma_\theta)$ calculus and if $A$ has dense range, the above functional calculus can be extended to a bounded Banach algebra homomorphism $\HI(\Sigma_\theta) \to B(X).$
This calculus also has the property $f_z(A) = T_z$ for $f_z(\lambda) = \exp(-z \lambda),\: z \in \Sigma_{\frac{\pi}{2} - \theta}.$

For further information on the $\HI$ calculus, we refer e.g. to \cite{KW04}. We now turn to H\"ormander function classes and their calculi.
\begin{defi}
\label{defi-Hoermander-class}
Let $\alpha > \frac12.$
We define the H\"ormander class by 
\[\Hor^\alpha_2 = \bigl\{ f : [0,\infty) \to \C \text{ is bounded and continuous on }(0,\infty), \:  \underbrace{|f(0)| + \sup_{R > 0} \| \phi f(R \,\cdot) \|_{W^\alpha_2(\R)}}_{=:\|f\|_{\Hor^\alpha_2}}< \infty \bigr\}.\]
Here $\phi$ is any $C^\infty_c(0,\infty)$ function different from the constant 0 function (different choices of functions $\phi$ resulting in equivalent norms) and $W^\alpha_2(\R)$ is the classical Sobolev space.
\end{defi}

The term $|f(0)|$ is not needed in the functional calculus applications of $\Hor^\alpha_2$ if $A$ is in addition injective.
The H\"ormander classes have the following properties.

\begin{lemma}
\label{lem-properties-Hoermander}
\begin{enumerate}
\item Assume that $\alpha \in \N$.
Then a locally integrable function $f : (0,\infty) \to \C$ belongs to the H\"ormander class $\Hor^\alpha_2$ if and only if
\[ |f(0)|^2 + \sum_{ k = 0}^\alpha \sup_{R > 0} \int_{R}^{2R} \Bigl|t^k \frac{d^k}{dt^k}f(t)\Bigr|^2 \,\frac{dt}{t} < \infty, \]
and the above quantity is equivalent to $\|f\|_{\Hor^\alpha_2}^2.$
\item $\Hor^\alpha_2$ is a Banach algebra for the pointwise multiplication.
\item Assume that $\frac12 < \alpha < \beta$.
Then $\Hor^\beta_2 \subseteq \Hor^\alpha_2$ is a continuous injection.
\end{enumerate}
\end{lemma}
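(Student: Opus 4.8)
The plan is to prove the three parts in order, with essentially all the work concentrated in (1); parts (2) and (3) will follow quickly from standard facts about the scalar Sobolev spaces $W^\gamma_2(\R)$. First I would record the equivalence implicit in Definition~\ref{defi-Hoermander-class}: for any two non-zero $\phi,\psi\in C^\infty_c(0,\infty)$ the seminorms $f\mapsto\sup_{R>0}\|\phi f(R\,\cdot)\|_{W^\alpha_2(\R)}$ and $f\mapsto\sup_{R>0}\|\psi f(R\,\cdot)\|_{W^\alpha_2(\R)}$ are comparable, with constant independent of $f$. To get this one may assume after rescaling that $|\phi|\geq c>0$ on $[\lambda^{-1},\lambda]$ for some $\lambda>1$, cover $\supp\psi$ by finitely many dilates $r_1[\lambda^{-1},\lambda],\dots,r_N[\lambda^{-1},\lambda]$, take a subordinate smooth partition of unity $\psi=\sum_i\psi_i$ and write $\psi_i=\chi_i\,\phi(\cdot/r_i)$ with $\chi_i\in C^\infty_c(0,\infty)$; after the substitution $s\mapsto s/r_i$ each $\psi_i f(R\,\cdot)$ becomes a fixed $C^\infty_c$ multiple of a dilate of $\phi f(Rr_i\,\cdot)$, and since both multiplication by a fixed $C^\infty_c$ function and the dilation $h\mapsto h(a\,\cdot)$ are bounded on $W^\alpha_2(\R)$ with constants depending only on the data, summing over $i$ gives the claim. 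Hence for (1) I am free to fix a convenient $\phi$.

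For (1) I would fix $\phi\in C^\infty_c(0,\infty)$ with $0\leq\phi\leq1$, $\phi\equiv1$ on $[1,2]$ and $\supp\phi\subseteq[\tfrac12,4]$. For integer $\alpha$ one has $\|g\|_{W^\alpha_2(\R)}^2\sim\sum_{k=0}^\alpha\|g^{(k)}\|_{L^2(\R)}^2$; I apply this to $g=\phi f(R\,\cdot)$ and expand by Leibniz, $g^{(k)}(s)=\sum_{j=0}^k\binom kj\phi^{(k-j)}(s)\,R^j f^{(j)}(Rs)$. For the upper bound I estimate $\|g^{(k)}\|_{L^2}^2$ by a sum of squared terms and substitute $t=Rs$; since $\supp\phi^{(k-j)}\subseteq[\tfrac12,4]$ this produces $\sum_{j\leq k}R^{2j-1}\int_{R/2}^{4R}|f^{(j)}(t)|^2\,dt$, and because $t\sim\rho$ on a dyadic annulus we have $\int_\rho^{2\rho}|f^{(j)}|^2\,dt\sim\rho^{\,1-2j}\int_\rho^{2\rho}|t^j f^{(j)}(t)|^2\,\tfrac{dt}{t}$, so covering $[\tfrac12R,4R]$ by the three annuli near $R$ gives $\sup_{R>0}\|\phi f(R\,\cdot)\|_{W^\alpha_2}^2\lesssim\sum_{k=0}^\alpha\sup_{R>0}\int_R^{2R}|t^k f^{(k)}(t)|^2\,\tfrac{dt}{t}$. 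For the reverse inequality I restrict the integral defining $\|g^{(k)}\|_{L^2}^2$ to $s\in[1,2]$, where $\phi\equiv1$, so $g^{(k)}(s)=R^k f^{(k)}(Rs)$ there, and the substitution $t=Rs$ with $t\sim R$ on $[R,2R]$ gives $\|g^{(k)}\|_{L^2}^2\geq R^{2k-1}\int_R^{2R}|f^{(k)}|^2\,dt\sim\int_R^{2R}|t^k f^{(k)}(t)|^2\,\tfrac{dt}{t}$. Summing over $k\leq\alpha$, taking $\sup_R$ and adding $|f(0)|^2$ to both sides yields the claimed equivalence; and finiteness of the right-hand quantity forces $f$ to agree on $(0,\infty)$ with a bounded continuous function, since on each $[R,2R]$ the scale-invariant Sobolev inequality $W^{1,2}\hookrightarrow L^\infty$ bounds $\|f\|_{L^\infty[R,2R]}$ by $\bigl(\int_R^{2R}(|f|^2+|tf'(t)|^2)\,\tfrac{dt}{t}\bigr)^{1/2}$.

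For (2) I would use that $W^\alpha_2(\R)$ is a Banach algebra under pointwise multiplication for $\alpha>\tfrac12$, via $W^\alpha_2\hookrightarrow L^\infty$ and the fractional Leibniz rule (for integer $\alpha$ the ordinary Leibniz rule plus interpolation suffice). Choosing $\widetilde\phi\in C^\infty_c(0,\infty)$ with $\widetilde\phi\equiv1$ on $\supp\phi$ gives $\phi\,(fg)(R\,\cdot)=\bigl(\phi f(R\,\cdot)\bigr)\bigl(\widetilde\phi g(R\,\cdot)\bigr)$, hence $\|\phi(fg)(R\,\cdot)\|_{W^\alpha_2}\lesssim\|\phi f(R\,\cdot)\|_{W^\alpha_2}\,\|\widetilde\phi g(R\,\cdot)\|_{W^\alpha_2}$; taking $\sup_R$, using the $\phi$-independence from the first paragraph to pass from $\widetilde\phi$ to $\phi$, and adding $|f(0)g(0)|\leq|f(0)|\,|g(0)|$, yields $\|fg\|_{\Hor^\alpha_2}\leq C\,\|f\|_{\Hor^\alpha_2}\,\|g\|_{\Hor^\alpha_2}$, which is submultiplicative after renormalisation. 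Completeness is routine: if $(f_n)$ is Cauchy then the scalars $f_n(0)$ converge and, since $W^\alpha_2(\R)\hookrightarrow C_0(\R)$, the functions $\phi f_n(R\,\cdot)$ converge in $W^\alpha_2$ uniformly in $R$; picking $s_0$ with $\phi(s_0)\neq0$ and $R=t_0/s_0$ shows $(f_n)$ converges locally uniformly on $(0,\infty)$ to a limit $f$, which then lies in $\Hor^\alpha_2$ with $\|f_n-f\|_{\Hor^\alpha_2}\to0$.

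For (3): when $\tfrac12<\alpha<\beta$ one has $(1+\xi^2)^\alpha\leq(1+\xi^2)^\beta$, hence the continuous embedding $W^\beta_2(\R)\hookrightarrow W^\alpha_2(\R)$; applying it to $\phi f(R\,\cdot)$, taking $\sup_R$ and adding $|f(0)|$ gives $\|f\|_{\Hor^\alpha_2}\leq\|f\|_{\Hor^\beta_2}$, and the inclusion $\Hor^\beta_2\hookrightarrow\Hor^\alpha_2$ is the identity map, hence injective. The one genuinely delicate step is (1): organising the Leibniz expansion and the change of variables $t=Rs$ so that every constant is independent of the scale $R$, and absorbing the mismatch between $\supp\phi$ and the dyadic windows $[R,2R]$ by a covering with a bounded number of annuli; the $\phi$-independence reduction underlying (1) and (2) is elementary but needs the partition-of-unity argument sketched at the start.
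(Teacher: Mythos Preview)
Your proof is correct. The paper's own proof is simply a citation to \cite{KrW3} and \cite{KrPhD} together with the remark that adding the summand $|f(0)|$ is harmless, so there is no ``paper's approach'' to compare against in detail; you have supplied a complete, self-contained argument of the standard type that those references presumably contain. In particular, your handling of (1) via Leibniz expansion plus the substitution $t=Rs$ and dyadic covering, of (2) via the Banach-algebra property of $W^\alpha_2(\R)$ for $\alpha>\tfrac12$ together with the $\widetilde\phi$-trick, and of (3) via the trivial Sobolev embedding, are all sound and essentially the canonical route.
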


\begin{proof}
See \cite[Section 3]{KrW3}, \cite[Section 4.2.1]{KrPhD} for the case that $\|f\|_{\Hor^\alpha_2}$ does not contain the summand $|f(0)|$.
The present case is deduced immediately.
\end{proof}

We can base a H\"ormander functional calculus on the $\HI$ calculus by the following procedure.

\begin{defi}
\label{defi-Hormander-calculus}
We say that a pseudo-$0$-sectorial operator has a bounded $\Hor^\alpha_2$ calculus if for some $\theta \in (0,\pi)$ and any $f \in \HI(\Sigma_\theta),$
$\|f(A)\| \leq C \|f\|_{\Hor^\alpha_2} ( \leq C' \|f\|_{\infty,\theta}).$
\end{defi}
In this case, the $\HI(\Sigma_\theta)$ calculus can be extended to a bounded Banach algebra homomorphism $\Hor^\alpha_2 \to B(X)$ in the following way.
Let 
\[ \W^\alpha_2 = \bigl\{ f : (0,\infty) \to \C :\: f \circ \exp \in W^\alpha_2(\R) \bigr\}\] equipped with the norm $\|f\|_{\W^\alpha_2} = \|f\circ \exp\|_{W^\alpha_2(\R)}.$
Note that for any $\theta \in (0,\pi),$ the space $\HI(\Sigma_\theta) \cap \W^\alpha_2$ is dense in $\W^\alpha_2$ \cite{KrW3}.
Since $\W^\alpha_2 \hookrightarrow \Hor^\alpha_2,$ by the above density, we get a bounded mapping $\W^\alpha_2 \to B(X)$ extending the $\HI$ calculus.

\begin{defi}
\label{defi-dyadic-partition}
Let $(\phi_k)_{k \in \Z}$ be a sequence of functions in $C^\infty_c(0,\infty)$ with the properties that $\supp \phi_k \subset [2^{k-1},2^{k+1}]$, $\phi_k(t) = \phi_0(2^{-k}t)$ and $\sum_{k \in \Z} \phi_k(t) = 1$ for all $t > 0.$
Then $(\phi_k)_{k \in \Z}$ is called a dyadic partition of unity.
\end{defi}

Let $(\phi_k)_{k \in \Z}$ be a dyadic partition of unity.
For $f \in \Hor^\alpha_2,$ we have that $\phi_k f \in \W^\alpha_2,$ hence $(\phi_k f)(A)$ is well-defined.
Then it can be shown that for any $x \in X,$ $\sum_{k= -n}^n (\phi_k f)(A) x$ converges as $n \to \infty$ and that it is independent of the choice of $(\phi_k)_{k \in \Z}.$
This defines the operator $f(A),$ which in turn yields a bounded Banach algebra homomorphism $\Hor^\alpha_2 \to B(X),\: f \mapsto f(A).$
This is the H\"ormander functional calculus.
For details of this procedure, we refer to \cite[Section 4]{KrW3}, \cite[Sections 4.2.3 - 4.2.6]{KrPhD}.
%Define H\"ormander calculus for $A$ either injective or on reflexive space with possible kernel. See Christoph's handwritten notes p.155.
The H\"ormander functional calculus is the central object in this paper.
We shall access it by the following Theorem from \cite[Theorem 7.1]{KrW3} or \cite{KrPhD}.
Here and in what follows we put $\C_+ = \Sigma_{\frac{\pi}{2}} = \{ z \in \C :\: \Re (z) > 0 \}$.
For the notion of Pisier's property $(\alpha)$ we refer e.g. to \cite[4.9]{KW04}.

\begin{thm}
\label{thm-KrW3}
Let $A$ be a generator of an analytic semigroup on some reflexive Banach space $X$ with property $(\alpha)$.
Assume that $A$ has an $\HI(\Sigma_\sigma)$ calculus to some angle $\sigma \in (0,\pi)$ and that
\begin{equation}
\label{equ-R_T}
\bigl\{ \exp(-te^{i\theta}A):\: t > 0 \bigr\}
\end{equation}
is $R$-bounded in $B(X)$ with $R$-bound $\leq C (\cos(\theta))^{-\alpha}$.
This is clearly the case if
\begin{equation}
\label{equ-R_T-variant}
\bigl\{ \bigl(\cos(\arg z)\bigr)^\alpha \exp(-zA) :\: z \in \C_+ \bigr\}
\end{equation}
is $R$-bounded in $B(X)$.
Then $A$ has a H\"ormander functional calculus $\Hor^\beta_2$ on $X$ with $\beta > \alpha + \frac12$.
\end{thm}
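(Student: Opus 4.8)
The plan is to establish the norm bound $\|f(A)\|_{B(X)}\lesssim\|f\|_{\Hor^\beta_2}$ for $f$ in the dense subclass $\W^\beta_2\cap\HI(\Sigma_\sigma)$ and then conclude by the extension-by-density procedure described after Definition~\ref{defi-Hormander-calculus}; as usual one first reduces to $A$ injective with dense range by passing to its injective part. Fix a dyadic partition of unity $(\phi_k)_{k\in\Z}$ and dilates $(\eta_k)_{k\in\Z}$ of a single $\eta_0\in C^\infty_c(0,\infty)$ with $\eta_k\equiv 1$ on $\supp\phi_k$ and $\supp\eta_k\subset[2^{k-2},2^{k+2}]$, so that $(\phi_kf)(A)=(\phi_kf)(A)\eta_k(A)$ and every vector $w_k:=(\phi_kf)(A)\eta_k(A)x$ is localised at scale $k$, i.e.\ $w_k=\eta_k(A)w_k$. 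The first step is a Littlewood--Paley reduction: since $X$ is reflexive with property $(\alpha)$ (hence of finite cotype) and $A$ has a bounded $\HI(\Sigma_\sigma)$ calculus, the usual square function machinery for the $\HI$ calculus is available --- the two-sided estimate $\|x\|_X\approx\E\|\sum_k\epsilon_k\phi_k(A)x\|_X$, the one-sided bound $\E\|\sum_k\epsilon_k\eta_k(A)x\|_X\lesssim\|x\|_X$, and $R$-boundedness of each family $\{\psi(2^{-k}A):k\in\Z\}$ with $\psi\in\HI_0$. Expanding $f(A)x=\sum_kw_k$ against a fattened partition and using these facts (only boundedly many scales interact, so the double sum collapses), I would reduce the claim to the single $R$-bound $R(\{(\phi_kf)(A):k\in\Z\})\lesssim\|f\|_{\Hor^\beta_2}$. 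This is the point where property $(\alpha)$ and the a priori $\HI$ calculus are consumed; the rest of the argument needs only the hypothesis on the analytic semigroup.

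Next, by dilation invariance of that hypothesis --- replacing $A$ by $2^{-k}A$ merely rescales the time parameter in $\exp(-te^{i\theta}A)$ and leaves the angle $\theta$ and the $R$-bound $C(\cos\theta)^{-\alpha}$ untouched --- and by the scaling property $\|\phi_0\,f(2^k\cdot)\|_{W^\beta_2(\R)}\lesssim\|f\|_{\Hor^\beta_2}$ of the H\"ormander norm, it suffices to $R$-bound the set $\{m(2^{-k}A):k\in\Z,\ m\in\M\}$, where $\M$ is a fixed multiple of the unit ball of $\{m\in W^\beta_2(\R):\supp m\subset[\tfrac12,2]\}$. For $m\in\M$ I set $g(\lambda)=e^\lambda m(\lambda)$, so $\supp g\subset[\tfrac12,2]$, $\|g\|_{W^\beta_2(\R)}\lesssim1$, and $(1+|s|)^\beta\hat g\in L^2(\R)$, hence $\hat g\in L^1(\R)$ since $2\beta>1$. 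Fourier inversion gives $m(\lambda)=\frac1{2\pi}\int_\R\hat g(s)\,e^{-(1-is)\lambda}\,ds$, and therefore
\[ m(2^{-k}A)=\frac1{2\pi}\int_\R\hat g(s)\,\exp\!\bigl(-(1-is)2^{-k}A\bigr)\,ds, \]
the integral converging absolutely precisely because $\|\exp(-(1-is)2^{-k}A)\|\lesssim(1+s^2)^{\alpha/2}$ and $\beta>\alpha+\tfrac12$. Writing $(1-is)2^{-k}=2^{-k}\sqrt{1+s^2}\,e^{i\theta_s}$ with $\cos\theta_s=(1+s^2)^{-1/2}$, the integrand at $s$ lies, uniformly in $k$, in the family $\{\exp(-te^{i\theta_s}A):t>0\}$, whose $R$-bound is $\leq C(1+s^2)^{\alpha/2}$.

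The third step is a dyadic decomposition in $s$. I split $\R=\bigcup_{l\in\Z}\{|s|\sim2^l\}$, writing $m(2^{-k}A)=\sum_lT_l(m,k)$ accordingly. Each $T_l(m,k)$ is an average of the operators $\exp(-(1-is)2^{-k}A)$ with $|s|\sim2^l$ and $k\in\Z$ against a measure of total mass $\lesssim\|(1+|\cdot|)^\beta\hat g\|_{L^2(|s|\sim2^l)}\,\|(1+|\cdot|)^{-\beta}\|_{L^2(|s|\sim2^l)}\lesssim2^{l(\frac12-\beta)}$ by Cauchy--Schwarz (for $l\le0$ one uses $|\{|s|\sim2^l\}|^{1/2}\|\hat g\|_{L^2(\R)}\lesssim2^{l/2}$ instead). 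Since the closed absolutely convex hull of an $R$-bounded family is $R$-bounded with comparable constant, and since \emph{$\{\exp(-(1-is)2^{-k}A):|s|\sim2^l,\,k\in\Z\}$ should be $R$-bounded with bound $\lesssim2^{l\alpha}=\sup_{|s|\sim2^l}(\cos\theta_s)^{-\alpha}$}, I obtain that $\{T_l(m,k):m\in\M,\,k\in\Z\}$ is $R$-bounded with bound $\lesssim2^{l\alpha}\cdot2^{l(\frac12-\beta)}=2^{l(\alpha+\frac12-\beta)}$. Using subadditivity of the $R$-bound under countable unions and summing over $l\in\Z$ gives $R(\{m(2^{-k}A):\ldots\})\lesssim\sum_{l\in\Z}2^{l(\alpha+\frac12-\beta)}<\infty$, the geometric series converging exactly because $\alpha+\tfrac12-\beta<0$; this is the source of the restriction $\beta>\alpha+\tfrac12$. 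Combined with the first two steps this yields $\|f(A)\|\lesssim\|f\|_{\Hor^\beta_2}$.

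The hard part is the italicised assertion in the last step: the operators $\exp(-(1-is)2^{-k}A)$ with $|s|\sim2^l$ sit on the rays $\arg z=\theta_s$, whose angles $\theta_s$ fill a genuine interval, whereas hypothesis \eqref{equ-R_T} only controls $R$-bounds along \emph{each single ray}, and $R$-boundedness of a union over a continuum of families is not automatic. This is exactly why the theorem also records the ``clearly sufficient'' hypothesis \eqref{equ-R_T-variant}: if $\{(\cos\arg z)^\alpha\exp(-zA):z\in\C_+\}$ is $R$-bounded then on $\{z\in\C_+:\cos\arg z\gtrsim2^{-l}\}$ one writes $\exp(-zA)=(\cos\arg z)^{-\alpha}\cdot[(\cos\arg z)^\alpha\exp(-zA)]$ with $(\cos\arg z)^{-\alpha}\lesssim2^{l\alpha}$ and the contraction principle finishes the point at once. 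Under \eqref{equ-R_T} alone one must instead deduce the interval-of-angles bound from the single-ray bounds by a maximum-principle-type argument for $R$-boundedness of the analytic family $z\mapsto\exp(-zA)$; once this is in place, the remaining work is just the bookkeeping above.
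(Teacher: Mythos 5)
The paper does not reprove this theorem: its ``proof'' consists of invoking \cite[Theorem 7.1]{KrW3} and observing that (i) an inspection of that proof shows the single-ray hypothesis \eqref{equ-R_T} suffices, and (ii) the passage from the injective part $A_0$ to $A$, together with $|f(0)|\leq\|f\|_{\Hor^\beta_2}$, removes the dense-range assumption. Your proposal is a from-scratch reconstruction of the cited argument, and its architecture --- Paley--Littlewood reduction via the $\HI$ calculus and property $(\alpha)$ to the $R$-bound of $\{(\phi_kf)(A)\}_k$, Fourier inversion of $g(\lambda)=e^\lambda m(\lambda)$ representing $m(2^{-k}A)$ as an absolutely convergent average of $\exp(-(1-is)2^{-k}A)$, dyadic decomposition in $s$ with mass $\lesssim 2^{l(\frac12-\beta)}$ by Cauchy--Schwarz, the convex-hull theorem for $R$-bounds, and summation of $\sum_l 2^{l(\alpha+\frac12-\beta)}$ --- is indeed the strategy of \cite{KrW3}. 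These steps are sound, and the source of the condition $\beta>\alpha+\frac12$ is correctly located.

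The one genuine gap is the one you italicise and then leave open: hypothesis \eqref{equ-R_T} controls the $R$-bound of $\{\exp(-te^{i\theta}A):t>0\}$ for each \emph{fixed} $\theta$, whereas your dyadic block requires the $R$-bound of $\{\exp(-zA):\ |\arg z|\leq\theta_{2^{l+1}},\ z\in\C_+\}$ over a whole subsector, and $R$-boundedness is not stable under uncountable unions. As written, your argument therefore only proves the theorem under the stronger hypothesis \eqref{equ-R_T-variant}. The missing lemma is standard and should be stated and used explicitly: if $F$ is a bounded analytic $B(X)$-valued function on a sector slightly larger than $\Sigma_{\theta_0}$ and $\{F(z):\arg z=\pm\theta_0\}$ is $R$-bounded, then $\{F(z):|\arg z|\leq\theta_0\}$ is $R$-bounded with comparable constant; indeed each interior value is a Poisson-type integral average (total mass $1$) of the boundary values, hence lies in the closed absolutely convex hull of an $R$-bounded set, and $R(\overline{\mathrm{absconv}}\,\tau)\leq 2R(\tau)$ (see \cite{KW04} for both facts). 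Applying this to $F(z)=\exp(-zA)$ on the sector of half-aperture $\theta_{2^{l+1}}=\arctan(2^{l+1})$, whose boundary rays have $R$-bound $\leq C(\cos\theta_{2^{l+1}})^{-\alpha}\lesssim 2^{l\alpha}$ by \eqref{equ-R_T} (and which also furnishes the uniform boundedness of $F$ needed for the Poisson representation, since $R$-bounds dominate operator norms), yields exactly the block estimate you need; the rest of your bookkeeping then closes the proof under \eqref{equ-R_T} alone. This is precisely the ``inspection'' the paper alludes to when passing from \eqref{equ-R_T-variant} to \eqref{equ-R_T}.
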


\begin{proof}
The Theorem is proved in \cite[Theorem 7.1]{KrW3} under condition \eqref{equ-R_T-variant} and for $A$ having dense range.
An inspection of the proof given there shows that \eqref{equ-R_T} is sufficient, and the above alluded passage from the injective part $A_0$ to $A$ together with $|f(0)| \leq \|f\|_{\Hor^\beta_2}$ allows to conclude for general $A$.
\end{proof}

\subsection{UMD spaces, Banach lattices, $p$-convexity and $q$-concavity}
\label{subsec-UMD}

In this article, UMD lattices, i.e. Banach lattices which enjoy the UMD property, play a prevalent role.
For a general treatment of Banach lattices and their geometric properties, we refer the reader to \cite[Chapter 1]{LTz}.
We recall now definitions and some useful properties.
A Banach space $Y$ is called UMD space if the Hilbert transform 
\[ H : L^p(\R) \to L^p(\R),\: Hf(x) = \lim_{\epsilon \to 0} \int_{|x-y| \geq \epsilon} \frac{1}{x-y} f(y) \,dy \] 
extends to a bounded operator on $L^p(\R;Y),$ for some (equivalently for all) $1 < p < \infty$ \cite[Theorem 5.1]{HvNVW}.
The importance of the UMD property in harmonic analysis was recognized for the first time by Burkholder \cite{Burk1981,Burk1983}, see also his survey \cite{Burk2001}.
He settled a geometric characterization via a convex functional \cite{Burk1981} and together with Bourgain \cite{Bourgain1983}, they showed that the UMD property can be expressed by boundedness of $Y$-valued martingale sequences.
A UMD space is super-reflexive \cite{Al79}, and hence (almost by definition) B-convex.

A K\"othe function space $Y$ is a Banach lattice consisting of equivalence classes of locally integrable functions on some $\sigma$-finite measure space $(\Omega',\mu')$ with the additional properties
\begin{enumerate}
\item If $f :\: \Omega' \to \C$ is measurable and $g \in Y$ is such that $|f(\omega')| \leq |g(\omega')|$ for almost every $\omega' \in \Omega'$, then $f \in Y$ and $\|f\|_Y \leq \|g\|_Y$.
\item The indicator function $1_A$ is in $Y$ whenever $\mu'(A) < \infty$.
\item Moreover, we will assume that $Y$ has the Fatou property:
If a sequence $(f_k)_k$ of non-negative functions in $Y$ satisfies $f_k(\omega') \nearrow f(\omega')$ for almost every $\omega' \in \Omega'$ and $\sup_k \|f_k\|_Y < \infty$, then $f \in Y$ and $\|f\|_Y = \lim_k \|f_k\|_Y$.
\end{enumerate}
Note that for example, any $L^p(\Omega')$ space with $1 \leq p \leq \infty$ is such a K\"othe function space.

\begin{ass}
In the rest of the paper, $Y = Y(\Omega')$ will always be a UMD space which is also a K\"othe function space, unless otherwise stated.
\end{ass}

By $B$-convexity, $Y$ is order continuous and therefore $Y$ and its dual $Y'$ can be represented on the same measure space $(\Omega',\mu'),$ and moreover the duality is given simply by
\[ \langle y , y' \rangle = \int_{\Omega'} y(\omega') y'(\omega') \,d\mu'(\omega'), \]
see \cite[1.a, 1.b]{LTz}.
It is not difficult to show that if $Y$ is UMD, then also its dual is UMD.
Hence the dual of a UMD lattice is again a UMD lattice.
$L^p(\Omega;Y)$ is reflexive for $(\Omega,\mu)$ a $\sigma$-finite measure space, $Y$ a UMD space and $1 < p <\infty$, since $Y$ is reflexive and thus has the Radon-Nikodym property.
As a survey for UMD lattices and their properties in connection with results in harmonic analysis, we refer the reader to \cite{RdF}.

Let $Y = Y(\Omega')$ be a B-convex Banach lattice and $(\epsilon_k)_k$ an i.i.d. Rademacher sequence.
Then we have the norm equivalence
\begin{equation}
\label{equ-Rademacher-square}
\E \biggl\|\sum_{ k = 1}^n \epsilon_k y_k \biggr\|_Y \cong \biggl\|\Bigl( \sum_{k = 1}^n |y_k|^2 \Bigr)^{\frac12}\biggr\|_Y
\end{equation}
uniformly in $n \in \N$ \cite{Ma74}.
In particular, this also applies to $L^p(\Omega;Y),\: 1 < p < \infty,$ since this will also be a B-convex Banach lattice.

Let $E$ be any Banach space.
We can consider the vector valued lattice $Y(E) = \{ F : \Omega' \to E :\: F \text{ is strongly measurable and }\omega' \mapsto \|F(\omega')\|_E \in Y\}$ with norm $\|F\|_{Y(E)} = \bigl\| \|F(\cdot)\|_E \bigr\|$.
If $Y$ and $E$ are Banach lattices that are K\"othe function spaces with a $\sigma$-order continuous norm (e.g. an $L^p$ space for $p < \infty$ \cite[p.~235]{Lind} or more generally, a UMD lattice), then $[Y(E)]' = Y'(E')$ \cite[p.~239, p.~237]{Lind}.
From \cite[Corollary p.~214]{RdF}, we know that if $Y$ is UMD and $E$ is UMD, then also $Y(E)$ is UMD. Recall now the definition of $p$-convexity and $q$-concavity. 

\begin{defi}
\label{defi-p-convex}
Let $Y$ be a Banach lattice and $1 \leq p,q \leq \infty$.
Then $Y$ is called $p$-convex if there exists a constant $C < \infty$ such that for any $x_1,\ldots,x_n \in Y$, we have
\[ \Biggl\| \biggl( \sum_{i = 1}^n |x_i|^p \biggr)^{\frac1p} \Biggr\|_Y \leq C \biggl( \sum_{i = 1}^n \|x_i\|_Y^p \biggr)^{\frac1p} . \]
Similarly, $Y$ is called $q$-concave if there exists a constant $C > 0$ such that for any $x_1,\ldots,x_n \in Y$, we have
\[ \Biggl\| \biggl( \sum_{i = 1}^n |x_i|^q \biggr)^{\frac1q} \Biggr\|_Y \geq C \biggl( \sum_{i = 1}^n \|x_i\|_Y^q \biggr)^{\frac1q} . \]
\end{defi}

We now state several lemmas which will be relevant for the sequel.

\begin{lemma}
\label{lem-p-convexification}
Let $Y = Y(\Omega')$ be a Banach lattice and $1 \leq p < \infty$.
Suppose that $Y$ is $p$-convex.
Then $Y^p = \{ z : \: \Omega' \to \C :\: z\text{ measurable and there exists some }y\in Y :\: |z| = y^p \}$ together with the order induced by $Y$ and the norm $\|z\|_{Y^p} = \bigl\|\:|z|^{\frac1p}\bigr\|_Y^p$ is a Banach lattice.
$Y^p$ is called the $p$-convexification of $Y$.
\end{lemma}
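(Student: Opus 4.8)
The plan is to check directly that $Y^p$, equipped with the stated order and the norm $\|z\|_{Y^p}=\||z|^{1/p}\|_Y^p$, satisfies the defining axioms of a K\"othe function space (hence of a Banach lattice), drawing the required completeness from the Fatou property; the only clause where the hypothesis really enters is the triangle inequality, which needs $p$-convexity with optimal constant. So first I would invoke \cite[1.d]{LTz} to replace $\|\cdot\|_Y$ by an equivalent lattice norm whose $p$-convexity constant equals $1$. This changes $\|\cdot\|_{Y^p}$ only up to equivalence, so there is no loss in assuming henceforth that the constant $C$ in Definition \ref{defi-p-convex} is $1$.

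Next I would verify that $Y^p$ is a linear space of (classes of) measurable functions on $\Omega'$. Stability under scalar multiplication is clear from $|\lambda z|^{1/p}=|\lambda|^{1/p}|z|^{1/p}$, which also gives homogeneity of $\|\cdot\|_{Y^p}$; and $\|z\|_{Y^p}=0$ forces $|z|^{1/p}=0$ in $Y$, i.e.\ $z=0$. If $z_1,z_2\in Y^p$, that is $|z_1|^{1/p},|z_2|^{1/p}\in Y$, then from $|z_1+z_2|\leq|z_1|+|z_2|$, the monotonicity of $t\mapsto t^{1/p}$ and its subadditivity $(a+b)^{1/p}\leq a^{1/p}+b^{1/p}$ on $[0,\infty)$ we obtain $|z_1+z_2|^{1/p}\leq|z_1|^{1/p}+|z_2|^{1/p}\in Y$, so $z_1+z_2\in Y^p$ by the ideal property of $Y$.

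The triangle inequality is the crux. Put $x_i=|z_i|^{1/p}\in Y$, so $|z_i|=x_i^p$. Applying the $p$-convexity inequality (with $n=2$ and constant $1$) to $x_1,x_2$ and raising to the power $p$ gives
\[ \bigl\|(x_1^p+x_2^p)^{1/p}\bigr\|_Y^p \leq \|x_1\|_Y^p+\|x_2\|_Y^p = \|z_1\|_{Y^p}+\|z_2\|_{Y^p}. \]
Since $|z_1+z_2|\leq|z_1|+|z_2|=x_1^p+x_2^p$ and $t\mapsto t^{1/p}$ is increasing, $|z_1+z_2|^{1/p}\leq(x_1^p+x_2^p)^{1/p}$, hence by monotonicity of $\|\cdot\|_Y$,
\[ \|z_1+z_2\|_{Y^p} = \bigl\||z_1+z_2|^{1/p}\bigr\|_Y^p \leq \bigl\|(x_1^p+x_2^p)^{1/p}\bigr\|_Y^p \leq \|z_1\|_{Y^p}+\|z_2\|_{Y^p}. \]

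Finally I would record the remaining K\"othe-space properties, all transported from $Y$ through $z\mapsto|z|^{1/p}$: if $|w|\leq|z|$ with $z\in Y^p$, then $|w|^{1/p}\leq|z|^{1/p}\in Y$, so $w\in Y^p$ and $\|w\|_{Y^p}\leq\|z\|_{Y^p}$ (ideal property and monotonicity of the lattice norm); $1_A\in Y^p$ with $\|1_A\|_{Y^p}=\|1_A\|_Y^p<\infty$ whenever $\mu'(A)<\infty$; and if $0\leq z_k\nearrow z$ a.e.\ with $\sup_k\|z_k\|_{Y^p}<\infty$, then $z_k^{1/p}\nearrow z^{1/p}$ a.e.\ with $\sup_k\|z_k^{1/p}\|_Y<\infty$, so the Fatou property of $Y$ yields $z\in Y^p$ and $\|z\|_{Y^p}=\lim_k\|z_k\|_{Y^p}$. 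Since a normed K\"othe function space with the Fatou property is norm complete (a classical fact, see \cite[1.b]{LTz}), $Y^p$ is a Banach lattice. The main obstacle is thus localised entirely in the triangle inequality together with the need for the sharp $p$-convexity constant; the normalisation step disposes of it, and every other clause is routine verification inherited from $Y$.
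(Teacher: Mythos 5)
Your proof is correct. The paper itself disposes of this lemma by citing \cite[(c.2)]{RdF}, so there is no argument in the text to compare against; your direct verification is exactly the standard one behind that reference, and you correctly isolate the only non-routine point, namely that the triangle inequality for $\|\cdot\|_{Y^p}$ requires the $p$-convexity constant to be $1$, which you handle by the usual renorming (the same device the paper invokes elsewhere, e.g.\ in the proof of Lemma \ref{lem-Tomczak}). The remaining steps (ideal property, Fatou property transported through $z\mapsto|z|^{1/p}$, and completeness from Fatou) are all sound.
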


\begin{proof}
See \cite[(c.2)]{RdF}.
\end{proof}

Note that we clearly have for $1\leq p \leq q < \infty$ that $Y^q = (Y^p)^{q/p}$.

\begin{lemma}
\label{lem-Tomczak}
Let $Y(\Omega') = [Y_0(\Omega'),L^2(\Omega')]_\theta$ be the complex interpolation space between a Banach lattice over $\Omega'$ and a Hilbert space, and $\theta \in (0,1)$.
Then $Y(\Omega')$ is $p$-convex and $q$-concave for $\frac1p = (1-\theta) + \frac{\theta}{2}$ and $\frac1q = \frac{\theta}{2}$.
Conversely, any $p$-convex and $q$-concave Banach lattice $Y(\Omega')$ with values of $p$ and $q$ as above for some $\theta \in (0,1)$ is, after renorming, of the form above.
\end{lemma}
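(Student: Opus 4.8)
\emph{Direct implication.} This is a short computation once the complex interpolation space is identified with a Calder\'on--Lozanovskii product. By Calder\'on's interpolation theorem for Banach lattices (applicable since $L^2(\Omega')$ has order continuous norm), $[Y_0,L^2]_\theta$ coincides, up to equivalent norm, with the Calder\'on--Lozanovskii product
\[ Y_0^{1-\theta}(L^2)^\theta = \bigl\{ f : \Omega' \to \C : |f| \leq |g|^{1-\theta}|h|^\theta \text{ for some } g \in Y_0,\ h \in L^2(\Omega') \bigr\}, \]
normed by the infimum of $\|g\|_{Y_0}^{1-\theta}\|h\|_{L^2}^\theta$ over all such factorisations. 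Since every Banach lattice is $1$-convex with constant $1$ and $L^2(\Omega')$ is $2$-convex with constant $1$, one proves $p$-convexity of this product by a pointwise H\"older argument: given $x_1,\ldots,x_n$ in the product, pick factorisations $|x_i| = |g_i|^{1-\theta}|h_i|^\theta$ and rescale them (using the freedom $g_i \mapsto \mu_i g_i$, $h_i \mapsto \mu_i^{-(1-\theta)/\theta}h_i$, which preserves the product) so that $\|g_i\|_{Y_0} = \|x_i\|^p$ and $\|h_i\|_{L^2} = \|x_i\|^{p/2}$; applying H\"older pointwise with the exponents $\frac1a = (1-\theta)p$, $\frac1{a'} = \frac{\theta p}{2}$, which are conjugate precisely because $\frac1p = (1-\theta)+\frac\theta2$, one obtains $\bigl(\sum_i|x_i|^p\bigr)^{1/p} \leq \bigl(\sum_i|g_i|\bigr)^{1-\theta}\bigl(\sum_i|h_i|^2\bigr)^{\theta/2}$ pointwise, and hence
\[ \Bigl\|\bigl(\textstyle\sum_i|x_i|^p\bigr)^{1/p}\Bigr\| \leq \bigl(\textstyle\sum_i\|g_i\|_{Y_0}\bigr)^{1-\theta}\bigl(\textstyle\sum_i\|h_i\|_{L^2}^2\bigr)^{\theta/2} = \bigl(\textstyle\sum_i\|x_i\|^p\bigr)^{1/p}. \]
The $q$-concavity follows dually, or from the analogous product theorem for concavity exponents \cite[Section~1.d]{LTz}, using that $L^2(\Omega')$ is $2$-concave and every Banach lattice is $\infty$-concave, both with constant $1$. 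Since $\frac1p = (1-\theta)\cdot 1 + \theta\cdot\frac12$ and $\frac1q = (1-\theta)\cdot 0 + \theta\cdot\frac12$, these are the asserted exponents.

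\emph{Converse implication.} Note first that $\frac1p = (1-\theta)+\frac\theta2$ and $\frac1q = \frac\theta2$ force $\frac1p + \frac1q = 1$, i.e.\ $q = p'$, $\theta = \frac2q \in (0,1)$ and $1-\theta = \frac2p - 1 > 0$ (in particular $p \in (1,2)$, $q \in (2,\infty)$). The plan is: (i) renorm $Y$, via the standard renorming results for Banach lattices \cite[Section~1.d]{LTz}, so that both its $p$-convexity and its $q$-concavity constants equal $1$; (ii) construct the candidate $Y_0$ as the Calder\'on--Lozanovskii quotient of $Y$ by the $L^2$-factor, i.e.\ for $f \geq 0$
\[ \|f\|_{Y_0} = \inf\bigl\{ \lambda > 0 : f = \lambda\, g^{1/(1-\theta)}h^{-\theta/(1-\theta)},\ 0 \leq g \in Y,\ 0 < h \in L^2(\Omega'),\ \|g\|_Y \leq 1,\ \|h\|_{L^2} \leq 1 \bigr\} \]
(and $\|f\|_{Y_0} := \bigl\||f|\bigr\|_{Y_0}$ for complex $f$), possibly after absorbing a positive weight into the $L^2$-factor so that the latter is literally $L^2(\Omega')$; (iii) check that $(Y_0,\|\cdot\|_{Y_0})$ is a Banach lattice; (iv) verify $Y = Y_0^{1-\theta}(L^2)^\theta$ with equivalent norms, which by the Calder\'on identification of the direct part then yields $Y = [Y_0,L^2]_\theta$. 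In (iv), the inclusion $Y_0^{1-\theta}(L^2)^\theta \subseteq Y$ is immediate from the definition of $\|\cdot\|_{Y_0}$, while the reverse inclusion follows from Lozanovskii's factorisation theorem together with the $p$-convexity of $Y$, which is what allows one to factor an arbitrary $f \in Y$ through $Y_0$ and $L^2$.

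\emph{Expected main obstacle.} The crux is step (iii): showing that the quotient functional $\|\cdot\|_{Y_0}$ is subadditive and that $(Y_0,\|\cdot\|_{Y_0})$ is complete. This is exactly where the $q$-concavity of $Y$, and the precise value $q = p'$ (equivalently the exact $\theta$), are indispensable: the negative exponent $-\theta/(1-\theta)$ on the $L^2$-factor is compensated only by a lower $p'$-estimate for $Y$. I would handle this by invoking the Calder\'on--Lozanovskii calculus of products and quotients of Banach lattices (Calder\'on's interpolation paper, Lozanovskii's factorisation theorem, and \cite[Section~1.d]{LTz}) rather than by reproving it from scratch. By contrast, the direct implication is essentially a one-line H\"older estimate once Calder\'on's product description is available.
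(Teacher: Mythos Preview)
Your approach is essentially the same as the paper's, though you unpack considerably more detail. The paper's proof is three lines: it invokes Calder\'on's identity $[Y_0,L^2]_\theta = Y_0^{1-\theta}(L^2)^\theta$ (as you do), then cites \cite[Theorem~28.1]{TJ} for the characterisation of $p$-convex and $q$-concave lattices as such products, together with the renorming result \cite[Proposition~3.3.6]{Lor} to get convexity and concavity constants equal to $1$, and the $2$-convexity/$2$-concavity of Hilbert space. Your direct implication is a correct and explicit version of what that citation encapsulates; your converse sketches a Calder\'on--Lozanovskii quotient construction of $Y_0$ and honestly flags the subadditivity of the quotient norm as the crux, ultimately deferring to the same Calder\'on--Lozanovskii/\cite{LTz} machinery. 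So the logical content is the same; the paper simply names the single reference (Tomczak-Jaegermann) that packages both directions, whereas you partially reopen that package. If you want to keep your more hands-on presentation, it would be cleaner to replace the quotient construction in the converse by a direct appeal to \cite[Theorem~28.1]{TJ}, since the verification you defer in step~(iii) is precisely what that theorem provides.
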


\begin{proof}
Note Calderon's complex interpolation identity $[Y_0,L^2]_\theta = Y_0^{1-\theta} (L^2)^\theta = \{ y : \Omega' \to \C : \: y \text{ measurable and }|y| = |y_0|^{1-\theta} |y_1|^\theta \text{ for some }y_0 \in Y_0,\: y_1 \in L^2\}$ \cite[(c.3)]{RdF}.
Then the Lemma follows from the description of $p$-convex and $q$-concave lattices \cite[p.~218-219, Theorem 28.1]{TJ} after a possible renorming to have convexity and concavity constants equal to $1$ (see e.g. \cite[Proposition 3.3.6]{Lor}), together with the fact that a Hilbert space is both $2$-convex and $2$-concave.
\end{proof}

\begin{lemma}
\label{lem-Ls-convexity}
Let $1 \leq p \leq s \leq \infty$ with $p < \infty$.
Then $L^s(\Omega')$ is $s$-convex and $s$-concave.
Its $p$-convexification identifies to $L^s(\Omega')^p = L^{\frac{s}{p}}(\Omega')$.
\end{lemma}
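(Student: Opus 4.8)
The plan is to reduce the whole statement to Tonelli's theorem and Minkowski's inequality in $L^r$ spaces; no substantial input is needed beyond these, so the proof will be short and computational.

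\textbf{Step 1: $s$-convexity and $s$-concavity.} First I would treat the case $s < \infty$. For $x_1,\ldots,x_n \in L^s(\Omega')$, Tonelli's theorem gives
\[ \Bgnorm{ \Bigl( \sum_{i=1}^n |x_i|^s \Bigr)^{1/s} }_{L^s}^s = \int_{\Omega'} \sum_{i=1}^n |x_i(\omega')|^s \, d\mu'(\omega') = \sum_{i=1}^n \|x_i\|_{L^s}^s , \]
so the two inequalities in Definition \ref{defi-p-convex} hold simultaneously with constant $1$, i.e. $L^s(\Omega')$ is both $s$-convex and $s$-concave. For $s = \infty$ the identity $\bnorm{\sup_i |x_i|}_\infty = \sup_i \|x_i\|_\infty$ (together with the triviality that every Banach lattice is $\infty$-concave with constant $1$) gives the same conclusion.

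\textbf{Step 2: identification of the $p$-convexification.} Since $p \leq s$, hence $s/p \geq 1$, Minkowski's inequality in $L^{s/p}(\Omega')$ yields
\[ \Bgnorm{ \Bigl( \sum_i |x_i|^p \Bigr)^{1/p} }_{L^s}^p = \Bgnorm{ \sum_i |x_i|^p }_{L^{s/p}} \leq \sum_i \bnorm{ |x_i|^p }_{L^{s/p}} = \sum_i \|x_i\|_{L^s}^p , \]
so $L^s(\Omega')$ is $p$-convex with constant $1$ and Lemma \ref{lem-p-convexification} applies, producing the Banach lattice $L^s(\Omega')^p$. It then suffices to observe that for a measurable $z$ one has $|z|^{1/p} \in L^s(\Omega')$ precisely when $z \in L^{s/p}(\Omega')$, and that in that case
\[ \|z\|_{L^s(\Omega')^p} = \bnorm{ |z|^{1/p} }_{L^s}^p = \Bigl( \int_{\Omega'} |z|^{s/p} \, d\mu' \Bigr)^{p/s} = \|z\|_{L^{s/p}} \]
(with the evident reading for $s = \infty$). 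Consequently $L^s(\Omega')^p$ and $L^{s/p}(\Omega')$ coincide as sets of equivalence classes of functions, carry the same order inherited from $L^s(\Omega')$, and have identical norms, which is exactly the asserted identification.

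\textbf{Main obstacle.} There is essentially none; the only points deserving a line of care are the degenerate case $s = \infty$, where essential-supremum manipulations replace the integral identities, and the possibility $\mu'(\Omega') = \infty$, which is why Tonelli rather than Fubini is invoked. It is also worth recording that all convexity and concavity constants come out equal to $1$, so that the identification $L^s(\Omega')^p = L^{s/p}(\Omega')$ is in fact isometric.
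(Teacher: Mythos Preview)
Your proof is correct and follows essentially the same approach as the paper's: both verify the equality $\bigl\|(\sum_i |x_i|^s)^{1/s}\bigr\|_{L^s} = (\sum_i \|x_i\|_{L^s}^s)^{1/s}$ by Tonelli to get $s$-convexity and $s$-concavity simultaneously, and both identify the $p$-convexification by the direct norm computation $\|z\|_{L^s(\Omega')^p} = \bigl\||z|^{1/p}\bigr\|_{L^s}^p = \|z\|_{L^{s/p}}$. Your version is simply a bit more explicit, spelling out the case $s=\infty$ and verifying $p$-convexity via Minkowski before invoking Lemma~\ref{lem-p-convexification}.
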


\begin{proof}
We clearly have $\begin{displaystyle}\Bigl\| \Bigl( \sum_i |x_i|^s \Bigr)^{\frac1s} \Bigr\|_{L^s(\Omega')} = \Bigl( \sum_i \|x_i\|_{L^s(\Omega')}^s \Bigr)^{\frac1s}\end{displaystyle}$, which immediately gives $s$-convexity and $s$-concavity.
Moreover, $\begin{displaystyle}\|x\|_{L^s(\Omega')^p} = \bigl\|\:|x|^{\frac1p}\bigr\|_{L^s(\Omega')}^p = \|x\|_{L^{\frac{s}{p}}(\Omega')}\end{displaystyle}$, which shows the second statement.
\end{proof}

\begin{lemma}
\label{lem-UMD-lattice-convexity}
Let $p \in [1,2)$.
\begin{enumerate}
\item If $Y$ is a $p$-convex UMD lattice, then $Y(\ell^2)$ is also $p$-convex.
\item If $Y$ is a $p$-convex UMD lattice and $Y^p$ is also UMD, then $Y(\ell^2)^p$ is UMD.
\item $Y$ is a $p$-convex Banach lattice if and only if $Y'$ is a $p'$-concave Banach lattice.
\item If $Y$ is a $p$-convex Banach lattice, then it is also a $q$-convex Banach lattice for any $q \in (0,p)$.
\item If $Y$ is a UMD lattice, then there exists some $\epsilon > 0$ such that for any $0 < q < 1 + \epsilon$, $Y^q$ is a UMD lattice.
\end{enumerate}
\end{lemma}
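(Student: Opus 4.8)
The plan is to treat the five assertions in order $(1)$--$(5)$, basing $(1)$ and $(2)$ on the isometric identity
\[
[Y(\ell^2)]^p = Y^p(\ell^{2/p}) ,
\]
which is meaningful precisely because $p \in [1,2)$ forces $2/p \in (1,2]$, so that $\ell^{2/p}$ is an honest (UMD) Banach lattice. The identity is a pointwise computation: for $F = (F_j)_j \in Y(\ell^2)$ the element $|F|^p$ of $[Y(\ell^2)]^p$ corresponds, under the obvious identification, to $(F_j^p)_j \in Y^p(\ell^{2/p})$, and in either space its norm equals $\bigl\| (\sum_j |F_j|^2)^{1/2} \bigr\|_Y^p$.

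For $(1)$: by Lemma \ref{lem-p-convexification}, applied after renorming $Y$ so that its $p$-convexity constant equals $1$, the space $Y^p$ is a Banach lattice, and a K\"othe space with values in a Banach lattice --- here $Y^p(\ell^{2/p})$ --- is again a Banach lattice; by the displayed identity, so is $[Y(\ell^2)]^p$. The easy converse of Lemma \ref{lem-p-convexification} --- if $Z^p$ is a Banach lattice then $Z$ is $p$-convex with constant $1$, which one reads off by applying the triangle inequality in $Z^p$ to $z_i = |x_i|^p$ --- then shows that $Y(\ell^2)$ is $p$-convex. (The UMD hypothesis is not actually used for $(1)$.) For $(2)$: $Y^p$ is UMD by hypothesis and $\ell^{2/p}$ is UMD since $1 < 2/p < \infty$, so the vector-valued UMD theorem recalled above (\cite{RdF}) gives that $Y^p(\ell^{2/p}) = [Y(\ell^2)]^p$ is UMD.

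$(3)$ is the classical duality between convexity and concavity: $Y$ $p$-convex $\Rightarrow Y'$ $p'$-concave, and $Y'$ $p'$-concave $\Rightarrow Y''$ $p$-convex (see \cite[1.d]{LTz}), after which $p$-convexity descends to the closed sublattice $Y \hookrightarrow Y''$ (for our $Y$ one in fact has $Y'' = Y$). $(4)$ has a short direct proof: given $0 < q < p$ and nonzero $x_i \in Y$ with $b_i := \|x_i\|_Y$, put $w_i := b_i^{\,p-q}$ and apply H\"older pointwise, with exponents $p/q$ and $(p/q)'$, to $\sum_i |x_i|^q = \sum_i (|x_i|^q w_i^{-q/p}) \, w_i^{q/p}$; applying $p$-convexity to $(\sum_i |w_i^{-1/p} x_i|^p)^{1/p}$ and using $\sum_i w_i^{q/(p-q)} = \sum_i w_i^{-1} b_i^p = \sum_i b_i^q$ together with the exponent identity $\frac1p + \frac{p-q}{pq} = \frac1q$ yields $q$-convexity with the same constant. (Alternatively $(4)$ follows from $(3)$ and the analogous monotonicity of concavity.)

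The substance is in $(5)$. Since $Y$ is UMD, so is $Y' = Y^*$, and both are $B$-convex and hence of finite cotype; as finite cotype for a Banach lattice is equivalent to $q$-concavity for some $q < \infty$ (\cite{LTz}; Maurey), applying this to $Y$ and to $Y'$ and using $(3)$ shows that $Y$ is $q_0$-concave for some $q_0 < \infty$ and $p_0$-convex for some $p_0 > 1$; set $\epsilon := p_0 - 1 > 0$. For $0 < q < 1 + \epsilon = p_0$, part $(4)$ and Lemma \ref{lem-p-convexification} (after renorming) make $Y^q$ a Banach lattice, and the convexification computations behind $(1)$--$(4)$ show it is $(p_0/q)$-convex and $(q_0/q)$-concave; since $q < p_0$ its convexity exponent exceeds $1$, so one can choose $r \in (1,2)$ with $r \le p_0/q$ and $r' \ge q_0/q$, whence $Y^q$ is $r$-convex and $r'$-concave and Lemma \ref{lem-Tomczak} yields $Y^q \cong [Z, L^2]_\theta$ for a Banach lattice $Z$ and some $\theta \in (0,1)$. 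The decisive step --- and, I expect, the genuine obstacle --- is to deduce from such a representation that $Y^q$ is UMD: this is the structural fact that a Banach lattice realised by complex interpolation with a Hilbert space is UMD, and locating/quoting it correctly (in the circle of ideas of \cite{RdF}) is the crux. Should one wish to bypass that input, one may instead extrapolate the boundedness of the Hilbert transform on $L^2(\R; Y^q)$ from the case $q = 1$, using the room afforded by $p_0 > 1$ and the lattice Hardy--Littlewood maximal theorem (Theorem \ref{thm-MHL-intro}); but the cleanest presentation routes through the interpolation characterisation of UMD lattices, and that last step is where the difficulty lies.
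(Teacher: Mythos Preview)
Your arguments for (1)--(4) are correct. For (1) and (2) you route through the identity $[Y(\ell^2)]^p = Y^p(\ell^{2/p})$; the paper instead proves (1) directly via the pointwise contractive embedding $\ell^p(\ell^2)\hookrightarrow \ell^2(\ell^p)$ followed by $p$-convexity of $Y$, and proves (2) by essentially the same identity you use. For (3) and (4) the paper merely cites \cite[Proposition 1.d.4]{LTz} and \cite[Remark 3.4.14 (1)]{Lin}; your H\"older argument for (4) is a clean self-contained substitute.

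For (5) there is a genuine gap. Your main line reduces the problem to the assertion that any Banach lattice of the form $[Z,L^2]_\theta$ is UMD, but this is \emph{not} a known fact: complex interpolation with a Hilbert space produces a uniformly convex (hence superreflexive) lattice, and whether every superreflexive Banach lattice is UMD is a long-standing open problem. Operator interpolation gives $\|H\|_{L^2(\R;[Z,L^2]_\theta)} \le \|H\|_{L^2(\R;Z)}^{1-\theta}$, which is useless when $Z$ is not itself UMD, and nothing in the Tomczak representation of Lemma~\ref{lem-Tomczak} forces the endpoint lattice $Z$ to be UMD. So this route does not close, and you correctly sense as much.

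The extrapolation alternative you mention at the end is the right idea, and is exactly what \cite[Theorem~4]{RdF} proves and what the paper cites without reproducing: one uses the $p_0$-convexity of $Y$ together with Rubio de Francia's weighted extrapolation (adapted to lattice-valued functions) to push boundedness of the Hilbert transform from $L^2(\R;Y)$ to $L^2(\R;Y^q)$ for $q$ in a neighbourhood of $1$. That argument does not pass through any interpolation representation; if you want a self-contained proof of (5), this is the mechanism to write out.
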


\begin{proof}
1. Note that since $p \leq 2$, we have $\ell^p(\ell^2) \hookrightarrow \ell^2(\ell^p)$ (contractively).
This implies that if $y_k^i \in Y$, then pointwise 
\[ \Biggl\{ \sum_i \biggl( \sum_k |y_k^i(\omega')|^p \biggr)^{\frac{2}{p}} \Biggr\}^{\frac12} \leq \Biggl\{ \sum_k \biggl( \sum_i |y_k^i(\omega')|^2 \biggr)^{\frac{p}{2}} \Biggr\}^{\frac{1}{p}} . \]
Since $Y$ is a lattice, this implies
\begin{align*}
\Biggl\| \Biggl\{ \sum_i \biggl( \sum_k |y_k^i|^p \biggr)^{\frac{2}{p}} \Biggr\}^{\frac12} \Biggr\|_Y & \leq \Biggl\| \Biggl\{ \sum_k \biggl( \sum_i |y_k^i|^2 \biggr)^{\frac{p}{2}} \Biggr\}^{\frac{1}{p}} \Biggr\|_Y \\
& \leq \Biggl\{ \sum_k \Biggl\|  \biggl( \sum_i |y_k^i|^2 \biggr)^{\frac12} \Biggr\|_Y^p \Biggr\}^{\frac1p},
\end{align*}
where we have used that $Y$ is $p$-convex in the last step.

2. Let $H$ be the Hilbert transform on $L^2(\R)$.
Let further $\sum_k f_k \otimes (z_i^k)_i \in L^2(\R) \otimes Y(\ell^2)^{p}$.
We calculate
\begin{align*}
\Biggl\| \Bigl(H \otimes \Id_{Y(\ell^2)^p} \Bigr)\biggl(\sum_k f_k \otimes (z_i^k)_i \biggr) \Biggr\|_{L^2(\R;Y(\ell^2)^p)}^2 & = \int_{\R} \Biggl\| \sum_k Hf_k(t) (z_i^k)_i \Biggr\|_{Y(\ell^2)^p}^2 dt \\
& = \int_{\R} \Biggl\| \biggl( \sum_i \Bigl| \sum_k Hf_k(t) z_i^k\Bigr|^{\frac{2}{p}} \biggr)^{\frac12} \Biggr\|_{Y}^{2p} dt \\
& = \int_{\R} \Biggl\| \biggl( \sum_i \Bigl| \sum_k Hf_k(t) z_i^k\Bigr|^{\frac{2}{p}} \biggr)^{\frac{p}{2}} \Biggr\|_{Y^p}^p dt.
\end{align*}
The latter is a norm in $L^2(\R;Y^p(\ell^{\frac{2}{p}}))$.
Since $Y^p$ is a UMD space and $\ell^{\frac{2}{p}}$ also is a UMD space (as $\frac{2}{p} > 1$), according to \cite[Corollary p.~214]{RdF}, $Y^p(\ell^{\frac{2}{p}})$ is also a UMD space.
Thus we can estimate the expression above by the same term without $H$, which shows that $Y(\ell^2)^p$ is UMD.

3. We refer to \cite[Proposition 1.d.4]{LTz}.

4. We refer to \cite[Remark 3.4.14 (1)]{Lin}.

5. This is proved in \cite[Theorem 4]{RdF}.
\end{proof}

\begin{remark}
\label{rem-RdF-Problem-3}
In \cite[Problem 3]{RdF}, Rubio de Francia asks the question: if $Y$ is a UMD lattice that is also $p$-convex, is then also $Y^q$ a UMD lattice for any $q < p$?
This problem seems to be open since 1986.
In several instances in this article, we need that for a $p$-convex UMD lattice $Y$, the convexification $Y^p$ is also UMD and have thus to assume the latter as well.
\end{remark}

We turn to vector valued $L^p$ spaces, which are the underlying Banach spaces at the center of interest in this article.
In what follows, we let $(\Omega,\mu)$ be a $\sigma$-finite measure space.
For later use, we record the following fact.

\begin{lemma}
\label{lem-property-alpha}
Let $Y$ be a UMD lattice and $p \in (1,\infty)$.
Then $L^p(\Omega;Y)$ has Pisier's property $(\alpha)$.
\end{lemma}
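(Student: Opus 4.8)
The plan is to establish Pisier's property $(\alpha)$ for $L^p(\Omega;Y)$ by leveraging the fact that $Y$ is a B-convex Banach lattice, and that property $(\alpha)$ is intimately connected to the ability to compare double Rademacher sums $\E_s \E_t \| \sum_{j,k} \epsilon_j \epsilon_k' x_{jk} \|$ with the ``diagonal-free'' bound $\E_s \E_t \| \sum_{j,k} \epsilon_j \epsilon_k' \alpha_{jk} x_{jk} \|$ for scalars $|\alpha_{jk}| \leq 1$. The key observation is that for a B-convex Banach lattice $Z$, the square function equivalence \eqref{equ-Rademacher-square} holds, so that a double Rademacher sum $\E_s \E_t \| \sum_{j,k} \epsilon_j \epsilon_k' x_{jk} \|_Z$ is equivalent to $\| ( \sum_{j,k} |x_{jk}|^2 )^{1/2} \|_Z$, which manifestly does not change (up to the constant $1$) when each $x_{jk}$ is replaced by $\alpha_{jk} x_{jk}$ with $|\alpha_{jk}| \leq 1$ in the sense that the square function only decreases. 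So property $(\alpha)$ for $Z$ is essentially automatic once $Z$ is a B-convex Banach lattice.

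**First I would** observe that $L^p(\Omega;Y)$ is itself a Banach lattice: it is a K\"othe function space over the product measure space $\Omega \times \Omega'$ (recall $Y = Y(\Omega')$), and this is exactly the kind of ``iterated'' lattice discussed just before Lemma~\ref{lem-UMD-lattice-convexity}. **Next I would** invoke that $L^p(\Omega;Y)$ is UMD --- this follows from \cite[Corollary p.~214]{RdF} since both $L^p$ (with $1 < p < \infty$) and $Y$ are UMD --- hence in particular it is B-convex, being super-reflexive by the remark following the UMD definition in Subsection~\ref{subsec-UMD}. Therefore the square function equivalence \eqref{equ-Rademacher-square}, stated in the excerpt for B-convex Banach lattices, applies with $Y$ there replaced by $Z := L^p(\Omega;Y)$. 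One must be a little careful here: \eqref{equ-Rademacher-square} as stated handles a single Rademacher sequence, so to treat double Rademacher sums I would apply it twice, conditioning first on one sequence and then on the other (using Fubini for the expectations), to arrive at
\[ \E_\epsilon \E_{\epsilon'} \Bigl\| \sum_{j,k} \epsilon_j \epsilon_k' x_{jk} \Bigr\|_Z \cong \Bigl\| \Bigl( \sum_{j,k} |x_{jk}|^2 \Bigr)^{\frac12} \Bigr\|_Z, \]
where the pointwise modulus and square root are taken in the lattice $Z$.

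**The main step** is then routine: given scalars $(\alpha_{jk})$ with $|\alpha_{jk}| \leq 1$, the pointwise inequality $\bigl( \sum_{j,k} |\alpha_{jk} x_{jk}|^2 \bigr)^{1/2} \leq \bigl( \sum_{j,k} |x_{jk}|^2 \bigr)^{1/2}$ holds in the lattice, so by the lattice property (monotonicity of the norm) and the two-sided equivalence above,
\[ \E_\epsilon \E_{\epsilon'} \Bigl\| \sum_{j,k} \epsilon_j \epsilon_k' \alpha_{jk} x_{jk} \Bigr\|_Z \lesssim \Bigl\| \Bigl( \sum_{j,k} |\alpha_{jk}x_{jk}|^2 \Bigr)^{\frac12} \Bigr\|_Z \leq \Bigl\| \Bigl( \sum_{j,k} |x_{jk}|^2 \Bigr)^{\frac12} \Bigr\|_Z \lesssim \E_\epsilon \E_{\epsilon'} \Bigl\| \sum_{j,k} \epsilon_j \epsilon_k' x_{jk} \Bigr\|_Z, \]
which is precisely property $(\alpha)$. **The part requiring the most care** is not any deep estimate but rather checking that the double application of \eqref{equ-Rademacher-square} is legitimate --- i.e. that after conditioning on $\epsilon'$, the vectors $\sum_k \epsilon_k' x_{jk}$ still lie in the lattice $Z$ and the equivalence applies with a constant uniform in the fixed signs $\epsilon'$ --- and that all the $L^p$-Bochner and lattice structures are compatible; since the constant in \eqref{equ-Rademacher-square} depends only on the B-convexity constant of $Z$ and not on $n$, this goes through. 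Alternatively, and perhaps more cleanly, one can cite that property $(\alpha)$ is known to hold for all B-convex (equivalently, K-convex) Banach lattices — this is classical, cf. \cite{KW04} 4.9 and the references therein, in particular Pisier's work — and simply note that $L^p(\Omega;Y)$ is such a lattice. I would present both the self-contained square-function argument and the citation, letting the reader pick.
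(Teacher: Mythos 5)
Your argument is correct, and it arrives at the same fact the paper cites, but by essentially re-proving it rather than quoting it. The paper's proof is a three-step chain of citations: $Y$ UMD $\Rightarrow$ $Y$ has finite concavity, hence finite cotype \cite[Proposition 1.f.3]{LTz}; $L^p(\Omega;Y)$ then has finite cotype \cite[Theorem 11.12]{DiJT}; and a Banach \emph{function space} with finite cotype has property $(\alpha)$ \cite[N 4.8--4.10]{KW04}. You instead observe that $Z := L^p(\Omega;Y)$ is a UMD (hence B-convex) lattice and re-derive the ``lattice with finite cotype $\Rightarrow$ property $(\alpha)$'' implication from Maurey's Khintchine inequality \eqref{equ-Rademacher-square}: the double Rademacher average collapses to the double square function, which is manifestly monotone under multiplication by scalars of modulus $\leq 1$. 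This is precisely the proof of the fact the paper cites, so the two routes coincide modulo packaging; your version is more self-contained, the paper's shorter. One point you should make fully explicit rather than leaving as ``uniformity of constants'': after the first application of \eqref{equ-Rademacher-square} you are left with $\E_{\epsilon'}\bigl\|\bigl(\sum_j|\sum_k\epsilon_k'x_{jk}|^2\bigr)^{1/2}\bigr\|_Z$, and the second application of \eqref{equ-Rademacher-square} has to take place in the lattice $Z(\ell^2)$, not in $Z$ itself --- so you need to know that $Z(\ell^2)$ is again B-convex (equivalently, of finite cotype). This is true (it is UMD by \cite[Corollary p.~214]{RdF}), but it is a genuine hypothesis the argument uses and should be stated; ``the constant depends only on the B-convexity constant of $Z$'' is not quite the right justification, since it is the B-convexity constant of $Z(\ell^2)$ that governs the second step. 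A further minor remark: in the closing citation you equate ``B-convex'' with the hypothesis of \cite[4.9]{KW04}, whereas the sharp hypothesis there is finite cotype, which is strictly weaker; the chain B-convex $\Rightarrow$ nontrivial type $\Rightarrow$ finite cotype makes your invocation correct but slightly over-strong, and the paper's finite-cotype phrasing is the more economical one.
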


\begin{proof}
Since $Y$ is UMD, it has finite concavity, and so finite cotype \cite[Proposition 1.f.3]{LTz}.
Thus, also $L^p(\Omega;Y)$ has finite cotype \cite[Theorem 11.12]{DiJT}.
Then according to \cite[N 4.8 - 4.10]{KW04}, the Banach function space $L^p(\Omega;Y)$ has property $(\alpha)$.
\end{proof}

\subsection{Tensor extension of operators to vector valued $L^p$ spaces}
\label{subsec-tensor}

We recall some technical points on linear and sublinear operators acting on $L^p(\Omega)$ and their tensor extensions to $L^p(\Omega;Y)$, where $Y$ is a UMD lattice.
For an operator $T$ acting on $L^p(\Omega)$ with $1 < p < \infty$ and $Y$ any Banach space,
we can consider the tensor extension $T \otimes \Id_Y : L^p(\Omega) \otimes Y \to L^p(\Omega) \otimes Y$ defined by $(T \otimes \Id_Y)(\sum_{k = 1}^n f_k \otimes y_k) = \sum_{k = 1}^n Tf_k \otimes y_k$.
Since $L^p(\Omega) \otimes Y$ is dense in $L^p(\Omega;Y)$, $T \otimes \Id_Y$ extends to a bounded operator on $L^p(\Omega;Y)$ if and only if
\[ \Bigl\| \sum_{k = 1}^n Tf_k \otimes y_k \Bigr\|_{L^p(\Omega;Y)} \leq C \Bigl\| \sum_{k = 1}^n f_k \otimes y_k \Bigr\|_{L^p(\Omega;Y)} \]
for some $C < \infty$ and any $\sum_{k = 1}^n f_k \otimes y_k \in L^p(\Omega) \otimes Y$.
We denote such an extension by slight abuse of notation again by $T$.

\begin{defi}
For $D \subset L^p(\Omega;Y)$ a subspace ($D = L^p(\Omega) \otimes Y$ mainly), $T : D \to L^p(\Omega;Y)$ is called sublinear if
\begin{align*}
|T(cf)| & = |c| \: |Tf| \\
|T(f+g)| & \leq |Tf| + |Tg| \\
|T(f) - T(g)| & \leq |T(f-g)|
\end{align*}
for any $f,g \in D$ and $c \in \C$.
\end{defi}

\begin{lemma}
\label{lem-density-sublinear}
Let $D$ above be a dense subspace of $L^p(\Omega;Y)$ and $T : D \to L^p(\Omega;Y)$ be a sublinear operator.
Assume that $T$ is bounded, that is, $\|Tf\|_{L^p(\Omega;Y)} \leq C \|f\|_{L^p(\Omega;Y)}$ for any $f \in D$.
Then $T$ extends uniquely to a bounded sublinear operator $T : L^p(\Omega;Y) \to L^p(\Omega;Y)$.
\end{lemma}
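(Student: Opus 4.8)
The plan is the standard density argument for extending a bounded operator, adapted to the sublinear setting. Let $f \in L^p(\Omega;Y)$ and pick a sequence $(f_n)_n \subset D$ with $f_n \to f$ in $L^p(\Omega;Y)$; in particular $(f_n)_n$ is Cauchy. First I would observe that $(Tf_n)_n$ is also Cauchy in $L^p(\Omega;Y)$: from the third defining inequality of sublinearity, $|T(f_n) - T(f_m)| \leq |T(f_n - f_m)|$ pointwise, hence
\[
\|Tf_n - Tf_m\|_{L^p(\Omega;Y)} \leq \bigl\| \, |T(f_n - f_m)| \, \bigr\|_{L^p(\Omega;Y)} = \|T(f_n - f_m)\|_{L^p(\Omega;Y)} \leq C \|f_n - f_m\|_{L^p(\Omega;Y)},
\]
where in the middle equality I used that the $Y$-norm is a lattice norm so that passing to the pointwise modulus does not change it, and in the last step the assumed boundedness of $T$ on $D$. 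Since $L^p(\Omega;Y)$ is complete, $(Tf_n)_n$ converges; call the limit $Tf$. The same estimate with $f_m$ replaced by a second approximating sequence $g_m \to f$ shows $\|Tf_n - Tg_m\|_{L^p(\Omega;Y)} \leq C\|f_n - g_m\|_{L^p(\Omega;Y)} \to 0$, so the limit is independent of the approximating sequence and $Tf$ is well-defined. Taking $f_n \equiv f$ for $f \in D$ shows the extension agrees with $T$ on $D$.

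Next I would check that the extension is again sublinear and bounded. Boundedness is immediate: $\|Tf\|_{L^p(\Omega;Y)} = \lim_n \|Tf_n\|_{L^p(\Omega;Y)} \leq C \lim_n \|f_n\|_{L^p(\Omega;Y)} = C\|f\|_{L^p(\Omega;Y)}$. For the three sublinearity relations, I would pass to subsequences so that the relevant $L^p(\Omega;Y)$-convergences hold $\mu$-a.e.\ in the $Y$-norm (here one uses that $L^p$-convergence implies a.e.\ convergence along a subsequence); then the pointwise inequalities $|T(cf_n)| = |c|\,|Tf_n|$, $|T(f_n+g_n)| \leq |Tf_n| + |Tg_n|$, and $|T(f_n) - T(g_n)| \leq |T(f_n - g_n)|$ survive passage to the limit because the lattice operations and the modulus are continuous. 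This yields the corresponding relations for the extension. Uniqueness of the extension follows from continuity of any sublinear bounded extension together with density of $D$: two such extensions agree on $D$ and are each $C$-Lipschitz (by the third sublinearity inequality, as in the Cauchy estimate above), hence agree everywhere.

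The only genuinely delicate point is the passage from $L^p(\Omega;Y)$-convergence to pointwise (a.e., in $Y$-norm) convergence needed to limit the pointwise sublinearity inequalities; this is handled routinely by extracting a subsequence, and since the identities/inequalities to be verified involve only finitely many vectors at a time one may extract a common subsequence. Everything else is a verbatim repetition of the linear density argument, with lattice-norm invariance under the modulus replacing linearity where needed.
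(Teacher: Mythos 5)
Your proposal is correct and follows essentially the same route as the paper: the Cauchy estimate $\|Tf_n - Tf_m\| \leq \|T(f_n-f_m)\| \leq C\|f_n-f_m\|$ via the third sublinearity inequality, definition of the extension as the limit, and well-definedness by comparing approximating sequences. You additionally spell out the verification that the extension is again sublinear (via a.e.-convergent subsequences) and the uniqueness, details the paper leaves as "easy to show."
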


\begin{proof}
For $f \in L^p(\Omega;Y)$, let $(f_n)_n$ be an approximating sequence in $D$.
Define $Tf = \lim_n Tf_n$.
Hereby, by sublinearity of $T$, we have $\|Tf_n - Tf_m\| \leq \|T(f_n - f_m)\| \leq C \|f_n - f_m\|$, so that $Tf_n$ is indeed a Cauchy sequence in $L^p(\Omega;Y)$ and the limit does not change if the approximating sequence is changed.
Hence, $T : L^p(\Omega;Y) \to L^p(\Omega;Y)$ is well-defined.
Now boundedness is easy to show.
\end{proof}

In a similar manner, one can prove the following variant of the above lemma.

\begin{lemma}
Let $\tau$ be a family of bounded sublinear operators on $L^p(\Omega;Y)$.
Let $D$ be a dense subspace of $L^p(\Omega;Y)$.
\begin{enumerate}
\item Suppose that $\tau$ is $R$-bounded $D \to L^p(\Omega;Y)$, that is, there exists a constant $C < \infty$ such that for any $T_1,T_2,\ldots,T_N \in \tau$ and $f_1,\ldots,f_N \in D$, we have
\[ \Biggl\| \biggl( \sum_n |T_n f_n|^2 \biggr)^{\frac12} \Biggr\|_{L^p(\Omega;Y)} \leq C \Biggl\| \biggl( \sum_n |f_n|^2 \biggr)^{\frac12} \Biggr\|_{L^p(\Omega;Y)}. \]
Then $\tau$ is $R$-bounded $L^p(\Omega;Y) \to L^p(\Omega;Y)$.
\item Suppose that $\tau$ is lower $R$-bounded $D \to L^p(\Omega;Y)$.
Then $\tau$ is lower $R$-bounded $L^p(\Omega;Y) \to L^p(\Omega;Y)$.
\end{enumerate}
\end{lemma}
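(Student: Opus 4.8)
\emph{Proof sketch.} The plan is to run the density argument of Lemma~\ref{lem-density-sublinear}, but now for finitely many operators simultaneously and at the level of the (lower) $R$-bound inequality. The one elementary fact I would use repeatedly is that a bounded sublinear operator $T$ on $L^p(\Omega;Y)$ is Lipschitz: from the pointwise bound $|Tf - Tg| \leq |T(f-g)|$ one gets $\|Tf - Tg\|_{L^p(\Omega;Y)} \leq \|T(f-g)\|_{L^p(\Omega;Y)} \leq \|T\|\,\|f-g\|_{L^p(\Omega;Y)}$, so $Tf^{(m)} \to Tf$ in $L^p(\Omega;Y)$ whenever $f^{(m)} \to f$ there. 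I would also record the continuity of the square-function map: the pointwise inequalities $\bigl| (\sum_n |a_n|^2)^{1/2} - (\sum_n |b_n|^2)^{1/2} \bigr| \leq (\sum_n |a_n - b_n|^2)^{1/2} \leq \sum_n |a_n - b_n|$ (finite sums), together with the lattice property of $Y$ and the triangle inequality in $L^p(\Omega;Y)$, give
\[ \Bigl\| \bigl( {\textstyle\sum_{n=1}^N} |g_n|^2 \bigr)^{\frac12} - \bigl( {\textstyle\sum_{n=1}^N} |h_n|^2 \bigr)^{\frac12} \Bigr\|_{L^p(\Omega;Y)} \leq \sum_{n=1}^N \|g_n - h_n\|_{L^p(\Omega;Y)} \]
for all $g_n, h_n \in L^p(\Omega;Y)$; in particular $\|(\sum_n |g_n|^2)^{1/2}\|_{L^p(\Omega;Y)} < \infty$ for $g_n \in L^p(\Omega;Y)$, so all the expressions below make sense.

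For part (1), I would fix $T_1,\dots,T_N \in \tau$ and $f_1,\dots,f_N \in L^p(\Omega;Y)$, choose sequences $f_n^{(m)} \in D$ with $f_n^{(m)} \to f_n$ in $L^p(\Omega;Y)$, and apply the hypothesis to the $f_n^{(m)}$. Letting $m \to \infty$, the right-hand side converges to $C\,\|(\sum_n |f_n|^2)^{1/2}\|_{L^p(\Omega;Y)}$ by the displayed continuity estimate, while on the left-hand side $T_n f_n^{(m)} \to T_n f_n$ in $L^p(\Omega;Y)$ by the Lipschitz bound, so the left-hand side converges to $\|(\sum_n |T_n f_n|^2)^{1/2}\|_{L^p(\Omega;Y)}$; the inequality survives in the limit. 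For part (2), the argument is identical with the square functions replaced by Rademacher sums: fixing $T_1,\dots,T_N \in \tau$, $x_1,\dots,x_N \in L^p(\Omega;Y)$ and approximants $x_n^{(m)} \in D$, I apply the lower $R$-bound inequality of Definition~\ref{defi-lower-R-bounded} to the $x_n^{(m)}$; as $m \to \infty$ one has $x_k^{(m)} \to x_k$ and $T_k x_k^{(m)} \to T_k x_k$ in $L^p(\Omega;Y)$, hence $\sum_k \epsilon_k x_k^{(m)} \to \sum_k \epsilon_k x_k$ and $\sum_k \epsilon_k T_k x_k^{(m)} \to \sum_k \epsilon_k T_k x_k$ for each of the finitely many sign choices, and passing to the limit (the expectation over $\epsilon$ is a sum of $2^N$ terms and commutes with it) yields the claim for $x_1,\dots,x_N$.

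I do not expect a genuine obstacle here; the proof is essentially the same as that of Lemma~\ref{lem-density-sublinear}. The only points that need a little care are the continuity of the nonlinear square-function and Rademacher-sum expressions with respect to $L^p(\Omega;Y)$-convergence of their arguments — covered by the lattice triangle inequalities above — and the use of sublinearity rather than linearity of the $T_n$ to guarantee $T_n x_n^{(m)} \to T_n x_n$. One may, if convenient, switch freely between the square-function and Rademacher-sum formulations throughout, since $L^p(\Omega;Y)$ is a B-convex lattice and \eqref{equ-Rademacher-square} applies.
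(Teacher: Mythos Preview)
Your proposal is correct and follows precisely the route the paper indicates: the paper gives no detailed proof here, stating only that the lemma is proved ``in a similar manner'' to Lemma~\ref{lem-density-sublinear}, and your density argument---using the Lipschitz property $\|Tf-Tg\|\leq\|T\|\,\|f-g\|$ of each bounded sublinear $T$ together with continuity of the finite square-function (resp.\ Rademacher-sum) expressions---is exactly that similar manner.
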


We will close the preliminaries with the following  subsection, which deals with space of homogeneous type and (generalised) Gaussian estimates.

\subsection{Spaces of homogeneous type, (generalised) Gaussian estimates}
\label{subsec-GE}

Let us first recall the definition of space of homogeneous type.

\begin{defi}
Let $(\Omega,\dist,\mu)$ be a metric measure space, that is, $\dist$ is a metric on $\Omega$ and $\mu$ is a Borel measure on $\Omega$.
We denote $B(x,r) = \{ y \in \Omega :\: \dist(x,y) \leq r \}$ the closed balls of $\Omega$.
We assume that $\mu(B(x,r)) \in (0,\infty)$ for any $x \in \Omega$ and $r > 0$.
Then $\Omega$ is said to be a space of homogeneous type if there exists a constant $C < \infty$ such that the doubling condition holds:
\[ \mu(B(x,2r)) \leq C \mu(B(x,r)) \quad (x \in \Omega,\: r > 0) . \]
\end{defi}

We write in short $V(x,r) = \mu(B(x,r))$.
In what follows, $(\Omega,\dist,\mu)$ is always a space of homogeneous type.
It is well-known that there exists some finite $d \in (0,\infty)$ such that $V(x,\lambda r) \leq C \lambda^d V(x,r)$ for any $x \in \Omega$, $r > 0$ and $\lambda \geq 1$.
Such a $d$ is called (homogeneous) dimension of $\Omega$.

\begin{lemma}
\label{lem-volume-homogeneous-type}
Let $(\Omega,\dist,\mu)$ be a space of homogeneous type.
Then there exists a constant $C > 0$ such that for all $r > 0$ and $x,y \in \Omega$ with $\dist(x,y) \leq r:$
\[ \frac{1}{C} V(x,r) \leq V(y,r) \leq C V(x,r) .\]
\end{lemma}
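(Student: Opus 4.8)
The plan is to deduce the claimed two-sided bound directly from the triangle inequality and the doubling condition, with the constant $C$ being exactly the doubling constant.

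First I would observe that if $\dist(x,y) \leq r$, then by the triangle inequality every point $z$ with $\dist(x,z) \leq r$ satisfies $\dist(y,z) \leq \dist(y,x) + \dist(x,z) \leq 2r$, so $B(x,r) \subseteq B(y,2r)$; by symmetry also $B(y,r) \subseteq B(x,2r)$. Applying monotonicity of $\mu$ and then the doubling condition $\mu(B(y,2r)) \leq C_D \mu(B(y,r))$ gives
\[ V(x,r) = \mu(B(x,r)) \leq \mu(B(y,2r)) \leq C_D \, V(y,r), \]
and interchanging the roles of $x$ and $y$ yields $V(y,r) \leq C_D \, V(x,r)$. Taking $C = C_D$ (the doubling constant from the definition of space of homogeneous type) establishes both inequalities simultaneously.

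There is essentially no obstacle here; the only thing to be mild about is that the balls are closed, so the inclusions $B(x,r) \subseteq B(y,2r)$ use a non-strict triangle inequality, which is fine. One could alternatively phrase the argument with a chain of two doublings if one wanted a ball of radius comparable to $r$ rather than $2r$, but that is unnecessary since the doubling property is already stated for the factor $2$. The statement and its proof are used repeatedly in the sequel (e.g. to pass between $V(x,r_t)$ and $V(y,r_t)$ in Gaussian estimates), so it is worth isolating it as a lemma even though the proof is a one-liner.
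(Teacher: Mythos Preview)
Your proof is correct and follows exactly the same approach as the paper: use the triangle inequality to get $B(x,r)\subseteq B(y,2r)$, apply the doubling condition to obtain $V(x,r)\leq C\,V(y,r)$, and then argue symmetrically. The paper's version is simply more terse.
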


\begin{proof}
One has $B(x,r) \subseteq B(y,2r)$, so $V(x,r) \leq V(y,2r) \leq C V(y,r)$ according to the doubling condition.
The converse inequality is proved in the same way.
\end{proof}

We now introduce both the notions of  Gaussian estimates and generalised Gaussian estimates.

\begin{defi}
Let $(T_t)_{t \geq 0}$ be a semigroup acting on $L^2(\Omega)$.
Assume that \[T_tf(x) = \int_\Omega p_t(x,y) f(y) \,dy\] for any $f \in L^2(\Omega),\:x \in \Omega,\:t > 0$ and some measurable functions $p_t : \Omega \times \Omega \to \C$.
Let $m \geq 2$.
Then $(T_t)_t$ is said to satisfy Gaussian estimates (of order $m$) if there exist constants $C,c > 0$ such that
\begin{equation}
\label{equ-GE-prelims}
|p_t(x,y)| \leq C \frac{1}{V(x,r_t)} \exp\Biggl(-c \biggl(\frac{\dist(x,y)}{r_t}\biggr)^{\frac{m}{m-1}} \Biggr) \quad (x,y \in \Omega,\: t > 0),
\end{equation}
where $r_t = t^{\frac1m}$.
\end{defi}

\begin{defi}
Let $(\Omega,\dist,\mu)$ be a space of homogeneous type.
Let $A$ be a self-adjoint operator on $L^2(\Omega)$ generating the semigroup $(T_t)_{t \geq 0}$.
Let $p_0 \in [1,2)$ and $m \in [2,\infty).$
We say that $(T_t)_{t \geq 0}$ satisfies generalised Gaussian estimates (with parameters $p_0,m$) if there exist $c,C < \infty$ such that
\begin{multline}
\label{equ-GGE}
\bigl\| 1_{B(x,r_t)} T_t 1_{B(y,r_t)} \bigr\|_{L^{p_0}(\Omega) \to L^{p_0'}(\Omega)}\\  \leq C |V(x,r_t)|^{-(\frac{1}{p_0} - \frac{1}{p_0'})} \exp \Biggl(-c \biggl(\frac{\dist(x,y)}{r_t} \biggr)^{\frac{m}{m-1}}\Biggr) \quad (x,y \in \Omega,\: t > 0),
\end{multline}
where $r_t = t^{\frac1m}.$
\end{defi}

\begin{remark}
\label{rem-GE-GGE}
According to \cite[Proposition 2.9]{BK02} and \cite[Proposition 2.1]{BK05}, Gaussian estimates \eqref{equ-GE-prelims} with parameter $m \geq 2$ for a semigroup imply generalised Gaussian estimates \eqref{equ-GGE} with parameter $p_0 = 1$ and $m$.
Moreover, according to \cite[Proposition 2.1]{BK05}, generalised Gaussian estimates with parameters $p_0 \in [1,2)$ and $m \geq 2$ imply generalised Gaussian estimates with parameters $p_1 \in [p_0,2)$ and $m$.
\end{remark}

\begin{remark}
\label{rem-GGE-spectrum}
Assume that a semigroup is self-adjoint and satisfies generalised Gaussian estimates \eqref{equ-GGE}.
Then according to \cite[Proposition 2.1 (1) $\Longrightarrow$ (2) with $u=v=2$]{BK02}, we have
\[ \|1_{B_1} T_t 1_{B_2}\|_{2 \to 2} \leq g(\dist(B_1,B_2) r^{-1}) \]
for any ball $B_1,B_2 \subseteq \Omega$, with $g$ some bounded decreasing function.
This implies in particular that $\sup_{t \geq 0} \|T_t\|_{2 \to 2} < \infty$, which in turn gives $\sigma(A) \subseteq [0,\infty)$.
Thus any of our self-adjoint generators $A$ of the semigroup $T_t$ satisfying generalised Gaussian estimates is positive.
\end{remark}

In our work, we will make use of the fact that a space of homogeneous type $\Omega$ can be partitioned into finer and finer subsets which take over the role of dyadic cubes in $\R^d$.
This is the content of the following theorem.

\begin{thm}
\label{thm-dyadic-cubes}
Let $(\Omega, \dist, \mu)$ be a space of homogeneous type.
One can construct a \emph{dyadic system} 
$\mathcal{D} = \bigcup_{k\in\Z} \mathcal{D}_k$,
where each collection $\mathcal{D}_k$ consists of pairwise disjoint
sets of positive measure, \emph{dyadic cubes}, with the following properties:
\begin{itemize}
  \item $\Omega = \bigcup_{Q \in \mathcal{D}_k} Q$,
  \item if $Q\in\mathcal{D}_k$ and $R\in\mathcal{D}_l$ with $l\geq k$,
  then either $R\subset Q$ or $Q\cap R = \emptyset$,
  \item for some scaling parameter $\delta \in (0,1)$ it holds that, every 
  $Q\in\mathcal{D}_k$ contains a point $z$ for which
  \begin{equation*}
    B(z, \delta^k/3) \subset Q \subset B(z, 2\delta^k) .
  \end{equation*}
\end{itemize}
\end{thm}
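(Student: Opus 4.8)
The plan is to carry out the by-now-standard construction of dyadic cubes on a space of homogeneous type, due in essence to Christ and, in the precise form stated, to Hyt\"onen and Kairema. We only sketch the main steps.

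Fix once and for all a parameter $\delta \in (0,1)$ small enough relative to the doubling constant of $\Omega$ (how small will be dictated by the final geometric estimate). For each level $k \in \Z$ choose, by a standard maximality argument (Zorn's lemma), a set of \emph{centers} $\{z^k_\alpha :\: \alpha \in A_k\}$ maximal with respect to the separation property $\dist(z^k_\alpha, z^k_{\alpha'}) \geq \delta^k$ for $\alpha \neq \alpha'$. Maximality yields the two usual consequences: the balls $B(z^k_\alpha, \delta^k/2)$ are pairwise disjoint, while the balls $B(z^k_\alpha, \delta^k)$ cover $\Omega$. By a diagonal argument one may arrange the families of centers to be \emph{nested}, so that each center persists at all finer levels. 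Next, introduce a \emph{parent map} assigning to each level-$(k+1)$ center $z^{k+1}_\beta$ a level-$k$ center at distance $< \delta^k$ (it exists since the level-$k$ balls of radius $\delta^k$ cover $\Omega$), choosing the closest one and breaking ties by a fixed well-ordering of the index sets.

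The cubes are then built by first assigning every $x \in \Omega$ to a nearest center at each level (a Voronoi-type partition) and then \emph{correcting upward}, so that $Q^k_\alpha$ consists of the points whose entire ancestral chain through level $k$ passes through $z^k_\alpha$. This forces, essentially by construction, that $\{Q^k_\alpha :\: \alpha \in A_k\}$ partitions $\Omega$ and that cubes of different levels are either nested or disjoint. Setting $\D_k = \{Q^k_\alpha :\: \alpha \in A_k\}$ and $\D = \bigcup_{k \in \Z} \D_k$ then gives the first two listed properties.

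The step I expect to be the main obstacle is the sandwich inclusion $B(z^k_\alpha, \delta^k/3) \subset Q^k_\alpha \subset B(z^k_\alpha, 2\delta^k)$. The outer inclusion follows by summing a geometric series: a point of $Q^k_\alpha$ lies within $\delta^k$ of $z^k_\alpha$ up to an accumulated error $\sum_{j \geq 0} C\delta^{k+j}$ from the corrections at finer scales, which is $\leq 2\delta^k$ once $\delta$ is small. The inner inclusion is more delicate: one must check that a point within $\delta^k/3$ of $z^k_\alpha$ is never expelled from the descendants of $z^k_\alpha$ at any finer scale, again via a geometric-series estimate, now combined with the center separation at each level and the doubling property of $\mu$ to control the number of competing neighbouring centers. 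Balancing the constants $1/3$ and $2$ against the radii $\delta^k/2$ and $\delta^k$ is precisely what pins down the admissible range of $\delta$. Finally, each cube has positive measure since it contains a ball $B(z,\delta^k/3)$, which has positive measure by the standing assumption on $\mu$.
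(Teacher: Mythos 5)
Your proposal is essentially the paper's approach: the paper proves this theorem simply by citing Hyt\"onen--Kairema \cite{HyKa} (Theorem 2.2 with $\delta \leq 1/12$, $c_0 = C_0 = 1$, $c_1 = 1/3$, $C_1 = 2$, together with \cite[2.21]{HyKa}), and your sketch is an outline of precisely that construction, with the correct key steps (maximal $\delta^k$-separated nested center sets, parent map, geometric-series sandwich estimates) and the correct range of $\delta$. The only small inaccuracy is in the inner inclusion $B(z,\delta^k/3)\subset Q$: this is a pure triangle-inequality and geometric-series argument using the $\delta^k$-separation of the centers, and the doubling property of $\mu$ is not what is needed there (doubling enters rather in guaranteeing that cubes have finitely many children and in results such as Lemma \ref{lem-systems}).
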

\begin{proof}
  Theorem 2.2 in \cite{HyKa} with $\delta \leq 1/12$, $c_0 = C_0 = 1$,
  $c_1 = 1/3$, $C_1 = 2$, together with \cite[2.21]{HyKa}.
\end{proof}

According to the next two lemmas, arbitrary balls in $\Omega$ are comparable to these dyadic cubes in a certain sense.

\begin{lemma}
\label{lem-dyadic-cubes-lower}
Let $(\Omega,\dist,\mu)$ be a space of homogeneous type.
Let $\mathcal{D}$ be a dyadic system from Theorem \ref{thm-dyadic-cubes}.
For every $r>0$ there exists an integer $k(r)$ such that
if $x\in Q \in \mathcal{D}_{k(r)}$, then $Q\subset B(x,r)$ and
$\mu (B(x,r)) \lesssim \mu (Q)$ with implied constant independent of $x$ and $r$.
\end{lemma}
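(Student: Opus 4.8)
The plan is to use the third (inner/outer ball) property of the dyadic system from Theorem \ref{thm-dyadic-cubes} together with the doubling condition, choosing $k(r)$ so that the outer containment radius $2\delta^k$ is controlled by $r$. Concretely, I would fix $r > 0$ and pick $k(r)$ to be the smallest integer $k$ with $2\delta^k \leq r$, i.e. $\delta^k \leq r/2 < \delta^{k-1}$; such a $k$ exists because $\delta \in (0,1)$ and it depends only on $r$ (and the fixed $\delta$), not on $x$.

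Now suppose $x \in Q \in \mathcal{D}_{k(r)}$. By the third property of Theorem \ref{thm-dyadic-cubes}, there is a point $z \in Q$ with $B(z,\delta^k/3) \subset Q \subset B(z,2\delta^k)$. Since $x \in Q \subset B(z,2\delta^k)$, we have $\dist(x,z) \leq 2\delta^k \leq r$. For any $y \in Q$ we likewise have $\dist(y,z) \leq 2\delta^k$, hence $\dist(x,y) \leq \dist(x,z) + \dist(z,y) \leq 4\delta^k \leq 2r$. This would give $Q \subset B(x,2r)$ rather than $B(x,r)$; to land in $B(x,r)$ exactly I would instead choose $k(r)$ to be the smallest integer with $4\delta^k \leq r$ (equivalently $\delta^k \leq r/4$), which is the harmless adjustment that makes the containment $Q \subset B(x,r)$ literally true, and all constants below absorb it.

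For the measure comparison, the key point is to run the doubling condition a bounded number of times. Since $\delta^{k-1} > r/4$, we have $\delta^k > \delta r / 4$, so $2\delta^k > (\delta/2) r$. By the inner containment $B(z,\delta^k/3) \subset Q$ and Lemma \ref{lem-volume-homogeneous-type} (volumes of equal-radius balls at points within that radius are comparable, applicable since $\dist(x,z) \leq 2\delta^k$ and $\delta^k/3 \leq 2\delta^k$ — one first compares $V(z,\delta^k/3)$ with $V(x,\delta^k/3)$), we get $\mu(Q) \geq V(z,\delta^k/3) \gtrsim V(x,\delta^k/3)$. On the other hand $r \leq 4\delta^k$ is comparable to $\delta^k/3$ up to the fixed factor $12$, so iterating the doubling condition $\lceil \log_2(12/\delta) \rceil$ times yields $\mu(B(x,r)) \leq V(x,4\delta^k) \leq V(x, 12 \cdot \delta^k/3) \lesssim V(x,\delta^k/3)$, with the implied constant depending only on the doubling constant and $\delta$. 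Chaining these gives $\mu(B(x,r)) \lesssim V(x,\delta^k/3) \lesssim \mu(Q)$, with constants independent of $x$ and $r$.

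I do not expect a genuine obstacle here; the only care needed is bookkeeping the constant factor between $r$ and $\delta^{k(r)}$ so that both the containment $Q \subset B(x,r)$ and the comparison $\mu(B(x,r)) \lesssim \mu(Q)$ hold simultaneously, and making sure the doubling condition is invoked only finitely many (uniformly bounded) times so the implied constant stays uniform in $x$ and $r$. One should also note $k(r) \to +\infty$ as $r \to 0^+$, consistent with $\mathcal{D}_{k(r)}$ being a fine partition for small $r$, which is the regime of interest in the applications that follow.
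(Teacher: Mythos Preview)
Your approach is essentially the paper's: choose $k(r)$ so that $4\delta^{k(r)} \leq r < 4\delta^{k(r)-1}$, use the outer ball $B(z,2\delta^k)$ for the containment $Q \subset B(x,r)$, and the inner ball $B(z,\delta^k/3)$ together with doubling for the measure bound. Two small slips, though.

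First, in the measure step you write ``$r \leq 4\delta^k$'' and then ``$\mu(B(x,r)) \leq V(x,4\delta^k)$''; this is backwards, since by your choice of $k$ one has $4\delta^k \leq r$, so $B(x,4\delta^k) \subset B(x,r)$. The correct inequality is $r < 4\delta^{k-1} = (12/\delta)(\delta^k/3)$, which is indeed what your doubling count $\lceil \log_2(12/\delta)\rceil$ reflects; the chain should read $\mu(B(x,r)) \leq V(x,4\delta^{k-1}) \lesssim V(x,\delta^k/3)$.

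Second, your invocation of Lemma~\ref{lem-volume-homogeneous-type} to compare $V(z,\delta^k/3)$ with $V(x,\delta^k/3)$ does not apply as stated: that lemma requires $\dist(x,z)$ to be at most the radius, but here $\dist(x,z) \leq 2\delta^k$ exceeds $\delta^k/3$. The paper avoids this by applying the lemma at radius $r$ (where $\dist(x,z) \leq 2\delta^k \leq r$ does hold) to get $V(x,r) \lesssim V(z,r)$, and only then uses doubling to pass from $V(z,r)$ down to $V(z,\delta^k/3) \leq \mu(Q)$. Alternatively you could bypass the lemma and argue directly via doubling that $V(x,\delta^k/3) \leq V(z,7\delta^k/3) \lesssim V(z,\delta^k/3)$.
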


\begin{proof}
For $r>0$ let $k(r)$ be the smallest integer for which $\delta^{k(r)} < r/4$,
so that $\delta r \leq 4\delta^{k(r)} < r$.
For any $x\in\Omega$ and $r>0$, there exists a unique $Q\in\mathcal{D}_{k(r)}$
containing $x$. Now, for some $z\in Q$, since $\dist(x,z) \leq 2 \delta^{k(r)}$,
\begin{equation*}
  Q \subset B(z,2\delta^{k(r)}) \subset B(x,4\delta^{k(r)}) \subset B(x,r) .
\end{equation*}
On the other hand, $r\leq 4\delta^{k(r) - 1} = 12\delta^{-1} (\delta^{k(r)} / 3)$
so that
\begin{equation*}
  \mu (B(x,r)) \lesssim \mu (B(z,r)) \lesssim \mu (B(z,\delta^{k(r)} / 3))
  \leq \mu (Q),
\end{equation*}
where we have used Lemma \ref{lem-volume-homogeneous-type} together with $\dist(x,z) \leq 2 \delta^{k(r)} \leq r$.
\end{proof}

\begin{lemma}
\label{lem-systems}
  Let $(\Omega, \mu)$ be a space of homogeneous type.
  There exists a finite collection of dyadic systems 
  $\mathcal{D}^m$ on $\Omega$, so that for every ball $B$
  one can find a dyadic cube $Q_B$ in one of the systems such that
  $B\subset Q_B$ and $\mu(Q_B) \lesssim \mu(B)$.
\end{lemma}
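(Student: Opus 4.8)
The plan is to deduce Lemma~\ref{lem-systems} from the existence of finitely many \emph{adjacent} dyadic systems on a space of homogeneous type --- the companion of Theorem~\ref{thm-dyadic-cubes}, also due to Hytönen and Kairema \cite{HyKa}. Concretely, one invokes the fact that there are an integer $K = K(\Omega)$, a constant $A_0 < \infty$, and dyadic systems $\mathcal{D}^1,\ldots,\mathcal{D}^K$, each as in Theorem~\ref{thm-dyadic-cubes} and sharing a common scaling parameter $\delta$, such that for every $x \in \Omega$ and every $r > 0$ there are $m \in \{1,\ldots,K\}$ and a cube $Q \in \mathcal{D}^m$ with
\[ B(x,r) \subset Q \subset B(x, A_0 r) . \]
Having several systems is exactly what removes the obstruction that, within a single grid, a ball may straddle the common boundary of two cubes at its own scale and then be contained only in a cube of a much larger scale; the finite family is arranged so that for each centre and each scale at least one system places that centre deep inside one of its cubes.

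Granting this input, the lemma is essentially immediate. Given a ball $B = B(x,r)$, choose $m$ and $Q_B := Q \in \mathcal{D}^m$ as above; the inclusion $B \subset Q_B$ is then half of the assertion. For the measure comparison, from $Q_B \subset B(x, A_0 r)$ and the doubling condition one gets
\[ \mu(Q_B) \leq \mu\bigl(B(x, A_0 r)\bigr) \leq C^{\lceil \log_2 A_0 \rceil}\, \mu\bigl(B(x, r)\bigr) = C^{\lceil \log_2 A_0 \rceil}\, \mu(B), \]
$C$ being the doubling constant, so $\mu(Q_B) \lesssim \mu(B)$ with implied constant depending only on $\Omega$. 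One further notes the edge case: if $\dist$ is bounded and $r \geq \operatorname{diam}(\Omega)$ then $B = \Omega$ and one may take $Q_B = \Omega$, the maximal cube of any of the systems.

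If one preferred to avoid citing the adjacent-systems theorem and instead build everything from Theorem~\ref{thm-dyadic-cubes}, the route would parallel the classical shifted-grid (``one-third trick'') argument in $\R^d$: run the construction of Theorem~\ref{thm-dyadic-cubes} for several suitably spread-out configurations of reference points, and show by a pigeonhole argument that for each fixed $x$ and each scale $\delta^k$ at least one system contains a cube $Q$ of generation $k - c$, for a fixed $c = c(\delta)$ independent of $x$ and $k$, whose core ball $B(z, \delta^{k-c}/3)$ (in the notation of Theorem~\ref{thm-dyadic-cubes}) already contains $B(x, \delta^k)$; combining with $Q \subset B(z, 2\delta^{k-c})$ and Lemma~\ref{lem-volume-homogeneous-type}, and choosing $\delta^k$ comparable to $r$, then yields $B(x,r) \subset Q$ and $\mu(Q) \lesssim \mu(B(x,r))$.

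The deduction itself is a one-line application of doubling; the real content --- and hence the main obstacle, were one to prove it here rather than cite it --- lies entirely in the adjacent-systems input, namely that finitely many grids suffice uniformly over all centres and all scales and that the boundary-avoidance is quantitative (a bounded number of generations, independent of $x$ and $r$). Since this is precisely Hytönen--Kairema's result, and the paper already relies on \cite{HyKa} for Theorem~\ref{thm-dyadic-cubes}, I would simply cite it.
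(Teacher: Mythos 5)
Your proposal is correct and matches the paper's approach: the paper's entire proof is a citation of \cite[2.21 and Theorem 4.1]{HyKa}, i.e.\ exactly the adjacent dyadic systems theorem you invoke, with the measure comparison $\mu(Q_B)\lesssim\mu(B)$ following from $Q_B\subset B(x,A_0r)$ and doubling just as you write.
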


\begin{proof}
See \cite[2.21 and Theorem 4.1]{HyKa}.
\end{proof}

\section{The Hardy-Littlewood lattice maximal function}
\label{sec-MHL}

In this section we consider the Hardy-Littlewood lattice maximal
function
\begin{equation}
  M_{HL}(f)(x,\omega') = \sup_{r>0} \frac{1}{V(x,r)}
  \int_{B(x,r)} |f(y,\omega')| \, d\mu(y), \quad x\in\Omega, \quad
  \omega'\in\Omega' ,
\end{equation}
and prove the following result:

\begin{thm}
\label{thm-MHL}
  $M_{HL}$ is bounded on $L^p(\Omega;Y)$ for any $p \in (1,\infty)$ and for every UMD lattice $Y$.
\end{thm}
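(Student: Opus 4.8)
The plan is to bound $M_{HL}$ by a \emph{finite} sum of dyadic maximal operators and then identify each of the latter with a (lattice-valued) martingale maximal operator, for which boundedness on $L^p(\Omega;Y)$ is the vector-valued Doob inequality available for UMD lattices. Since $M_{HL}$ only sees the pointwise lattice modulus of its argument and $y\mapsto |f(y)|$ is again in $L^p(\Omega;Y)$ with the same norm, it suffices to show
\[ \Bigl\| \sup_{r>0} \frac{1}{V(\cdot,r)} \int_{B(\cdot,r)} g(y)\,d\mu(y) \Bigr\|_{L^p(\Omega;Y)} \leq C \|g\|_{L^p(\Omega;Y)} \]
for every $0\leq g\in L^p(\Omega;Y)$, where the supremum is the lattice supremum in $Y$ and the integral is a $Y$-valued Bochner integral.

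For the dyadic reduction I would invoke Lemma \ref{lem-systems}: there is a finite family of dyadic systems $\mathcal{D}^1,\ldots,\mathcal{D}^N$ so that every ball $B=B(x,r)$ is contained in some cube $Q_B$ of some $\mathcal{D}^m$ with $\mu(Q_B)\lesssim\mu(B)=V(x,r)$. Since $x\in B\subset Q_B$ and $g\geq 0$,
\[ \frac{1}{V(x,r)}\int_{B(x,r)} g\,d\mu \;\leq\; \frac{1}{V(x,r)}\int_{Q_B} g\,d\mu \;\lesssim\; \frac{1}{\mu(Q_B)}\int_{Q_B} g\,d\mu \;\leq\; \sum_{m=1}^N M_{\mathcal{D}^m}g(x,\cdot), \]
where $M_{\mathcal{D}}g(x,\cdot)=\sup\bigl\{\tfrac{1}{\mu(Q)}\int_Q g\,d\mu:\: Q\in\mathcal{D},\ x\in Q\bigr\}$ (a countable, hence lattice, supremum). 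Taking the supremum over $r$ gives $\sup_{r>0}\tfrac{1}{V(\cdot,r)}\int_{B(\cdot,r)}g\,d\mu\leq C\sum_{m=1}^N M_{\mathcal{D}^m}g$ pointwise in $Y$, so it remains to bound each $M_{\mathcal{D}}$ on $L^p(\Omega;Y)$.

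Now fix one dyadic system $\mathcal{D}=\bigcup_{k\in\Z}\mathcal{D}_k$. Each cube sits inside a ball and hence has finite measure, and $\Omega$ (being doubling, hence separable) is $\sigma$-finite; by the nesting property of Theorem \ref{thm-dyadic-cubes} the collection $\mathcal{D}_l$ refines $\mathcal{D}_k$ for $l\geq k$, so the $\sigma$-algebras $\mathcal{F}_k$ generated by $\mathcal{D}_k$ form an increasing filtration, and the inner/outer ball inclusion $B(z,\delta^k/3)\subset Q\subset B(z,2\delta^k)$ shows $\bigvee_k\mathcal{F}_k$ is the Borel $\sigma$-algebra. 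For $0\leq g\in L^p(\Omega;Y)$ the cube average $x\mapsto\tfrac{1}{\mu(Q)}\int_Q g\,d\mu$ (for $x\in Q\in\mathcal{D}_k$) is exactly the conditional expectation $\mathbb{E}_k g$ applied in the $\Omega$-variable to the $Y$-valued function $g$, so $M_{\mathcal{D}}g=\sup_{k\in\Z}\mathbb{E}_k g$, supremum in the lattice. Since $Y$ is a UMD lattice it has the Hardy--Littlewood property, and the corresponding lattice-valued Doob maximal inequality $\|\sup_k\mathbb{E}_k g\|_{L^p(\Omega;Y)}\lesssim\|g\|_{L^p(\Omega;Y)}$ holds for $1<p<\infty$ (a result going back to Bourgain and Rubio de Francia; see \cite{RdF}). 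Combining this with the dyadic reduction finishes the proof.

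The dyadic reduction and the martingale identification are routine given the machinery of Subsection \ref{subsec-GE}; the essential input — and the only place the UMD hypothesis enters — is the lattice-valued Doob inequality, which fails for general Banach lattices (e.g.\ $\ell^1$). The main work is therefore to pin this down in precisely the generality needed: an \emph{arbitrary} filtration on a $\sigma$-finite space, rather than the dyadic filtration on $[0,1]$ or $\R^n$. A clean route is to use that a UMD lattice has finite convexity and concavity exponents (cf.\ Lemma \ref{lem-UMD-lattice-convexity} and Subsection \ref{subsec-UMD}) together with the known equivalence of the Euclidean Hardy--Littlewood property with its dyadic/martingale counterpart, and then transfer the classical vector-valued maximal theorem to the present filtration. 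I expect establishing (or locating in the literature) this lattice Doob inequality to be the main obstacle; everything else is bookkeeping with the dyadic cubes of Theorem \ref{thm-dyadic-cubes} and Lemma \ref{lem-systems}.
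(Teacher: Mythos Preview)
Your proposal is correct and follows essentially the same route as the paper: dominate $M_{HL}$ by finitely many dyadic maximal operators via Lemma \ref{lem-systems}, identify these with martingale maximal operators, and invoke the lattice-valued Doob inequality. The paper supplies precisely the ingredient you flag as the main obstacle --- the transference of the Hardy--Littlewood property from the dyadic filtration on $[0,1)$ to an arbitrary filtration on a $\sigma$-finite space --- via a Burkholder-type concave function argument (Lemmas \ref{lemma-MHL} and \ref{lem-concave-U}).
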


Boundedness of such vector-valued maximal operators originates in the case $Y = \ell^q$ and $\Omega = \R^d$ in the work of Fefferman and Stein \cite{FeSt}.
The boundedness of lattice maximal operators is commonly abstracted
in the following Banach space property \cite{GMT1,GMT2}:

\begin{defi}
Let $Y=Y(\Omega')$ be a Banach lattice and denote 
by $\mathcal{D}$ the family
of dyadic intervals on the unit interval $[0,1)$.
The space $Y$ is said to have the \emph{Hardy-Littlewood property}
if the dyadic lattice maximal function
\begin{equation}
  M_d(f)(x,\omega') = \sup_{\substack{I\ni x \\ I\in\mathcal{D}}}
  \frac{1}{|I|} \biggl| \int_I f(y,\omega') \, dy \biggr| , \quad
  x\in [0,1), \quad \omega'\in\Omega' ,
\end{equation}
defines a bounded operator on $L^p([0,1); Y)$ 
for one (or, equivalently, for all) $p\in (1,\infty)$.
\end{defi}

Note that the definition in \cite{GMT1} refers directly to $M_{HL}$ on $\R^d$.
The point of this section is to extend this property to $M_{HL}$ on spaces of
homogeneous type, and it is useful to begin with dyadic maximal operators. 
It is immediately clear that $M_{HL}$ dominates any dyadic maximal operator 
on $\R^d$. Conversely, using the well-known Euclidean version of 
Lemma \ref{lem-systems} above, we see that dyadic maximal operators dominate $M_{HL}$. The equivalence between the definition in
\cite{GMT1} and the one above will thereby quickly follow from our considerations.

The UMD property was connected with the Hardy-Littlewood property
by Bourgain in \cite[Lemma 1]{Bou84}, see also \cite[Theorem 3]{RdF}.

\begin{thm}[Bourgain]
\label{thm-Bourgain}
  Let $Y$ be a Banach lattice. Then $Y$ is UMD if and only
  if $Y$ and $Y'$ have the Hardy-Littlewood property.
\end{thm}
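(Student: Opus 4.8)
The plan is to prove the two implications separately, using throughout the Burkholder--Bourgain characterisation of UMD: a Banach space $Z$ is UMD if and only if, for some (equivalently every) $p\in(1,\infty)$, every martingale transform with coefficients in $\{-1,+1\}$ (with respect to the dyadic filtration on $[0,1)$) is bounded on $L^p([0,1);Z)$.

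\textbf{Proof of ``$Y$ UMD $\Rightarrow$ $Y$ and $Y'$ have the Hardy--Littlewood property''.} Since the dual of a UMD lattice is again a UMD lattice, it is enough to show that \emph{every} UMD lattice $Z$ has the Hardy--Littlewood property and then to apply this to $Z=Y$ and to $Z=Y'$. Fix $p\in(1,\infty)$ and $f\in L^p([0,1);Z)$. Using $|\E_n f|\leq\E_n|f|$ pointwise, where $\E_n$ is the conditional expectation onto the dyadic $\sigma$-algebra of generation $n$, we reduce to $f\geq 0$; then $f_n:=\E_n f$ is a positive $Z$-valued martingale and $M_d f=\sup_n f_n$, so what has to be shown is the lattice Doob inequality $\|\sup_n f_n\|_{L^p([0,1);Z)}\leq C\|f\|_{L^p([0,1);Z)}$. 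The difficulty is that order-theoretic estimates alone yield only $\sup_n f_n\leq f_0+\sum_n|d_n|$ with $d_n:=f_n-f_{n-1}$, and the $\ell^1$-variation on the right is not controlled in $L^p([0,1);Z)$; circumventing this is precisely the content of Bourgain's lemma \cite[Lemma 1]{Bou84}, and I expect it to be the main obstacle of the entire theorem. Its proof linearises the maximal function by a measurable selection $n(\cdot)$ of an almost-extremising index, exhibits the operator $f\mapsto(f_{n(\cdot)})$ as an average of $\{-1,+1\}$-martingale transforms after randomising away the non-predictable dependence of $n(\cdot)$ on the martingale's trajectory, and then applies the uniform martingale-transform bound on $L^p([0,1);Z)$ furnished by UMD, the error terms being dominated by $\E_m f$ for suitable $m$.

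\textbf{Proof of ``$Y$ and $Y'$ have the Hardy--Littlewood property $\Rightarrow Y$ UMD''.} Here one verifies directly that the $\{-1,+1\}$-martingale transform $f=\sum_n d_n\mapsto\sum_n\epsilon_n d_n$ is bounded on $L^p([0,1);Y)$, by a Calderón--Zygmund / Gundy-type decomposition in which the maximal operators on $L^p([0,1);Y)$ and $L^{p'}([0,1);Y')$ play the role that the scalar Hardy--Littlewood maximal function plays in the Benedek--Calderón--Panzone theory of vector-valued singular integrals. Concretely: (i) at each height $\lambda=2^j$ decompose $f=g_\lambda+b_\lambda$, with $b_\lambda$ carried by the exceptional set $\{M_d f>\lambda\}$ together with the overly large increments, and $g_\lambda$ of increments $\lesssim\lambda$ and total size comparable to $f$; (ii) control the transform of $b_\lambda$ through the $L^p([0,1);Y)$-size of its support, which is where boundedness of $M_d$ on $L^p([0,1);Y)$ enters; (iii) control the transform of $g_\lambda$ by an $L^p$- (or $L^2$-) estimate exploiting the increment bound; (iv) pair the good part and the exceptional sets against test functions in $L^{p'}([0,1);Y')$, which is where the Hardy--Littlewood property of $Y'$ is used; (v) sum over the heights $\lambda=2^j$ to obtain $\|\sum_n\epsilon_n d_n\|_{L^p([0,1);Y)}\leq C\|f\|_{L^p([0,1);Y)}$. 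The technical burden of this direction is to carry out the decomposition and the summation for genuinely lattice-valued martingales with all constants independent of $\lambda$; see \cite[Theorem 3]{RdF}.

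\textbf{Relation to $M_{HL}$.} Finally, by the Euclidean version of Lemma~\ref{lem-systems} the dyadic maximal operators on $\R$ and the Hardy--Littlewood maximal operator $M_{HL}$ on $\R^d$ dominate one another, so the Hardy--Littlewood property of $Y$ is equivalent to the $L^p(\R^d;Y)$-boundedness of $M_{HL}$; this also reconciles the definition used here with the one of \cite{GMT1}, and is the form in which Bourgain's theorem is invoked when proving Theorem~\ref{thm-MHL}.
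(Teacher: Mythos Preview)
The paper does not give a proof of this theorem at all: it is quoted as a known result, attributed to Bourgain \cite[Lemma 1]{Bou84} with the reference \cite[Theorem 3]{RdF} added for the full equivalence. Your proposal is therefore not to be compared with an argument in the paper but with the cited literature, and in that respect it is accurate: the forward implication is exactly Bourgain's lemma (linearise the supremum and express the stopped martingale as an average of signed martingale transforms, then invoke UMD), and the converse is the Benedek--Calder\'on--Panzone/Rubio de Francia argument in which the Hardy--Littlewood property of $Y$ and of $Y'$ replace the scalar maximal inequality on the two sides of the duality. Your closing remark on the equivalence with $M_{HL}$ on $\R^d$ via adjacent dyadic grids is also consistent with how the paper links the dyadic definition used here to the one in \cite{GMT1}. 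In short: nothing to correct, but be aware that in the paper this theorem is a black box, and a reader following the paper would simply cite \cite{Bou84,RdF} rather than reproduce the argument.
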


The proof of Theorem \ref{thm-MHL} is based on the following
transference result:

\begin{lemma}
\label{lemma-MHL}
  Let $Y = Y(\Omega')$ be a Banach lattice. Further, 
  let $\mathcal{F} = (\mathcal{F}_k)_{k\in\Z}$
  be a filtration on $(\Omega, \mu)$
  and denote by $E_k$ the corresponding conditional expectation
  operators. If $Y$ has the Hardy-Littlewood property, then
  the lattice maximal function
  \begin{equation}
    M_{\mathcal{F}}(f)(x,\omega') = \sup_{k\in\Z} |E_kf(x,\omega')| ,
    \quad x\in\Omega, \quad \omega'\in\Omega' ,
  \end{equation}
  defines a bounded operator on $L^p(\Omega ; Y)$ for all 
  $p\in (1,\infty)$. Moreover, the operator norm of
  $M_\mathcal{F}$ is not greater than the operator norm of $M_d$.
\end{lemma}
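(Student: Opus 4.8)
The plan is to transfer the abstract martingale maximal operator $M_{\mathcal{F}}$ to the concrete dyadic maximal operator $M_d$ on $[0,1)$ for which the Hardy-Littlewood property gives boundedness by hypothesis. The key observation is that a filtration $\mathcal{F} = (\mathcal{F}_k)_{k \in \Z}$ on $(\Omega,\mu)$ — for which I may as well assume each $\mathcal{F}_k$ is generated by a countable partition of $\Omega$ into atoms of positive, finite measure, since otherwise one first reduces to this case by exhausting $\Omega$ and refining — produces, via the conditional expectations $E_k$, a structure formally identical to a dyadic filtration. Concretely, I would build a measure-preserving-up-to-rescaling identification between the atoms of $\mathcal{F}_k$ and dyadic subintervals of $[0,1)$: order the atoms and inductively assign to the atoms of $\mathcal{F}_k$ contained in a fixed atom $A$ of $\mathcal{F}_{k-1}$ a partition of the interval already assigned to $A$ into subintervals of proportional length. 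One has to be slightly careful because dyadic intervals in $[0,1)$ always split into exactly two children whereas $\mathcal{F}$-atoms may split into more (or fewer, or infinitely many) children; this is handled by allowing the identification to land in a dyadic system of sufficiently high (but finite, locally) branching, or by inserting intermediate generations so that each split is binary — the price is only a bounded number of extra generations per original generation when the branching is bounded, and an exhaustion argument when it is not.

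First I would set up this correspondence carefully on a $\sigma$-finite measure space, reducing to the probability-space case by writing $\Omega$ as a countable disjoint union of sets of finite measure compatible with the filtration and treating each piece separately (the maximal operator acts diagonally across such pieces). Second, I would observe that under the identification $\Phi \colon \Omega \to [0,1)$ constructed this way, the conditional expectation $E_k$ on $L^p(\Omega;Y)$ corresponds exactly to a (dyadic) conditional expectation $\widetilde{E}_{k}$ on $L^p([0,1);Y)$, because both are averaging operators over the matched atoms and $\Phi$ is measure-preserving after normalisation; hence $M_{\mathcal{F}} f = (M_d \widetilde f) \circ \Phi$ pointwise, where $\widetilde f = f \circ \Phi^{-1}$, up to relabelling of generations. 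Third, taking $L^p(\Omega;Y)$ norms and using that $\Phi$ transports the norm isometrically, I would get $\|M_{\mathcal{F}} f\|_{L^p(\Omega;Y)} = \|M_d \widetilde f\|_{L^p([0,1);Y)} \leq \|M_d\|_{L^p([0,1);Y) \to L^p([0,1);Y)} \|\widetilde f\|_{L^p([0,1);Y)} = \|M_d\| \, \|f\|_{L^p(\Omega;Y)}$, which is exactly the claimed bound, including the assertion that the operator norm of $M_{\mathcal{F}}$ does not exceed that of $M_d$.

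The main obstacle I anticipate is the bookkeeping in the transference identification: making sure that a filtration with unbounded branching, atoms of varying sizes, and a doubly-infinite index set $\Z$ (so there is no canonical ``top'' generation) is faithfully modelled inside the single fixed dyadic system $\mathcal{D}$ on $[0,1)$, while keeping track that only finitely many extra generations are ever inserted in any bounded range of indices so that the \emph{same} dyadic maximal operator norm controls everything. A clean way to organise this is to first prove the lemma for filtrations whose atoms at each level split into at most two children (the genuinely binary case, where $\Phi$ is immediate), then reduce the general bounded-branching case to the binary case by refining each generation into $\lceil \log_2 N \rceil$ intermediate binary generations (this only increases the supremum defining $M_{\mathcal{F}}$, so the inequality goes the right way), and finally pass to unbounded branching and to $\sigma$-finite $\Omega$ by monotone convergence, using the Fatou property of $Y$ to take the limit inside the lattice norm. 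One technical point worth flagging: one should check that the dyadic maximal operator on $[0,1)$ and on a ``dyadic system'' with higher fixed branching have comparable norms, or simply phrase $M_d$ from the outset with respect to the system actually produced by the construction — either way the constant is the one in the hypothesis, as the equivalence of the various formulations of the Hardy-Littlewood property was already noted in the discussion preceding Theorem \ref{thm-Bourgain}.
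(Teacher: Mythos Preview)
Your approach differs from the paper's and has a real gap. The paper proceeds via a Burkholder-type concave function argument (Lemma~\ref{lem-concave-U}): the validity of the dyadic maximal inequality on $[0,1)$ is encoded in the existence of a function $U:\{\text{finite subsets of }Y\}\times Y\to\R$ satisfying $U\geq V_p$, $U(S\cup\{y\},y)=U(S,y)$, $U(\emptyset,y)\leq 0$, and concavity in the second variable. Because $U$ lives purely on $Y$ and knows nothing about any measure space, it transfers for free to an arbitrary filtration: a one-line Jensen computation on each atom shows that $\int_\Omega U(\{E_kf\}_{k=0}^n,E_nf)\,d\mu$ is nonincreasing in $n$, first for finite algebras on finite spaces, and then for general $\sigma$-algebras and $\sigma$-finite spaces by explicit approximation and exhaustion arguments.

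Your proposed identification $\Phi:\Omega\to[0,1)$ cannot in general land in the dyadic filtration. Already for a single binary split of a probability-one atom into children of measures $\alpha$ and $1-\alpha$ with $\alpha$ not a dyadic rational, the images are $[0,\alpha)$ and $[\alpha,1)$; the associated conditional expectation is \emph{not} a dyadic one, and is not pointwise dominated by $M_d$ either (at $x=1/2$, no dyadic interval containing $x$ is comparable to $[0,1/3)$). Inserting intermediate generations fixes the branching \emph{number} but not the branching \emph{ratios}. Your closing appeal to ``the equivalence of the various formulations of the Hardy--Littlewood property'' to control the non-dyadic interval filtration that $\Phi$ actually produces is then circular: that equivalence is precisely what the lemma establishes, and the paper's remark before Theorem~\ref{thm-Bourgain} explicitly says it will \emph{follow from}, not precede, these considerations. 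The reduction to atomic filtrations has the same defect --- for a genuinely non-atomic $\mathcal{F}_k$ there is no ``refining'' that preserves $E_k$; one has to approximate each $E_kf$ by simple functions and control the difference, which the paper does, but only after the concave-function machinery delivers a bound that is uniform over all finite filtrations.
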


The proof of Lemma \ref{lemma-MHL} is based on a concave function
argument originating from the work of Burkholder \cite{Burk1981} (see also \cite{Burk2001}). We follow closely the argument presented in \cite[Section 7]{Kempp1}.
We begin by observing that, given a filtration 
$(\mathcal{F}_k)_{k\in\N}$ on
$(\Omega, \mu)$, the inequality
\begin{equation}
  \int_\Omega \bigl\| \sup_{0\leq k \leq n} |E_kf(x,\cdot)|
  \bigr\|^p \, d\mu (x) \leq C \int_\Omega \| E_nf(x) \|^p \,
  d\mu(x), \quad f\in L^p(\Omega; Y), \quad n \in\N ,
\end{equation}
where $C$ is a fixed constant, is equivalent with
\begin{equation}
\label{eq-V}
\tag{$\ast$}
  \int_\Omega V_p \Big( \{ E_kf(x) \}_{k=0}^n , E_nf(x) \Big)
  \, d\mu(x) \leq 0 , \quad f\in L^p(\Omega, Y), \quad n\in\N ,
\end{equation}
where
\begin{equation}
  V_p(S,y) = \bigl\| \sup_{y'\in S} |y'(\cdot)| \bigr\|^p - C \| y \|^p,
  \quad S\subset Y \textup{ finite}, \quad y\in Y.
\end{equation}

\begin{lemma}
\label{lem-concave-U}
  Suppose that $Y$ is a Banach lattice and let
  $1 < p < \infty$. The following conditions are equivalent:
  \begin{enumerate}
    \item \eqref{eq-V} holds for the dyadic filtration on the unit interval (with the Lebesgue measure).
    \item There exists a real-valued function $U : \{ \text{finite subsets of }Y \} \times Y \to \R$ such that
    \begin{itemize}
      \item $U(S,y) \geq V_p(S,y)$
      \item $U(S\cup \{ y \}, y) = U(S,y)$
      \item $U(\emptyset, y) \leq 0$
      \item $U(S, \cdot)$ is concave
    \end{itemize}
    for finite subsets $S$ of $Y$ and $y\in Y$.
    \item \eqref{eq-V} holds for any filtration on any
    $\sigma$-finite measure space.
  \end{enumerate}
\end{lemma}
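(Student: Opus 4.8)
The plan is to establish the cycle of implications $(1) \Rightarrow (2) \Rightarrow (3) \Rightarrow (1)$, with the bulk of the work going into $(1) \Rightarrow (2)$, which is the classical Bellman-function/concavification step in the spirit of Burkholder.

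\textbf{Step $(2) \Rightarrow (3)$.} This is the easy direction and I would do it first. Suppose $U$ exists with the four listed properties and let $(\mathcal{F}_k)_{k=0}^n$ be any finite filtration on a $\sigma$-finite measure space $(\Omega,\mu)$, with conditional expectations $E_k$. Fix $f \in L^p(\Omega;Y)$ and consider the scalar process $\omega \mapsto U(\{E_kf(x)\}_{k=0}^j, E_jf(x))$. By the concavity of $U(S,\cdot)$ in its second variable together with the averaging property of conditional expectation (Jensen), and by the property $U(S \cup \{y\},y) = U(S,y)$ which lets the ``history'' set $S$ absorb the new value, one shows that $j \mapsto \int_\Omega U(\{E_kf(x)\}_{k=0}^j, E_jf(x))\,d\mu(x)$ is non-increasing in $j$. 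Starting at $j=0$ where $U(\{E_0f(x)\}, E_0f(x)) = U(\emptyset, E_0f(x)) \leq 0$, we get at $j = n$ that $\int_\Omega U(\{E_kf(x)\}_{k=0}^n, E_nf(x))\,d\mu(x) \leq 0$, and since $U \geq V_p$ this yields \eqref{eq-V}. One must take a little care that the relevant integrals are finite (using $f \in L^p(\Omega;Y)$ and the boundedness properties of $V_p$ and $U$ on the range of the process), but this is routine. Note $(3) \Rightarrow (1)$ is trivial since the dyadic filtration on $[0,1)$ is a special case.

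\textbf{Step $(1) \Rightarrow (2)$.} This is the main obstacle. Assuming \eqref{eq-V} holds for the dyadic filtration on $[0,1)$, I would construct $U$ as an infimum/supremum over dyadic ``test configurations''. Concretely, for a finite subset $S \subset Y$ and $y \in Y$, define
\[
U(S,y) = \sup \Big\{ \int_0^1 V_p\big(S \cup \{E_kf(t)\}_{k=0}^n,\, E_nf(t)\big)\,dt \Big\},
\]
where the supremum is over $n \in \N$ and all dyadic-filtration-adapted $Y$-valued functions $f$ on $[0,1)$ with $E_0 f \equiv y$ (i.e.\ $\int_0^1 f = y$), with the convention that $S$ is carried along inside every $V_p$. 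One then checks: $U \geq V_p$ by taking the trivial configuration $n=0$, $f \equiv y$; $U(S \cup \{y\}, y) = U(S,y)$ because adjoining $y$ to $S$ is absorbed by the $k=0$ term of every admissible configuration; $U(\emptyset, y) \leq 0$ is exactly the hypothesis \eqref{eq-V}; and concavity of $U(S,\cdot)$ follows from a splitting/self-similarity argument: given $y = \lambda y_1 + (1-\lambda) y_2$ with $\lambda$ dyadic rational (then pass to general $\lambda$ by a density/continuity argument, or allow non-dyadic first splits), one glues near-optimal configurations for $(S,y_1)$ and $(S,y_2)$ on the two halves of $[0,1)$ after the first dyadic split, rescaling, to produce a configuration for $(S,y)$ with value $\geq \lambda U(S,y_1) + (1-\lambda)U(S,y_2) - \epsilon$. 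The finiteness of $U$ (so that it is genuinely $\R$-valued, not $+\infty$) again uses \eqref{eq-V}: translating a configuration by the constant $-y$ does not change the sup-of-absolute-values structure much, and one bounds everything by $C\|y\|^p$ plus a controlled term. I expect the delicate points to be (a) justifying that dyadic rational weights suffice for concavity and extending to all weights, and (b) the bookkeeping that lets the finite set $S$ be transported unchanged through the recursion — both are handled essentially as in \cite[Section 7]{Kempp1}, which I would cite for the details rather than reproduce in full.

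\textbf{Remark on scope.} Since Lemma \ref{lemma-MHL} only needs the implication ``$(1) \Rightarrow (3)$'' (Hardy--Littlewood property, which via Bourgain's theorem is our standing UMD hypothesis, gives \eqref{eq-V} for arbitrary filtrations, hence boundedness of $M_{\mathcal F}$), the function $U$ in $(2)$ is really the engine: it is the device that upgrades the one-filtration estimate to all filtrations. In the write-up I would therefore emphasize $(1) \Rightarrow (2) \Rightarrow (3)$ and note $(3) \Rightarrow (1)$ in one line.
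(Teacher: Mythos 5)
Your proposal is correct and follows essentially the same route as the paper: the same supremum definition of $U$ over dyadic test configurations with prescribed mean $y$, the same gluing-on-two-halves argument for (midpoint) concavity upgraded by local boundedness from below, and the same monotone decrease of $\int_\Omega U$ along the filtration for $(2)\Rightarrow(3)$. The one point you dismiss as routine but where the paper does real work is the Jensen step in $(2)\Rightarrow(3)$: for a general $\sigma$-algebra this is not a finite convex combination of values of the concave function $U(S,\cdot)$ on the Banach space $Y$, so the paper first proves the claim for filtrations of \emph{finite} algebras (where the conditional expectation over each atom is a finite average and concavity applies directly), then reduces the general case to this by approximating the $E_kf$ with simple functions, and separately reduces $\sigma$-finite to finite measure spaces and $\Z$-indexed to $\N$-indexed filtrations.
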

\begin{proof}
  \textbf{1. $\Rightarrow$ 2.}
  %To see that $\textit{1.}$ implies $\textit{2.}$, 
  We define
  \begin{multline}
    U(S,y) = \sup \Biggl\{ \int_0^1 V_p \Bigl( S \cup \{ E_kf(x) \}_{k=0}^n , E_nf(x) \Bigr) \, dx :\\  \int_0^1 f(x)\, dx = y, \: f \: \mathcal{F}_n\text{-measurable and }Y\text{-valued, }n\in\N
    \Biggr\}
  \end{multline}
  for finite $S\subset Y$ and $y\in Y$.
  
  That $U(S,y) \geq V_p(S,y)$ is immediate from the definition of
  $U$ already with $n=0$. Likewise, that
  $U(S\cup \{ y \}, y) = U(S,y)$ follows at once from the
  observation that $\{ y \} \subset \{ E_kf(x) \}_{k=0}^n$
  for almost every $x\in [0,1)$ whenever $\int_0^1 f(x)\, dx = y$
  and $n\in\N$. That $U(\emptyset , y) \leq 0$ is exactly the
  assumption $\textit{1.}$
  
  To see that $U(S, \cdot)$ is concave, we first show that it 
  is midpoint concave, i.e. that for any $y_1, y_2 \in Y$ we have
  \begin{equation}
  \label{eq-midpointconcave}
    U\Bigl(S, \frac{y_1 + y_2}{2}\Bigr) \geq \frac{1}{2} \bigl( U(S,y_1) + U(S,y_2)\bigr).
  \end{equation}
  
  To deal with the suprema, let $m_i < U(S,y_i)$
  for $i=1,2$. By the definition of $U$ there
  exist functions $f_1,f_2 \in L^p([0,1);Y)$
  for which $\int_0^1 f_i (x) \, dx = y_i$ and
  \begin{equation*}
    \int_0^1 V_p \Bigl( S \cup \{ E_kf_i(x) \}_{k=0}^n , E_nf_i(x) \Bigr) \, dx > m_i , \quad i=1,2 .
  \end{equation*}
  The function defined as
  \begin{equation*}
    f(x) = \begin{cases}
      f_1(2x), \quad &0\leq x < 1/2, \\
      f_2(2x-1), \quad &1/2\leq x < 1,
    \end{cases}
  \end{equation*}
  will then satisfy
  \begin{equation*}
    \int_0^1 V_p \Bigl( S \cup \{ E_kf(x) \}_{k=0}^{n+1} , E_{n+1}f(x) \Bigr) \, dx > \frac{m_1 + m_2}{2} . 
  \end{equation*}
  To see this note that
  \begin{equation*}
  \begin{split}
    \{ E_kf(x) \}_{k=1}^{n+1} \supset
    \{ E_kf_1(2x) \}_{k=0}^n , \quad 0\leq x < 1/2,\\
   \{ E_kf(x) \}_{k=1}^{n+1} \supset
    \{ E_kf_2(2x-1) \}_{k=0}^n , \quad 1/2\leq x < 1.   
  \end{split}
  \end{equation*}
  Therefore
  \begin{equation*}
  \begin{split}
    &\int_0^{1/2} V_p \Bigl( S \cup \{ E_kf(x) \}_{k=0}^{n+1} , E_{n+1}f(x) \Bigr) \, dx \\
    &\geq \int_0^{1/2} V_p \Bigl( S \cup \{ E_kf_1(2x) \}_{k=0}^n , E_nf_1(2x) \Bigr) \, dx \\
    &= \frac{1}{2} \int_0^1 V_p \Bigl( S \cup \{ E_kf_1(x) \}_{k=0}^n , E_nf_1(x) \Bigr) \, dx 
    > \frac{m_1}{2}
    \end{split}
  \end{equation*}
  and similarly
  \begin{equation*}
    \int_{1/2}^1 V_p \Bigl( S \cup \{ E_kf(x) \}_{k=0}^{n+1} , E_{n+1}f(x) \Bigr) \, dx
    > \frac{m_2}{2} .
  \end{equation*}
  Since $m_i$ were arbitrary, \eqref{eq-midpointconcave} follows.
  
  To finish this part of the proof, we remark that a midpoint concave function that is also locally
  bounded from below is actually concave.  
  We have now shown that $U$ satisfies the required conditions.
  
  \textbf{2. $\Rightarrow$ 3. (finite space, finite algebras)}
  For this step we first consider 
  filtrations of finite algebras and then reduce the general
  case to this. 
  
  Claim: If $(\mathcal{F}_k)_{k\in\N}$ is a
  filtration of finite algebras on a finite measure space
  $(\Omega, \mu)$, then
  \begin{equation}
    \int_\Omega U\Bigl( \{ E_kf(x) \}_{k=0}^n, E_nf(x) \Bigr) \, d\mu(x) \leq \int_\Omega U\Big( \{ E_kf(x) \}_{k=0}^{n-1}, E_{n-1}f(x) \Big) \, d\mu(x)
  \end{equation}
  for all $f\in L^1(\Omega; Y)$ and $n\in\N$.
  
  Proof of claim: By the second property in Lemma \ref{lem-concave-U} $\textit{2.}$,
  $\begin{displaystyle}U\Bigl( \{ E_kf(x) \}_{k=0}^n, E_nf(x) \Bigr) = 
  U\Bigl( \{ E_kf(x) \}_{k=0}^{n-1}, E_nf(x) \Bigr)\end{displaystyle}$.
  Moreover, on each generator $A$ of the (finite) algebra
  $\mathcal{F}_{n-1}$, the set $\{ E_kf(x) \}_{k=0}^{n-1}$ 
  is a constant $Y_A$ and   
  \begin{equation}
    \int_A U\bigl(Y_A, E_nf(x)\bigr) \, d\mu(x)
    \leq \mu(A) U \Bigl( Y_A, 
    \frac{1}{\mu(A)} \int_A E_nf(x) \, d\mu(x) \Bigr).
  \end{equation}  
  Therefore
  \begin{equation}
    \begin{split}
      &\int_\Omega U\Bigl( \{ E_kf(x) \}_{k=0}^n , E_nf(x) \Bigr)
      \, d\mu(x) \\ 
      &= \sum_{A\in \textup{gen}(\mathcal{F}_{n-1})}
      \int_A U\bigl(Y_A, E_nf(x)\bigr) \, d\mu(x) \\
      &\leq \sum_{A\in \textup{gen}(\mathcal{F}_{n-1})}
      \mu(A) \, U \Bigl( Y_A, \frac{1}{\mu(A)} \int_A E_nf(x) 
      \, d\mu(x) \Bigr) \\
      &= \sum_{A\in \textup{gen}(\mathcal{F}_{n-1})}
      \int_A U\bigl( Y_A, E_{n-1}f(x)\bigr) \, d\mu(x) \\
      &= \int_\Omega U\Bigl( \{ E_kf(x) \}_{k=0}^{n-1}, E_{n-1}f(x) \Bigr) \, d\mu(x).
    \end{split}
  \end{equation}
  
We have now shown: $\textit{2.}$ implies that the maximal
function
\begin{equation*}
  M_\mathcal{F}^{(n)}(f)(x, \omega')
  = \sup_{0 \leq k \leq n} | E_kf(x,\omega')|, \quad x\in\Omega, \quad \omega'\in\Omega',
\end{equation*}
satisfies
\begin{equation}
\label{eq:maximalineq}
  \| M_\mathcal{F}^{(n)}(f) \|_{L^p(\Omega; Y)}
  \leq C \| E_n f \|_{L^p(\Omega; Y)}, \quad 
  f\in L^p(\Omega;Y), \quad n\in\N,
\end{equation}
uniformly for all filtrations $(\mathcal{F}_k)_{k\in\N}$ of
finite algebras on any finite measure space $(\Omega, \mu)$.

\textbf{2. $\Rightarrow$ 3. (reduction to finite algebras)}
To see that the finiteness requirement for $\sigma$-algebras
is not necessary we argue as follows:

Suppose that $(\mathcal{F}_k)_{k\in\N}$ is a filtration, $n$ a positive integer and
$f$ a function in $L^p(\Omega ; Y)$. 
Let $\varepsilon > 0$ and begin by choosing simple functions 
$s_k \in L^p(\mathcal{F}_k ; Y)$, $k=0,1,\ldots ,n$,
so that
\begin{equation*}
  \| E_kf - s_k \|_{L^p(\Omega;Y)} < \frac{\varepsilon}{2^{k+2}} .
\end{equation*}
For $k=0,1,\ldots ,n$, let $\widetilde{\mathcal{F}}_k$ be the finite algebra generated by 
$s_0,s_1,\ldots , s_k$ and observe that $\widetilde{\mathcal{F}}_k \subset \mathcal{F}_k$ and that
$\widetilde{\mathcal{F}}_k \subset \widetilde{\mathcal{F}}_{k+1}$, i.e. that 
$(\widetilde{\mathcal{F}}_k)_{k=0}^n$ is a filtration.
  %  Denoting the maximal operator with respect to $(\mathcal{F}_j)_{j=1}^{\infty}$ by $M_R$ we see that
    Now
    \begin{align*}
      \| M_\mathcal{F}^{(n)} (f) \|_{L^p(\Omega;Y)}
      &= \biggl( \int_\Omega
      \bigl\| \sup_{0\leq k \leq n} E_kf(x, \cdot ) \bigr\|^p \, d\mu (x) \biggr)^{1/p} \\
      &\leq \biggl( \int_\Omega\bigl\| \sup_{0\leq k \leq n} (E_kf(x,\cdot ) - \widetilde{E}_kf(x,\cdot )) \bigr\|^p \, d\mu (x) \biggr)^{1/p} \\
      &+ \| \widetilde{M}_\mathcal{F}^{(n)} (f) \|_{L^p(\Omega;Y)} ,
    \end{align*}
    where the maximal operator $\widetilde{M}_\mathcal{F}^{(n)}$ %with respect to $(\mathcal{G}_j)_{j=1}^N$ satisfies by assumption
    satisfies 
    $\| \widetilde{M}_\mathcal{F}^{(n)} (f) \|_{L^p(\Omega;Y)} \leq C \| E_nf \|_{L^p(\Omega;Y)}$ for a constant $C$ independent
    of the filtration $(\widetilde{\mathcal{F}}_k)_{k=0}^n$. This independence is crucial, as 
    $\widetilde{\mathcal{F}}_k$'s arose from $f$.

    We then estimate
    \begin{align*}
      &\biggl( \int_\Omega \bigl\| \sup_{0\leq k \leq n} ( E_kf(x,\cdot )
      - \widetilde{E}_kf(x,\cdot )) \bigr\|^p \, d\mu (x) \biggr)^{1/p}\\
      &\leq \biggl( \int_\Omega \biggl( \sum_{k=0}^n \bigl\| E_kf(x,\cdot )
      - \widetilde{E}_kf(x,\cdot ) \bigl\| \biggr) ^p \, d\mu (x) \biggr)^{1/p} \\
      &\leq \sum_{k=0}^n \| E_kf - \widetilde{E}_kf \|_{L^p(\Omega;Y)} \\
      &\leq \sum_{k=0}^n \Bigl( \bigl\| E_kf - s_k \bigr\|_{L^p(\Omega;Y)} 
      + \| \widetilde{E}_kf - s_k \|_{L^p(\Omega;Y)} \Bigr) .     
    \end{align*}
    Furthermore, since 
    \begin{align*}
      \| \widetilde{E}_kf - s_k \|_{L^p(\Omega;Y)} &= \| \widetilde{E}_kf - \widetilde{E}_k s_k \|_{L^p(\Omega;Y)}
      = \| \widetilde{E}_k (E_kf 
      - s_k) \|_{L^p(\Omega;Y)} \\
      &\leq \| E_kf - s_k \|_{L^p(\Omega;Y)} ,
    \end{align*}
    we get
    \begin{align*}
      \biggl( \int_\Omega \bigl\| \sup_{0\leq k \leq n} ( E_kf(x,\cdot )
      - \widetilde{E}_kf(x,\cdot )) \bigr\|^p \, d\mu (x) \biggr)^{1/p}
      &\leq 2 \sum_{k=0}^n \| E_kf - s_k \|_{L^p(\Omega;Y)} \\
      &< \sum_{k=0}^n \frac{\varepsilon}{2^{k+1}} < \varepsilon .
    \end{align*}
  
\textbf{2. $\Rightarrow$ 3. (reduction to finite space)}  
As the final step, we will get rid of the assumption that
the measure space $(\Omega, \mu)$ is finite.

Suppose then that \eqref{eq:maximalineq} holds 
uniformly with respect to any 
filtration $\mathcal{F}$ on any 
finite measure space and let
$(\Omega, \mu )$ be a $\sigma$-finite measure space with a filtration $\mathcal{F} = (\mathcal{F}_k)_{k=0}^{\infty}$.
Since $\mathcal{F}_0$ is $\sigma$-finite (by assumption), 
we can write $\Omega$ as a union of disjoint sets $A_j\in\mathcal{F}_0$, 
$j\in\N$, each with finite $\mu$-measure. 
%Now the restriction $\mathcal{F}|_{A_k}$ of $\mathcal{F}$ 
%to each $A_k$ remains a $\sigma$-algebra
%and the restriction $(\mathcal{F}_j|_{A_k})_{j=1}^{\infty}$
%of the filtration $(\mathcal{F}_j)_{j=1}^{\infty}$ becomes a filtration on
%$\mathcal{F}|_{A_k}$. 
Let us define for $j\in\N$ the 
finite measures $\mu_j (A) = \mu (A \cap A_j)$ on $\Omega$.
The conditional expectation of a function $f\in L^p(\Omega ; Y)$ with
respect to $\mathcal{F}_k$ and $\mu_j$ is simply the conditional expectation of $1_{A_j}f$ with respect to
$\mathcal{F}_k$ which further equals $1_{A_j} E_k f$ (since $\mathcal{F}_0 \subset \mathcal{F}_k$ for all $k$). In symbols
\begin{equation*}
  E_k^{(j)} f = 1_{A_j} E_kf ,
\end{equation*}
where $E_k^{(j)} f$ denotes the conditional expectation of $f$ with respect to $\mathcal{F}_k$ and $\mu_j$.
Thus
\begin{align*}
  \| M_\mathcal{F}^{(n)}(f) \|_{L^p(\Omega;Y)}^p 
  &= \sum_{j=0}^{\infty} \int_{A_j} 
  \bigl\| \sup_{0\leq k \leq n} E_k f(x,\cdot ) \bigr\|^p \, d\mu(x) \\
  &= \sum_{j=0}^{\infty} \int_{A_j}  
  \bigl\| \sup_{0\leq k \leq n} E_k^{(j)} f(x,\cdot ) \bigr\|^p \, d\mu_j(x) \\
  &\leq \sum_{j=0}^{\infty} C^p \int_{A_j} \bigl\| E_n f(x,\cdot ) \bigr\|^p \, d\mu_j (x) \\
  &= C^p \| E_nf \|_{L^p(\Omega;Y)}^p .
\end{align*}

So far we have only considered filtrations indexed by $\N$. Suppose that \eqref{eq:maximalineq} holds with
respect to any filtration indexed by $\N$ on any $\sigma$-finite measure space and let
$\mathcal{F} = (\mathcal{F}_k)_{k\in\Z}$ be a filtration on $(\Omega, \mu )$. Then for all $N\geq 0$, \eqref{eq:maximalineq} holds with respect to $(\mathcal{F}_k)_{k=-N}^{\infty}$ with a constant
independent of $N$ and thus by monotone convergence theorem with respect to $(\mathcal{F}_k)_{k\in\Z}$.  
  
  This finishes the proof.
\end{proof}

Let now $\mathcal{D} = \bigcup_{k\in\Z} \mathcal{D}_k$ be a dyadic system on
$\Omega$ as in Theorem \ref{thm-dyadic-cubes}. Denote by $\mathcal{F}_k$
the $\sigma$-algebra generated by $\mathcal{D}_k$ and note that 
the corresponding conditional expectation is
\begin{equation}
\label{equ-conditional-expectation}
  E_kf(x) = \sum_{Q\in\mathcal{D}_k} \frac{1_Q(x)}{\mu(Q)}
  \int_Q f(y) \, d\mu(y) .
\end{equation}
The maximal function associated with the increasing filtration
$(\mathcal{F}_k)_{k\in\Z}$ is therefore given by
\begin{equation}
  M_\mathcal{F}(f)(x,\omega') = \sup_{\substack{Q\ni x \\ Q\in\mathcal{D}}} \frac{1}{\mu(Q)} \biggl| \int_Q f(y,\omega') \, d\mu(y)
  \biggr| , \quad x\in\Omega, \quad \omega'\in\Omega' .
\end{equation}

Lemma \ref{lem-systems} allows us to control the Hardy-Littlewood lattice maximal
function $M_{HL}$ by its dyadic counterparts. Indeed, we see that for any ball $B$,
\begin{equation}
  \frac{1}{\mu(B)} \int_B |f(y,\omega')| \, d\mu(y)
  \lesssim \frac{1}{\mu(Q_B)} \int_{Q_B} |f(y,\omega')| \, d\mu(y) .
\end{equation}
Therefore,
\begin{equation}
\label{eq-domination}
  M_{HL}(f)(x,\omega') \lesssim \sum_m M_{\mathcal{F}^m}(|f|)(x,\omega'),
  \quad x\in\Omega, \quad \omega'\in\Omega' , 
\end{equation}
where $\mathcal{F}^m$ are the filtrations arising from the
finite collection of dyadic systems $\mathcal{D}^m$.

We are now in a position to prove Theorem \ref{thm-MHL}.

\begin{proof}[of Theorem \ref{thm-MHL}]
  The result is an immediate consequence of the considerations
  above. Indeed, if $Y$ is a UMD lattice, it has the Hardy-Littlewood property by Theorem \ref{thm-Bourgain}. By Lemma
  \ref{lem-systems}, we may construct dyadic filtrations
  $\mathcal{F}^m$ so that $M_{HL}$ is dominated pointwise
  by the sum of $M_{\mathcal{F}^m}$ as in \eqref{eq-domination}.
  By Lemma \ref{lemma-MHL}, the latter maximal operators are
  bounded on $L^p(\Omega; Y)$ for any $p\in (1,\infty)$,
  and therefore so is $M_{HL}$.
\end{proof}

We remark that in the case $Y = \ell^s(\ell^2)$, Theorem \ref{thm-MHL} can be proved by a similar method as \cite[Theorem 1.2, Corollary 2.9]{GLY}.
Important for our later considerations will also be the following variant of the centered Hardy-Littlewood maximal operator.

\begin{defi}
\label{defi-local-Lq-average}
Let $f : \Omega \to Y$ locally integrable, $q \in [1,\infty]$ and $r > 0.$
\begin{enumerate}
\item We put
\begin{equation}
\label{equ-defi-Nqr}
N_{q,r}(f)(x,\omega') = \frac{1}{V(x,r)^{\frac{1}{q}}} \biggl(\int_{B(x,r)} |f(y,\omega')|^q \,d\mu(y) \biggr)^{\frac1q},
\end{equation}
(obvious modification if $q = \infty$).
\item Furthermore, we put
\begin{equation}
\label{equ-defi-Mq}
M_{HL}^q(f)(x,\omega') = \sup_{r > 0} N_{q,r}(f)(x,\omega').
\end{equation}
\end{enumerate}
\end{defi}

These operators are well-defined a priori on $L^p(\Omega) \otimes Y$, and they are sublinear on that subspace.
We will show below that they are bounded $L^p(\Omega) \otimes Y \subseteq L^p(\Omega;Y) \to L^p(\Omega;Y)$, so that by Lemma \ref{lem-density-sublinear}, they extend boundedly to $L^p(\Omega;Y)$.

\begin{prop}
\label{prop-MHL-q}
Let $Y$ be a $p_Y$-convex UMD lattice for some $p_Y \in [1,\infty]$ such that the convexification $Y^{p_Y}$ is again a UMD lattice, $q \in [1,\infty)$ and $p \in (1,\infty)$.
Assume that $p_Y \geq q$ and $p > q$.
Then $M_{HL}^q$ is bounded on $L^p(\Omega;Y)$.
\end{prop}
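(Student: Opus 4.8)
The plan is to deduce this from Theorem \ref{thm-MHL} by a convexification/rescaling trick in both the lattice $Y$ and the exponent $p$. First I would record the pointwise identity $M_{HL}^q(f) = \bigl(M_{HL}(|f|^q)\bigr)^{1/q}$ on the dense subspace $L^p(\Omega)\otimes Y$: by Definition \ref{defi-local-Lq-average}, $N_{q,r}(f)(x,\omega')^q = \frac{1}{V(x,r)}\int_{B(x,r)}|f(y,\omega')|^q\,d\mu(y)$ is an ordinary Hardy--Littlewood average of the scalar function $|f|^q$, and $\sup_{r>0}$ commutes with $t\mapsto t^{1/q}$. Combining this with the definition $\|z\|_{Y^q} = \bigl\||z|^{1/q}\bigr\|_Y^q$ of the $q$-convexification norm yields the isometric bookkeeping identities
\[ \|M_{HL}^q(f)\|_{L^p(\Omega;Y)}^{p} = \|M_{HL}(|f|^q)\|_{L^{p/q}(\Omega;Y^q)}^{p/q}, \qquad \|f\|_{L^p(\Omega;Y)}^{p} = \bigl\| |f|^q \bigr\|_{L^{p/q}(\Omega;Y^q)}^{p/q}. \]
Since $p>q$ we have $p/q\in(1,\infty)$, and $|f|^q\in L^{p/q}(\Omega;Y^q)$, so once $M_{HL}$ is known to be bounded on $L^{p/q}(\Omega;Y^q)$ the first identity gives $\|M_{HL}^q(f)\|_{L^p(\Omega;Y)}\leq C\|f\|_{L^p(\Omega;Y)}$ on $L^p(\Omega)\otimes Y$; Lemma \ref{lem-density-sublinear} then extends this bound to all of $L^p(\Omega;Y)$, using that $M_{HL}^q$ is sublinear on $L^p(\Omega)\otimes Y$.

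So the task reduces to checking the hypotheses of Theorem \ref{thm-MHL} for the lattice $Y^q$. Since $q\leq p_Y$, Lemma \ref{lem-UMD-lattice-convexity}(4) gives that $Y$ is $q$-convex, hence $Y^q$ is a Banach lattice by Lemma \ref{lem-p-convexification}. The key point is that $Y^q$ is in fact a \emph{UMD} lattice: writing the convexification of exponent $s\leq 1$ via the same formula $\|z\|_{(Y^{p_Y})^{s}} = \bigl\||z|^{1/s}\bigr\|_{Y^{p_Y}}^{s}$, a direct computation with these norms shows $Y^q = (Y^{p_Y})^{q/p_Y}$ with $q/p_Y\leq 1$. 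Now apply Lemma \ref{lem-UMD-lattice-convexity}(5) to the lattice $Y^{p_Y}$, which is UMD by hypothesis: there is $\epsilon>0$ such that $(Y^{p_Y})^s$ is a UMD lattice for every $0<s<1+\epsilon$, and since $0<q/p_Y\leq 1<1+\epsilon$ we conclude that $Y^q$ is a UMD lattice. (For $p_Y=\infty$ this is to be read in the obvious limiting sense; the substantive content is $p_Y<\infty$.) Theorem \ref{thm-MHL} then applies with exponent $p/q$ and lattice $Y^q$, which closes the argument.

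I expect the only genuine obstacle to be this last point. Naively one only has $Y^{p_Y}$ UMD, and asking whether the smaller convexification $Y^q$ with $q<p_Y$ is again UMD is precisely Rubio de Francia's open Problem 3 recalled in Remark \ref{rem-RdF-Problem-3}. The resolution is \emph{not} to convexify $Y$ "downwards", but to realise $Y^q$ as a convexification of the already-UMD lattice $Y^{p_Y}$ with exponent $\leq 1$, a range that is unconditionally covered by Lemma \ref{lem-UMD-lattice-convexity}(5). Apart from that, the only care needed is the routine verification that $|f|^q$ genuinely defines an element of $L^{p/q}(\Omega;Y^q)$, so that $M_{HL}(|f|^q)$ is exactly the maximal function controlled by Theorem \ref{thm-MHL}.
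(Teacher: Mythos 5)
Your proposal is correct and follows essentially the same route as the paper: the pointwise identity $M_{HL}^q(f)=\bigl(M_{HL}(|f|^q)\bigr)^{1/q}$, the transfer to $L^{p/q}(\Omega;Y^q)$ via the convexification norm, and the observation that $Y^q=(Y^{p_Y})^{q/p_Y}$ with $q/p_Y\leq 1$ is UMD by Lemma \ref{lem-UMD-lattice-convexity}(5), so that Theorem \ref{thm-MHL} applies. Your explicit remark on why this sidesteps Rubio de Francia's Problem 3 is exactly the point the paper's hypothesis on $Y^{p_Y}$ is designed to handle.
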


\begin{proof}
Let $f \in L^p(\Omega) \otimes Y \subseteq L^p(\Omega;Y)$.
For $x \in \Omega$ and $\omega' \in \Omega'$, we put $g(x,\omega') = |f(x,\omega')|^q$.
Then
\begin{align*}
\| M_{HL}^q(f) \|_{L^p(\Omega;Y)}^p & = \int_\Omega \Biggl\| \Biggl( \sup_{r > 0} \frac{1}{V(x,r)} \int_{B(x,r)} |f(y,\omega')|^q \,dy \Biggr)^{\frac1q} \Biggr\|_Y^p \,dx \\
& = \int_\Omega \Biggl\| \Biggl( \sup_{r > 0} \frac{1}{V(x,r)} \int_{B(x,r)} g(y,\omega') \,dy \Biggr)^{\frac1q} \Biggr\|_Y^p \,dx \\
& = \int_\Omega \Biggl\| \sup_{r > 0} \frac{1}{V(x,r)} \int_{B(x,r)} g(y,\omega') \,dy \Biggr\|_{Y^q}^{\frac{p}{q}} \,dx \\
& \lesssim \int_\Omega \bigl\| g(x,\omega') \bigr\|_{Y^q}^{\frac{p}{q}} \,dx \\
& = \int_\Omega \bigl\| f(x,\omega') \bigr\|_Y^p \,dx = \|f\|_{L^p(\Omega;Y)}^p,
\end{align*}
where we have used that $\frac{p}{q} > 1$ and that $Y^{p_Y}$ is a UMD lattice, so also $Y^q = (Y^{p_Y})^{\frac{q}{p_Y}}$ is a UMD lattice according to Lemma \ref{lem-UMD-lattice-convexity}.
Hence Theorem \ref{thm-MHL} was applicable on $L^{\frac{p}{q}}(\Omega;Y^q)$.
\end{proof}

\section{H\"ormander functional calculus}
\label{sec-main}

In this section, we prove the H\"ormander functional calculus result on $L^p(\Omega;Y)$, using the $R$-boundedness of the semigroup for complex times.
The main idea is to estimate the semigroup against the Hardy-Littlewood maximal operator.
To push down the H\"ormander calculus derivation exponent and also to treat generalised Gaussian estimates in place of classical Gaussian estimates,
we will the need the local $L^q$ average operator $N_q$ given in Definition \ref{defi-local-Lq-average}.
We will also illustrate in this section several consequences of the H\"ormander functional calculus result.
In the following definition, we give the parameter needed in the H\"ormander calculus, which will encode convexity and concavity of $Y$ and the Lebesgue $L^p$ exponent.

\begin{defi}
Let $p \in (1,\infty),$ $p_Y \in (1,2]$ and $q_Y \in [2,\infty).$
We put 
\begin{equation}
\label{equ-defi-alpha}
\alpha(p,p_Y,q_Y) = \max\biggl(\frac{1}{p},\frac{1}{p_Y},\frac12 \biggr) - \min \biggl(\frac{1}{p},\frac{1}{q_Y},\frac12 \biggr) \in (0,1).
\end{equation}
Informally spoken, this is the length of the segment, which is the convex hull of the points $\frac{1}{p},\frac{1}{p_Y},\frac{1}{q_Y}$ and $\frac12$ sitting on the real line.
\end{defi}

The $R$-boundedness of the semigroup under generalised Gaussian estimates reads as follows.

\begin{thm}
\label{thm-R-bounded-semigroup}
Let $(\Omega,\dist,\mu)$ be a space of homogeneous type with a dimension $d$.
Let $A$ be a self-adjoint operator on $L^2(\Omega)$ generating the semigroup $(T_t)_{t \geq 0}$.
Let $p_0 \in [1,2)$ and $m \in [2,\infty)$.
Assume that $(T_t)_{t \geq 0}$ satisfies generalised Gaussian estimates with parameters $p_0,m$.
Let $Y$ be a UMD lattice which is $p_Y$-convex and $q_Y$-concave for some $p_Y \in (p_0,2]$ and $q_Y \in [2,p_0')$
Assume that the convexifications $Y^{p_Y}$ and $(Y')^{q_Y'}$ are also UMD lattices.
Then
\[ \bigl\{ \bigl(\cos(\arg z)\bigr)^\alpha \exp(-zA) : \: z \in \C_+ \bigr\} \]
is $R$-bounded on $L^p(\Omega;Y)$ for $p \in (p_0,p_0'),$ where 
\[ \alpha > \alpha(p,p_Y,q_Y) d \]
from \eqref{equ-defi-alpha}.
\end{thm}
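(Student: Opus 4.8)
The plan is to verify the sufficient condition \eqref{equ-R_T-variant} of Theorem \ref{thm-KrW3}. Since $L^p(\Omega;Y)$ is a B-convex lattice, \eqref{equ-Rademacher-square} rephrases the asserted $R$-boundedness of $\{(\cos(\arg z))^\alpha T_z:z\in\C_+\}$ (with $T_z=\exp(-zA)$) as the square function estimate
\[ \Bigl\| \Bigl( \sum_k (\cos(\arg z_k))^{2\alpha} |T_{z_k}f_k|^2 \Bigr)^{1/2} \Bigr\|_{L^p(\Omega;Y)} \lesssim \Bigl\| \Bigl( \sum_k |f_k|^2 \Bigr)^{1/2} \Bigr\|_{L^p(\Omega;Y)}, \]
$z_k\in\C_+$, $f_k\in L^p(\Omega)\otimes Y$ -- an inequality between norms in $L^p(\Omega;Y(\ell^2))$. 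I would prove it by duality, pairing against $(h_k)_k$ with $\bigl\|(\sum_k|h_k|^2)^{1/2}\bigr\|_{L^{p'}(\Omega;Y'(\ell^2))}\leq 1$ and bounding $\mathcal{B}=\sum_k\int_\Omega\langle(\cos(\arg z_k))^\alpha T_{z_k}f_k(x,\cdot),h_k(x,\cdot)\rangle_{Y,Y'}\,d\mu(x)$.

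The engine is the extrapolation of the generalised Gaussian estimate \eqref{equ-GGE} from the positive axis to $\C_+$, stemming from the work of Blunck and Kunstmann (see \cite{BK05}): for $z\in\C_+$, with $\omega_z=\cos(\arg z)$ and $r=|z|^{1/m}$,
\[ \|1_{B(x,r)}T_z1_{B(y,r)}\|_{L^{p_0}\to L^{p_0'}}\lesssim V(x,r)^{-(1/p_0-1/p_0')}\exp\Bigl(-c\,\omega_z^{1/(m-1)}(\dist(x,y)/r)^{m/(m-1)}\Bigr) \]
(complex time degrades the off-diagonal decay, not the on-diagonal size). With the analogous $L^2\to L^2$ bound (self-adjointness $T_z^{\ast}=T_{\bar z}$ and Remark \ref{rem-GGE-spectrum}) and the $L^q(\Omega)$-boundedness of $T_z$ for $q\in(p_0,p_0')$, interpolation produces, for all $q_1,q_2\in[p_0,2]$, an $L^{q_1}\to L^{q_2'}$ off-diagonal bound with prefactor $V(x,r)^{-(1/q_1-1/q_2')}$ and a loss that is a power of $\omega_z$.

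With these estimates, for each $k$ I would split $\Omega$ into dyadic cubes $Q$ of radius $\sim r_k=|z_k|^{1/m}$ (Theorem \ref{thm-dyadic-cubes}, Lemma \ref{lem-dyadic-cubes-lower}), split $f_k$ along the dyadic annuli $A_j(Q)$ about $Q$, estimate the integral over $Q$ by Hölder in $x$ with exponents $q_2',q_2$, and use the off-diagonal bound to control it -- uniformly in $\omega'\in\Omega'$ -- by a $j$-summable series whose general term is $\mu(Q)$ times $N_{q_1,2^{j+2}r_k}(f_k)(c_Q,\omega')$ (cf. \eqref{equ-defi-Nqr}, $c_Q$ a centre of $Q$) times the normalized $L^{q_2}$-average of $|h_k(\cdot,\omega')|$ over $Q$. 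Since $Q$ has radius $\sim r_k$, these averages are $\lesssim M_{HL}^{q_1}(f_k)(x,\omega')$ resp. $\lesssim M_{HL}^{q_2}(h_k)(x,\omega')$ for every $x\in Q$; summing over $j$ and over $\{Q\}$ gives $\mathcal{B}\lesssim\sum_k\omega_{z_k}^{\alpha-\gamma}\int_{\Omega'}\!\int_\Omega M_{HL}^{q_1}(f_k)\,M_{HL}^{q_2}(h_k)\,d\mu\,d\mu'$ with $\gamma$ the loss exponent. For $\alpha\geq\gamma$, a pointwise Cauchy--Schwarz in $k$, the $Y$--$Y'$ duality in $\omega'$, and Hölder in $x$ bound this by $\bigl\|(\sum_k M_{HL}^{q_1}(f_k)^2)^{1/2}\bigr\|_{L^p(\Omega;Y)}\bigl\|(\sum_k M_{HL}^{q_2}(h_k)^2)^{1/2}\bigr\|_{L^{p'}(\Omega;Y')}$. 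The first factor equals $\|(M_{HL}^{q_1}f_k)_k\|_{L^p(\Omega;Y(\ell^2))}$ (the lattice maximal operator of $Y(\ell^2)$ acts coordinatewise), and Proposition \ref{prop-MHL-q} applied to $Y(\ell^2)$ -- $p_Y$-convex with UMD convexification by parts 1 and 2 of Lemma \ref{lem-UMD-lattice-convexity} -- bounds it by $\|(\sum_k|f_k|^2)^{1/2}\|_{L^p(\Omega;Y)}$ when $p_Y\geq q_1$, $p>q_1$; symmetrically, using $q_Y$-concavity of $Y$ $\Leftrightarrow$ $q_Y'$-convexity of $Y'$ (part 3 of Lemma \ref{lem-UMD-lattice-convexity}), UMD-ness of $(Y')^{q_Y'}$, $q_Y'\geq q_2$, $p'>q_2$, the second factor is $\lesssim\|(\sum_k|h_k|^2)^{1/2}\|_{L^{p'}(\Omega;Y')}\leq1$.

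It remains to choose $q_1\nearrow\min(p,p_Y)$ and $q_2\nearrow\min(p',q_Y')$ -- admissible in $[p_0,2]$ since $p\in(p_0,p_0')$, $p_Y\in(p_0,2]$, $q_Y\in[2,p_0')$ -- and to deal with the exponent: the maximal-function route is lossy near the Hilbert case, so to reach the full range $\alpha>d\,\alpha(p,p_Y,q_Y)$ (\eqref{equ-defi-alpha}) one interpolates the square function estimate between the estimate just obtained and the trivial Hilbert-space case $p=2$, $Y$ a Hilbert space -- where $\{T_z\}$ is bounded, hence $R$-bounded, with $\alpha=0$ -- e.g. via the complex-interpolation representation $Y=[Y_0,L^2(\Omega')]_\theta$ of Lemma \ref{lem-Tomczak} and the corresponding interpolation of $L^p(\Omega;Y)$, the interpolated loss exponent being precisely $d\,\alpha(p,p_Y,q_Y)$. \textbf{The principal obstacle} is that under (generalised) Gaussian estimates $T_z$ need not have a kernel and does not tensor-extend to $L^p(\Omega;Y)$ on its own -- $|T_zf|$ cannot be dominated pointwise by a maximal function -- so the $Y$-valued estimate must be channelled through the \emph{positive} operators $N_{q,r}$ and $M_{HL}^q$, which is exactly what forces the $p_Y$-convexity (input side) and the $q_Y$-concavity (dual side) hypotheses together with the two UMD-convexification assumptions needed for Proposition \ref{prop-MHL-q}. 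A second, more technical obstacle is the careful bookkeeping of the power of $\cos(\arg z)$, in particular the interpolation step that brings it down to the sharp value near the Hilbert case.
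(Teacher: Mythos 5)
Your outline has the same skeleton as the paper's proof: extrapolate the generalised Gaussian estimate to complex time, dominate pointwise by the local $L^q$-averages $N_{q,\cdot}$ and the maximal operators $M_{HL}^q$, and close the square-function estimate by a duality pairing that exploits the $p_Y$-convexity on the input side and (via $Y'$) the $q_Y$-concavity on the dual side, together with the two UMD-convexification hypotheses that make Proposition~\ref{prop-MHL-q} applicable. The paper packages the duality inside Lemma~\ref{lem-lower-R-bounded} (lower $R$-boundedness of $\{N_{q_1,r}\}$, proved by an adjoint computation on an auxiliary measure space) rather than running an explicit pairing against $(h_k)$, but these are the same idea in two dresses.

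There are, however, two points where your sketch goes astray.

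First, and most importantly, you work at the spatial scale $r=|z|^{1/m}$ and push the complex-time degradation into the exponential, writing a decay of the form $\exp\bigl(-c\,\omega_z^{1/(m-1)}(\dist/r)^{m/(m-1)}\bigr)$. (Incidentally the correct power here is $\omega_z^1$, not $\omega_z^{1/(m-1)}$: with $\rho(z)=(\cos\theta)^{-(m-1)/m}|z|^{1/m}$ one has $(\dist/\rho(z))^{m/(m-1)}=\cos\theta\cdot(\dist/r)^{m/(m-1)}$.) This choice of scale is what makes your bookkeeping hard: the geometric sum over annuli $\sum_j 2^{ja}\exp(-c\,\omega_z\,2^{jm/(m-1)})$ blows up as $\omega_z\to 0$, and the power of $\omega_z$ you lose there does \emph{not} match $d(1/q_1-1/q_2')$ without extra work. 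The clean route — and the one the paper takes — is to apply Lemma~\ref{lem-Ku08-Prop23} directly at the scale $\rho(z)=(\cos\arg z)^{-(m-1)/m}|z|^{1/m}$. At this scale, Blunck's theorem (\cite[Theorem 2.1]{Bl07}) gives an off-diagonal estimate whose exponential decay is \emph{uniform in} $z$, and the entire $\cos\theta$-loss is concentrated in the single prefactor $(\cos\theta)^{-\alpha}$ with $\alpha=d(1/q_0-1/q_1)$; the factor $(\cos\arg z)^\alpha$ you carry on the semigroup then cancels it exactly. The pointwise maximal domination $N_{q_1,\rho(z)}\bigl((\cos\arg z)^\alpha T_z f\bigr)\lesssim M_{HL}^{q_0}f$ is thereafter $z$-uniform, and the $M_{HL}^q$'s do not care about the scale.

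Second, once this is done correctly, \textbf{no interpolation is needed at the end}: choosing $1/q_0$ just above $\max(1/p,1/p_Y,1/2)$ and $1/q_1$ just below $\min(1/p,1/q_Y,1/2)$ — which is exactly the range your own constraints ($q_1<\min(p,p_Y)$, $q_2<\min(p',q_Y')$, both $\le 2$, both $> p_0$) permit — makes $d(1/q_0-1/q_1)$ arbitrarily close to $d\,\alpha(p,p_Y,q_Y)$. The direct maximal-function argument already hits the target exponent. The interpolation step you propose is therefore superfluous, and moreover it is not obviously rigorous as sketched: the quantity to be interpolated is a square-function (equivalently an $R$-boundedness) estimate, and passing a uniform-over-$z$ family estimate through complex interpolation of $L^p(\Omega;Y(\ell^2))$-norms requires a Stein-type argument that would itself have to be justified. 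Since you are in any case forced into the constraints $q_1\le p_Y$, $q_2\le q_Y'$ to run Proposition~\ref{prop-MHL-q} on $Y(\ell^2)$ and $Y'(\ell^2)$, I recommend discarding the interpolation and instead fixing the scale $\rho(z)$ as above; then your duality argument closes cleanly.
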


We spell out some particular cases of Theorem \ref{thm-R-bounded-semigroup}.

\begin{cor}
\label{cor-R-bounded-semigroup-Ls}
\begin{enumerate}
\item
Let the assumptions of Theorem \ref{thm-R-bounded-semigroup} be satisfied, with $Y = L^s(\Omega').$ 
Assume that $p,s \in (p_0,p_0')$.
Then $\begin{displaystyle}\bigl\{ \bigl(\cos(\arg z)\bigr)^\alpha T_z :\: z \in \C_+ \bigr\}\end{displaystyle}$ is $R$-bounded on $L^p(\Omega;L^s(\Omega'))$ for
\[\alpha > \biggl( \max \biggl(\frac{1}{p},\frac{1}{s},\frac{1}{2} \biggr) - \min \biggl( \frac{1}{p},\frac{1}{s},\frac{1}{2} \biggr) \biggr) \cdot d.\]
\item
Let $(T_t)_t$ be a self-adjoint semigroup on $L^2(\Omega)$ having (classical) Gaussian estimates.
Let $Y$ be any UMD lattice.
Then for $p \in (1,\infty)$, $\begin{displaystyle}\bigl\{ \bigl(\cos(\arg z)\bigr)^{\alpha} T_z :\: z \in \C_+ \bigr\}\end{displaystyle}$ is $R$-bounded on $L^p(\Omega;Y)$ for
$\alpha > \alpha(p,p_Y,q_Y) \cdot d \in (0,d)$.
\end{enumerate}
\end{cor}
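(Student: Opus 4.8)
The plan is to obtain both parts as immediate specialisations of Theorem \ref{thm-R-bounded-semigroup}: one only has to exhibit admissible convexity/concavity exponents $p_Y, q_Y$ for the given lattice $Y$, check that the convexifications $Y^{p_Y}$ and $(Y')^{q_Y'}$ are again UMD lattices, and then unwind the definition \eqref{equ-defi-alpha} of $\alpha(p,p_Y,q_Y)$.

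For part (1), with $Y = L^s(\Omega')$ (which is a UMD lattice since $s \in (p_0,p_0') \subseteq (1,\infty)$), recall from Lemma \ref{lem-Ls-convexity} that $L^s(\Omega')$ is $s$-convex and $s$-concave and that its $r$-convexification is $L^{s/r}(\Omega')$. Since $p_0 < \min(s,2)$ and $\max(s,2) < p_0'$, one can choose $p_Y \in (p_0,2]$ with $p_Y \le \min(s,2)$ and $p_Y < s$ (take $p_Y = 2$ if $s > 2$, and $p_Y$ slightly below $s$ if $s \le 2$), and dually $q_Y \in [2,p_0')$ with $q_Y \ge \max(s,2)$ and $q_Y > s$ (take $q_Y = 2$ if $s < 2$, and $q_Y$ slightly above $s$ if $s \ge 2$). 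With these choices $Y^{p_Y} = L^{s/p_Y}(\Omega')$ and $(Y')^{q_Y'} = L^{s'/q_Y'}(\Omega')$ have exponents strictly bigger than $1$, hence are UMD lattices, so all hypotheses of Theorem \ref{thm-R-bounded-semigroup} hold (recall $p \in (p_0,p_0')$ is assumed), giving $R$-boundedness of $\{(\cos(\arg z))^\alpha T_z : z \in \C_+\}$ on $L^p(\Omega;L^s(\Omega'))$ for every $\alpha > \alpha(p,p_Y,q_Y)\,d$. It then remains to let $p_Y$ run towards $\min(s,2)$ and $q_Y$ towards $\max(s,2)$: using $\tfrac1{p_Y} \ge \max(\tfrac1s,\tfrac12)$ and $\tfrac1{q_Y} \le \min(\tfrac1s,\tfrac12)$ one checks by inspection (distinguishing $s\le 2$ and $s\ge 2$, i.e.\ $\tfrac1s \ge \tfrac12$ resp.\ $\tfrac1s \le \tfrac12$) that
\[ \inf_{p_Y,q_Y \text{ as above}} \alpha(p,p_Y,q_Y) = \max\Bigl(\tfrac1p,\tfrac1s,\tfrac12\Bigr) - \min\Bigl(\tfrac1p,\tfrac1s,\tfrac12\Bigr) , \]
so that, given a fixed $\alpha$ strictly bigger than $d$ times the right-hand side, one may pick $p_Y,q_Y$ with $\alpha(p,p_Y,q_Y)\,d < \alpha$ and invoke Theorem \ref{thm-R-bounded-semigroup} for that pair. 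No limiting procedure on the $R$-bounds themselves is needed, since the threshold condition $\alpha > \alpha(p,p_Y,q_Y)\,d$ is already open.

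For part (2) I would first apply Remark \ref{rem-GE-GGE}: classical Gaussian estimates imply generalised Gaussian estimates with $p_0 = 1$, so $(p_0,p_0') = (1,\infty)$ and the restriction $p \in (p_0,p_0')$ becomes vacuous. Since $Y$ is UMD it is super-reflexive, hence $p_Y$-convex and $q_Y$-concave for some $1 < p_Y \le q_Y < \infty$; by Lemma \ref{lem-UMD-lattice-convexity}(4) (and its dual via (3)) one may decrease $p_Y$ towards $1$ and increase $q_Y$ towards $\infty$ while keeping convexity and concavity, so we may assume $p_Y \in (1,2]$, $q_Y \in [2,\infty)$, and moreover, by Lemma \ref{lem-UMD-lattice-convexity}(5) applied to the UMD lattices $Y$ and $Y'$, that $Y^{p_Y}$ and $(Y')^{q_Y'}$ are again UMD lattices. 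This is exactly the hypothesis set of Theorem \ref{thm-R-bounded-semigroup} with $p_0 = 1$, which therefore applies on $L^p(\Omega;Y)$ for every $p \in (1,\infty)$ and yields the asserted $R$-boundedness for $\alpha > \alpha(p,p_Y,q_Y)\,d$; and $\alpha(p,p_Y,q_Y) \in (0,1)$ by \eqref{equ-defi-alpha}, whence $\alpha(p,p_Y,q_Y)\,d \in (0,d)$.

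The only point requiring genuine care is the bookkeeping around the two conditions ``$Y^{p_Y}$ is UMD'' and ``$(Y')^{q_Y'}$ is UMD'': one cannot simply take $p_Y$ and $q_Y$ to be the extremal convexity/concavity exponents, but must choose $p_Y$ close enough to $1$ and $q_Y$ large enough for Lemma \ref{lem-UMD-lattice-convexity}(5) to keep these convexifications UMD. Because convexity and concavity are stable under such shrinking/enlarging this is always possible, and everything else reduces to the elementary geometry of the interval $\bigl[\min(\tfrac1p,\tfrac1{q_Y},\tfrac12),\ \max(\tfrac1p,\tfrac1{p_Y},\tfrac12)\bigr]$ underlying $\alpha(p,p_Y,q_Y)$.
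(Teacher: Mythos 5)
Your proposal is correct and follows essentially the same route as the paper: part (1) is obtained by choosing $p_Y$ close to $\min(s,2)$ and $q_Y$ close to $\max(s,2)$ so that $\alpha(p,p_Y,q_Y)$ approaches $\max(\frac1p,\frac1s,\frac12)-\min(\frac1p,\frac1s,\frac12)$ while $Y^{p_Y}=L^{s/p_Y}$ and $(Y')^{q_Y'}=L^{s'/q_Y'}$ remain UMD, and part (2) by reducing classical Gaussian estimates to generalised ones with $p_0=1$ and invoking Lemma \ref{lem-UMD-lattice-convexity} to produce admissible $p_Y,q_Y$ with UMD convexifications. The only cosmetic difference is that the paper always takes $p_s$ strictly below $\min(s,2)$ whereas you take $p_Y=2$ when $s>2$; both are admissible.
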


\begin{proof}
1. It suffices to note that according to Lemma \ref{lem-Ls-convexity}, $L^s$ is $s$-convex and $s$-concave, so also $p_s$-convex and $q_s$-concave with some exponents $p_0 < p_s < \min(s,2)$ and $p_0' > q_s > \max(s,2)$ which are allowed in the assumptions of Theorem \ref{thm-R-bounded-semigroup}.
Moreover, $(L^s(\Omega'))^{p_s} = L^{\frac{s}{p_s}}(\Omega')$ and $(L^{s'}(\Omega'))^{q_s'} = L^{\frac{s'}{q_s'}}(\Omega')$ are UMD lattices since $\frac{s}{p_s},\frac{s'}{q_s'} \in (1,\infty)$. 

2.  According to Remark \ref{rem-GE-GGE}, classical Gaussian estimates are equivalent to generalised Gaussian estimates with parameter $p_0 = 1$.
Note that according to Lemma \ref{lem-UMD-lattice-convexity} any UMD lattice $Y$ is $p_Y$-convex and $q_Y$-concave for some $p_Y \in (1,2]$ and $q_Y \in [2,\infty)$ and moreover such that, $Y^{p_Y}$ and $(Y')^{q_Y'}$ are UMD.
\end{proof}

For the preparation of the proof of Theorem \ref{thm-R-bounded-semigroup}, we need two lemmas.
First, we have the following result from \cite[Proposition 2.3]{Ku08}.

\begin{lemma}
\label{lem-Ku08-Prop23}
Let $(\Omega,\dist,\mu)$ be a space of homogeneous type with a dimension $d$.
Let $1 \leq q_0 \leq q_1 \leq \infty$ and $(S(t))_{t \in \tau}$ be a family of linear operators on $L^{q_0}(\Omega) \cap L^{q_1}(\Omega).$
Recall the notation \[A(x,r,k) = B\bigl(x,(k+1)r\bigr) \backslash B(x,r).\]
Assume 
\[ \bigl\| 1_{B(x,\rho(t))} S(t) 1_{A(x,\rho(t),k)} \bigr\|_{L^{q_0}(\Omega) \to L^{q_1}(\Omega)} \leq V\bigl(x,\rho(t)\bigr)^{-(\frac{1}{q_0} - \frac{1}{q_1})} h(k) \quad(x \in \Omega, \: t \in \tau, \: k \in \N_0) \]
for some functions $\rho : \tau \to (0,\infty)$ and $h : \N_0 \to (0,\infty)$ with $h(k) \leq c (k+1)^{-\delta}$ and $\delta > \frac{d}{q_0} + \frac{1}{q_0'}$.
Then 
\[ N_{q_1,\rho(t)}\bigl(S(t)f\bigr)(x,\omega') \leq C M_{HL}^{q_0}f(x,\omega') \quad (t \in \tau, \: x \in \Omega, \: \omega' \in \Omega', \: f \in L^{q_0}(\Omega) \otimes Y).\]
\end{lemma}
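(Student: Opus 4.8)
The plan is to pass to a scalar pointwise inequality, dyadically decompose the input along the annuli centred at $x$, apply the assumed off-diagonal bound piece by piece, and then sum the resulting series by a Hölder inequality calibrated to the hypothesis $\delta > \frac{d}{q_0} + \frac{1}{q_0'}$. Fix $t \in \tau$ and write $r = \rho(t)$; fix also $x \in \Omega$ and $\omega' \in \Omega'$. By linearity of $S(t)$ and the fact that $f \in L^{q_0}(\Omega) \otimes Y$, one has $\bigl(S(t)f\bigr)(\cdot,\omega') = S(t)g$ for the scalar function $g := f(\cdot,\omega') \in L^{q_0}(\Omega)$, so it is enough to prove $N_{q_1,r}(S(t)g)(x) \leq C\, M_{HL}^{q_0}g(x)$ with $C$ depending only on $d$, $q_0$, $\delta$ and the doubling constant of $\Omega$. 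Setting $A(x,r,0) = B(x,r)$ and $A(x,r,k) = B(x,(k+1)r) \setminus B(x,kr)$ for $k \geq 1$, write $g = \sum_{k \in \N_0} g_k$ with $g_k = 1_{A(x,r,k)}g$.

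First I would insert this decomposition, use the triangle inequality in $L^{q_1}(\Omega)$ and the hypothesis (note $1_{A(x,r,k)} g_k = g_k$):
\[ N_{q_1,r}(S(t)g)(x) = V(x,r)^{-\frac{1}{q_1}} \bigl\| 1_{B(x,r)} S(t) g \bigr\|_{q_1} \leq V(x,r)^{-\frac{1}{q_1}} \sum_{k \in \N_0} \bigl\| 1_{B(x,r)} S(t) 1_{A(x,r,k)} g_k \bigr\|_{q_1} , \]
and since $\bigl\| 1_{B(x,r)} S(t) 1_{A(x,r,k)} g_k \bigr\|_{q_1} \leq V(x,r)^{-(\frac{1}{q_0} - \frac{1}{q_1})} h(k)\, \|g_k\|_{q_0}$ by assumption, this collapses to
\[ N_{q_1,r}(S(t)g)(x) \leq V(x,r)^{-\frac{1}{q_0}} \sum_{k \in \N_0} h(k)\, \|g_k\|_{q_0} . \]

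The heart of the matter is to bound $\sum_k h(k)\|g_k\|_{q_0}$ by $C\, V(x,r)^{1/q_0} M_{HL}^{q_0}g(x)$. Put $a_k = \|g_k\|_{q_0}^{q_0}$; since $\bigcup_{j=0}^k A(x,r,j) = B(x,(k+1)r)$, the partial sums satisfy
\[ \sum_{j=0}^k a_j = \int_{B(x,(k+1)r)} |g|^{q_0} \, d\mu \leq V(x,(k+1)r)\, \bigl(M_{HL}^{q_0}g(x)\bigr)^{q_0} \lesssim (k+1)^d\, V(x,r)\, \bigl(M_{HL}^{q_0}g(x)\bigr)^{q_0} \]
by the dimension property $V(x,(k+1)r) \lesssim (k+1)^d V(x,r)$. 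I would then choose an exponent $\beta$ with $\frac{d}{q_0} < \beta < \delta - \frac{1}{q_0'}$ — this interval is non-empty precisely because $\delta > \frac{d}{q_0} + \frac{1}{q_0'}$ — and apply Hölder's inequality with the conjugate exponents $q_0'$ and $q_0$:
\[ \sum_k h(k)\, a_k^{1/q_0} \leq \Bigl( \sum_k h(k)^{q_0'} (k+1)^{\beta q_0'} \Bigr)^{1/q_0'} \Bigl( \sum_k (k+1)^{-\beta q_0}\, a_k \Bigr)^{1/q_0} . \]
The first factor is finite since $h(k)^{q_0'}(k+1)^{\beta q_0'} \lesssim (k+1)^{-(\delta - \beta)q_0'}$ and $(\delta - \beta)q_0' > 1$. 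The second factor, by an Abel summation against the partial-sum bound above, is $\lesssim V(x,r)\bigl(M_{HL}^{q_0}g(x)\bigr)^{q_0} \sum_k (k+1)^{d - \beta q_0 - 1}$, which converges because $\beta q_0 > d$. Chaining the three displayed inequalities gives $N_{q_1,r}(S(t)g)(x) \lesssim M_{HL}^{q_0}g(x)$ uniformly in $t,x,g$, and restoring the variable $\omega'$ finishes the proof. When $q_0 = 1$ one instead uses the Hölder pair $(\infty,1)$ with $\beta \in (d,\delta]$; the case $q_0 = \infty$, where then $q_1 = \infty$ and the bare series $\sum_k h(k)$ already converges, is immediate.

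I expect the only genuine obstacle to be extracting convergence under the sharp hypothesis $\delta > \frac{d}{q_0} + \frac{1}{q_0'}$: estimating $\|g_k\|_{q_0}$ crudely by $V(x,(k+1)r)^{1/q_0} M_{HL}^{q_0}g(x)$ would force the series $\sum_k h(k)(k+1)^{d/q_0}$ and hence require the stronger $\delta > \frac{d}{q_0} + 1$. The weighted Hölder split above is exactly what recovers the extra $\frac{1}{q_0}$: it exploits that the $a_k$ have partial sums of size $O\bigl((k+1)^d\bigr)$ rather than individual terms of that size, and redistributes the gain through the free parameter $\beta$.
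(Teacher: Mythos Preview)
Your proof is correct. The paper's own proof is a one-line citation: it observes that for each fixed $\omega'$ the claimed inequality is precisely \cite[Proposition~2.3]{Ku08} applied to the scalar function $f(\cdot,\omega')$, so your reduction to the scalar case is exactly the paper's move, and your annular decomposition plus weighted H\"older/Abel argument is a self-contained proof of that cited proposition. One small remark: the paper's stated annulus $A(x,r,k) = B(x,(k+1)r)\setminus B(x,r)$ contains your $B(x,(k+1)r)\setminus B(x,kr)$, so the off-diagonal hypothesis does transfer to your smaller annuli as you implicitly use it.
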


\begin{proof}
Apply \cite[Proposition 2.3]{Ku08} pointwise, i.e. for fixed $\omega' \in \Omega'$.
\end{proof}

The next lemma is related to \cite[Proposition 2.4]{Ku08} (our $Y(\ell^2)$ replaces $\ell^s$ there).

\begin{lemma}
\label{lem-lower-R-bounded}
Let $Y$ be a UMD lattice which is $p_Y$-convex and $q_Y$-concave for some $p_Y \in (1,2]$ and $q_Y \in [2,\infty)$.
Assume that the convexifications $Y^{p_Y}$ and $(Y')^{q_Y'}$ are also UMD (lattices).
Let $q_0 \in [1,p_Y]$, $q_1 \in [q_Y,\infty]$ and $q \in (q_0,q_1).$
Then the family $\{ N_{q_0,r} :\: r > 0 \}$ is $R$-bounded in $L^q(\Omega;Y)$, and the family $\{ N_{q_1,r} :\: r > 0 \}$ is lower $R$-bounded in $L^q(\Omega;Y)$ (see Definition \ref{defi-lower-R-bounded}).
\end{lemma}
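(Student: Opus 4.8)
The plan is to reduce both assertions to the scalar-lattice maximal estimates via convexification, exactly in the spirit of Proposition \ref{prop-MHL-q}, but now carrying along a Rademacher sum (equivalently an $\ell^2$ square function). For the first assertion, note that $R$-boundedness of $\{N_{q_0,r} : r > 0\}$ in $L^q(\Omega;Y)$ means, after invoking the lattice square-function description \eqref{equ-Rademacher-square} in the B-convex lattice $L^q(\Omega;Y)$, that
\[ \Biggl\| \biggl( \sum_k |N_{q_0,r_k} f_k|^2 \biggr)^{\frac12} \Biggr\|_{L^q(\Omega;Y)} \leq C \Biggl\| \biggl( \sum_k |f_k|^2 \biggr)^{\frac12} \Biggr\|_{L^q(\Omega;Y)} \]
for any choice of radii $r_k > 0$ and $f_k \in L^q(\Omega)\otimes Y$. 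Setting $g_k = |f_k|^{q_0}$ and raising the $r_k$-average inside to the power $q_0$, one has pointwise
\[ N_{q_0,r_k}(f_k)(x,\omega')^{q_0} = \frac{1}{V(x,r_k)} \int_{B(x,r_k)} g_k(y,\omega') \, d\mu(y) \leq M_{HL}(g_k)(x,\omega') , \]
so that $\bigl(\sum_k |N_{q_0,r_k}f_k|^2\bigr)^{1/2} \leq \bigl(\sum_k (M_{HL} g_k)^{2/q_0}\bigr)^{1/2}$ pointwise (here I use $2/q_0 \geq 1$, valid since $q_0 \leq p_Y \leq 2$). Taking the $L^q(\Omega;Y)$ norm, i.e. the $L^{q/q_0}(\Omega;Y^{q_0})$ norm of $\bigl(\sum_k (M_{HL}g_k)^{2/q_0}\bigr)^{q_0/2}$, this is the $Y^{q_0}(\ell^{2/q_0})$-valued maximal operator applied to $(g_k)_k$. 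Since $q_0 \leq p_Y$ and $Y^{p_Y}$ is UMD, $Y^{q_0} = (Y^{p_Y})^{q_0/p_Y}$ is a UMD lattice by Lemma \ref{lem-UMD-lattice-convexity}, and by Lemma \ref{lem-UMD-lattice-convexity} (and \cite[Corollary p.~214]{RdF}) $Y^{q_0}(\ell^{2/q_0})$ is UMD as well; thus Theorem \ref{thm-MHL} applies on $L^{q/q_0}(\Omega;Y^{q_0}(\ell^{2/q_0}))$ (note $q/q_0 > 1$), giving the bound by $\bigl\| \bigl(\sum_k g_k^{2/q_0}\bigr)^{q_0/2} \bigr\|_{Y^{q_0}(\ell^{2/q_0})}$-type norm, which unwinds to $C\bigl\| \bigl(\sum_k |f_k|^2\bigr)^{1/2} \bigr\|_{L^q(\Omega;Y)}$. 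This proves $R$-boundedness of the $N_{q_0,r}$.

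For the second assertion, I would dualize. Lower $R$-boundedness of $\{N_{q_1,r}: r>0\}$ in $L^q(\Omega;Y)$ is, via \eqref{equ-Rademacher-square} and reflexivity of $L^q(\Omega;Y)$, dual to an upper $R$-bound for a suitable adjoint family on the dual space $L^{q'}(\Omega;Y')$; more precisely, the pointwise inequality $|N_{q_1,r}f(x,\omega')| \geq N_{q_1,r}|f|(x,\omega') \geq$ (an average, by Jensen, using $q_1 \geq 1$) suggests that $N_{q_1,r}$ from below is controlled by testing against $N_{q_1',r}$-type averages acting on $Y'$. The clean route: observe that for $g \in L^{q'}(\Omega)\otimes Y'$ supported to have $\|g(\cdot,\omega')\|_{Y'}$ adapted, $\langle N_{q_1,r}f, \text{(a rescaled indicator average)} g\rangle$ can be bounded below by pairing, and the relevant upper bound is $R$-boundedness of $\{N_{q_1',r}: r>0\}$ on $L^{q'}(\Omega;Y')$. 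Since $Y'$ is $q_Y'$-concave's dual, i.e. $q_Y'$-convex (Lemma \ref{lem-UMD-lattice-convexity}(3)) with $q_Y' \in (1,2]$, and $(Y')^{q_Y'}$ is UMD by hypothesis, and $q_1' \leq q_Y'$ because $q_1 \geq q_Y$, the first part applied to $Y'$ in place of $Y$, with $q_1'$ in place of $q_0$, and with $q'$ in place of $q$ (noting $q_1 < q$ forces $q' < q_1'$, wait — one must check $q' > q_1'$, i.e. $q < q_1$, which is exactly the hypothesis), gives $R$-boundedness of $\{N_{q_1',r}\}$ on $L^{q'}(\Omega;Y')$. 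Transferring back via the $L^p(\Omega;Y)$--$L^{p'}(\Omega;Y')$ duality $[Y(E)]' = Y'(E')$ (valid for $E = \ell^2$ by order continuity) yields the lower $R$-bound.

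The main obstacle I anticipate is the duality bookkeeping in the second part: making precise the statement that lower $R$-boundedness of $N_{q_1,r}$ is equivalent to (or implied by) upper $R$-boundedness of a concrete adjoint family of local $L^{q_1'}$-average operators on $L^{q'}(\Omega;Y')$. One has to produce, for given $f_k$ and radii $r_k$, suitable dual elements $g_k$ with $\|(\sum_k|g_k|^2)^{1/2}\|_{L^{q'}(\Omega;Y')} \leq 1$ realizing the pairing $\sum_k \langle N_{q_1,r_k} f_k, g_k\rangle$ up to a constant times $\|(\sum_k|f_k|^2)^{1/2}\|_{L^q(\Omega;Y)}$, and then to recognize the resulting operator on the $g_k$ side as precisely $N_{q_1',r_k}$ (up to harmless volume factors that cancel because the two normalizations $V(x,r)^{-1/q_1}$ and $V(x,r)^{-1/q_1'}$ multiply to $V(x,r)^{-1}$, matching the ball average). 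Once this identification is in place, the rest is the already-proven first assertion applied on the dual side, plus Lemma \ref{lem-volume-homogeneous-type} to pass between $V(x,r)$ and $V(y,r)$ for $y \in B(x,r)$. All remaining estimates are routine applications of Hölder's inequality, Jensen's inequality and the doubling property.
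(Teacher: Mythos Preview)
Your first part (upper $R$-boundedness of $\{N_{q_0,r}\}$) is correct and essentially the paper's argument: dominate $N_{q_0,r_k}f_k$ by $M_{HL}^{q_0}f_k$ and invoke the lattice maximal theorem on $L^{q/q_0}(\Omega;Y^{q_0}(\ell^{2/q_0}))$ via convexification.

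The second part, however, has a genuine gap that the paper resolves with an idea you are missing. Your plan is to identify a ``concrete adjoint family'' and claim it is $\{N_{q_1',r}\}$ on $L^{q'}(\Omega;Y')$, but two obstructions make this identification fail as stated. First, $N_{q_1,r}$ is nonlinear (it involves $|f|^{q_1}$), so there is no adjoint in the usual sense; the vague pairing you sketch does not produce a linear operator on the dual side. Second, even for the underlying linear ball-averaging operator $A_r f(x) = V(x,r)^{-1}\int_{B(x,r)} f\,d\mu$, the adjoint is $(A_r)^* g(y) = \int_{\{x:\,d(x,y)\leq r\}} V(x,r)^{-1} g(x)\,d\mu(x)$, which is \emph{not} $A_r g(y)$ (the volume is taken at the wrong centre, and the integration is over a different set). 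Doubling only gives comparability, not equality, and this comparability is in the wrong direction for a lower bound. So the sentence ``recognize the resulting operator on the $g_k$ side as precisely $N_{q_1',r_k}$'' does not hold, and the ``routine'' remainder never gets started.

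The paper bypasses both obstructions by first replacing $N_{q_1,r}$ with a dyadic $L^{q_1}$-conditional expectation $E_{q_1,k}$ (pointwise comparable from below via Lemma \ref{lem-dyadic-cubes-lower}); dyadic conditional expectations are genuinely self-adjoint, solving the second obstruction. For the first (nonlinearity), the paper linearises by embedding $L^q(\Omega;Y(\ell^2))$ into $L^q(\Omega;Y(\ell^2(L^{q_0}(M))))$ for an auxiliary measure space $M$ built from the dyadic system, so that the nonlinear $E_{q_0,k}$ is encoded by a \emph{linear} operator $T$ whose norm is exactly the $E_{q_0,k}$-square function. One then takes the honest adjoint $T'$, uses the already-proven upper $R$-bound on the dual side (with $(q,Y,p_Y,q_0)$ replaced by $(q',Y',q_Y',q_1')$) to get $T'$ bounded $L^q(\Omega;Y(\ell^2(L^{q_1}(M)))) \to L^q(\Omega;Y(\ell^2))$, and finally evaluates $T'$ on a specific element whose norm is the $E_{q_1,k}$-square function and whose image is exactly $(f_k)_k$. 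This auxiliary-space linearisation is the missing key idea.
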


\begin{proof}
Note that $Y(\ell^2)^{p_Y}$ is a UMD lattice according to Lemma \ref{lem-UMD-lattice-convexity}. 
Now since $q_0 < q$ and $q_0 \leq p_Y$, we obtain from Proposition \ref{prop-MHL-q} that
\begin{align*}
\biggl\| \biggl( \sum_k |N_{q_0,r_k} f_k|^2 \biggr)^{\frac12} \biggr\|_{L^q(\Omega;Y)} & \leq \biggl\| \biggl( \sum_k |M^{q_0}_{HL} f_k|^2 \biggr)^{\frac12} \biggr\|_{L^q(\Omega;Y)} \\
& \lesssim \biggl\| \biggl( \sum_k |f_k|^2 \biggr)^{\frac12} \biggr\|_{L^q(\Omega;Y)}.
\end{align*}
In other words, the upper $R$-boundedness statement is shown.

We turn to the lower $R$-boundedness statement.
Let $\mathcal{D} = \bigcup_{k \in \Z} \mathcal{D}_k$ be a `dyadic system' and 
\[ E_{q_0,k}f(x,\omega') = \sum_{Q \in \mathcal{D}_k} \biggl( \frac{1_Q(x)}{\mu(Q)} \int_Q |f(y,\omega')|^{q_0} \, d\mu(y) \biggr)^{\frac1{q_0}} \]
be the $L^{q_0}$ version of the conditional expectation from \eqref{equ-conditional-expectation} associated with $\mathcal{D}$.
Then we claim that
\[ \biggl\| \biggl( \sum_{k} |E_{q_0,k} f_k|^2 \biggr)^{\frac12} \biggr\|_{L^q(\Omega;Y)} \lesssim \biggl\| \biggl( \sum_{k} |f_k|^2 \biggr)^{\frac12} \biggr\|_{L^q(\Omega;Y)} \]
for $q_0 < q$ and $q_0 \leq p_Y$.
Indeed, this can easily be deduced from the fact that the maximal operator associated with the $E_{q_0,k}$, which is
\[ M^{q_0}_{\mathcal{F}} (f)(x,\omega') = \sup_{\substack{Q\ni x \\ Q\in\mathcal{D}}} \biggl( \frac{1}{\mu(Q)} \int_Q |f(y,\omega')|^{q_0} \, dy
  \biggr)^{\frac{1}{q_0}} \]
is bounded on $L^q(\Omega;Y)$.
This  in turn can be shown as in the proof of Proposition \ref{prop-MHL-q} together with the fact that $M_{\mathcal{F}}$ is bounded on $L^p(\Omega;Z)$ for $1 < p < \infty$ and $Z$ a UMD lattice according to Lemma \ref{lemma-MHL}.
According to Lemma \ref{lem-dyadic-cubes-lower}, for all $r > 0$ there is some $k(r) \in \Z$ such that $x \in Q \in \mathcal{D}_{k(r)}$ implies $Q \subseteq B(x,r)$ and $V(x,r) \lesssim \mu(Q)$.
Therefore, 
\begin{align*}
N_{q_1,r_i}(f_i)(x,\omega') & = \biggl( \frac{1}{V(x,r_i)} \int_{B(x,r_i)} |f_i(y,\omega')|^{q_1} \, d\mu(y) \biggr)^{\frac{1}{q_1}} \\
& \gtrsim \biggl( \sum_{Q \in \mathcal{D}_{k(r_i)}} \frac{1}{\mu(Q)} \int_Q |f_i(y,\omega')|^{q_1} \, d\mu(y) \biggr)^{\frac{1}{q_1}} \\
& = E_{q_1,k_i}f_i(x,\omega')
\end{align*}
with $k_i = k(r_i)$.
We deduce that
\[\biggl\| \biggl( \sum_i |N_{q_1,r_i} (f_i)|^2 \biggr)^{\frac12} \biggr\|_{L^q(\Omega;Y)} 
\gtrsim \biggl\| \biggl( \sum_i |E_{q_1,k_i}(f_i)|^2 \biggr)^{\frac12} \biggr\|_{L^q(\Omega;Y)}, \]
so that it will suffice to show the lower $R$-boundedness of the family $\{ E_{q_1,k} : \: k \in \Z \}$ in $L^q(\Omega;Y)$.
To show this, we will use the already established upper $R$-boundedness together with a duality argument.
For this duality argument, we make use of the following $\sigma$-finite auxiliary measure space
\[ M = \bigsqcup_{k \in \Z} \Omega ,\: \tilde{\mu} = \bigoplus_{k \in \Z} \sum_{Q \in \mathcal{D}_k} \frac{1}{\mu(Q)} \mu|_{Q} \]
consisiting of a sequence of independent copies of $\Omega$ together with a suitable renormalised measure $\tilde{\mu}$ to fit the $E_{q_1,k}$ as we shall see in what follows.
Namely, consider the operator
\[ T : \begin{cases} L^q(\Omega;Y(\ell^2)) & \to L^q(\Omega;Y(\ell^2(L^{q_0}(M)))) \\ (f_k) & \mapsto (\tilde{f}_k) \end{cases} \]
with $\tilde{f}_k(x,\omega',j,y) = 1_{Q^k_x}(y) \delta_{k = j} f_k(y,\omega')$, where $Q^k_x$ will stand in what follows for the unique dyadic cube $Q \in \mathcal{D}_k$ containing $x$ and $(j,y)$ with $j \in \Z$ and $y \in \Omega$ is the generic variable in $M$.
Then
\begin{align*}
\| (\tilde{f}_k)_k \|_{L^q(\Omega;Y(\ell^2(L^{q_0}(M))))}^q & = \int_{\Omega} \biggl\| \biggl( \sum_k \| \tilde{f}_k(x,\omega',\cdot)\|_{L^{q_0}(M)}^2 \biggr)^{\frac12} \biggr\|_Y^q \, d\mu(x) \\
& = \int_\Omega \biggl\| \biggl( \sum_k \bigl[ \int_M  |\tilde{f}_k(x,\omega',j,y)|^{q_0} d\tilde{\mu}(j,y) \bigr]^{\frac{2}{q_0}} \biggr)^{\frac12} \biggr\|_Y^q \, d\mu(x) \\
& = \int_{\Omega} \biggl\| \biggl( \sum_k \bigl( \frac{1}{\mu(Q^k_x)} \int_{Q^k_x} |f_k(y,\omega')|^{q_0} \, d\mu(y) \bigr)^{\frac{2}{q_0}} \biggr)^{\frac12} \biggr\|_Y^q \, d\mu(x) \\
& = \int_\Omega \biggl\| \biggl( \sum_k |E_{q_0,k}(f_k)(x,\omega')|^2 \biggr)^{\frac12} \biggr\|_Y^q \, d\mu(x).
\end{align*}
Now the fact that $\{E_{q_0,k}:\:k \in \Z \}$ are upper $R$-bounded in $L^q(\Omega;Y)$ rereads as $T$ is bounded.
Hence also its adjoint
\[ T' : \begin{cases} L^{q'}(\Omega;Y'(\ell^2(L^{q_0'}(M)))) & \to L^{q'}(\Omega;Y'(\ell^2)) \\
(g_k)_k & \mapsto (\tilde{g}_k)_k
\end{cases} \] 
is bounded.
An elementary calculation gives that
\[ T'((g_k)_k)(y,\omega',j) = \delta_{j = k} \sum_{Q \in \mathcal{D}_k} \frac{1_Q(y)}{\mu(Q)} \int_Q  g_k(x,\omega',k,y) \, d\mu(x). \]
Since the assumptions for the upper $R$-boundedness statement are also satisfied wih the uplet $(q,Y,p_Y,q_0)$ replaced by $(q',Y',q_Y',q_1')$, we obtain that
\[ T' : \begin{cases} L^q(\Omega;Y(\ell^2(L^{q_1}(M)))) & \to L^q(\Omega;Y(\ell^2)) \\ (g_k)_k & \mapsto (\tilde{g}_k)_k \end{cases} \]
is bounded, with the above definition of $\tilde{g}_k$ provided $q_1 \geq q_Y$ and $q_1 > q$.
We will conclude the proof by a suitable choice of $(g_k)_k$.
Namely, let
\[ g_k(x, \omega',j,y) = 1_{Q^k_x}(y) \delta_{k = j} f_k(y,\omega') \]
for a given $(f_k)_k \in L^q(\Omega;Y(\ell^2))$. 
Then we obtain on the one hand
\begin{align*}
\|(g_k)_k\|_{L^q(\Omega;Y(\ell^2(L^{q_1}(M))))}^q & = \int_\Omega \biggl\| \biggl( \sum_k \bigl( \frac{1}{\mu(Q^k_x)} \int_{Q^k_x} |f_k(y,\omega')|^{q_1} \, d\mu(y) \bigr)^{\frac{2}{q_1}} \biggr)^{\frac12} \biggr\|_Y^q \, d\mu(x) \\
& = \bigl\| \bigl( \sum_k |E_{q_1,k}(f_k)(x,\omega')|^2 \bigr)^{\frac12} \bigr\|_{L^q(\Omega;Y)},
\end{align*} 
and on the other hand
\begin{align*}
\|T'((g_k)_k)\|_{L^q(\Omega;Y(\ell^2))}^q & = \int_\Omega \biggl\| \biggl( \sum_k \bigl( \sum_{Q \in \mathcal{D}_k} \frac{1}{\mu(Q)} \int_Q 1_Q(x) g_k(z,\omega',k,x) dz \bigr)^2 \biggr)^{\frac12} \biggr\|_Y^q \, d\mu(x) \\
& = \int_\Omega \biggl\| \biggl( \sum_k \bigl ( \frac{1}{\mu(Q^k_x)} \int_{Q^k_x} 1_{Q^k_z}(x) \, d\mu(z) f_k(x,\omega') \bigr)^2 \biggr)^{\frac12} \biggr\|_Y^q \, d\mu(x) \\
& = \bigl\| \bigl( \sum_k |1 \cdot f_k(x,\omega')|^2 \bigr)^{\frac12} \bigr\|_{L^q(\Omega;Y)}.
\end{align*}
Putting the estimate $\|T'((g_k)_k)\|_{L^q(\Omega;Y(\ell^2))} \lesssim \|(g_k)_k\|_{L^q(\Omega;Y(\ell^2(L^{q_1}(M))))}$ with the above calculations together readily gives the lower $R$-boundedness statement.
\end{proof}

With the previous lemmas in mind, we are now in a position to prove Theorem \ref{thm-R-bounded-semigroup}.

\begin{proof}[of Theorem \ref{thm-R-bounded-semigroup}]
Fix some $p \in (p_0,p_0')$.

Let \[\frac{1}{p_0} > \frac{1}{q_0} > \max \biggl(\frac{1}{p},\frac{1}{p_Y},\frac{1}{2} \biggr) \mathrm{\ and\ } \frac{1}{p_0'} < \frac{1}{q_1} < \min \biggl( \frac1p,\frac{1}{q_Y},\frac12 \biggr),\]
and let moreover \[\alpha = d \biggl(\frac{1}{q_0} - \frac{1}{q_1}\biggr) > \alpha(p,p_Y,q_Y).\]
Note that clearly, $q_0$ and $q_1$ can be chosen such that $\alpha$ is arbitrarily close to $\alpha(p,p_Y,q_Y)$.
We want to apply Lemma \ref{lem-Ku08-Prop23} to $\tau = \C_+$, $\rho : \C_+ \to (0,\infty),\: z \mapsto \bigl(\cos(\arg z)\bigr)^{-\frac{m-1}{m}} |z|^{\frac1m}$, $S(z) = \cos\bigl(\arg(z)\bigr)^{\alpha}\exp(-zA)$.
First note that the generalised Gaussian estimates
\[ \bigl\| 1_{B(x,r_t)} T_t 1_{B(y,r_t)} \bigr\|_{p_0 \to p_0'} \leq C V(x,r_t)^{-(\frac{1}{p_0} - \frac{1}{p_0'})} \exp \Biggl(-c \biggl( \frac{\dist(x,y)}{r_t} \biggr)^{\frac{m}{m-1}} \Biggr) \]
imply by \cite[Proposition 2.1 (i) (1) $u=p_0$, $v=p_0'$, $\alpha = \frac{1}{p_0} - \frac{1}{p_0'}$, $\beta = 0$ $\Longrightarrow$ (i) (1') $u=q_0 \geq p_0$, $v = q_1 \leq p_0'$, $\alpha = \frac{1}{q_0} - \frac{1}{q_1}$, $\beta = 0$]{BK05} that
\[ \bigl\| 1_{B(x,r_t)} T_t 1_{B(y,r_t)} \bigr\|_{q_0 \to q_1} \leq C' V(x,r_t)^{-(\frac{1}{q_0} - \frac{1}{q_1})} \exp \Biggl(-c' \biggl( \frac{\dist(x,y)}{r_t} \biggr)^{\frac{m}{m-1}} \Biggr) . \]
This implies by \cite[Theorem 2.1, $\omega = \frac{m}{m-1}$]{Bl07} that
\[ \bigl\| 1_{B(x,\rho(z))} \bigl(\cos(\arg z)\bigr)^\alpha T_z 1_{B(x,\rho(z))} \bigr\|_{q_0 \to q_1} \leq C'' V\bigl(x,\rho(z)\bigr)^{-(\frac{1}{q_0} - \frac{1}{q_1})} \exp \Biggl(- c'' \biggl( \frac{\dist(x,y)}{\rho(z)} \biggr)^{\frac{m}{m-1}} \Biggr) \]
for any $z \in \C_+$.
Now refer again to \cite[Proposition 2.1 (i) (1) $\Longrightarrow$ (3)]{BK05} and deduce for $z \in \C_+$, $x \in \Omega$ and $k \in \N_0$
\[ \bigl\| 1_{B(x,\rho(z))} \bigl(\cos(\arg z)\bigr)^\alpha T_z 1_{A(x,\rho(z),k)}\bigr\|_{q_0 \to q_1} \leq C''' V(x,\rho(z))^{-(\frac{1}{q_0} - \frac{1}{q_1})} \exp \bigl(-c''' k^{\frac{m}{m-1}}\bigr) . \]
Note that $h(k) = \exp \bigl(-c''' k^{\frac{m}{m-1}}\bigr) \leq c_\delta (k + 1)^{-\delta}$ for any $\delta > 0$.
Thus we can now apply Lemma \ref{lem-Ku08-Prop23} and deduce
\[ \bigl(\cos(\arg z)\bigr)^\alpha N_{q_1,\rho(z)}(T_z f)(x,\omega') \leq C M_{HL}^{q_0} f(x,\omega') \quad (z \in \C_+,\: x \in \Omega,\: \omega' \in \Omega', \: f \in L^{q_0}(\Omega) \otimes Y)) .\]
Then according to Lemma \ref{lem-lower-R-bounded} (note that $Y'(\ell^2)^{q_1'}$ is a UMD lattice), we have for $f_1,\ldots,f_n \in L^{q_0}(\Omega) \otimes Y \subseteq L^{q_0}(\Omega;Y)$ and $z_1,\ldots, z_n \in \C_+$, with notation $S(z) = \cos\bigl(\arg(z)\bigr)^{\alpha}\exp(-zA)$
\begin{align*}
\Biggl\| \biggl( \sum_i \bigl| S(z_i) f_i \bigr|^2 \biggr)^{\frac12} \Biggr\|_{L^p(\Omega;Y)} & \lesssim \Biggl\| \biggl( \sum_i \bigl| N_{q_1,\rho(z_i)}(S(z_i) f_i) \bigr|^2 \biggr)^{\frac12} \Biggr\|_{L^p(\Omega;Y)} \\
& \lesssim \Biggl\| \biggl( \sum_i \bigl|M_{HL}^{q_0} f_i\bigr|^2 \biggl)^{\frac12} \Biggr\|_{L^p(\Omega;Y)} \\
& \lesssim \Biggl\| \biggl( \sum_i |f_i|^2 \biggr)^{\frac12} \Biggr\|_{L^p(\Omega;Y)}
\end{align*}
where we have applied Lemma \ref{lem-Ku08-Prop23} and Proposition \ref{prop-MHL-q} (note that $Y(\ell^2)^{q_0}$ is a UMD lattice).
\end{proof}

Now we gather several situations, in which the operator $A$ has an $\HI$ calculus on $L^p(\Omega;Y)$.
This will be important for the H\"ormander calculus, i.e. it is one of the hypotheses in Theorem \ref{thm-Hoermander}.

\begin{thm}
\label{thm-HI-extrapolation}
Let $(\Omega,\dist,\mu)$ be a space of homogeneous type and $E$ a Banach space.
Let $A$ generate the self-adjoint semigroup $(T_t)_t$ on $L^2(\Omega)$ satisfying Gaussian estimates \eqref{equ-GE-prelims}.
Let $p_0 \in (1,\infty)$.
Assume that $A$ has an $\HI(\Sigma_\omega)$ calculus on $L^{p_0}(\Omega;E)$ for some $\omega \in (0,\pi)$.
Then for any $p \in (1,\infty)$, $A$ has an $\HI(\Sigma_\omega)$ calculus on $L^p(\Omega;E)$.
%Consequently, if $E$ is reflexive and $A$ has also a $\HI(\Sigma_\omega)$ calculus on $L^{p_0'}(\Omega;E')$, then $A$ has an $\HI(\Sigma_\omega)$ calculus on $L^p(\Omega;E)$ for any $p \in
\end{thm}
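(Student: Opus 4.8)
This is an extrapolation statement, and the plan is to run a vector-valued Calderon-Zygmund argument around the exponent $p_0$, exploiting that all kernels occurring here are \emph{scalar}, so that no hypothesis on $E$ beyond being a Banach space is needed. By Remark~\ref{rem-GGE-spectrum} the operator $A$ is pseudo-$0$-sectorial, and self-adjointness gives a bounded $\HI(\Sigma_\theta)$ calculus of $A$ on $L^2(\Omega)$ for every $\theta \in (0,\pi)$. Passing if necessary to the injective part $A_0$ on $\overline{R(A)}$ (as recalled in Subsection~\ref{subsec-R-bdd}) we may assume that $A$ has dense range. It then suffices to establish a uniform bound
\[ \bigl\| f(A) \otimes \Id_E \bigr\|_{B(L^p(\Omega;E))} \leq C \|f\|_{\infty,\omega} \qquad \bigl( f \in \HI_0(\Sigma_\omega) \bigr), \]
since the full bounded $\HI(\Sigma_\omega)$ calculus on $L^p(\Omega;E)$ then follows by the usual convergence lemma: one approximates $f \in \HI(\Sigma_\omega)$ by $f \psi_n$ with $\psi_n \in \HI_0(\Sigma_\omega)$, $\sup_n \|\psi_n\|_{\infty,\omega} < \infty$ and $\psi_n \to 1$ pointwise, uses $(f\psi_n)(A) = f(A)\psi_n(A)$ together with $\psi_n(A) \to \Id$ strongly on $L^{p_0}(\Omega;E)$, and concludes by a Fatou argument on the dense subspace $L^p(\Omega;E) \cap L^{p_0}(\Omega;E)$.

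The first ingredient is the classical fact that, under the Gaussian estimates \eqref{equ-GE-prelims}, every $f \in \HI_0(\Sigma_\omega)$ gives rise, via the self-adjoint $L^2$ calculus and a Cauchy integral of resolvents whose kernels inherit Gaussian-type off-diagonal bounds, to an operator $f(A)$ with an integral kernel $K_f$ satisfying the H\"ormander regularity condition
\[ \sup_{y, y' \in \Omega} \ \int_{\dist(x,y) \geq 2 \dist(y,y')} \bigl| K_f(x,y) - K_f(x,y') \bigr| \, d\mu(x) \ \leq \ C\, \|f\|_{\infty,\omega}, \]
with $C$ depending only on $\omega$, on $d$ and on the Gaussian constants, and in particular independent of $f$; this is the argument of Duong and McIntosh, see \cite{Duong,DuOS} and, in the present degree of generality, \cite{BK02,BK05,Bl07}. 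Since $A$ is self-adjoint on $L^2(\Omega)$, the Banach adjoint of the realisation of $f(A)$ on $L^{p_0}(\Omega)$ is the realisation of $\overline{f}(A)$ on $L^{p_0'}(\Omega)$, whose kernel is $(x,y) \mapsto \overline{K_f(y,x)}$; hence the displayed estimate and its transposed counterpart, obtained by applying it to $\overline{f}$ with $\|\overline{f}\|_{\infty,\omega} = \|f\|_{\infty,\omega}$, both hold uniformly.

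Now I extrapolate from $p_0$ in both directions. For $1 < p \leq p_0$ the scalar kernel estimate above is exactly what is needed to run the $E$-valued Calderon-Zygmund argument, namely a Calderon-Zygmund decomposition of $\|f(\cdot)\|_E \in L^1(\Omega)$ with the good part controlled by the given $L^{p_0}(\Omega;E)$ bound and the bad part by the kernel regularity; this yields a weak type $(1,1)$ estimate $\|f(A)\otimes\Id_E\|_{L^1(\Omega;E)\to L^{1,\infty}(\Omega;E)} \leq C\|f\|_{\infty,\omega}$, and interpolation with the $L^{p_0}$ bound gives the claim for $1 < p \leq p_0$. For $p_0 < p < \infty$, since $f(A) \otimes \Id_E$ is bounded on $L^{p_0}(\Omega;E)$ its Banach adjoint leaves the isometric subspace $L^{p_0'}(\Omega;E') \subseteq (L^{p_0}(\Omega;E))'$ invariant and acts there (on the dense tensor subspace) as $\overline{f}(A) \otimes \Id_{E'}$, so the latter is bounded on $L^{p_0'}(\Omega;E')$ with norm $\leq C\|f\|_{\infty,\omega}$; running the same Calderon-Zygmund argument with the transposed kernel condition gives boundedness of $\overline{f}(A)\otimes\Id_{E'}$ on $L^r(\Omega;E')$ for $1 < r \leq p_0'$, and dualising once more, using the isometric embedding $L^{r'}(\Omega;E) \hookrightarrow L^{r'}(\Omega;E'') \subseteq (L^r(\Omega;E'))'$ on which the adjoint acts as $f(A)\otimes\Id_{E''}$, yields boundedness of $f(A)\otimes\Id_E$ on $L^{r'}(\Omega;E)$ for $p_0 \leq r' < \infty$, again with constant $\lesssim \|f\|_{\infty,\omega}$. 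Letting $f$ run over $\HI_0(\Sigma_\omega)$ and invoking the convergence lemma completes the proof; alternatively, this type of extrapolation is available in \cite{KuUhl,KU2}.

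I expect the main obstacle to be the second paragraph, namely making the Duong-McIntosh kernel estimate quantitative and uniform over $f \in \HI_0(\Sigma_\omega)$, with the correct dependence on $\|f\|_{\infty,\omega}$ \emph{and} at the same angle $\omega$ (which forces one to write $f(A)$ as a Cauchy integral over $\partial\Sigma_\nu$ with $\nu$ only slightly smaller than $\omega$ and to control the resolvent kernels appearing there). Everything that follows, that is the $E$-valued Calderon-Zygmund decomposition, the Marcinkiewicz interpolation and the duality and bidual bookkeeping, is routine and insensitive to the geometry of $E$.
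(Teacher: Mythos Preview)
Your overall architecture --- weak $(1,1)$ via a vector-valued Calder\'on--Zygmund argument, then Marcinkiewicz interpolation with the given $L^{p_0}$ bound, then duality for $p>p_0$ --- is exactly the paper's strategy. The gap is in the second paragraph: you assert that the kernel $K_f$ of $f(A)$ satisfies the \emph{classical} H\"ormander regularity condition
\[
\sup_{y,y'} \int_{\dist(x,y)\geq 2\dist(y,y')} |K_f(x,y)-K_f(x,y')|\,d\mu(x) \leq C\|f\|_{\infty,\omega},
\]
and attribute this to Duong--McIntosh and \cite{BK02,BK05,Bl07}. But this is precisely what those references do \emph{not} prove, and under mere Gaussian upper bounds (no H\"older continuity of $p_t(x,y)$) it is not available. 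The whole point of the Duong--McIntosh/Duong--Robinson theory is to bypass the H\"ormander condition by using the semigroup as a mollifier.

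Concretely, the paper follows \cite{DuRo}: after the Calder\'on--Zygmund decomposition $f=g+\sum_i f_i$ with $\supp f_i\subset B_i=B(x_i,r_i)$, the bad part is split further as $h_1=\sum_i T_{t_i}f_i$ and $h_2=\sum_i(\Id-T_{t_i})f_i$ with $t_i=r_i^m$. The piece $h_1$ is smooth enough to land in $L^{p_0}(\Omega;E)$ (this uses only the pointwise Gaussian bound and the scalar boundedness of $M_{HL}$ on $L^{p_0'}$), while for $h_2$ one needs off-diagonal decay of the kernel of $\xi(A)(\Id-T_{t_i})$, which \emph{does} follow from Gaussian estimates via the Cauchy integral representation. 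Both estimates are scalar kernel estimates and hence pass to $E$-valued functions by taking norms inside, exactly as you say; but the correct substitute for your displayed H\"ormander condition is the pair of estimates on $T_{t_i}$ and on $\xi(A)(\Id-T_{t_i})$.

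For the range $p>p_0$ the paper observes that $A'=A$ again generates a self-adjoint semigroup with Gaussian estimates (after a symmetric volume comparison) and has $\HI(\Sigma_\omega)$ calculus on $L^{p_0'}(\Omega;E')$, so one simply reruns the weak $(1,1)$ argument on the dual side; your bidual route also works but is a bit heavier. Your last paragraph correctly identifies the second paragraph as the crux --- but the difficulty is not making a known estimate quantitative, it is that the estimate you wrote down is the wrong one.
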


\begin{proof}
Suppose that we have shown that for any $\xi \in \HI_0(\Sigma_\omega)$ and $f \in L^1(\Omega;E)$ with bounded support, we have
\begin{equation}
\label{equ-proof-thm-HI-extrapolation}
\mu\Bigl( \bigl\{ x \in \Omega :\: \|\xi(A) f(x)\|_E > \lambda \bigr\} \Bigr) \leq C \frac{1}{\lambda} \|\xi\|_{\infty,\omega} \|f\|_{L^1(\Omega;E)}.
\end{equation}
Then $\xi(A)$ is of weak type $L^1(\Omega;E) \to L^{1,\infty}(\Omega;E)$ and by assumptions, is also of strong type $L^{p_0}(\Omega;E) \to L^{p_0}(\Omega;E)$, so by the vector valued Marcinkiewicz interpolation theorem \cite[Lemma 1]{BCP}, $\xi(A)$ will be bounded on $L^p(\Omega;E)$ for any $1 < p < p_0$.
Now in the dual setting, the hypotheses of the Theorem imply that $A'$ has a bounded $\HI(\Sigma_\omega)$ calculus on $L^{p_0'}(\Omega;E')$.
Indeed, $A'$ will again be self-adjoint and the semigroup $T_t'$ generated by $A'$ will satisfy Gaussian estimates (note that $V(y,t^{\frac1m}) \leq 2^{(n+1)d} V(x,t^{\frac1m})$, and $\exp \Biggl(-\frac{c}{2} \biggl[\frac{\dist(y,x)}{t^{\frac1m}} \biggr]^{\frac{m}{m-1}} \Biggr) 2^{(n+1)d} \lesssim 1$ for $\dist(x,y) \in [2^n t^{\frac1m},2^{n+1} t^{\frac1m}]$).
Thus, applying \eqref{equ-proof-thm-HI-extrapolation} to $A'$, we obtain that $\xi(A') = \xi(A)'$ is bounded on $L^p(\Omega;E')$ for any $1 < p < p_0'$.
This shows that $\xi(A)$ is bounded on $L^q(\Omega;E)$ for $p_0 < q < \infty$, and the Theorem would follow.

It thus only remains to show \eqref{equ-proof-thm-HI-extrapolation}, which we do now, hereby following closely \cite[Proof of Theorem 3.1]{DuRo}, or its variant for $\omega \geq \frac{\pi}{2}$ from \cite[p.~104-105]{DuRo}.
Note that the additional assumption \cite[(6)]{DuRo} on the space $\Omega$ is not needed in this part.
We only indicate where Duong's and Robinson's arguments have to be modified slightly.
We use the Calder\'on-Zygmund decomposition of $f$ at height $\lambda > \frac{\|f\|_{L^1(E)}}{\mu(\Omega)}$ from \cite[Section 3.2]{CoW} in its vector-valued form from \cite[Section 2]{MoLu}.
That is, there exist functions $g,\: f_i : \Omega \to E$ and balls $B_i=B(x_i,r_i)$ such that
\begin{enumerate}
\item[($a_1$)] $f = g + h$ with $h = \sum_i f_i$,
\item[$(a_2)$] $\|g(x)\|_E \leq c \lambda$,
\item[$(a_3)$] $\supp f_i \subset B_i$ and each point of $X$ is contained in at most $M$ balls,
\item[$(a_4)$] $\|f_i\|_{L^1(\Omega;E)} \leq c \lambda \mu(B_i)$,
\item[$(a_5)$] $\sum_i \mu(B_i) \leq c \frac{1}{\lambda} \|f\|_{L^1(\Omega;E)}$.
\end{enumerate}
Note that $(a_4)$ and $(a_5)$ imply that $\|h\|_{L^1(\Omega;E)} \leq c \|f\|_{L^1(\Omega;E)}$:
Hence $\|g\|_{L^1(\Omega;E)} \leq (1 + c) \|f\|_{L^1(\Omega;E)}$.
Now decompose $h$ into the sum of two functions \[h_1 = \sum_i T_{t_i} f_i, \quad h_2 = \sum_i (\Id - T_{t_i}) f_i,\] where $t_i = r_i^m$, $m \geq 2$ being the parameter in the Gaussisan estimates.
At first, we estimate
\begin{multline}
\label{equ-2-proof-thm-HI-extrapolation}
\mu\Bigl( \bigl\{ x \in \Omega:\: \|\xi(A)f(x)\|_E > \lambda \bigr\} \Bigr) \\ \leq \mu\Bigl( \Bigl\{ x \in \Omega:\: \|\xi(A)g(x)\|_E > \frac{\lambda}{3} \Bigr\} \Bigr) + \sum_{i = 1}^2 \mu \Bigl( \Bigl\{ x \in \Omega: \: \| \xi(A)h_i(x)\|_E > \frac{\lambda}{3} \Bigr\} \Bigr) .
\end{multline}
For the ``good'' part $g$, we have
\begin{align*}
\mu\Bigl( \bigl\{ x \in \Omega :\: \| \xi(A)g(x) \|_E > \lambda \bigr\} \Bigr) & \leq \lambda^{-p_0} \int_\Omega \|\xi(A)g(x)\|_E^{p_0} \,dx \\
& \lesssim \lambda^{-p_0} \int_\Omega \|g(x)\|_E^{p_0} \,dx \\
& \lesssim \lambda^{-1} \int_\Omega \|g(x)\|_E \,dx \lesssim \lambda^{-1} \|f\|_{L^1(\Omega;E)}.
\end{align*}
Here we have used $(a_2)$.
Next consider the $h_1$-term in \eqref{equ-2-proof-thm-HI-extrapolation}.
We have \[\mu \Bigl( \bigl\{x \in \Omega:\: \|\xi(A)h_1(x)\|_E > \lambda \bigr\} \Bigr) \lesssim \lambda^{-p_0} \Bigl\| \sum_i T_{t_i} f_i \Bigr\|_{L^{p_0}(E)}^{p_0}.\]
Now arguing as in \cite[Proof of Theorem 3.1]{DuRo}, we obtain by the boundedness of the scalar Hardy-Littlewood maximal operator $M_{HL} : L^{p_0'}(\Omega) \to L^{p_0'}(\Omega)$ that $\|\sum_i T_{t_i} f_i\|_{L^{p_0}(E)} \lesssim \lambda \| \sum_i 1_{B_i} \|_{p_0}$.
Then using $(a_3)$ we obtain \[\mu\Bigl(\bigl\{ x \in \Omega :\: \|\xi(A)h_1(x)\|_E > \lambda \bigr\} \Bigr) \lesssim \lambda^{-1} \|f\|_{L^1(\Omega;E)}.\]
For the $h_2$-term in \eqref{equ-2-proof-thm-HI-extrapolation}, one does not need the $\HI(\Sigma_\omega)$ calculus on $L^{p_0}(E)$ any more, but the Gaussian estimates come into play.
Since the estimates of $\xi(A)h_2 = \sum_i\xi_i(A) f_i$ used in \cite[Proof of Theorem 3.1]{DuRo} are of the form $|\xi_i(A)f_i(x)| \leq \int_\Omega |k(x,y)| \: |f_i(y)| \,dy$, with $k(x,y)$ being the integral kernel of $\xi_i(A)$, and then estimating $|k(x,y)|$ further, the same arguments apply literally to our vector valued case, replacing absolute values around $f_i$ by $E$-norms.
One finally obtains, taking into account $(a_5)$, that \[\mu\Bigl( \bigl\{ x \in \Omega: \: \|\xi(A)h_2(x)\|_E > \lambda \bigr\} \Bigr) \lesssim \lambda^{-1} \|f\|_{L^1(\Omega;E)}.\]
This concludes the proof of \eqref{equ-proof-thm-HI-extrapolation}, and thus of the theorem.
\end{proof}

For generalised Gaussian estimates, we have the following result on $\HI$ calculus.

\begin{thm}
\label{thm-HI-Kunstmann-Ullmann}
Let $(\Omega,\dist,\mu)$ be a space of homogeneous type and $A$ generate a self-adjoint semigroup $(T_t)_t$ on $L^2(\Omega)$ satisfying generalised Gaussian estimates \eqref{equ-GGE} with parameters $p_0 \in [1,2)$ and $m \in [2,\infty)$.
Let $Y = L^s(\Omega')$ with $s \in (p_0,p_0')$.
Then $A$ has a bounded $\HI(\Sigma_\omega)$ calculus on $L^p(\Omega;L^s(\Omega'))$ for any $p \in (p_0,p_0')$ and $\omega \in (0,\pi)$.
\end{thm}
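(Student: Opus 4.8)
To establish Theorem~\ref{thm-HI-Kunstmann-Ullmann}, the plan is to reduce the $L^s(\Omega')$-valued assertion to the scalar $\HI$ calculus together with an $\ell^s$-valued refinement that is already available in the literature, and then to pass from $\ell^s$ to a general $L^s(\Omega')$ by a routine density argument. First I would record the scalar case: by Remark~\ref{rem-GGE-spectrum} the operator $A$ is $0$-sectorial on $L^2(\Omega)$, and by the work of Blunck--Kunstmann and Kunstmann--Uhl \cite{KuUhl} (building on \cite{BK02,BK05,Bl07}), the generalised Gaussian estimates \eqref{equ-GGE} with parameters $p_0,m$ guarantee that $A$ has a bounded $\HI(\Sigma_\omega)$ calculus on $L^p(\Omega)$ for every $p \in (p_0,p_0')$ and every $\omega \in (0,\pi)$; in particular $\|\xi(A)\|_{B(L^p(\Omega))} \leq C \|\xi\|_{\infty,\omega}$ for all $\xi \in \HI_0(\Sigma_\omega)$.

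Next I would invoke the $\mathcal{R}_s$-boundedness results of Kunstmann--Ullmann \cite{KU2}: under the same hypotheses, for every $p,s \in (p_0,p_0')$ the family $\{\xi(A):\: \xi \in \HI_0(\Sigma_\omega),\: \|\xi\|_{\infty,\omega} \leq 1\}$ (and likewise the analytic semigroup $\{T_z\}$) is $\mathcal{R}_s$-bounded on $L^p(\Omega)$, i.e. $\xi(A) \ot \Id_{\ell^s_N}$ is bounded on $L^p(\Omega;\ell^s_N)$ with a bound independent of $\xi$ and of $N \in \N$. To turn this into a bound on $L^p(\Omega;L^s(\Omega'))$ I would use that $L^p(\Omega) \ot L^s(\Omega')$ is dense in $L^p(\Omega;L^s(\Omega'))$ and that, given $F = \sum_{k=1}^n f_k \ot y_k$ with the $y_k$ simple functions, the $y_k$ are measurable with respect to a common finite family of pairwise disjoint sets $E_1,\dots,E_N \subset \Omega'$ of finite positive measure; then $W := \vect\{1_{E_1},\dots,1_{E_N}\} \subset L^s(\Omega')$ is isometric to $\ell^s_N$, and both $F$ and $(\xi(A) \ot \Id_{L^s(\Omega')})F$ lie in $L^p(\Omega) \ot W$, so the $\ell^s_N$-bound transfers verbatim. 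This yields $\|\xi(A) \ot \Id_{L^s(\Omega')}\|_{B(L^p(\Omega;L^s(\Omega')))} \leq C \|\xi\|_{\infty,\omega}$ for all $\xi \in \HI_0(\Sigma_\omega)$; the semigroup part of the same argument shows that $A \ot \Id_{L^s(\Omega')}$ is $0$-sectorial on $L^p(\Omega;L^s(\Omega'))$, and the standard convergence lemma for the $\HI$ calculus (cf.\ Subsection~\ref{subsec-R-bdd}, passing to the injective part of $A$ if it fails to have dense range) then upgrades the uniform $\HI_0$ bound to a bounded $\HI(\Sigma_\omega)$ calculus on $L^p(\Omega;L^s(\Omega'))$.

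The main obstacle is the $\mathcal{R}_s$-boundedness input quoted from \cite{KU2}. If one wishes to reprove it rather than cite it, one would rerun the Calder\'on--Zygmund scheme underlying the scalar $\HI$ calculus in the $\ell^s$-valued setting, replacing the scalar Hardy--Littlewood maximal operator by the Fefferman--Stein $\ell^s$-valued one, which is bounded on $L^p(\Omega;\ell^s)$ by Theorem~\ref{thm-MHL} applied with the UMD lattice $Y = \ell^s$. The genuinely delicate point there is the $\ell^s$-valued off-diagonal estimate for $1_{B_1}T_t1_{B_2}$ from $L^{p_0}(\Omega;\ell^s)$ to $L^{p_0'}(\Omega;\ell^s)$: it does \emph{not} follow from naively tensoring \eqref{equ-GGE} with $\Id_{\ell^s}$ (the mismatch $p_0 \neq p_0'$ in the $\Omega'$-variable obstructs this), and one has to obtain it by interpolating the Marcinkiewicz--Zygmund $\ell^2$-valued bound for $1_{B_1}T_t1_{B_2}$ against the pointwise $\ell^\infty$-valued bound coming from the scalar decay, exactly as in \cite{Bl07,BK05,KU2}. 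That interpolation of off-diagonal bounds is where I expect the real work to lie; everything else is bookkeeping.
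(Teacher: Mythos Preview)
Your proposal is correct and follows essentially the same route as the paper: quote the $\mathcal{R}_s$-bounded $\HI$ calculus of Kunstmann--Ullmann to get boundedness on $L^p(\Omega;\ell^s)$, then reduce $L^s(\Omega')$ to $\ell^s$ via simple functions on pairwise disjoint sets of finite measure. One correction: the $\mathcal{R}_s$-boundedness input is \cite[Theorems 2.1 and 2.3]{KuUl} (Kunstmann--Ullmann), not \cite{KU2} (Kunstmann--Uhl); with that citation in hand your final paragraph about reproving the $\ell^s$-valued off-diagonal estimates is unnecessary.
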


\begin{proof}
According to \cite[Proposition 2.1 (i) (1) $\Longrightarrow$ (3) with $\alpha = \frac{1}{p_0} - \frac{1}{p_0'}$, $\beta = 0$, $u = p_0$, $v = p_0'$]{BK05} and the dual estimate due to self-adjointness of the semigroup, the generalised Gaussian estimates \eqref{equ-GGE} imply the hypotheses of \cite[Theorem 2.3]{KuUl}.
Then \cite[Theorem 2.3]{KuUl} implies that $A$ has an $R_s$-bounded $\HI(\Sigma_\omega)$ calculus on $L^p(\Omega)$ for $s,p \in (p_0,p_0')$ and $\omega > 0$, $R_s$ boundedness being defined in that article.
By \cite[Theorem 2.1]{KuUl}, $A$ has then a bounded $\HI(\Sigma_\omega)$ calculus on $L^p(\Omega;\ell^s)$.
Now let $f = \sum_k f_k \otimes 1_{A_k} \in L^p(\Omega) \otimes L^s(\Omega')$ with $1_{A_k}$ indicator functions of pairwise disjoint measurable subsets $A_k$ of $\Omega'$ of finite positive measure, and $\xi \in \HI(\Sigma_\omega)$.
Note that clearly, those $f$ are dense in $L^p(\Omega;L^s(\Omega'))$.
We have
\begin{align*}
\| \xi(A) f\|_{L^p(\Omega;L^s(\Omega'))} & = \biggl\| \sum_k \xi(A)f_k \otimes 1_{A_k} \biggr\|_{L^p(\Omega;L^s(\Omega'))} \\
& = \Biggl( \int_\Omega \biggl\| \sum_k \xi(A)f_k(x) 1_{A_k}(\omega') \biggr\|_{L^s(\Omega')}^{p} \,dx \Biggr)^{\frac1p} \\
& = \Biggl( \int_\Omega \biggl( \sum_k \Bigl| \xi(A)f_k(x) \mu(A_k)^{\frac1s} \Bigr|^s \biggr)^{\frac{p}{s}} \,dx \Biggr)^{\frac1p} \\
& \lesssim \Biggl( \int_\Omega \biggl| \sum_k f_k(x) \mu(A_k)^{\frac1s} \biggr|^{\frac{p}{s}} \,dx \Biggr)^{\frac1p} \\
& = \biggl\| \sum_k f_k \otimes 1_{A_k} \biggr\|_{L^p(\Omega;L^s(\Omega'))} = \|f\|_{L^p(\Omega;L^s(\Omega'))}.
\end{align*}
In other words, the case $Y = L^s(\Omega')$ can be reduced to the case $Y = \ell^s$, since $L^s(\Omega')$ is representable in $\ell^s$.
\end{proof}

The following sufficient conditions for the $\HI$ calculus on $L^p(\Omega;Y)$ are essentially known in the literature.

\begin{prop}
\label{prop-HI-diffusion}
Let $(\Omega,\mu)$ be a $\sigma$-finite measure space.
\begin{enumerate}
\item 
Let $(T_t)_t$ be a semigroup acting on $L^p(\Omega)$ for some fixed $p \in (1,\infty),$ such that the $T_t$ are regular contractive, that is, there exist $S_t$ positive and contractive operators on $L^p(\Omega)$ such that $|T_t f| \leq S_t |f|$ for all $t > 0$.
Assume that $p \neq 2$ or that $T_t$ are themselves positive.
Then the generator $A$ of $(T_t)_t$ has an $\HI(\Sigma_\omega)$ calculus on $L^p(\Omega;Y)$ for any $\omega \in (\frac{\pi}{2},\pi)$ and any UMD space $Y$.
\item
Let $(T_t)_t$ be a semigroup which is contractive on $L^p(\Omega)$ for all $p \in [1,\infty]$ (strong continuity only for finite $p$).
Then the generator $A$ of $(T_t)_t$ has an $\HI(\Sigma_\omega)$ calculus on $L^p(\Omega;Y)$ for any $p \in (1,\infty)$, $\omega \in (\frac{\pi}{2},\pi)$ and any UMD space $Y$.
\end{enumerate}
\end{prop}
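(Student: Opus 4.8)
The plan is to derive both statements by assembling tools already available in the literature: a dilation theorem for contraction semigroups on $L^p$, the Coifman--Weiss transference principle, and the bounded $\HI$ calculus that generators of bounded $C_0$-groups on UMD spaces are known to possess; statement~(2) will then be reduced to~(1). A preliminary remark used throughout: a regular operator on $L^p(\Omega)$ extends, with unchanged regular norm, to a bounded operator on $L^p(\Omega;Y)$ for every Banach space $Y$ (this is standard; cf.\ Subsection~\ref{subsec-tensor}). Hence under the hypotheses of~(1) the $T_t$ act as contractions on $L^p(\Omega;Y)$, and by strong continuity and density $-A$ generates a bounded $C_0$-semigroup on the UMD space $L^p(\Omega;Y)$, so that its $\HI$ calculus there is meaningful to ask about.

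For~(1), I would start with a dilation. If the $T_t$ are positive contractions on $L^p(\Omega)$, Fendler's dilation theorem provides a measure space $\tilde\Omega$, a positive isometry $J\co L^p(\Omega)\to L^p(\tilde\Omega)$, a positive contraction $Q\co L^p(\tilde\Omega)\to L^p(\Omega)$ and a $C_0$-group $(U_t)_{t\in\R}$ of positive invertible isometries of $L^p(\tilde\Omega)$ such that $T_t=QU_tJ$ for $t\geq0$. When $p\neq2$ and the $T_t$ are merely regular contractions, the analogous dilation, available on $L^p$ with $p\neq2$ where isometries are rigid by Lamperti's theorem, gives the same factorisation with $J$, $Q$ contractive. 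Tensoring with $\Id_Y$ then yields $T_t\otimes\Id_Y=(Q\otimes\Id_Y)(U_t\otimes\Id_Y)(J\otimes\Id_Y)$, where $(U_t\otimes\Id_Y)_{t\in\R}$ is a bounded $C_0$-group on the UMD space $L^p(\tilde\Omega;Y)$ and $J\otimes\Id_Y$, $Q\otimes\Id_Y$ are bounded.

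Next comes the transference. The generator of a bounded $C_0$-group on a UMD space has a bounded $\HI$ calculus on every horizontal strip; applying this to the group $(U_t\otimes\Id_Y)$ and combining it, via the Coifman--Weiss transference identity, with the factorisation above expresses $f(A)$, for $f\in\HI_0(\Sigma_\omega)$ with $\omega\in(\frac\pi2,\pi)$, as $(Q\otimes\Id_Y)$ followed by a strip-calculus operator of the group generator followed by $(J\otimes\Id_Y)$, whence $\|f(A)\|_{L^p(\Omega;Y)}\lesssim\|f\|_{\infty,\omega}$ uniformly in such $f$; by density this is the claimed bounded $\HI(\Sigma_\omega)$ calculus of $A$ on $L^p(\Omega;Y)$ for every $\omega\in(\frac\pi2,\pi)$. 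The whole scheme is essentially that of \cite[Sections 5--6]{HiPr}, see also \cite{KW04}; the only thing to check is that each step tensorises with $\Id_Y$ and that $L^p(\tilde\Omega;Y)$ is UMD. For~(2), contractivity on $L^1(\Omega)$, $L^2(\Omega)$ and $L^\infty(\Omega)$ makes each $T_t$ a regular contraction on $L^p(\Omega)$ for every $1<p<\infty$ (interpolate the modulus $|T_t|$, which is a positive contraction on $L^1$ and on $L^\infty$, between these endpoints), so~(1) applies directly when $p\neq2$, and the remaining case $p=2$ follows by complex interpolation of the $\HI_0(\Sigma_\omega)$ calculus bounds furnished by~(1) on $L^{p_0}(\Omega;Y)$ and $L^{p_1}(\Omega;Y)$ for some $p_0<2<p_1$, $p_0,p_1\neq2$; alternatively~(2) is classical for symmetric diffusion semigroups.

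The main obstacle is the dilation step of~(1) in the non-positive case: one must invoke the Fendler/Akcoglu-type dilation for regular contraction semigroups that is available precisely on $L^p$ with $p\neq2$ (this is exactly where the hypothesis that $p\neq2$ or the $T_t$ are positive is used; at $p=2$ a regular contraction need not dilate to a group of isometries), and one has to verify that the resulting embedding, projection and group interact correctly with the tensor factor $\Id_Y$, which is automatic because they are positive, respectively regular. Granting that dilation, the UMD group calculus and the transference estimate are entirely standard.
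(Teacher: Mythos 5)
Your proposal is correct and follows essentially the same route as the paper: Fendler's dilation $T_t = PU_tJ$ into a group of isometries, the Hieber--Pr\"u{\ss}/Coifman--Weiss transference giving the $\HI$ calculus of the group generator on the UMD-valued $L^p$ space, tensorisation of the positive maps $J$, $P$ with $\Id_Y$, and for part (2) the reduction to regular contractivity for $p\neq2$ with complex interpolation at $p=2$. No gaps.
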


\begin{proof}
1. According to \cite[Theorem 4.2.1 \& p.~45]{Fen}, there exists a one parameter group $(U_t)_{t \in \R}$ of isometries acting on $L^p(\tilde{\Omega})$ for some other measure space $\tilde{\Omega}$ and positive contractions $J : L^p(\Omega) \to L^p(\tilde{\Omega})$, $P : L^p(\tilde{\Omega}) \to L^p(\Omega)$ such that $T_t f = P U_t J f$ for any $t > 0$ and $f \in L^p(\Omega)$.
Now we have for $\xi \in \HI_0(\Sigma_\omega)$ that $\xi(A) = P \xi(B) J$ with $B$ the generator of $(U_t)_t$.
Since $Y$ is UMD, according to \cite[Theorem 5]{HiPr}, $\xi(B)$ admits a bounded extension to $L^p(\tilde\Omega;Y)$ with norm $\lesssim \|\xi\|_{\infty,\omega}$.
Moreover, since $P$ and $J$ are positive, they admit bounded extensions to the $Y$ valued $L^p$ spaces, too.
Thus $\xi(A)$ also admits a bounded extension to $L^p(\Omega;Y)$ with norm $\lesssim \| \xi \|_{\infty,\omega}$.

2. It is well-known that such a semigroup satisfies the assumptions of 1. if $p \neq 2$ \cite[Theorem 2.2.1]{Tagg}.
Thus the result then follows from 1.
If $p = 2$, then we interpolate between $L^{2+\epsilon}(\Omega;Y)$ and $L^{2-\epsilon}(\Omega;Y)$.
\end{proof} 

We record the following corollary.

\begin{cor}
\label{cor-HI-positive}
Let $(\Omega,dist,\mu)$ be a space of homogeneous type and $Y$ a UMD space.
Suppose that the self-adjoint semigroup $(T_t)_t$ on $L^2(\Omega)$ satisfies the Gaussian estimates \eqref{equ-GE-prelims} and that $(T_t)_t$ is moreover (lattice) positive, i.e. $p_t(x,y) \geq 0$ for any $t > 0$ and $x,y \in \Omega$, where $p_t(x,y)$ is the integral kernel as in \eqref{equ-GE-prelims}.
Then for any $1 < p < \infty$, the generator $A$ has an $\HI(\Sigma_\omega)$ calculus on $L^p(\Omega;Y)$ for any $\omega \in (\frac{\pi}{2},\pi)$.
\end{cor}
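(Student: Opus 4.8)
\emph{Proof plan.} The idea is to obtain the $\HI$ calculus first in the Hilbert-space situation $p=2$ and then to extrapolate to all $p\in(1,\infty)$ by Theorem \ref{thm-HI-extrapolation}. As a preliminary, since $A$ is self-adjoint on $L^2(\Omega)$ and $(T_t)_t$ satisfies the Gaussian estimates \eqref{equ-GE-prelims}, these are in particular generalised Gaussian estimates with parameter $p_0=1$ by Remark \ref{rem-GE-GGE}, so Remark \ref{rem-GGE-spectrum} gives $\sigma(A)\subseteq[0,\infty)$, i.e.\ $A\geq 0$. By the spectral theorem this yields $\|T_t\|_{L^2(\Omega)\to L^2(\Omega)}\leq 1$ for every $t\geq 0$.

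Next I would exploit positivity. The hypothesis $p_t(x,y)\geq 0$ says exactly that each $T_t$ is a positive operator on $L^2(\Omega)$, and then $|T_tf|\leq T_t|f|$ pointwise. Together with the contractivity just recorded, this shows that $(T_t)_t$ is regular contractive on $L^2(\Omega)$ in the sense of Proposition \ref{prop-HI-diffusion}(1), with the choice $S_t=T_t$. Since moreover the $T_t$ are themselves positive, the alternative condition ``$p\neq 2$ or $T_t$ positive'' of that proposition is met at the exponent $p=2$. Hence Proposition \ref{prop-HI-diffusion}(1) applies and gives that $A$ has a bounded $\HI(\Sigma_\omega)$ calculus on $L^2(\Omega;Y)$ for every $\omega\in(\tfrac{\pi}{2},\pi)$ and every UMD space $Y$.

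Finally I would invoke Theorem \ref{thm-HI-extrapolation} with $E=Y$ and $p_0=2$: $A$ generates the self-adjoint semigroup $(T_t)_t$ on $L^2(\Omega)$ satisfying the Gaussian estimates \eqref{equ-GE-prelims} and, by the previous step, has an $\HI(\Sigma_\omega)$ calculus on $L^2(\Omega;Y)$; the theorem then delivers a bounded $\HI(\Sigma_\omega)$ calculus on $L^p(\Omega;Y)$ for every $p\in(1,\infty)$, with the same angle $\omega$, which is the assertion. I do not anticipate a real obstacle; the only points requiring a little care are the step $A\geq 0\Rightarrow\|T_t\|_{2\to2}\leq 1$ (so that \emph{contractivity}, and not merely uniform boundedness, holds on $L^2$) and the bookkeeping that lets us apply Proposition \ref{prop-HI-diffusion}(1) precisely at the ``bad'' exponent $p=2$, which is legitimate here only because the $T_t$ are assumed positive.
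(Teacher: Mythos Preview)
Your proposal is correct and follows essentially the same route as the paper's own proof: deduce $\sigma(A)\subseteq[0,\infty)$ (hence $\|T_t\|_{2\to2}\leq 1$) from the Gaussian estimates, apply Proposition \ref{prop-HI-diffusion}(1) at $p=2$ using positivity of $T_t$, and then extrapolate via Theorem \ref{thm-HI-extrapolation}. Your write-up is in fact a bit more explicit than the paper about why the $p=2$ case of Proposition \ref{prop-HI-diffusion}(1) is legitimate here.
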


\begin{proof}
Observe that the Gaussian bound \eqref{equ-GE-prelims} implies that $\sup_{t \geq 0} \|T_t\|_{2 \to 2} < \infty$ (see also Remark \ref{rem-GGE-spectrum}).
Thus, the spectrum of $A$ is contained in $[0,\infty)$, so that in fact, $\|T_t\|_{2 \to 2} \leq 1$.
Now apply first Proposition \ref{prop-HI-diffusion} 1. for $p = 2$ and then extrapolate via Theorem \ref{thm-HI-extrapolation} to the general case $1 < p < \infty$.
\end{proof}

Now we state the main theorem of this section.
For the existence of the $\HI$ calculus assumption needed below, we refer to Theorems \ref{thm-HI-extrapolation}, \ref{thm-HI-Kunstmann-Ullmann}, Proposition \ref{prop-HI-diffusion} and Corollary \ref{cor-HI-positive}.

\begin{thm}
\label{thm-Hoermander}
Let $(\Omega,\dist,\mu)$ be a space of homogeneous type with a dimension $d$.
Let $A$ be a self-adjoint operator on $L^2(\Omega)$ generating the semigroup $(T_t)_{t \geq 0}$.
Let $p_0 \in [1,2)$ and $m \in [2,\infty)$.
Assume that $(T_t)_{t \geq 0}$ satisfies generalised Gaussian estimates with parameters $p_0,m$.
Let $Y$ be a UMD lattice which is $p_Y$-convex and $q_Y$-concave for some $p_Y \in (p_0,2]$ and $q_Y \in [2,p_0')$.
Assume that the convexifications $Y^{p_Y}$ and $(Y')^{q_Y'}$ are also UMD lattices.
Finally, assume that $A$ has a bounded $\HI(\Sigma_\omega)$ calculus on $L^p(\Omega;Y)$ for some fixed $p \in (p_0,p_0')$ and $\omega \in (0,\pi)$.

Then $A$ has a H\"ormander $\Hor^\beta_2$ calculus on $L^p(\Omega;Y)$ with
\[ \beta > \alpha(p,p_Y,q_Y) \cdot d + \frac12\]
and $\alpha$ from \eqref{equ-defi-alpha}.
\end{thm}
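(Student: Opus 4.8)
The plan is to deduce the theorem from the abstract criterion Theorem \ref{thm-KrW3} by feeding into it the $R$-boundedness of the rescaled complex-time semigroup supplied by Theorem \ref{thm-R-bounded-semigroup}. First I would check that $X = L^p(\Omega;Y)$ fits the framework of Theorem \ref{thm-KrW3}: it is reflexive (as $Y$ is a reflexive UMD lattice and $1 < p < \infty$), it has Pisier's property $(\alpha)$ by Lemma \ref{lem-property-alpha}, and, by the standing hypothesis, $A$ has a bounded $\HI(\Sigma_\omega)$ calculus on it for some $\omega \in (0,\pi)$.

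Next I would apply Theorem \ref{thm-R-bounded-semigroup}, whose hypotheses coincide verbatim with those assumed here (generalised Gaussian estimates with parameters $p_0,m$, the $p_Y$-convexity and $q_Y$-concavity of $Y$ with $p_Y \in (p_0,2]$ and $q_Y \in [2,p_0')$, and the UMD property of $Y^{p_Y}$ and $(Y')^{q_Y'}$). This gives, for any $\alpha > \alpha(p,p_Y,q_Y)\, d$, that
\[ \bigl\{ \bigl(\cos(\arg z)\bigr)^\alpha \exp(-zA) : \: z \in \C_+ \bigr\} \]
is $R$-bounded on $L^p(\Omega;Y)$. In particular each $\exp(-zA)$ with $z \in \C_+$ is bounded, so $-A$ generates a bounded analytic semigroup of angle $\frac{\pi}{2}$ on $L^p(\Omega;Y)$, and the displayed family is exactly condition \eqref{equ-R_T-variant} of Theorem \ref{thm-KrW3}. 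Invoking that theorem therefore yields a bounded $\Hor^\beta_2$ calculus on $L^p(\Omega;Y)$ for every $\beta > \alpha + \frac12$; since $\alpha$ may be chosen arbitrarily close to $\alpha(p,p_Y,q_Y)\, d$, letting $\alpha \downarrow \alpha(p,p_Y,q_Y)\, d$ produces the claimed threshold $\beta > \alpha(p,p_Y,q_Y)\, d + \frac12$. The reduction from the injective part $A_0$ to a possibly non-injective $A$, together with the bound $|f(0)| \le \|f\|_{\Hor^\beta_2}$, is already incorporated in Theorem \ref{thm-KrW3}.

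The substance of the argument lies entirely in Theorem \ref{thm-R-bounded-semigroup} (extrapolation of the generalised off-diagonal bounds from real to complex time following Blunck and Kunstmann, combined with the Hardy--Littlewood maximal theorem and the convexity and concavity of $Y$) and in the abstract Theorem \ref{thm-KrW3}; the present proof merely assembles these. The only points requiring genuine care here are the verification of the applicability hypotheses of Theorem \ref{thm-KrW3} listed above and making the limiting argument $\alpha \downarrow \alpha(p,p_Y,q_Y)\, d$ explicit so that the sharp derivation exponent is reached. I do not expect any new estimate to be needed.
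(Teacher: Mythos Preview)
Your proposal is correct and follows essentially the same approach as the paper's proof, which simply notes that $L^p(\Omega;Y)$ has property $(\alpha)$ by Lemma \ref{lem-property-alpha} and then combines Theorem \ref{thm-R-bounded-semigroup} with Theorem \ref{thm-KrW3}. Your version is merely a more detailed spelling-out of the same assembly, including the explicit check of reflexivity and the limiting argument in $\alpha$, but there is no substantive difference.
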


\begin{proof}
Note that according to Lemma \ref{lem-property-alpha}, $L^p(\Omega;Y)$ has Pisier's property $(\alpha)$.
Now combine Theorem \ref{thm-R-bounded-semigroup} with Theorem \ref{thm-KrW3}.
\end{proof}

Theorem \ref{thm-KrW3}, used in the proof of Theorem \ref{thm-Hoermander} admits an enhancement, and this carries over to Theorem \ref{thm-Hoermander}.
This is the content of the next theorem.

\begin{thm}
\label{thm-R-Hor}
Let $A$ have a $\Hor^\beta_2$ calculus as a consequence of any of theorems in this article.
Then in fact, the operators $f(A)$ are not only bounded for $f \in \Hor^\beta_2$, but also $R$-bounded for a whole family of functions $f$, under the condition that $\|f\|_{\Hor^\beta_2}$ remains bounded.
In other words, there exists  a constant $C < \infty$ such that for any $x_1,\ldots,x_n \in L^p(\Omega;Y)$ and $f_1,\ldots,f_n \in \Hor^\beta_2$, we have a square function estimate
\[ \Biggl\| \biggl( \sum_{i = 1}^n \bigl|f_i(A)x_i\bigr|^2 \biggr)^{\frac12} \Biggr\|_{L^p(\Omega;Y)} \leq C \sup_{i = 1,\ldots,n} \|f_i\|_{\Hor^\beta_2} \Biggl\| \biggl( \sum_{i = 1}^n |x_i|^2 \biggr)^{\frac12} \Biggr\|_{L^p(\Omega;Y)} . \]
\end{thm}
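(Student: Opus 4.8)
The plan is to recognise that the square function estimate in the statement is precisely an $R$-boundedness assertion, and that the $R$-bounded version is already delivered by the machinery of \cite{KrW3} that we invoked (through Theorem \ref{thm-KrW3}) to obtain the norm version; so the only work is to read that argument with $R$-bounds in place of operator norms.

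\textbf{Step 1: Reformulation.} First I would use that $L^p(\Omega;Y)$ is a $B$-convex Banach lattice, so that by the norm equivalence \eqref{equ-Rademacher-square} the asserted inequality is equivalent to
\[ \E \bigl\| \sum_{i=1}^n \epsilon_i f_i(A) x_i \bigr\|_{L^p(\Omega;Y)} \leq C \sup_{i} \|f_i\|_{\Hor^\beta_2} \, \E \bigl\| \sum_{i=1}^n \epsilon_i x_i \bigr\|_{L^p(\Omega;Y)}, \]
that is, to the $R$-boundedness on $L^p(\Omega;Y)$ of the family $\bigl\{ f(A) :\: \|f\|_{\Hor^\beta_2} \leq 1 \bigr\}$. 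This is what I would then establish.

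\textbf{Step 2: Collecting the hypotheses.} Next I would observe that every $\Hor^\beta_2$ calculus produced in this paper arises from an application of Theorem \ref{thm-KrW3} to $X = L^p(\Omega;Y)$, so that on $X$ one has simultaneously: Pisier's property $(\alpha)$ (Lemma \ref{lem-property-alpha}); a bounded $\HI(\Sigma_\sigma)$ calculus for some $\sigma \in (0,\pi)$; and the $R$-boundedness condition \eqref{equ-R_T-variant}, i.e. $R(\{(\cos(\arg z))^\alpha \exp(-zA) :\: z \in \C_+\}) < \infty$, supplied by Theorem \ref{thm-R-bounded-semigroup} and its corollaries. Since $X$ has property $(\alpha)$, I would upgrade the bounded $\HI$ calculus to an $R$-bounded one, i.e. $R(\{g(A) :\: \|g\|_{\infty,\sigma} \leq 1\}) < \infty$, which is automatic in this setting \cite{KW04}.

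\textbf{Step 3: Running the \cite{KrW3} argument with $R$-bounds.} The core of the proof of \cite[Theorem 7.1]{KrW3} underlying Theorem \ref{thm-KrW3} expresses $f(A)$ (for $\|f\|_{\Hor^\beta_2} \leq 1$) as an absolutely convergent combination of the dyadic pieces $(\phi_k f)(A)$ and, unwinding further, of the $\HI_0(\Sigma_\sigma)$ calculus of $A$ together with the complex-time semigroup operators, where the coefficients and kernels depend on $f$ only through $\|f\|_{\Hor^\beta_2}$. I would re-run this representation replacing each operator norm by the corresponding $R$-bound, invoking the standard permanence properties of $R$-boundedness: stability under $\ell^1$-summable sums, under integration against $L^1$-kernels, under composition on either side with a fixed bounded operator, and under strong closure. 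Feeding in the $R$-bounded $\HI$ calculus and the $R$-bound of $\{(\cos(\arg z))^\alpha \exp(-zA)\}$ from Step 2, this yields $R(\{f(A) :\: \|f\|_{\Hor^\beta_2} \leq 1\}) < \infty$ for $\beta > \alpha + \frac12$, which together with Step 1 is the claim.

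\textbf{Main obstacle.} The genuine work lies in Step 3: one must verify that every estimate in \cite{KrW3} survives verbatim when operator norms are replaced by $R$-bounds — in particular the decomposition accounting for the loss of half a derivative, and the passage from the $\HI_0$ calculus to arbitrary $\Hor^\beta_2$ functions via the partition $(\phi_k)$ — and that the implied constants stay uniform over $\{f :\: \|f\|_{\Hor^\beta_2} \leq 1\}$. This is exactly where property $(\alpha)$ (needed to control $R$-bounds of operator-valued integrals) and the precise $(\cos(\arg z))^{-\alpha}$ rate enter; once these are used as in \cite{KrW3}, the remaining manipulations are routine.
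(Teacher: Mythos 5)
Your proposal is correct and follows essentially the same route as the paper: the paper's proof is a one-line observation that \cite[Theorem 7.1]{KrW3} already concludes $R$-boundedness of the family $\{f(A):\|f\|_{\Hor^\beta_2}\leq 1\}$ (not merely boundedness of each $f(A)$) once $X=L^p(\Omega;Y)$ is known to have property $(\alpha)$, which is Lemma \ref{lem-property-alpha}. Your Step 3, re-deriving the \cite{KrW3} machinery with $R$-bounds in place of operator norms, is precisely the content already established in that reference and so can be replaced by a direct citation.
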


\begin{proof}
Theorem \ref{thm-KrW3} in the form from \cite[Theorem 7.1]{KrW3} immediately gives Theorem \ref{thm-R-Hor}, once one notices that $L^p(\Omega;Y)$ has property $(\alpha)$, according to Lemma \ref{lem-property-alpha}.
\end{proof}

\begin{remark}
\label{rem-comparison}
\begin{enumerate}
\item
Spectral multiplier theorems under generalised Gaussian estimates have been obtained in the last 5 years by different methods, but only in the scalar case $Y = \C$.
We note that in this case, $p_Y = q_Y = 2$ and our H\"ormander functional calculus exponent becomes \[\beta > \alpha(p,p_Y,q_Y) \cdot d + \frac12 = \biggl(\max\biggl(\frac1p,\frac12\biggr) - \min\biggl(\frac1p,\frac12\biggr)\biggr)d + \frac12 = \Bigl|\frac1p - \frac12\Bigr| d + \frac12.\]
Let us compare this result with those scalar valued H\"ormander type spectral multiplier theorems obtained in the literature, sometimes under stronger hypotheses.
In the literature, the Definition \ref{defi-Hoermander-class} of $\Hor^\beta_q$ is extended for values $q \neq 2$ in an obvious manner.
We denote $(GGE_{p_0,m})$ for $p_0 \in [1,2)$ and $m \geq 2$ our generalised Gaussian estimate hypothesis, and refer to the sources below for the definition of other hypotheses.
In all cases, the semigroup is moreover assumed to be self-adjoint, acting on $L^2(\Omega)$ with $\Omega$ a space of homogeneous type.
Finally, in the last two sources, there is an autoimprovement of the calculus by self-adjointness of $T_t$ possible.

\noindent
\begin{tabular}{| l | l | l |}
\hline			
Resource & Hypotheses & $\Hor^\beta_q$ calculus on $L^p(\Omega;\C)$ with $p_0 < p < p_0'$ \\
\hline
This article, Theorem \ref{thm-Hoermander} & $(GGE_{p_0,m})$ & $\beta > |\frac1p - \frac12| d + \frac12$ , $q = 2$  \\
\hline
\cite[Theorem 1.1]{Bl} & $(GGE_{p_0,m})$ & $\beta > \frac{d}{2} + \frac12$, $q = 2$ \\
\hline  
\cite[Theorem 5.4 a)]{KuUhl} & $(GGE_{p_0,m})$ & $\beta > | \frac1p - \frac12 | (d+1)$, $\frac1q < |\frac1p - \frac12|$ \\
\hline
\cite[Theorem 5.4b)]{KuUhl} & $(GGE_{p_0,m})$ & $\beta > | \frac1p - \frac12 | d$, $q = \infty$ \\
\hline
\cite[Theorem 4.1]{COSY} & $(FS) + (ST^q_{p_0,2})$ & $\beta > \max(d(\frac1{p_0} - \frac12),\frac1q)$ \\
\hline
\cite[Theorem 5.1]{SYY} & $(DG_m) + (ST^q_{p_0,2,m})$ & $\beta > \max(d(\frac1{p_0} - \frac12), \frac1q)$ \\
\hline
\end{tabular}

\item In the case that $\Omega = \R^d$ and $(T_t)_t$ satisfying classical Gaussian estimates \eqref{equ-GE}, a combination of \cite{ALV} and \cite{GoY} also yields UMD lattice valued spectral multipliers.
Indeed, in \cite{ALV} it is shown that if $m(A)$ satisfies weighted estimates $L^p(\R^d,w) \to L^p(\R^d,w)$ for any weight $w \in A_{p/r_0}$ in the classical Muckenhoupt class, and any $r_0 < p < \infty$, then it extends boundedly to $L^p(\R^d;Y) \to L^p(\R^d;Y)$ for $r_0 < p < \infty$ (in fact, even to $L^p(\R^d,w;Y) \to L^p(\R^d,w;Y)$ for such weights $w$), where $r_0 = p_Y$ is the convexity exponent of $Y$.
On the other hand, \cite{GoY} establishes such scalar weighted estimates $m(A) : L^p(\R^d,w) \to L^p(\R^d,w)$.
Going into the parameter calculations in \cite{GoY,ALV}, one obtains that $A$ has a bounded $\Hor^\beta_\infty$ calculus on $L^p(\R^d;Y)$ for $\beta > \frac{d}{p_Y}$ and $p_Y < p < \infty$ and for $\beta > \frac{d}{q_Y'}$ and $1 < p < q_Y$.
This result and ours from Theorem \ref{thm-Hoermander} are incomparable, since this $\Hor^\beta_\infty$ class and our $\Hor^\beta_2$ class are not contained in each other, also due to the fact that we take into account the concavity exponent $q_Y$ in addition to the convexity exponent $p_Y$.
Moreover, we also obtain square function estimates in Theorem \ref{thm-R-Hor}.
On the other hand, \cite{GoY,ALV} obtain weighted UMD lattice valued estimates.
\end{enumerate}
\end{remark}

We spell out some particular cases of Theorem \ref{thm-Hoermander}.

\begin{cor}
\label{cor-Hoermander}
Let $(\Omega,\dist,\mu)$ be a space of homogeneous type with a dimension $d$.
Let $A$ be a self-adjoint operator on $L^2(\Omega)$ generating the semigroup $(T_t)_{t \geq 0}$.
\begin{enumerate}
\item
Let $p_0 \in [1,2)$ and $m \in [2,\infty)$.
Assume that $(T_t)_{t \geq 0}$ satisfies generalised Gaussian estimates with parameters $p_0,m$.
Let $Y = L^s(\Omega')$.
Assume that $p,s \in (p_0,p_0')$.
Then $A$ has a H\"ormander $\Hor^\beta_2$ calculus on $L^p(\Omega;Y)$ with
\[ \beta > \biggl( \max \biggl(\frac{1}{p},\frac{1}{s},\frac{1}{2} \biggr) - \min \biggl( \frac{1}{p},\frac{1}{s},\frac{1}{2} \biggr) \biggr) \cdot d + \frac12. \]
\item
Assume that $(T_t)_t$ satisfies (classical) Gaussian estimates.
Let $Y$ be any UMD lattice.
Let $p \in (1,\infty)$.
Assume that $A$ has a bounded $\HI(\Sigma_\omega)$ calculus on $L^p(\Omega;Y)$ for some $\omega \in (0,\pi)$.
Then $A$ has a H\"ormander $\Hor^\beta_2$ calculus on $L^p(\Omega;Y)$ for
\[\beta > \alpha(p,p_Y,q_Y) \cdot d + \frac12.\]
\end{enumerate}
\end{cor}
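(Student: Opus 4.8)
The plan is to derive both statements as special cases of Theorem \ref{thm-Hoermander}; the only work is to check its hypotheses and to optimise the resulting derivation exponent.

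For part 1, I would first record the geometry of $L^s(\Omega')$: by Lemma \ref{lem-Ls-convexity} it is $s$-convex and $s$-concave, hence by Lemma \ref{lem-UMD-lattice-convexity}(4) it is $p_s$-convex for every $p_s < s$, and by the duality in Lemma \ref{lem-UMD-lattice-convexity}(3) (applied to $L^{s'}(\Omega')$, which is $s'$-convex) it is $q_s$-concave for every $q_s > s$. Since $p, s \in (p_0,p_0')$, one can choose $p_s \in \bigl(p_0,\min(s,2)\bigr)$ and $q_s \in \bigl(\max(s,2),p_0'\bigr)$, so that $p_Y := p_s \in (p_0,2]$ and $q_Y := q_s \in [2,p_0')$ as required in Theorem \ref{thm-Hoermander}. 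The convexifications are $\bigl(L^s(\Omega')\bigr)^{p_s} = L^{s/p_s}(\Omega')$ and $\bigl(L^{s'}(\Omega')\bigr)^{q_s'} = L^{s'/q_s'}(\Omega')$ by Lemma \ref{lem-Ls-convexity}, and both are UMD lattices because $\frac{s}{p_s}, \frac{s'}{q_s'} \in (1,\infty)$. The remaining hypothesis, an a priori bounded $\HI(\Sigma_\omega)$ calculus on $L^p(\Omega;L^s(\Omega'))$, is exactly the content of Theorem \ref{thm-HI-Kunstmann-Ullmann}. Theorem \ref{thm-Hoermander} then yields a $\Hor^\beta_2$ calculus for every $\beta > \alpha(p,p_s,q_s)\cdot d + \frac12$; letting $p_s \uparrow \min(s,2)$ and $q_s \downarrow \max(s,2)$ one has $\alpha(p,p_s,q_s) \to \max\bigl(\frac1p,\frac1s,\frac12\bigr) - \min\bigl(\frac1p,\frac1s,\frac12\bigr)$, which gives the stated range of $\beta$.

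For part 2, I would invoke Remark \ref{rem-GE-GGE} to view classical Gaussian estimates as generalised Gaussian estimates with $p_0 = 1$, and then, as in the proof of Corollary \ref{cor-R-bounded-semigroup-Ls}(2), use Lemma \ref{lem-UMD-lattice-convexity} to produce $p_Y \in (1,2]$ and $q_Y \in [2,\infty)$ for which $Y$ is $p_Y$-convex and $q_Y$-concave and the convexifications $Y^{p_Y}$ and $(Y')^{q_Y'}$ are UMD; here $(p_Y,q_Y)$ automatically lies inside $(1,\infty) = (p_0,p_0')$. Since the bounded $\HI(\Sigma_\omega)$ calculus on $L^p(\Omega;Y)$ is assumed, Theorem \ref{thm-Hoermander} applies verbatim and produces the $\Hor^\beta_2$ calculus for $\beta > \alpha(p,p_Y,q_Y)\cdot d + \frac12$.

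The only genuinely delicate point is the passage to the infimum over $\beta$ in part 1: one must check that for every target $\beta > \bigl(\max(\frac1p,\frac1s,\frac12) - \min(\frac1p,\frac1s,\frac12)\bigr)d + \frac12$ there is an admissible pair $(p_s,q_s)$ — simultaneously inside $(p_0,p_0')$ and on the correct side of $\min(s,2)$, $\max(s,2)$ — with $\alpha(p,p_s,q_s)\cdot d + \frac12 < \beta$. This is the same elementary interval computation already performed in the proof of Corollary \ref{cor-R-bounded-semigroup-Ls}, so no new idea is needed; the rest is bookkeeping of which earlier result supplies which hypothesis of Theorem \ref{thm-Hoermander}.
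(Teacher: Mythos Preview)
Your proposal is correct and follows essentially the same route as the paper. The only cosmetic difference is packaging: the paper cites Corollary \ref{cor-R-bounded-semigroup-Ls} together with Theorems \ref{thm-KrW3} and \ref{thm-HI-Kunstmann-Ullmann} directly, whereas you route everything through Theorem \ref{thm-Hoermander} (whose proof is precisely that combination); the verification of the convexity/concavity and convexification-UMD conditions for $L^s(\Omega')$, and the limiting argument $p_s \uparrow \min(s,2)$, $q_s \downarrow \max(s,2)$, are identical to what the paper already carried out in the proof of Corollary \ref{cor-R-bounded-semigroup-Ls}.
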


\begin{proof}
1. Take into account Corollary \ref{cor-R-bounded-semigroup-Ls} and Theorems \ref{thm-KrW3} and \ref{thm-HI-Kunstmann-Ullmann}.

2. Take into account Corollary \ref{cor-R-bounded-semigroup-Ls} and Theorem \ref{thm-KrW3}.
\end{proof}

If we plug in $f_{\delta,u}(t) = (1 - \frac{t}{u})_+^\delta$ into the H\"ormander functional calculus, we obtain the following result.

\begin{cor}
\label{cor-Bochner-Riesz}
Let $(\Omega,\dist,\mu)$ be a space of homogeneous type with a dimension $d$.
Let $A$ be a self-adjoint operator on $L^2(\Omega)$ generating the semigroup $(T_t)_{t \geq 0}$.
Let $p_0 \in [1,2)$ and $m \in [2,\infty)$.
Assume that $(T_t)_{t \geq 0}$ satisfies generalised Gaussian estimates with parameters $p_0,m$.
Let $Y$ be a UMD lattice which is $p_Y$-convex and $q_Y$-concave for some $p_Y \in (p_0,2]$ and $q_Y \in [2,p_0')$.
Assume that the convexifications $Y^{p_Y}$ and $(Y')^{q_Y'}$ are also UMD lattices.
Assume moreover that $A$ has a bounded $\HI(\Sigma_\omega)$ calculus on $L^p(\Omega;Y)$ for some fixed $p \in (p_0,p_0')$.
Then the Bochner-Riesz means associated with $A$ satisfy
\[ \sup_{u > 0} \biggl\|\biggl(1 - \frac{1}{u} A\biggr)_+^\delta\biggr\|_{L^p(\Omega;Y) \to L^p(\Omega;Y)} < \infty ,\]
provided that $\delta > \alpha(p,p_Y,q_Y) \cdot d$.
Moreover, for these $\delta$, we have
\[ \biggl\| \biggl( \sum_k \bigl| \bigl(1 - \frac{1}{u_k} A\bigr)_+^\delta f_k \bigr|^2 \biggr)^{\frac12} \biggr\|_{L^p(\Omega;Y)} \leq C  \biggl\| \biggl( \sum_k | f_k |^2 \biggr)^{\frac12} \biggr\|_{L^p(\Omega;Y)} \]
for any $u_k > 0$ and $f_k \in L^p(\Omega;Y)$.
\end{cor}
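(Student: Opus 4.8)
The plan is to write the Bochner-Riesz operators as $f_{\delta,u}(A)$, where $f_{\delta,u}(t) = (1 - t/u)_+^\delta = f_{\delta,1}(t/u)$, and to feed these into the H\"ormander calculus furnished by Theorem \ref{thm-Hoermander} together with its square function strengthening Theorem \ref{thm-R-Hor}. The one non-formal ingredient is the classical fact that the truncated power $f_{\delta,1}(t) = (1-t)_+^\delta$ lies in $\Hor^\beta_2$ for every $\beta < \delta + \tfrac12$: away from $t = 1$ this function is smooth with compact support, while near $t = 1$ it behaves like $(1-t)^\delta$, whose localised Fourier transform decays like $|\xi|^{-(\delta+1)}$, so that $\phi f_{\delta,1}(R\,\cdot) \in W^\beta_2(\R)$ precisely when $\beta < \delta + \tfrac12$, with norm bounded uniformly in $R > 0$; and $|f_{\delta,1}(0)| = 1 < \infty$.

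Granting this, since $\delta > \alpha(p,p_Y,q_Y)\cdot d$ by hypothesis, one may fix an exponent $\beta$ with
\[ \alpha(p,p_Y,q_Y)\cdot d + \tfrac12 < \beta < \delta + \tfrac12, \]
which is possible precisely because $\alpha(p,p_Y,q_Y)\cdot d < \delta$. For this $\beta$, Theorem \ref{thm-Hoermander}, all of whose remaining hypotheses are assumed in the present statement, shows that $A$ has a bounded $\Hor^\beta_2$ calculus on $L^p(\Omega;Y)$. Moreover the $\Hor^\beta_2$ norm is invariant under the dilations $f \mapsto f(u^{-1}\,\cdot)$: the supremum over $R > 0$ in Definition \ref{defi-Hoermander-class} is scale-invariant, and the value at $0$ is unchanged since $f_{\delta,u}(0) = 1$ for every $u > 0$. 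Hence
\[ \sup_{u>0}\|f_{\delta,u}\|_{\Hor^\beta_2} = \|f_{\delta,1}\|_{\Hor^\beta_2} =: C_\delta < \infty. \]
Since $f_{\delta,u}(A) = (1 - u^{-1}A)_+^\delta$, boundedness of the $\Hor^\beta_2$ calculus gives
\[ \sup_{u>0}\bigl\|(1 - u^{-1}A)_+^\delta\bigr\|_{L^p(\Omega;Y)\to L^p(\Omega;Y)} \leq C\,C_\delta < \infty, \]
which is the first assertion.

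For the square function estimate, Theorem \ref{thm-R-Hor} applies, since the $\Hor^\beta_2$ calculus of $A$ on $L^p(\Omega;Y)$ was obtained as a consequence of Theorem \ref{thm-Hoermander}. Applying it to the functions $f_i = f_{\delta,u_i}$, each of $\Hor^\beta_2$ norm equal to $C_\delta$, yields
\[ \Biggl\| \biggl( \sum_k \bigl|(1 - u_k^{-1}A)_+^\delta f_k\bigr|^2 \biggr)^{\frac12} \Biggr\|_{L^p(\Omega;Y)} \leq C\,C_\delta \Biggl\| \biggl( \sum_k |f_k|^2 \biggr)^{\frac12} \Biggr\|_{L^p(\Omega;Y)}, \]
as claimed. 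The main, and essentially only, point requiring genuine care is the membership $f_{\delta,1} \in \Hor^\beta_2$ for $\beta < \delta + \tfrac12$; the rest is bookkeeping with dilation invariance of the H\"ormander class and with the two quoted theorems.
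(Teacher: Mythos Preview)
Your proof is correct and follows essentially the same route as the paper: apply Theorem \ref{thm-Hoermander} to obtain the $\Hor^\beta_2$ calculus, use the classical estimate $\sup_{u>0}\|f_{\delta,u}\|_{\Hor^\beta_2}<\infty$ for $\delta>\beta-\tfrac12$ (the paper cites \cite{COSY} for this, while you sketch the Fourier-side argument directly), and then invoke Theorem \ref{thm-R-Hor} for the square function part. Your explicit mention of the dilation invariance of the H\"ormander norm makes transparent a step the paper leaves implicit.
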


\begin{proof}
For the first part, it suffices to apply Theorem \ref{thm-Hoermander} and to note the H\"ormander norm estimate
\[ \sup_{u > 0} \|f_{\delta,u}\|_{\Hor^\beta_2} < \infty \]
provided that $\delta > \beta - \frac12$ \cite[p.~11 in arxiv version]{COSY}.
Then for the second part, apply Theorem \ref{thm-R-Hor}.
\end{proof}

Another application of Theorem \ref{thm-Hoermander} is the following spectral decomposition of Paley-Littlewood type.
We refer e.g. to \cite{KrW2} for applications of this decomposition to the description of complex and real interpolation spaces associated with an abstract operator $A$.
To this end, we let $(\phi_n)_{n \in \Z}$ be a dyadic partition of unity in the sense of Definition \ref{defi-dyadic-partition}.
Further let $\psi_n = \phi_n$ for $n \geq 1$ and $\psi_0 = \sum_{n = -\infty}^0 \phi_n,$ so that $\sum_{n \in \Z} \phi_n(t) = \sum_{n = 0}^\infty \psi_n(t) = 1$ for all $t > 0.$

\begin{cor}
\label{cor-Littlewood-Paley}
Let $(\Omega,\dist,\mu)$ be a space of homogeneous type with a dimension $d$.
Let $A$ be a self-adjoint operator on $L^2(\Omega)$ generating the semigroup $(T_t)_{t \geq 0}$.
Let $p_0 \in [1,2)$ and $m \in [2,\infty)$.
Assume that $(T_t)_{t \geq 0}$ satisfies generalised Gaussian estimates with parameters $p_0,m$.
Assume moreover that $A$ has a bounded $\HI(\Sigma_\omega)$ calculus on $L^p(\Omega;Y)$ for some fixed $p \in (p_0,p_0')$.
Let $Y = Y(\Omega')$ be a UMD Banach lattice.

Then, for any $f \in L^p(\Omega;Y)$, we have the norm description
\[ \| f \|_{L^p(\Omega;Y)} \cong \Biggl\| \biggl( \sum_{ n \in \Z } \bigl| \phi_n(A) f \bigr|^2 \biggr)^{\frac12} \Biggr\|_{L^p(\Omega;Y)} \cong \Biggl\| \biggl( \sum_{ n = 0 }^\infty \bigl | \psi_n(A) f \bigr |^2 \biggr)^{\frac12} \Biggr\|_{L^p(\Omega;Y)} . \]
\end{cor}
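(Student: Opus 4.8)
The plan is to obtain both equivalences from the $R$-bounded H\"ormander calculus of Theorem \ref{thm-R-Hor}, applied both to $A$ on $L^p(\Omega;Y)$ and to the adjoint operator $A'$ on $L^{p'}(\Omega;Y')$. Fix convexity/concavity exponents $p_Y\in(p_0,2]$, $q_Y\in[2,p_0')$ for $Y$ with $Y^{p_Y}$ and $(Y')^{q_Y'}$ again UMD (available under the hypotheses); then Theorem \ref{thm-Hoermander} furnishes a $\Hor^\beta_2$ calculus of $A$ on $L^p(\Omega;Y)$ for some $\beta > \alpha(p,p_Y,q_Y)\cdot d + \frac12$. By Lemma \ref{lem-UMD-lattice-convexity} (3), $Y'$ is $q_Y'$-convex and $p_Y'$-concave, the convexifications $(Y')^{q_Y'}$ and $Y^{p_Y}$ required for $A'$ are again UMD lattices by hypothesis, $A'$ is self-adjoint and satisfies the same generalised Gaussian estimates, and an elementary computation gives $\alpha(p',q_Y',p_Y') = \alpha(p,p_Y,q_Y)$; hence $A'$ has a $\Hor^\beta_2$ calculus on $L^{p'}(\Omega;Y')$ with the \emph{same} $\beta$. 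We may assume $A$ is injective (otherwise we pass to the injective part of $A$ on $\overline{\Ran A}$; the complementary part is trivial, and for the second equivalence it is in any case absorbed since $\psi_0(0) = 1$), so that $\sum_{n\in\Z}\phi_n(A)f = f$ and $\sum_{n\geq 0}\psi_n(A)f = f$ in $L^p(\Omega;Y)$.

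For the estimate $\gtrsim$ in the first equivalence, I would use that the H\"ormander norm is dilation invariant, so $\|\phi_n\|_{\Hor^\beta_2} = \|\phi_0\|_{\Hor^\beta_2}$, and that on each dyadic scale only boundedly many $\phi_n$ do not vanish, whence $\sup_N\sup_\epsilon\bigl\|\sum_{|n|\leq N}\epsilon_n\phi_n\bigr\|_{\Hor^\beta_2} \leq C\|\phi_0\|_{\Hor^\beta_2} < \infty$, the supremum being over all sign sequences $\epsilon$. Therefore the operators $\sum_{|n|\leq N}\epsilon_n\phi_n(A)$ are uniformly bounded on $L^p(\Omega;Y)$, and since $L^p(\Omega;Y)$ is a B-convex lattice, \eqref{equ-Rademacher-square} gives
\[ \Bigl\| \bigl( \textstyle\sum_{|n|\leq N} |\phi_n(A)f|^2 \bigr)^{\frac12} \Bigr\|_{L^p(\Omega;Y)} \cong \E \Bigl\| \sum_{|n|\leq N} \epsilon_n \phi_n(A) f \Bigr\|_{L^p(\Omega;Y)} \lesssim \|f\|_{L^p(\Omega;Y)} . \]
Letting $N\to\infty$ and invoking the Fatou property of $Y$ yields $\bigl\|(\sum_n|\phi_n(A)f|^2)^{1/2}\bigr\| \lesssim \|f\|$. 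The same reasoning, with the obvious modification for the fixed bounded function $\psi_0$, handles the family $(\psi_n)_{n\geq 0}$, and also applies to $A'$ on $L^{p'}(\Omega;Y')$.

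For the converse estimate $\|f\| \lesssim \bigl\|(\sum_n |\phi_n(A)f|^2)^{1/2}\bigr\|$, I would argue by duality using $[L^p(\Omega;Y)]' = L^{p'}(\Omega;Y')$. Pick $\tilde\phi \in C^\infty_c(0,\infty)$ with $\tilde\phi \equiv 1$ on $\supp\phi_0$ and set $\tilde\phi_n(t) = \tilde\phi(2^{-n}t)$, so $\tilde\phi_n\phi_n = \phi_n$. For $g\in L^{p'}(\Omega;Y')$ with $\|g\|_{L^{p'}(\Omega;Y')}\leq 1$, using $f = \sum_n\phi_n(A)f$ and moving each $\tilde\phi_n$ onto the adjoint (via self-adjointness of $A$),
\[ |\langle f,g\rangle| = \Bigl| \sum_n \langle \phi_n(A)f,\, \tilde\phi_n(A')g\rangle \Bigr| \leq \int_\Omega \int_{\Omega'} \Bigl( \sum_n |\phi_n(A)f|^2 \Bigr)^{\frac12} \Bigl( \sum_n |\tilde\phi_n(A')g|^2 \Bigr)^{\frac12} d\mu'(\omega')\, d\mu(x) \]
by the pointwise lattice duality and the Cauchy--Schwarz inequality in $n$, and H\"older then bounds the right-hand side by $\bigl\|(\sum_n|\phi_n(A)f|^2)^{1/2}\bigr\|_{L^p(\Omega;Y)} \cdot \bigl\|(\sum_n|\tilde\phi_n(A')g|^2)^{1/2}\bigr\|_{L^{p'}(\Omega;Y')}$. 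By the previous step applied to $A'$ (note $\sup_n\|\tilde\phi_n\|_{\Hor^\beta_2} < \infty$ and the $\tilde\phi_n$ have bounded overlap on each dyadic scale), the last factor is $\lesssim \|g\|_{L^{p'}(\Omega;Y')}\leq 1$, and taking the supremum over $g$ completes the first equivalence. The second equivalence follows the same way, replacing $\phi_n$ by $\psi_n$ and $\tilde\phi_0$ by a bump function that equals $1$ on $\supp\psi_0$.

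The delicate point is the converse estimate: it is a lower square-function estimate, and the device of inserting the fattened multipliers $\tilde\phi_n$ and dualising works only because $A'$ carries a H\"ormander calculus of the \emph{same} order $\beta$ on $L^{p'}(\Omega;Y')$ — this is precisely where the self-duality of the convexity/concavity hypotheses and the identity $\alpha(p',q_Y',p_Y') = \alpha(p,p_Y,q_Y)$ enter. The remaining ingredients — convergence of the infinite square functions (via the Fatou property of $Y$ and density of $L^p(\Omega)\otimes Y$) and the reduction to injective $A$ — are routine.
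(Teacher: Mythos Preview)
Your argument is correct and is essentially the proof of \cite[Theorem 4.1]{KrW2} specialised to the present setting; the paper simply cites that result together with the Rademacher--square equivalence \eqref{equ-Rademacher-square}, whereas you unpack it. The key ideas are identical: for the upper square-function bound one uses that $\sup_\epsilon\bigl\|\sum_n\epsilon_n\phi_n\bigr\|_{\Hor^\beta_2}<\infty$ by bounded dyadic overlap, and for the lower bound one dualises via fattened bumps $\tilde\phi_n$ and invokes the H\"ormander calculus on $L^{p'}(\Omega;Y')$. Your explicit verification that $\alpha(p',q_Y',p_Y')=\alpha(p,p_Y,q_Y)$ and that the hypotheses of Theorem \ref{thm-Hoermander} are self-dual is a useful addition that the paper leaves implicit in the citation.

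One small caveat: your treatment of the non-injective case is slightly imprecise for the \emph{first} equivalence. On $\Ker A$ every $\phi_n(A)$ vanishes while $\|f\|$ does not, so the equivalence $\|f\|\cong\bigl\|(\sum_n|\phi_n(A)f|^2)^{1/2}\bigr\|$ genuinely fails there rather than being ``trivial''; it holds only on $\overline{\Ran A}$. (Your remark that the $\psi_n$-equivalence survives because $\psi_0(0)=1$ is correct.) The paper does not comment on this either, and \cite[Theorem 4.1]{KrW2} is stated for $0$-sectorial $A$ with dense range, so the issue is inherited from the source.
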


\begin{proof}
Once a H\"ormander calculus of $A$ on $L^p(\Omega;Y)$ is guaranteed by Theorem \ref{thm-Hoermander}, the corollary follows from \cite[Theorem 4.1]{KrW2} resp. \eqref{equ-Rademacher-square}, to decompose the norm in Rademacher sums resp. square sums.
\end{proof}

\begin{remark}
\label{rem-interpolation}
We note that if $Y(\Omega') = [Z(\Omega'),L^2(\Omega')]_\theta$ is a complex interpolation space with $Z$ a further UMD lattice and $\theta \in (0,1)$, then one can apply complex interpolation to improve the derivation exponent in the H\"ormander calculus of Theorem \ref{thm-Hoermander}.
Note however that one passes from an exponent which is maybe not optimal to another one again not optimal.
The interpolation procedure goes like this.
Introduce in the H\"ormander classes a second parameter $q \in [1,\infty]$ and define $\Hor^\beta_q$ by the norm
\[ \|f\|_{\Hor^\beta_q} = |f(0)| + \sup_{R > 0} \|\phi f(R\cdot)\|_{W^\beta_q(\R)} \]
in a similar manner to Definition \ref{defi-Hoermander-class}.
Then $A$ has a $\Hor^\beta_q$ calculus on $L^2(\Omega;L^2(\Omega'))$ provided that $\Hor^\beta_q \hookrightarrow C_b([0,\infty))$
due to the self-adjoint spectral calculus, which is the case for $\beta > \frac1q$.
We assume for simplicity that $T_t$ satisfies classical Gaussian estimates.
Then Theorem \ref{thm-Hoermander} gives a $\Hor^\beta_2$ calculus on $L^p(\Omega;Z)$ for $p$ close to $1$ or close to $\infty$ with a certain $\beta$.
According to \cite[Proposition 4.83]{KrPhD}, one can interpolate the mappings
\begin{align*}
\Hor^\beta_2 & \to B(L^p(\Omega;Z)),\: f \mapsto f(A) \\
\intertext{and}
\Hor^\epsilon_\infty & \to B(L^2(\Omega;L^2(\Omega'))),\: f \mapsto f(A)
\end{align*}
to obtain a calculus $\Hor^{\beta_\theta}_{q_\theta} \to B(L^{p_\theta}(\Omega;[Z,L^2]_\theta)) = B(L^{p_\theta}(\Omega;Y))$.
Going through the calculation,
%pages 191-193 on Christoph's handwritten notes
one gets in case $\frac{1}{p_Y} = 1 - \frac{\theta}{2}$ and $\frac{1}{q_Y} = \frac{\theta}{2}$ that $\beta_\theta > 2d|\frac1p - \frac12| + |\frac1p - \frac12|$ for $p \in (1, p_Y)$ or $p \in (q_Y,\infty)$ and a certain $q_\theta \in (2,\infty)$.
For $p$ close to $p_Y$, the differentiation index $\beta_\theta$ is close to $\frac12 - |\frac1{p_Y} - \frac12|$ better than what gives Theorem \ref{thm-Hoermander}.
\end{remark}

\begin{remark}
We point out a different strategy to show a weaker Mihlin functional calculus on $L^p(\Omega;Y)$ in a slightly different setting.
Namely, assume the conditions at the beginning of \cite[Section 4]{Kempp2}.
That is, $(\Omega,\dist,\mu)$ is a complete space of homogeneous type having in addition the cone property, $A$ is self-adoint on $L^2(\Omega)$ satisfying the more general Davies-Gaffney estimates, which correspond to generalised Gaussian estimates as in \eqref{equ-GGE} with parameters $p_0 = 2, \: m = 2$, and finally, $Y$ is a \textbf{UMD space} (not necessarily a lattice).

Then \cite[Theorem 12]{Kempp2} yields a $\HI(\Sigma_\omega)$ calculus for any angle $\omega > 0$ on the vector-valued Hardy space $H^p(\Omega;Y)$, $p \in (1,\infty)$.
If $A = - \Delta + V$ is a Schr\"odinger operator on $\Omega = \R^d$ with a positive potential $V \geq 0$ satisfying the following reverse H\"older condition for some $s > \frac{d}{2}$ and $C < \infty$
\[ \biggl( \int_B V^s(x) \,dx \biggr)^{\frac1s} \leq C \int_B V(x) \, dx \]
for any ball $B \subseteq \R^d$, \cite[(3)]{BCFR}, then we have \cite[Remark p. 18]{Kempp2} that $H^p(\Omega;Y) = L^p(\Omega;Y)$.
Now it is known that such a calculus $\Phi_\omega: \HI(\Sigma_\omega) \to B(L^p(\Omega;Y))$ for \textit{any} angle is related to Mihlin calculus \cite[Theorem 4.10]{CDMY}.
An inspection of the proof of \cite[Theorem 12]{Kempp2} shows that $\|\Phi_\omega\| \leq C \omega^{-d-1}$, where $d \in \N$ is a doubling dimension.
This implies by \cite[Theorem 4.10]{CDMY} that
\[ \|\xi(A)\|_{L^p(\Omega;Y) \to L^p(\Omega;Y)} \lesssim \max_{k = 0 , \ldots, d+1} \sup_{t > 0} t^k |\xi^{(k)}(t)| =: \|\xi\|_{M^{d+1}}. \]
Note that $\|\xi\|_{\Hor^\beta_2} \leq \|\xi\|_{\Hor^{d+1}_2} \lesssim \|\xi\|_{M^{d+1}}$, with $\beta = \alpha(p,p_Y,q_Y) \cdot (d + 1) + \epsilon < d+1$ from Theorem \ref{thm-Hoermander}, so that the latter Theorem yields a stronger result, when applicable.
\end{remark}

\subsection{A simpler alternate approach for the pure Laplacian, (classical) Gaussian estimates and dispersive estimates}
\label{subsec-simpler-approach}

If one does not strive for the optimal parameters $\alpha$ and $\beta$ in Theorems \ref{thm-R-bounded-semigroup} and \ref{thm-Hoermander}, then in the case of classical Gaussian estimates, there is a simpler and more direct approach which does not need the machinery of Blunck's and Kunstmann's work, but rather an extrapolation of Gaussian estimates from real to complex time from \cite[Proposition 4.1]{CaCoOu}.
One finds $\beta > d + 1$, see Corollary \ref{cor-GE-Hoermander} 1.
Moreover, in the case of $A$ being the pure Laplacian on $L^p(\R^d)$, this approach yields a parameter $\beta > \frac{d+1}{2}$ for any UMD lattice and any $1 < p < \infty$ independent of convexity and concavity of the UMD lattice $Y$.
Also if one knows dispersive estimates for $\exp(itA)$ and the volume of $\Omega$ is polynomially bounded, then the estimate in \cite[Proposition 4.1]{CaCoOu} can be improved, and again the parameter becomes $\beta > \frac{d+1}{2}$, see Corollary \ref{cor-GE-Hoermander} 2.
This parameter, universal in the class of UMD lattices, can then be strengthened, by self-adjoint calculus and a complex interpolation argument for 
\[ L^p(\Omega;Y) = L^p(\Omega;[Z,L^2]_\theta) = [L^{p_1}(\Omega;Z),L^2(\Omega;L^2)]_\theta\]
with $p_1$ close to $1$ or $\infty$.
It will give the condition $\beta > \widetilde{\alpha}(p,p_Y,q_Y) \cdot d + \frac12$ with
\[ \widetilde{\alpha}\bigl(p,p_Y,q_Y\bigr) = \max\biggl(\bigl|\frac1p - \frac12\bigr|, \bigl|\frac1{p_Y}-\frac12\bigr|,\bigl|\frac1{q_Y} - \frac12\bigr|\biggr) \leq \alpha(p,p_Y,q_Y), \]
see Remark \ref{rem-interpolation-dispersive}.
We start with the case of $A = -\Delta$ on $L^p(\R^d)$.
Then it is well known that $T_z = \exp(z \Delta)$ has the Gaussian integral kernel \[p_z(x,y) = \frac{1}{\sqrt{4 \pi z}^d} \exp \biggl(- \frac{|x-y|^2}{4z} \biggr)\] for $z \in \C_+$.

\begin{prop}
\label{prop-pure-laplacian}
For $\theta \in (- \frac{\pi}{2}, \frac{\pi}{2}),$ let  $M_\theta$ denote the maximal operator 
\[M_\theta(f)(x,\omega') = \sup_{t > 0} \bigl|T_{te^{i\theta}}(f(\cdot,\omega'))(x)\bigr| \]
for $f \in L^p(\R^d) \otimes Y$.
If $p \in (1,\infty)$ and $Y$ is a UMD lattice, then $M_\theta$ extends to a bounded operator on $L^p(\R^d;Y)$ with 
\[ \|M_\theta f\|_{L^p(\R^d;Y)} \leq C_{Y,p} \bigl(\cos(\theta)\bigr)^{-\frac{d}{2}} \|f\|_{L^p(\R^d;Y)} . \]
\end{prop}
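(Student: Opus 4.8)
The plan is to dominate $M_\theta$ pointwise by a constant multiple of the Hardy--Littlewood lattice maximal operator $M_{HL}$ and then invoke Theorem \ref{thm-MHL}. First I would make the complex heat kernel explicit: for $z = te^{i\theta}$ with $t>0$ and $\theta \in (-\frac\pi2,\frac\pi2)$ one has $\frac1z = \frac1t e^{-i\theta}$, hence $\Re \frac{1}{4z} = \frac{\cos\theta}{4t}$, and therefore
\[ |p_z(x,y)| = \frac{1}{(4\pi t)^{d/2}} \exp\!\left( -\frac{\cos\theta}{4t}|x-y|^2 \right) = (\cos\theta)^{-d/2}\, p_{t/\cos\theta}(x,y), \]
where $p_s$ denotes the \emph{real} heat kernel at time $s>0$. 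Consequently, for scalar $g \in L^p(\R^d)$,
\[ \bigl|T_{te^{i\theta}}g(x)\bigr| \leq (\cos\theta)^{-d/2}\, e^{(t/\cos\theta)\Delta}|g|(x) \leq (\cos\theta)^{-d/2} \sup_{s>0} e^{s\Delta}|g|(x). \]

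Second, I would recall the classical fact that the real heat kernel is radially nonincreasing and integrable with total mass $1$, so that it is an approximate identity dominated by the maximal operator: $\sup_{s>0} e^{s\Delta}|g|(x) \leq C_d\, M_{HL}(g)(x)$, where on the right $M_{HL}$ is the centered Hardy--Littlewood maximal operator on $\R^d$. Applying this estimate for each fixed $\omega' \in \Omega'$ with $g = f(\cdot,\omega')$, and recalling that $M_{HL}(f)(x,\omega')$ in Section \ref{sec-MHL} is precisely $\sup_{r>0}\frac{1}{V(x,r)}\int_{B(x,r)}|f(y,\omega')|\,d\mu(y)$, we obtain the pointwise bound
\[ M_\theta(f)(x,\omega') \leq C_d\, (\cos\theta)^{-d/2}\, M_{HL}(f)(x,\omega'), \qquad f \in L^p(\R^d)\otimes Y. \]

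Third, since $\R^d$ is a space of homogeneous type, Theorem \ref{thm-MHL} gives that $M_{HL}$ is bounded on $L^p(\R^d;Y)$ for every UMD lattice $Y$ and every $p \in (1,\infty)$. Combined with the displayed pointwise bound this yields $\|M_\theta f\|_{L^p(\R^d;Y)} \leq C_{Y,p}(\cos\theta)^{-d/2}\|f\|_{L^p(\R^d;Y)}$ for all $f$ in the dense subspace $L^p(\R^d)\otimes Y$; as $M_\theta$ is sublinear there, Lemma \ref{lem-density-sublinear} then extends it to a bounded sublinear operator on all of $L^p(\R^d;Y)$ with the same bound.

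I do not expect a serious obstacle here. The only points needing care are the bookkeeping of the complex exponent (so that the $(\cos\theta)^{-d/2}$ loss is exactly the ratio of $(4\pi t)^{-d/2}$ to $(4\pi t/\cos\theta)^{-d/2}$) and the observation that the domination of $e^{s\Delta}$ by $M_{HL}$, together with the sublinearity reduction, is entirely a scalar, pointwise-in-$\omega'$ matter, so that the Banach-lattice structure enters only through Theorem \ref{thm-MHL}.
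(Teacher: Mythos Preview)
Your proposal is correct and follows essentially the same approach as the paper: both compute $|p_{te^{i\theta}}(x,y)| = (\cos\theta)^{-d/2} p_{t/\cos\theta}(x,y)$ (the paper does this via the substitution $t \mapsto t\cos\theta$ after writing out the integral), then dominate the real heat maximal operator $M_0$ pointwise by $C_d M_{HL}$ and invoke Theorem~\ref{thm-MHL}. Your explicit mention of Lemma~\ref{lem-density-sublinear} for the extension step is a small addition but not a departure.
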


\begin{proof}
We estimate
\begin{align*}
M_\theta f(x,\omega') & = \sup_{t > 0} \bigl| T_{te^{i\theta}} (f(\cdot,\omega'))(x)\bigr| \\
 & \leq \sup_{t > 0} \int_{\R^d} \biggl| \frac{1}{\sqrt{4 \pi e^{i\theta}t}^d} \exp \biggl(-\frac{|x-y|^2}{4 e^{i\theta} t} \biggr) \biggr| |f(y,\omega')| \,dy \\
& \leq \frac{1}{(4\pi)^{\frac{d}{2}}} \sup_{t > 0} \int_{\R^d} \frac{1}{t^{\frac{d}{2}}} \exp \biggl( - \frac{|x-y|^2}{4t} \cos (\theta) \biggr) |f(y,\omega')| \,dy \\
& = \frac{1}{(4\pi)^{\frac{d}{2}}} \sup_{t > 0} \int_{\R^d} \frac{1}{\bigl(\cos(\theta) t\bigr)^{\frac{d}{2}}} \exp\biggl( - \frac{|x-y|^2}{4t} \biggr) |f(y,\omega')| \,dy \\
& = C_d \bigl(\cos(\theta)\bigr)^{-\frac{d}{2}} M_0 f(x,\omega'),
\end{align*}
where we have performed the simple substitution $t \mapsto \cos(\theta) t$ in the second to last step.
It is well-known that $M_0 f(x,\omega') \leq C_d M_{HL}f(x,\omega')$ pointwise.
Thus, we deduce from Theorem \ref{thm-MHL}
\begin{align*} \|M_\theta f\|_{L^p(\R^d;Y)} & \lesssim \bigl(\cos(\theta)\bigr)^{-\frac{d}{2}} \|M_0 f\|_{L^p(\R^d;Y)} \\ &\lesssim \bigl(\cos(\theta)\bigr)^{-\frac{d}{2}} \|M_{HL}f\|_{L^p(\R^d;Y)} \\ & \lesssim \bigl(\cos(\theta)\bigr)^{-\frac{d}{2}} \|f\|_{L^p(\R^d;Y)} . \end{align*}
\end{proof}

\begin{cor}
\label{cor-pure-laplacian}
Let $Y$ be any UMD lattice, $d \in \N$ and $1 < p < \infty$.
Then $A = - \Delta$ has a $\Hor^\beta_2$ functional calculus on $L^p(\R^d;Y)$ for any exponent $\beta > \frac{d+1}{2}$.
\end{cor}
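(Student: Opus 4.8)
The plan is to obtain the corollary from the abstract criterion of Theorem \ref{thm-KrW3}, applied to $A=-\Delta$ on the Banach space $X=L^p(\R^d;Y)$ with the exponent $\alpha=\frac d2$. Three things have to be checked: that $X$ is reflexive and has Pisier's property $(\alpha)$; that $-\Delta$ generates a bounded analytic semigroup on $X$ admitting a bounded $\HI(\Sigma_\sigma)$ calculus for some $\sigma\in(0,\pi)$; and that the family $\{(\cos(\arg z))^{d/2}T_z:z\in\C_+\}$ is $R$-bounded on $X$ (condition \eqref{equ-R_T-variant}). Of these, only the last requires real work.

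First I would dispose of the soft parts. Since $Y$ is a UMD lattice it is reflexive, hence $X=L^p(\R^d;Y)$ is reflexive for $1<p<\infty$, and by Lemma \ref{lem-property-alpha} it has property $(\alpha)$. The Gaussian semigroup $T_t=e^{t\Delta}$ is positive and contractive on $L^r(\R^d)$ for every $r\in[1,\infty]$, so Proposition \ref{prop-HI-diffusion}(2) (or, equivalently, Corollary \ref{cor-HI-positive}) gives a bounded $\HI(\Sigma_\omega)$ calculus of $-\Delta$ on $L^p(\R^d;Y)$ for any $\omega\in(\frac\pi2,\pi)$; in particular $-\Delta$ generates a bounded analytic semigroup on $X$.

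For the $R$-boundedness I would use that $X=L^p(\R^d;Y)$ is a B-convex Banach lattice, so by the Rademacher--square-function equivalence \eqref{equ-Rademacher-square} it suffices to prove, for finitely many $z_k=t_ke^{i\theta_k}\in\C_+$ and $f_k\in L^p(\R^d;Y)$, the square function bound
\[ \Bigl\| \bigl( \textstyle\sum_k \bigl|(\cos\theta_k)^{d/2}T_{z_k}f_k\bigr|^2 \bigr)^{1/2} \Bigr\|_{L^p(\R^d;Y)} \lesssim \Bigl\| \bigl( \textstyle\sum_k |f_k|^2 \bigr)^{1/2} \Bigr\|_{L^p(\R^d;Y)} . \]
The computation carried out in the proof of Proposition \ref{prop-pure-laplacian} already produces the \emph{pointwise} domination $(\cos\theta)^{d/2}\,|T_{te^{i\theta}}f(x,\omega')|\le C_d\,M_{HL}|f|(x,\omega')$, valid first for $f\in L^p(\R^d)\otimes Y$ and then, since every $T_z$ ($z\in\C_+$) is bounded on $L^p(\R^d;Y)$ (convolution with the scalar kernel $p_z\in L^1(\R^d)$), for all $f\in L^p(\R^d;Y)$ by density. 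Applying this to each $f_k$ and taking $\ell^2$-sums in $k$ bounds the left-hand side above by $C_d\,\|(M_{HL}|f_k|)_k\|_{L^p(\R^d;Y(\ell^2))}$.

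The last step --- the one I expect to be the main (if not very deep) obstacle --- is to read this as a single instance of Theorem \ref{thm-MHL}. Since $\ell^2$ is a Hilbert space, hence UMD, $Z:=Y(\ell^2)$ is again a UMD (K\"othe) lattice, now over $\Omega'\times\N$, and the operator $(g_k)_k\mapsto(M_{HL}|g_k|)_k$ is exactly the Hardy--Littlewood lattice maximal operator on $\R^d$ acting in $L^p(\R^d;Z)$. Theorem \ref{thm-MHL} then gives $\|(M_{HL}|f_k|)_k\|_{L^p(\R^d;Y(\ell^2))}\lesssim\|(f_k)_k\|_{L^p(\R^d;Y(\ell^2))}=\bigl\|(\sum_k|f_k|^2)^{1/2}\bigr\|_{L^p(\R^d;Y)}$, which completes the square function estimate and hence establishes the required $R$-boundedness. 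Plugging $\alpha=\frac d2$ into Theorem \ref{thm-KrW3} then yields the $\Hor^\beta_2$ calculus of $-\Delta$ on $L^p(\R^d;Y)$ for every $\beta>\alpha+\frac12=\frac{d+1}{2}$, as claimed.
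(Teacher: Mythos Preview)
Your proposal is correct and follows essentially the same route as the paper: both arguments derive the pointwise domination $(\cos\theta)^{d/2}|T_{te^{i\theta}}f|\le C_d\,M_{HL}|f|$ from Proposition~\ref{prop-pure-laplacian}, pass to the UMD lattice $Y(\ell^2)$ and invoke Theorem~\ref{thm-MHL} to obtain the square-function (hence $R$-boundedness) estimate, supply the $\HI$ calculus from positivity and contractivity of the heat semigroup, and conclude via Theorem~\ref{thm-KrW3}. The only cosmetic difference is that you verify the slightly stronger condition~\eqref{equ-R_T-variant} (varying $\theta$) whereas the paper checks~\eqref{equ-R_T} for each fixed $\theta$; both are adequate for Theorem~\ref{thm-KrW3}.
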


\begin{remark}
The exponent $\beta$ is better than what gives Theorem \ref{thm-Hoermander} for a bad UMD lattice, i.e. with convexity $p_Y$ close to $1$ and concavity $q_Y$ close to $\infty$, or also if $Y$ has bad convexity and $p$ is close to $\infty$.
We do not know if for self-adjoint semigroups with (generalised) Gaussian estimates, the value of $\beta$ from Corollary \ref{cor-pure-laplacian} holds for general UMD lattices.
See Corollary \ref{cor-GE-Hoermander} for a partial answer in case of dispersive estimates.
\end{remark}

\begin{proof}[of Corollary \ref{cor-pure-laplacian}]
Applying Proposition \ref{prop-pure-laplacian} to $Y(\ell^2)$ which is a UMD lattice according to Lemma \ref{lem-UMD-lattice-convexity}, we deduce that for $\theta \in \bigl( - \frac{\pi}{2}, \frac{\pi}{2} \bigr)$, $t_1,t_2,\ldots,t_n > 0$ and $f_1,f_2,\ldots,f_n \in L^p(\R^d) \otimes Y \subseteq  L^p(\R^d;Y)$ that
\begin{align*}
\Biggl\| \biggl( \sum_i |T_{t_ie^{i\theta}} f_i|^2 \biggr)^{\frac12} \Biggr\|_{L^p(\R^d;Y)} & \leq \Biggl\| \biggl( \sum_i \bigl(M_\theta f_i\bigr)^2 \biggr)^{\frac12} \Biggr\|_{L^p(\R^d;Y)} \\
& \lesssim \bigl(\cos(\theta)\bigr)^{-\frac{d}{2}} \Biggl\| \biggl( \sum_i |f_i|^2 \biggr)^{\frac12} \Biggr\|_{L^p(\R^d;Y)},
\end{align*}
so that $\{T_{t e^{i\theta}} : \: t > 0 \}$ is $R$-bounded in $L^p(\R^d;Y)$ with $R$-bound $\lesssim \bigl(\cos(\theta)\bigr)^{-\frac{d}{2}}$.
Since $Y$ is UMD and $(T_t)_t$ is a positive contraction semigroup, $A$ has an $\HI(\Sigma_\omega)$ calculus on $L^p(\R^d;Y)$ for any $\omega > \frac{\pi}{2}$, according to \cite{Duong}.
Thus we can apply Theorem \ref{thm-KrW3} and deduce the H\"ormander functional calculus stated in the Corollary.
\end{proof}

With a little more technical effort, we can give a variant of the proof of Proposition \ref{prop-pure-laplacian} which works for self-adjoint semigroups with Gaussian estimates, with worse exponent $\alpha$.
We recall the following extrapolation result of Gaussian estimates from real to complex time \cite[Proposition 4.1]{CaCoOu}.

\begin{lemma}
\label{lem-CaCoOu}
Let $(\Omega,d,\mu)$ be a space of homogeneous type.
Let $p_t(x,y)$ be the integral kernel of a self-adjoint semigroup on $L^2(\Omega)$ with upper Gaussian estimate \eqref{equ-GE-prelims}, that is
\begin{equation}
\label{equ-GE}
 | p_t(x,y) | \leq C \frac{1}{V(x,t^{\frac1m})} \exp\biggl( -c \biggl( \frac{\dist(x,y)}{t^{\frac{1}{m}}} \biggr)^{\frac{m}{m-1}} \biggr) \quad (t > 0, x,y \in \Omega)
\end{equation}
with $m \geq 2.$
Then $p_t(x,y)$ has an analytic extension for $z = t \in \C_+$ and an estimate
\begin{multline*} |p_z(x,y)| \leq \\ \frac{C}{\biggl(V\biggl(x,\Bigl( \frac{|z|}{(\cos \theta)^{m-1}}\Bigr)^{\frac1m}\biggr)V\biggl(y,\Bigl( \frac{|z|}{(\cos \theta)^{m-1}}\Bigr)^{\frac1m}\biggr)\biggr)^{\frac12} }
\exp \biggl( - c \biggl( \frac{\dist(x,y)}{|z|^{\frac{1}{m}}} \biggr)^{\frac{m}{m-1}} \cos \theta \biggr) (\cos \theta)^{-d} \end{multline*}
where $z \in \C_+,\,x,y\in\Omega, \theta = \arg(z)$ and $d$ is a doubling dimension of $\Omega.$
\end{lemma}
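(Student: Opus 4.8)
The plan is to push the two real-time facts at our disposal — the Gaussian kernel bound \eqref{equ-GE} and the ultracontractivity it entails — into the sector $\C_+$. First I would note that, $A$ being self-adjoint with $\sigma(A)\subseteq[0,\infty)$ (Remark \ref{rem-GGE-spectrum}), $(e^{-zA})_{z\in\C_+}$ is an analytic semigroup of contractions on $L^2(\Omega)$. Integrating \eqref{equ-GE} in one variable and using doubling gives uniform $L^1\to L^1$ and $L^\infty\to L^\infty$ bounds and $\|e^{-tA}\|_{L^1\to L^\infty}\lesssim V(\cdot,t^{1/m})^{-1}$; interpolation yields the local bound $\|e^{-tA}\|_{L^1\to L^2}\lesssim V(\cdot,t^{1/m})^{-1/2}$. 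Composing this with the complex-time $L^2$-contraction (write $z=\tfrac14\mathrm{Re}(z)+w$ with $w\in\C_+$) gives $\|e^{-zA}\|_{L^1\to L^2}\lesssim V(\cdot,(\mathrm{Re}\,z)^{1/m})^{-1/2}$, and then, by duality (since $(e^{-zA})^\ast=e^{-\bar z A}$) and one more composition, the crude complex-time ultracontractive bound $\|e^{-zA}\|_{L^1\to L^\infty}\lesssim(\cos\theta)^{-d}\bigl(V(x,\tilde r)V(y,\tilde r)\bigr)^{-1/2}$ with $\tilde r=(|z|/(\cos\theta)^{m-1})^{1/m}$ and $\theta=\arg z$, after rewriting $\mathrm{Re}\,z=|z|\cos\theta$ and moving volume factors around with Lemma \ref{lem-volume-homogeneous-type} and doubling. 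In particular $e^{-zA}$ has a bounded measurable kernel $p_z$, holomorphic in $z$, and the regime $\dist(x,y)\lesssim|z|^{1/m}$ is already settled.

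The heart of the argument is the Gaussian off-diagonal decay in complex time, for which the key claim is an $L^2$ Davies--Gaffney estimate carrying the honest weight $\cos\theta$: for Borel sets $E,F$,
\[ \bigl\|1_E\,e^{-zA}\,1_F\bigr\|_{L^2\to L^2}\ \lesssim\ (\cos\theta)^{-\kappa}\exp\!\Bigl(-c\,\bigl(\dist(E,F)/|z|^{1/m}\bigr)^{m/(m-1)}\cos\theta\Bigr). \]
I would derive this from finite speed of propagation of the propagator $\cos(sA^{1/m})$ — itself a consequence of the real-time Gaussian/Davies--Gaffney bounds, in the spirit of Coulhon and Sikora — together with a subordination formula writing $e^{-zA}$ as an integral of $\cos(sA^{1/m})$ against an explicit convolution kernel with order-$m$ decay in $s$ (for $m=2$, $e^{-zA}=(\pi z)^{-1/2}\int_0^\infty e^{-s^2/(4z)}\cos(s\sqrt A)\,ds$). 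Since $1_E\cos(sA^{1/m})1_F=0$ for $s<\dist(E,F)$, the integral is effectively over $s\ge\dist(E,F)$, where the modulus of the subordination kernel carries precisely the factor $\exp(-c(\dist(E,F)/|z|^{1/m})^{m/(m-1)}\cos\theta)$ because $\mathrm{Re}(z^{-1/(m-1)})\ge|z|^{-1/(m-1)}\cos\theta$; bounding $\|\cos(sA^{1/m})\|_{L^2\to L^2}\le1$ and integrating yields the claim. An alternative is Davies' exponential-perturbation method: twisting by $e^{\gamma(\dist(\cdot,y_0)\wedge\dist(x_0,y_0))}$ one proves $\|e^{\phi_\gamma}e^{-zA}e^{-\phi_\gamma}\|_{L^2\to L^2}\lesssim\exp(c_m\gamma^{m}\mathrm{Re}\,z)$ and optimises over $\gamma\ge0$, the exponent $\gamma^m$ being the Legendre transform of $r^{m/(m-1)}$.

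Finally, for $x_0,y_0$ with $R=\dist(x_0,y_0)\gtrsim|z|^{1/m}$ I would set $B_1=B(x_0,R/4)$, $B_2=B(y_0,R/4)$, factor $e^{-zA}=e^{-(z/3)A}e^{-(z/3)A}e^{-(z/3)A}$, insert $1_{B_1}$ on the left and $1_{B_2}$ on the right, estimate the middle factor (whose inserted cut-offs are at distance $\gtrsim R$) by the complex-time Davies--Gaffney bound of the second step and the two outer factors by the $L^1\to L^2$ and $L^2\to L^\infty$ bounds of the first step, and test against $L^1$-normalised indicators of balls shrinking to $x_0,y_0$ to recover $|p_z(x_0,y_0)|$; Lemma \ref{lem-volume-homogeneous-type} and doubling convert the volume prefactors into $(V(x_0,\tilde r)V(y_0,\tilde r))^{-1/2}$, and one checks that $(\dist(E,F)/|z|^{1/m})^{m/(m-1)}\cos\theta$ dominates the asserted exponent. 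The step I expect to be the main obstacle is this complex-time $L^2$ off-diagonal bound with the correct power of $\cos\theta$, and more generally carrying the whole scheme out in a bare space of homogeneous type where there is no explicit kernel and the dependence of the constants on the doubling dimension $d$ must be tracked throughout — this is precisely \cite[Proposition 4.1]{CaCoOu}, to which we refer for the details.
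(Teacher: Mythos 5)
The paper offers no proof of this lemma at all: it is quoted verbatim from \cite[Proposition 4.1]{CaCoOu}, so there is no internal argument to compare against. Your first step (analyticity of $z\mapsto p_z(x,y)$, the composition $p_z=p_{\Re z/2}\ast p_{z-\Re z/2}$ giving the on-diagonal bound $\lesssim (V(x,\tilde r)V(y,\tilde r))^{-1/2}(\cos\theta)^{-d}$ after rewriting $\Re z=|z|\cos\theta$ and doubling) is correct and is indeed how the on-diagonal part of the cited result is obtained.

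The genuine gap is in your second step, the complex-time $L^2$ off-diagonal estimate, which you yourself flag as the crux. Finite speed of propagation for $\cos(sA^{1/m})$ is \emph{equivalent} to second-order Davies--Gaffney bounds for $e^{-tA^{2/m}}$ (Coulhon--Sikora), and an order-$m$ Gaussian bound for $e^{-tA}$ does not yield this when $m>2$; your explicit subordination formula is also only written for $m=2$. The Davies exponential-perturbation alternative fares no better here: the bound $\|e^{\phi_\gamma}e^{-zA}e^{-\phi_\gamma}\|_{2\to2}\lesssim e^{c\gamma^m\Re z}$ is normally extracted from a sectorial Gårding inequality for a \emph{local} twisted form, whereas in this lemma $A$ is an abstract self-adjoint operator on a space of homogeneous type known only through the real-time kernel bound \eqref{equ-GE}, so the twisted-form computation is not available and the complex-time twisted bound would itself have to be extrapolated from real time. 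The proof in \cite[Proposition 4.1]{CaCoOu} (and likewise the extrapolation \cite[Theorem 2.1]{Bl07} used elsewhere in this paper) avoids $L^2$ off-diagonal estimates entirely: one fixes $x,y$ and applies the Phragm\'en--Lindel\"of principle to the scalar holomorphic function $z\mapsto p_z(x,y)$ on the half-plane, interpolating between the real-axis Gaussian bound \eqref{equ-GE} and the crude on-diagonal bound valid up to the boundary of the sector; this works uniformly for all $m\geq2$ and produces exactly the stated loss $(\cos\theta)^{-d}$ and the damped exponent $\cos\theta\,(\dist(x,y)/|z|^{1/m})^{m/(m-1)}$. As written, your argument establishes the lemma only for $m=2$ (modulo the routine three-factor composition in your last step), and the general case needs the Phragm\'en--Lindel\"of route.
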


\begin{prop}
\label{prop-Gaussian-maximal-operator}
Let $(\Omega,d,\mu)$ be a space of homogeneous type and $A$ a self-adjoint positive operator on $L^2(\Omega)$ with semigroup $T_t$ and kernel satisfying the upper Gaussian estimate \eqref{equ-GE}.
Let $Y = Y (\Omega')$ be a UMD lattice.
Let for $f \in L^p(\Omega) \otimes Y$ and $\theta \in \bigl(-\frac{\pi}{2},\frac{\pi}{2} \bigr)$ the maximal operator \[M_\theta ( f )(x,\omega') = \sup_{t > 0} \bigl| T_{te^{i\theta}} (f(\cdot,\omega'))(x)\bigr|.\]
Then the operator $M_\theta$ is bounded $L^p(\Omega;Y) \to L^p(\Omega;Y)$ for $1 < p < \infty$ with norm bound $\leq C (\cos \theta)^{-d}$, $d$ being a doubling dimension of $\Omega$.
\end{prop}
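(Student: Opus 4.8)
The plan is to dominate $M_\theta$ pointwise by the scalar Hardy--Littlewood maximal operator $M_{HL}$ times $(\cos\theta)^{-d}$, and then invoke Theorem \ref{thm-MHL}. Fix $\theta\in(-\tfrac{\pi}{2},\tfrac{\pi}{2})$, write $z=te^{i\theta}$ so that $\cos(\arg z)=\cos\theta$, and set $\rho_t=\bigl(t/(\cos\theta)^{m-1}\bigr)^{1/m}$, which is the natural scale appearing in Lemma \ref{lem-CaCoOu}. For $f\in L^p(\Omega)\otimes Y$ and a fixed $\omega'\in\Omega'$ I would start from
\[ \bigl|T_{te^{i\theta}}(f(\cdot,\omega'))(x)\bigr|\leq\int_\Omega|p_{te^{i\theta}}(x,y)|\,|f(y,\omega')|\,d\mu(y), \]
and substitute the pointwise kernel bound of Lemma \ref{lem-CaCoOu}.

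Next I would split the integral over the annuli $C_k=B(x,(k+1)\rho_t)\setminus B(x,k\rho_t)$, $k\geq0$ (so $C_0=B(x,\rho_t)$). On $C_k$ three things happen. First, $\dist(x,y)\geq k\rho_t$ forces $\dist(x,y)/t^{1/m}\geq k(\cos\theta)^{-(m-1)/m}$, so the Gaussian factor in Lemma \ref{lem-CaCoOu} is at most $\exp(-ck^{m/(m-1)})$; in particular the stray $(\cos\theta)^{-1}$ inside that exponent is exactly cancelled by the $\cos\theta$ multiplying it, and no new power of $\cos\theta$ is produced beyond the explicit $(\cos\theta)^{-d}$. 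Second, by the doubling condition and Lemma \ref{lem-volume-homogeneous-type}, $\bigl(V(x,\rho_t)V(y,\rho_t)\bigr)^{-1/2}\lesssim(k+1)^{d/2}V(x,\rho_t)^{-1}$. Third, $\int_{C_k}|f(y,\omega')|\,d\mu(y)\leq V(x,(k+1)\rho_t)\,M_{HL}(f)(x,\omega')\lesssim(k+1)^{d}V(x,\rho_t)\,M_{HL}(f)(x,\omega')$. Multiplying the three bounds and the explicit $(\cos\theta)^{-d}$ and summing over $k$, the series $\sum_{k\geq0}(k+1)^{3d/2}\exp(-ck^{m/(m-1)})$ converges because $\tfrac{m}{m-1}>1$, and one arrives at the pointwise estimate
\[ M_\theta(f)(x,\omega')\leq C\,(\cos\theta)^{-d}\,M_{HL}(f)(x,\omega'). \]

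Finally, since $M_{HL}$ is bounded on $L^p(\Omega;Y)$ for every $p\in(1,\infty)$ and every UMD lattice $Y$ by Theorem \ref{thm-MHL}, this gives $\|M_\theta f\|_{L^p(\Omega;Y)}\leq C(\cos\theta)^{-d}\|f\|_{L^p(\Omega;Y)}$ for $f\in L^p(\Omega)\otimes Y$; as $M_\theta$ is sublinear on this dense subspace, Lemma \ref{lem-density-sublinear} extends it to a bounded operator on all of $L^p(\Omega;Y)$ with the same bound. I expect the only delicate point to be the geometric bookkeeping in the space of homogeneous type --- keeping track of the powers of $(k+1)$ coming from the volume terms against the super-polynomial decay of the Gaussian factor, and checking that the powers of $\cos\theta$ collapse to the single factor $(\cos\theta)^{-d}$ --- but granted Lemma \ref{lem-CaCoOu} this is routine and there is no genuine obstacle.
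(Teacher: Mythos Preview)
Your proposal is correct and follows essentially the same route as the paper's proof: apply the complex-time kernel bound of Lemma \ref{lem-CaCoOu}, decompose into annuli around $x$, compare volumes via doubling and Lemma \ref{lem-volume-homogeneous-type}, sum the resulting series against the super-polynomial Gaussian decay, and conclude with Theorem \ref{thm-MHL}. The only cosmetic differences are that the paper first performs the substitution $t\mapsto t(\cos\theta)^{m-1}$ (equivalent to your choice of scale $\rho_t$) and uses dyadic annuli $B(x,2^{n+1}t^{1/m})\setminus B(x,2^n t^{1/m})$ rather than your arithmetic ones $B(x,(k+1)\rho_t)\setminus B(x,k\rho_t)$; either decomposition works and yields the same pointwise bound $M_\theta f\lesssim(\cos\theta)^{-d}M_{HL}f$.
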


\begin{proof}
Let $\theta \in (-\frac{\pi}{2},\frac{\pi}{2}),\: 1 < p < \infty$ and $f \in L^p(\Omega) \otimes Y.$
We use Lemma \ref{lem-CaCoOu}.
Write $z = te^{i\theta}.$ Then

\begin{align*}
M_\theta f(x,\omega') & = \sup_{t > 0} \bigl| T_{te^{i\theta}} (f(\cdot,\omega'))(x)\bigr| \nonumber \\
& \leq \sup_{t > 0} \int_\Omega |p_{te^{i\theta}}(x,y)| |f(y,\omega')| \,dy \nonumber \\
& \leq \sup_{t > 0} \int_\Omega \frac{C}{\biggl(V\biggl(x,\Bigl( \frac{|z|}{(\cos \theta)^{m-1}}\Bigr)^{\frac1m}\biggr)V\biggl(y,\Bigl( \frac{|z|}{(\cos \theta)^{m-1}}\Bigr)^{\frac1m}\biggr)\biggr)^{\frac12} } \nonumber \\
&
\exp \biggl( - c \biggl( \frac{\dist^m(x,y)}{|z|} \biggr)^{\frac{1}{m-1}} \cos \theta \biggr) (\cos \theta)^{-d} |f(y,\omega')| \,dy, \nonumber \\
\end{align*}
and then 

\begin{multline} \label{equ-1-calculation-M_theta}
M_\theta f(x,\omega')  \leq\\   \sup_{t > 0} \int_\Omega \frac{C}{\Bigl(V\Bigl(x, t^{\frac1m}\Bigr)V\Bigl(y, t^{\frac1m}\Bigr)\Bigr)^{\frac12} }
\exp \biggl( - c \biggl( \frac{\dist^m(x,y)}{t} \biggr)^{\frac{1}{m-1}} \biggr) (\cos \theta)^{-d} |f(y,\omega')| \,dy. 
\end{multline}
Here we have simply performed the substitution $t \mapsto t (\cos \theta)^{m-1}.$
We decompose the integral over $\Omega$ in \eqref{equ-1-calculation-M_theta} into annular regions $A_n = B(x,2 t^{\frac1m})$ if $n = 0$ and $A_n = B(x,2^{n+1} t^{\frac1m}) \backslash B(x,2^n t^{\frac1m})$ if $n \geq 1.$
Then \eqref{equ-1-calculation-M_theta} continues
\begin{align}
& \leq C (\cos \theta)^{-d} \sup_{t > 0} \sum_{n = 0}^\infty \int_{A_n} \frac{1}{\Bigl(V\Bigl(x, t^{\frac1m}\Bigr)V\Bigl(y, t^{\frac1m}\Bigr)\Bigr)^{\frac12} }
\exp \Bigl( - c 2^{n\frac{m}{m-1}} \Bigr) |f(y,\omega')| \,dy \nonumber \\
& \leq C (\cos \theta)^{-d} \sup_{t > 0} \sum_{n = 0}^\infty \exp \Bigl( - c 2^{n\frac{m}{m-1}} \Bigr) \int_{B(x,2^{n+1} t^{\frac1m})} \frac{1}{\Bigl(V\Bigl(x, t^{\frac1m}\Bigr)V\Bigl(y, t^{\frac1m}\Bigr)\Bigr)^{\frac12} } |f(y,\omega')| \,dy.
\label{equ-2-calculation-M_theta}
\end{align}

Now we make use of Lemma \ref{lem-volume-homogeneous-type} to introduce in \eqref{equ-2-calculation-M_theta} the Hardy-Littlewood maximal operator.
Namely, we have for $y \in A_n$
\[
\frac{1}{\Bigl(V\Bigl(x, t^{\frac1m}\Bigr)V\Bigl(y, t^{\frac1m}\Bigr)\Bigr)^{\frac12}  }  = \frac{1}{V\Bigl(x, t^{\frac1m}\Bigr)} \sqrt{\frac{V\Bigl(x, t^{\frac1m}\Bigr)}{V\Bigl(y, t^{\frac1m}\Bigr)} } ,
\]
so that
\begin{align*}
\frac{1}{\Bigl(V\Bigl(x, t^{\frac1m}\Bigr)V\Bigl(y, t^{\frac1m}\Bigr)\Bigr)^{\frac12}  }  & \leq \frac{1}{V\Bigl(x, t^{\frac1m}\Bigr)} 2^{(n+1) d} \sqrt{\frac{V\Bigl(x, t^{\frac1m}\Bigr)}{V\Bigl(y,2^{(n+1)}t^{\frac1m}\Bigr)} } \\
& \leq \frac{1}{V\Bigl(x, t^{\frac1m}\Bigr)} 2^{(n+1) d} \sqrt{\frac{V\Bigl(x,2^{(n+1)}t^{\frac1m}\Bigr)}{V\Bigl(y,2^{(n+1)}t^{\frac1m}\Bigr)} } \\
& \leq C \frac{1}{V\Bigl(x, t^{\frac1m}\Bigr)} 2^{(n+1) d}.
\end{align*}

Thus, \eqref{equ-2-calculation-M_theta} continues as
\begin{align*}
& \lesssim (\cos \theta)^{-d} \sup_{t > 0} \sum_{n = 0}^\infty \exp \Bigl( - c 2^{n\frac{m}{m-1}} \Bigr) 2^{(n+1)d} \int_{B(x,2^{n+1} t^{\frac1m})} \frac{1}{V\Bigl(x, t^{\frac1m}\Bigr)} |f(y,\omega')| \,dy \\
&\lesssim (\cos \theta)^{-d} \sum_{n = 0}^\infty \exp \Bigl( - c 2^{n\frac{m}{m-1}} \Bigr) 2^{2(n+1)d} M_{HL}f(x,\omega') \\
& \lesssim (\cos \theta)^{-d} M_{HL} f(x,\omega'),
\end{align*}
where we have used \[\frac{1}{V\Bigl(x,t^{\frac1m}\Bigr)} \lesssim 2^{(n+1)d} \frac{1}{V\Bigl(x,2^{n+1}t^{\frac1m}\Bigr)}.\]
All these estimates were pointwise in $\omega' \in \Omega'.$
Since $Y = Y(\Omega')$ is a lattice, we thus have
\[ \|M_\theta f \|_{L^p(\Omega;Y)} \leq C (\cos \theta)^{-d} \bigl\|M_{HL}f  \bigr\|_{L^p(\Omega;Y)}. \]
Now the Proposition follows invoking Theorem \ref{thm-MHL}.
\end{proof}

As in the pure Laplacian case, there is the following Corollary on H\"ormander functional calculus.
Note however that this time, we have to assume a priori that $A$ has an $\HI$ calculus on $L^p(\Omega;Y)$.
For the existence of the $\HI$ calculus, we refer to Theorems \ref{thm-HI-extrapolation} and \ref{thm-HI-Kunstmann-Ullmann}, and Proposition \ref{prop-HI-diffusion}.
Moreover, part 1. is entirely covered by Theorem \ref{thm-Hoermander}.

\begin{cor}
\label{cor-GE-Hoermander}
Let $(\Omega,\dist,\mu)$ be a space of homogeneous type of dimension $d$, let $Y$ be any UMD lattice and let $1 < p < \infty$.
Assume that $A$ generates the self-adjoint semigroup $(T_t)_t$ on $L^2(\Omega)$ and that $A$ has a bounded $\HI(\Sigma_\omega)$ calculus on $L^p(\Omega;Y)$ for some $\omega \in (0,\pi)$.
\begin{enumerate}
\item
Assume that $(T_t)_t$ has Gaussian estimates \eqref{equ-GE-prelims}.
Then the $\HI(\Sigma_\omega)$ calculus improves to a H\"ormander $\Hor^\beta_2$ functional calculus on $L^p(\Omega;Y)$ for any exponent $\beta > d + \frac{1}{2}$.
\item
Assume that $(T_t)_t$ satisfies Davies-Gaffney estimates, that is, generalised Gaussian estimates \eqref{equ-GGE} with $p_0 = 2$, $m = 2$.
Assume moreover that $(T_t)_t$ has an integral kernel $p_t$ satisfying the on-diagonal estimate
\[ p_t(x,x) \leq C t^{-\frac{d}{2}} \]
for some $C > 0$ and all $x \in \Omega$ and $t > 0$, and
that there exists $C < \infty$ such that 
\[ \|T_t\|_{L^\infty(\Omega) \to L^\infty(\Omega)} \leq C\]
for all $t \geq 0$.
Assume finally that the volume satisfies a polynomial growth 
\[ V(x,r) \leq C |r|^d \]
for some fixed $C > 0$ and any $x \in \Omega$ and $r > 0$.

All the assumptions in 2. up to now are satisfied e.g. if $\Omega$ is of polynomial volume growth of dimension $d$, and $T_t$ satisfies Gaussian estimates \eqref{equ-GE-prelims} with $m = 2$.
If the semigroup satisfies the dispersive estimate
\[ \|\exp(itA)\|_{L^1(\Omega) \to L^\infty(\Omega)} \leq C |t|^{-\frac{d}{2}}\]
for some $C > 0$ and all $t \in \R \backslash \{ 0 \}$,
then the $\HI(\Sigma_\omega)$ calculus improves to a H\"ormander $\Hor^\beta_2$ functional calculus on $L^p(\Omega;Y)$ for any exponent $\beta > \frac{d}{2} + \frac12$.
\end{enumerate}
\end{cor}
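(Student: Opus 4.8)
The plan is to prove both parts exactly along the lines of Corollary~\ref{cor-pure-laplacian}: one reduces the H\"ormander calculus to the $R$-boundedness hypothesis \eqref{equ-R_T} of Theorem~\ref{thm-KrW3}, and verifies the latter through a bound on the maximal operator of the complexified semigroup, which is in turn controlled by the Hardy--Littlewood maximal operator of Theorem~\ref{thm-MHL}. Concretely, for $\theta\in(-\frac\pi2,\frac\pi2)$ let $M_\theta f(x,\omega')=\sup_{t>0}\bigl|T_{te^{i\theta}}(f(\cdot,\omega'))(x)\bigr|$ as in Propositions~\ref{prop-pure-laplacian} and \ref{prop-Gaussian-maximal-operator}, and suppose we have established a bound
\[ \|M_\theta f\|_{L^p(\Omega;Y)}\leq C\,(\cos\theta)^{-\alpha}\,\|f\|_{L^p(\Omega;Y)} \]
with $\alpha=d$ in part 1 and $\alpha=\frac d2$ in part 2. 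Applying this with $Y$ replaced by $Y(\ell^2)$, which is again a UMD lattice by Lemma~\ref{lem-UMD-lattice-convexity}, and using that $M_\theta$ acts componentwise while $Y$ is a lattice, one obtains — exactly as in the proof of Corollary~\ref{cor-pure-laplacian}, via the square-function identity \eqref{equ-Rademacher-square} — that $\{T_{te^{i\theta}}:t>0\}$ is $R$-bounded on $L^p(\Omega;Y)$ with $R$-bound $\lesssim(\cos\theta)^{-\alpha}$. This is precisely \eqref{equ-R_T}; combined with the assumed $\HI(\Sigma_\omega)$ calculus on $L^p(\Omega;Y)$ and the fact that $L^p(\Omega;Y)$ has property $(\alpha)$ (Lemma~\ref{lem-property-alpha}), Theorem~\ref{thm-KrW3} yields the $\Hor^\beta_2$ calculus for $\beta>\alpha+\frac12$, i.e.\ $\beta>d+\frac12$ in part 1 and $\beta>\frac d2+\frac12$ in part 2. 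In part 1 the displayed maximal bound (with $\alpha=d$) is exactly Proposition~\ref{prop-Gaussian-maximal-operator}, so nothing more is needed there.

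For part 2 the point of the extra hypotheses is to replace the $(\cos\theta)^{-d}$ of Proposition~\ref{prop-Gaussian-maximal-operator} by $(\cos\theta)^{-d/2}$. I would deduce the improved maximal bound from the sharper complex-time kernel estimate
\[ |p_z(x,y)|\leq C\,|z|^{-d/2}\exp\!\Bigl(-c\cos\theta\,\tfrac{\dist(x,y)^2}{|z|}\Bigr),\qquad z\in\C_+,\ \theta=\arg z, \]
in which, contrary to Lemma~\ref{lem-CaCoOu}, there is \emph{no} power of $\cos\theta$ in the prefactor (this is the shape of the heat kernel of $-\Delta$, and is what the dispersive estimate buys). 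Granting it, the argument of Proposition~\ref{prop-pure-laplacian} applies verbatim: writing $|z|=t$ and substituting $t\mapsto(\cos\theta)t$ in the supremum extracts the factor $(\cos\theta)^{-d/2}$ and leaves a $\cos\theta$-free Gaussian average $\sup_{t>0}\int t^{-d/2}\exp(-c\dist(x,y)^2/t)\,|f(y,\omega')|\,d\mu(y)$, which is dominated pointwise by $C\,M_{HL}f(x,\omega')$ by the usual annular decomposition (here the polynomial volume growth $V(x,r)\leq Cr^d$ is used), and Theorem~\ref{thm-MHL} then transfers this to $L^p(\Omega;Y)$.

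The displayed kernel estimate is the main obstacle. I would obtain it from two ingredients. First, an on-diagonal bound $\|T_z\|_{L^1(\Omega)\to L^\infty(\Omega)}\leq C\,|z|^{-d/2}$ valid uniformly on $\overline{\C_+}$ with no loss in $\cos\theta$: this comes from a Stein (three-lines) interpolation between the real-time on-diagonal bound $\|T_t\|_{L^1\to L^\infty}\leq Ct^{-d/2}$ — which follows from $p_t(x,x)\leq Ct^{-d/2}$ together with self-adjointness via $|p_t(x,y)|\leq p_t(x,x)^{1/2}p_t(y,y)^{1/2}$ — and the dispersive bound $\|e^{isA}\|_{L^1\to L^\infty}\leq C|s|^{-d/2}$ on the imaginary axis. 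Second, the complex-time Davies--Gaffney $L^2$ off-diagonal estimate $\|1_ET_z1_F\|_{2\to2}\leq C\exp(-c\cos\theta\,\dist(E,F)^2/|z|)$, which follows from the assumed real-time Davies--Gaffney bounds ($p_0=2$, $m=2$) by the standard Davies perturbation method, the factor $\cos\theta=\Re z/|z|$ arising naturally. Feeding these into the familiar Coulhon--Sikora / Blunck--Kunstmann passage from an $L^2$ off-diagonal estimate plus an $L^1\to L^\infty$ on-diagonal estimate to a pointwise Gaussian kernel bound produces the claimed inequality; the $L^\infty$-boundedness of $(T_t)_t$ (hence, by duality and self-adjointness, $L^1$-boundedness) and $\sigma(A)\subseteq[0,\infty)$ are used to run the interpolations. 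The genuine difficulty is keeping the prefactor free of $\cos\theta$: a naive factorization $T_z=T_{z/2}\circ T_{z/2}$ into $L^1\to L^2$ and $L^2\to L^\infty$ pieces is too lossy (each piece carries $(\cos\theta)^{-d/4}$), and it is precisely the dispersive estimate, through the Stein interpolation above, that recovers the sharp on-diagonal power $|z|^{-d/2}$. (The corresponding square-function strengthening is then automatic from Theorem~\ref{thm-R-Hor}.)
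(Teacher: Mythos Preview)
Your proposal is correct and follows essentially the same approach as the paper: part~1 is exactly the paper's argument, and for part~2 the paper obtains the same sharp complex-time kernel bound $|p_z(x,y)|\leq C|z|^{-d/2}\exp\bigl(-\Re[\dist^2(x,y)/4z]\bigr)$ and then proceeds as you do via the maximal operator and Theorem~\ref{thm-KrW3}. The only difference is that the paper cites \cite[p.~521--522]{CouSi} as a black box for that kernel bound (this is precisely the Phragm\'en--Lindel\"of argument combining the on-diagonal estimate, the dispersive estimate, and the Davies--Gaffney $L^2$ off-diagonal estimate), whereas you sketch the internal mechanism; your outline is an accurate summary of what happens in \cite{CouSi}.
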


\begin{proof}
1. The proof goes along the same lines as Corollary \ref{cor-pure-laplacian}, using Proposition \ref{prop-Gaussian-maximal-operator} in place of Proposition \ref{prop-pure-laplacian}, and the a priori existence of the $\HI$ calculus.

2. Observe that rescaling the time $t \leadsto ct$ in the semigroup if necessary, we have a semigroup satisfying \cite[(3.1), (3.2)]{CouSi}, where we use \cite[Lemma 3.2]{CouSi}.
According to \cite[p.~521-522]{CouSi}, the dispersive assumption and the on-diagonal kernel estimate imply
\[ |p_z(x,y)| \leq C |z|^{-\frac{d}{2}} \exp\biggl(- \Re\biggl[ \frac{\dist^2(x,y)}{4z} \biggr] \biggr) \leq C' \frac{1}{\sqrt{V(x,|z|)}} \exp\biggl(- \Re\biggl[ \frac{\dist^2(x,y)}{4z} \biggr] \biggr).\]
Now we can argue, first as in the proof of Proposition \ref{prop-pure-laplacian} and second as in the proof of Lemma \ref{prop-Gaussian-maximal-operator} to see that $\| M_\theta \|_{L^p(\Omega;Y) \to L^p(\Omega;Y)} \leq C \bigl(\cos(\theta)\bigr)^{-\frac{d}{2}} \|M_{HL}\|_{L^p(\Omega;Y) \to L^p(\Omega;Y)}$.
Then use the $\HI$ calculus assumption to deduce as in Corollary \ref{cor-pure-laplacian} that $A$ has a $\Hor^\beta_2$ calculus on $L^p(\Omega;Y)$ for $\beta > \frac{d+1}{2}$.
\end{proof}

\begin{remark}
\label{rem-interpolation-dispersive}
Let the assumptions of Corollary \ref{cor-GE-Hoermander} 2. hold.
There holds a similar Remark to \ref{rem-interpolation}.
Namely, assume that $Y(\Omega') = [Z(\Omega'),L^2(\Omega')]_\theta$ with $Z$ a further UMD lattice and $\theta \in (0,1)$.
Interpolating between the calculus mappings
\begin{align*}
\Hor^{\frac{d+1}{2} + \epsilon}_2  & \to B( L^{p_1}(\Omega;Z)), \: f \mapsto f(A) \\
\intertext{and}
\Hor^{\frac12 + \epsilon}_2 & \to B( L^2(\Omega;L^2(\Omega'))) , \: f \mapsto f(A)
\end{align*}
with $p_1$ close to $1$ resp. $\infty$,
one gets for $\frac{1}{p} \geq 1 - \frac{\theta}{2} ( = \frac{1}{p_Y}$ typically) resp. $\frac{1}{p} \leq \frac{\theta}{2} ( = \frac{1}{q_Y}$ typically) that $A$ admits a $\Hor^\beta_2$ calculus on $L^p(\Omega;Y)$ with $\beta > d | \frac1p - \frac12 | + \frac12$.
Again as in Remark \ref{rem-interpolation}, one can slightly improve the index $\beta$ by considering the classes $\Hor^\beta_q$ and taking $\Hor^{\epsilon}_\infty \to B( L^2(\Omega;L^2(\Omega')))$ in place of $\Hor^{\frac12 + \epsilon}_2 \to B( L^2(\Omega;L^2(\Omega')))$ above.
Taking into account all possible values for $p,p_Y,q_Y$, we get the following:
If $Y$ is a UMD lattice with convexity resp. concavity exponents $p_Y \in (1,2]$ resp. $q_Y \in [2,\infty)$ such that we can write moreover $Y(\Omega') = [Z(\Omega'),L^2(\Omega')]_\theta$ with $Z(\Omega')$ a UMD lattice and $\theta = 2 \min\bigl(1-\frac{1}{p_Y},\frac{1}{q_Y}\bigr)$, then
$A$ has a $\Hor^\beta_2$ calculus on $L^p(\Omega;Y)$ for $\beta > \widetilde{\alpha} \cdot d + \frac12$ with
\[ \widetilde{\alpha} = \widetilde{\alpha}(p,p_Y,q_Y) = \max\biggl(\Bigl|\frac1p - \frac12\Bigr|, \Bigl|\frac1{p_Y}-\frac12\Bigr|,\Bigl|\frac1{q_Y} - \frac12\Bigr|\biggr) \leq \alpha(p,p_Y,q_Y) . \]
\end{remark}

\section{Examples}
\label{sec-examples}

The next lemma gives a simple procedure to create the UMD lattices that are needed in our Main Theorem \ref{thm-Hoermander} out of given ones.

\begin{lemma}
Let $1 < p < \infty$.
\begin{enumerate}
\item Suppose that $Y_1,Y_2$ are $p$-convex UMD lattices.
Then also $Y_1(Y_2)$ is a $p$-convex UMD lattice.
In particular, $L^q(\Omega;Y_1)$ is a $p$-convex UMD lattice for any $q \geq p$.
\item Suppose that $Y_1,Y_2$ are $p$-concave UMD lattices.
Then also $Y_1(Y_2)$ is a $p$-concave UMD lattice.
In particular, $L^q(\Omega;Y_1)$ is a $p$-concave UMD lattice for any $q \leq p$.
\item If $1 < q < p$ and $Y$ is a $q$-convex Banach lattice, then $[L^p(\Omega;Y)]^q = L^{\frac{p}{q}}(\Omega;Y^q)$.
In particular, if $Y^q$ is UMD, then also $[L^p(\Omega;Y)]^q$ is UMD.
\end{enumerate}
\end{lemma}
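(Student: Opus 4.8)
The plan is to treat the three parts in turn; parts~(1) and~(2) rest on the same ``freeze the outer variable, then iterate'' argument, while part~(3) is a direct unwinding of the definition of the convexification.

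For part~(1), the UMD assertion is immediate: since $Y_1$ and $Y_2$ are UMD, so is $Y_1(Y_2)$ by \cite[Corollary p.~214]{RdF} recalled above. For the $p$-convexity, I would fix $F_1,\dots,F_n\in Y_1(Y_2)$ and, writing $\omega_1'$ for the $Y_1$-variable, first apply $p$-convexity of $Y_2$ (with constant $C_2$) to the slices $F_i(\omega_1',\cdot)$ to get, pointwise in $\omega_1'$,
\[
\Bigl\| \bigl( {\textstyle\sum_i}|F_i(\omega_1',\cdot)|^p \bigr)^{1/p} \Bigr\|_{Y_2} \leq C_2 \bigl( {\textstyle\sum_i}\|F_i(\omega_1',\cdot)\|_{Y_2}^p \bigr)^{1/p},
\]
and then apply $p$-convexity of $Y_1$ (with constant $C_1$) to the functions $g_i=\bigl(\omega_1'\mapsto\|F_i(\omega_1',\cdot)\|_{Y_2}\bigr)\in Y_1$, using monotonicity of the $Y_1$-norm to pass from the pointwise bound to a bound in $Y_1(Y_2)$; this yields $p$-convexity of $Y_1(Y_2)$ with constant $C_1C_2$. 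The ``in particular'' then follows since $L^q(\Omega)$ is $q$-convex (Lemma~\ref{lem-Ls-convexity}), hence $p$-convex for $p\leq q$ by part~(4) of Lemma~\ref{lem-UMD-lattice-convexity}, and is UMD for $1<q<\infty$, so $L^q(\Omega;Y_1)=L^q(\Omega)(Y_1)$ is covered by the first assertion. Part~(2) is the exact mirror image: the same UMD conclusion, and the same two-step argument now run with the concavity inequalities (chaining the two lower bounds via monotonicity of the $Y_1$-norm); the ``in particular'' uses that $L^q(\Omega)$, being $q$-concave, is $p$-concave for $p\geq q$, by parts~(3) and~(4) of Lemma~\ref{lem-UMD-lattice-convexity} applied to the dual.

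For part~(3), I would first note that $L^p(\Omega)$ is $p$-convex, hence $q$-convex since $q<p$ (Lemma~\ref{lem-UMD-lattice-convexity}, part~(4)), so the purely lattice-theoretic computation of part~(1)---which does not use the UMD property---shows that $L^p(\Omega;Y)$ is $q$-convex and its $q$-convexification is well defined by Lemma~\ref{lem-p-convexification}. Then, for a measurable $z$ on $\Omega\times\Omega'$ with $|z|=y^q$ for some $y\in L^p(\Omega;Y)$, the definition of the convexification norm gives
\[
\|z\|_{[L^p(\Omega;Y)]^q} = \bigl\| |z|^{1/q} \bigr\|_{L^p(\Omega;Y)}^q = \Bigl( \int_\Omega \bigl\| |z(x,\cdot)|^{1/q} \bigr\|_Y^p \, dx \Bigr)^{q/p},
\]
and since $\bigl\| |z(x,\cdot)|^{1/q} \bigr\|_Y^q = \|z(x,\cdot)\|_{Y^q}$ by the definition of the norm on $Y^q$, this equals $\bigl( \int_\Omega \|z(x,\cdot)\|_{Y^q}^{p/q} \, dx \bigr)^{q/p} = \|z\|_{L^{p/q}(\Omega;Y^q)}$; a short check, using $p/q>1$ and that $Y^q$ is a Banach lattice by Lemma~\ref{lem-p-convexification}, shows the underlying function spaces coincide, so $[L^p(\Omega;Y)]^q=L^{p/q}(\Omega;Y^q)$ isometrically. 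Finally, if $Y^q$ is UMD then $L^{p/q}(\Omega;Y^q)$ is UMD because $1<p/q<\infty$, so $L^{p/q}$ is UMD, and $Y(E)$ is UMD whenever $Y$ and $E$ are, once more by \cite[Corollary p.~214]{RdF}.

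I do not anticipate a genuine obstacle; the only points needing a little care are keeping straight which variable plays the outer lattice role in $Y_1(Y_2)$ when taking pointwise lattice operations, and---in part~(3)---verifying strong measurability of $x\mapsto z(x,\cdot)$ as a $Y^q$-valued function, so that the \emph{set} equality $[L^p(\Omega;Y)]^q=L^{p/q}(\Omega;Y^q)$ holds and not merely the equality of norms on the common part.
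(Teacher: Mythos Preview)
Your proof is correct and follows essentially the same approach as the paper: parts~(1) and~(3) are identical to the paper's argument (iterate the convexity inequality through the two lattice layers; unwind the convexification norm). The only minor variation is in part~(2), where the paper reduces to part~(1) by duality---using $[Y_1(Y_2)]'=Y_1'(Y_2')$ and that $p$-concavity of $Y_i$ is equivalent to $p'$-convexity of $Y_i'$---whereas you run the mirror-image concavity chain directly; both are equally short and valid.
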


\begin{proof}
1.  We have
\begin{align*}
\Biggl\| \biggl( \sum_i |f_i|^p \biggr)^{\frac1p} \Biggr\|_{Y_1(Y_2)} & = \Biggl\| \ \Biggl\| \omega' \mapsto \biggl( \sum_i |f_i(\omega')|^p \biggr)^{\frac1p} \Biggr\|_{Y_2}\  \Biggr\|_{Y_1} \\
& \leq C \Biggl\| \omega' \mapsto \biggl( \sum_i \|f_i(\omega')\|_{Y_2}^p \biggr)^{\frac1p} \Biggr\|_{Y_1} \\
& \leq C C' \biggl( \sum_i \|f_i\|_{Y_1(Y_2)}^p \biggr)^{\frac1p}.
\end{align*}

2. We use the part 1. and the fact from Lemma \ref{lem-UMD-lattice-convexity} that $Y_1'$ and $Y_2'$ are $p'$-convex.

3. We have
\begin{align*}
\|f\|_{[L^p(\Omega;Y)]^q} & = \Bigl\| \: |f|^{\frac1q} \Bigr\|_{L^p(\Omega;Y)}^q \\
& = \biggl( \int_\Omega \bigl\|\: |f(x)|^{\frac1q} \bigr\|_Y^p \,dx \biggr)^{\frac{q}{p}} \\
& = \biggl( \int_\Omega \|f(x)\|_{Y^q}^{\frac{p}{q}} \,dx \biggr)^{\frac{q}{p}} \\
& = \|f\|_{L^{\frac{p}{q}}(\Omega;Y^q)}.
\end{align*}
\end{proof}

\subsection{Gaussian estimates}

In this subsection, we show that for many examples of differential operators in different contexts, Gaussian estimates and $\HI$ calculus on $L^p(\Omega;Y)$ are available, and thus Theorem \ref{thm-Hoermander} on the H\"ormander calculus on $L^p(\Omega;Y)$ applies for $1 < p < \infty$ and $Y$ a UMD lattice.
We recall that the derivation exponent $\alpha = \alpha(p,p_Y,q_Y) \in (0,1)$ is given in \eqref{equ-defi-alpha} and $p_Y$ resp. $q_Y$ define the convexity and concavity exponents in $(1,\infty)$ of the lattice $Y$ to which $Y^{p_Y}$ and $(Y')^{q_Y'}$ are still UMD lattices.
For example, if $Y = L^s(\Omega')$ for some $s \in (1,\infty)$, then any $p_Y \in (1,s)$ and any $q_Y \in (s,\infty)$ are admissible.

\paragraph{Manifolds}

Let $\Omega = M$ be a complete Riemannian manifold with non-negative Ricci curvature.
Then the heat semigroup (associated with the Laplace-Beltrami operator) is a symmetric contraction semigroup with Gaussian estimates \eqref{equ-GE-prelims} of order $m = 2$.
See \cite{LY}, \cite[p. 3/70 (1.3)]{GriTel}, \cite{Sal}.
Hence on these manifolds, according to Corollary \ref{cor-Hoermander} and Proposition \ref{prop-HI-diffusion}, the heat semigroup has a H\"ormander $\Hor^\beta_2$ calculus on $L^p(\Omega;Y)$ for $1 < p < \infty$, for any UMD lattice $Y$ and $\beta > \alpha(p,p_Y,q_Y) \cdot d + \frac12$.

 \paragraph{Schr\"odinger and differential operators}

We show now that our main results apply for several Schr\"odinger operators.

Start with the case $\Omega = M$ is a connected and complete Riemannian manifold with non-negative Ricci curvature.
Consider a potential $V : \Omega \to \R$ such that $V \geq 0$ and $V \in L^1_{loc}(\Omega)$.
Then $A = - \Delta + V$, defined by the quadratic form technique, generates a self-adjoint semigroup $(T_t)_t$ on $L^2(\Omega)$, and moreover, as a consequence of the Trotter-Kato product formula, $|T_tf(x)| \leq S_t|f|(x)$, where $S_t = \exp(t\Delta)$ is the heat semigroup \cite[Section 7.4]{DuOS}.
According to the preceding paragraph on manifolds, $S_t$ is $L^1$ and $L^\infty$ contractive, so according to Proposition \ref{prop-HI-diffusion}, $A$ has an $\HI$ calculus on $L^p(\Omega;Y)$ for $1 < p <\infty$ and $Y$ any UMD lattice.
Moreover, $(T_t)_t$ has Gaussian estimates \eqref{equ-GE-prelims} of order $m = 2$ \cite[(7.8)]{DuOS}, so that according to Corollary \ref{cor-Hoermander}, $A$ has a $\Hor^\beta_2$ calculus on $L^p(\Omega;Y)$ with $\beta > \alpha(p,p_Y,q_Y) \cdot d + \frac12$.

Now consider the case that $\Omega \subset \R^d$ is an open subset of homogeneous type.
Take the following self-adjoint differential operator defined on $L^2(\Omega)$ \cite[(1)]{Ouh06}:
\[ A = - \sum_{k,j = 1}^d\frac{\partial}{\partial x_j} \Bigl( a_{kj} \frac{\partial}{\partial x_k} \Bigr) \]
where $a_{kj} = a_{jk} \in L^\infty(\Omega,\R), \: 1 \leq k,j \leq d$ and $a_{kj}$ satisfy the standard ellipticity condition $\eta I \leq (a_{kj})_{kj} \leq \mu I$ for some constants $0 < \eta < \mu < \infty$.
We assume Dirichlet boundary conditions.
Then according to \cite[Theorem 1]{Ouh06}, the semigroup $(T_t)_t$ generated by $A$ is positive and according to \cite[(4)]{Ouh06}, satisfies Gaussian estimates \ref{equ-GE-prelims} with $m = 2$ (note that $V(x,\sqrt{t}) \leq C t^{\frac{d}{2}}$ there).
Thus according to Corollary \ref{cor-HI-positive}, $A$ has an $\HI$ calculus on $L^p(\Omega;Y)$ for $Y$ any UMD lattice, and according to Corollary \ref{cor-Hoermander}, $A$ has a $\Hor^\beta_2$ calculus on $L^p(\Omega;Y)$ for $\beta > \alpha(p,p_Y,q_Y) \cdot d + \frac12$.

Now consider the case that $\Omega = \R^d$ and a potential $V : \Omega \to \R$ such that $V = V^+ - V^-$, $V^+,V^- \geq 0$ and $V^+,V^-$ belong to the Kato class (see \cite{Sim}).
Then according to \cite[Corollary 3]{Ouh06}, $A = - \Delta + V$, the self-adjoint Schr\"odinger operator with potential $V$, generates a positive semigroup $(T_t)_{t}$ (even with a certain lower Gaussian estimate), and according to \cite[Theorem 1]{Ouh06}, the shifted semigroup generated by $A - s(A) + \epsilon$ for an $\epsilon > 0$ has Gaussian estimates \eqref{equ-GE-prelims}.
Here, $s(A) = \inf \sigma(A)$ is the spectral bound of $A$ in $L^2(\R^d)$.
Thus, according to Corollaries \ref{cor-HI-positive} and \ref{cor-Hoermander}, $A - s(A) + \epsilon$ has a $\Hor^\beta_2$ calculus on $L^p(\Omega;Y)$ for $\beta > \alpha(p,p_Y,q_Y) \cdot d + \frac12$.

Now consider for $\lambda > 0$ the Bessel operator \[A = \Delta_\lambda = - x^{-\lambda} \frac{d}{dx} x^{2 \lambda} \frac{d}{dx} x^{-\lambda} =  - \frac{d^2}{dx^2} + \lambda ( \lambda - 1 ) x^{-2}\] on $\Omega = (0,\infty)$ \cite[p.~343]{BCRM}.
Then according to \cite{Ouh95}, the semigroup $(T_t)_t$ generated by $A$ satisfies Gaussian estimates \eqref{equ-GE-prelims} provided that the potential $\lambda (\lambda - 1) x^{-2}$ is positive, i.e. $\lambda \geq 1$.
According to \cite[Theorem 1.6]{BCRM}, $A$ has an $\HI$ calculus on $L^p(\Omega;Y)$ for $1 < p < \infty$ and any UMD space $Y$ that is a complex interpolation space between a Hilbert space and another UMD space.
According to Lemma \ref{lem-Tomczak}, any UMD lattice is of this form.
Thus, for the particular case $Y$ being a UMD lattice and $\lambda \geq 1$, we can strengthen \cite[Theorem 1.6]{BCRM} and deduce for $\Delta_\lambda$ a $\Hor^\beta_2$ calculus on $L^p(\Omega;Y)$ for $\beta > \alpha(p,p_Y,q_Y) \cdot d + \frac12$.

There are other Schr\"odinger and differential operators, where Gaussian estimates are available and the semigroup is positive, hence Corollaries \ref{cor-HI-positive} and \ref{cor-Hoermander} apply.
We refer to \cite{Ouh06}, \cite[Section 6.4, in particular Theorems 6.10, 6.11]{Ouh} for upper Gaussian estimates, and for lower Gaussian estimates \cite[Section 7.8]{Ouh06}.

\paragraph{Lie groups of polynomial volume growth}

Consider $\Omega = G$ a Lie group having polynomial volume growth.
Then $\Omega$ is a space of homogeneous type.
Consider moreover $A = - \sum_{k = 1}^N X_k^2$, where $\{X_1, \ldots , X_N\}$ is a family of left invariant vector fields having the H\"ormander property.
For example, $G = \R^{2n + 1}$ is the Heisenberg group, and $A = - \sum_{k = 1}^n X_k^2 + Y_k^2$ is the standard sub-Laplacian.
Then according to \cite[Theorem 4.2, Example 2]{Sal}, \cite{Gri}, the semigroup $(T_t)_t$ satisfies two-sided Gaussian estimates \eqref{equ-GE-prelims} with $m = 2$.
Therefore, according to Corollories \ref{cor-HI-positive} and \ref{cor-Hoermander}, $A$ has a $\Hor^\beta_2$ calculus on $L^p(\Omega;Y)$ for $\beta > \alpha(p,p_Y,q_Y) \cdot d + \frac12$.

\paragraph{Fractals}

There are several fractals $\Omega \subseteq \R^n$ on which there exists a heat semigroup satisfying upper and lower Gaussian estimates.
Namely, one first turns $\Omega$ into a metric measure space by choosing a metric, e.g. the intrinsic metric inherited from $\R^n$ and a Hausdorff measure.
Then, the heat generator $A$ is defined using the form method, often by means of a Brownian motion Dirichlet form \cite[preprint version p.~3-4]{GriTel}.
The heat kernel $p_t(x,y)$ satisfies \cite[(1.4)]{GriTel}
\[ p_t(x,y) \cong \frac{C}{t^{\alpha/\beta}} \exp\Biggl(-c \biggl( \frac{d^\beta(x,y)}{t} \biggl)^{\frac{1}{\beta - 1}} \Biggr) \]
for certain $\alpha > 0$ and $\beta > 1$ ($\beta \geq 2$ according to \cite[Abstract]{GHL})), and the implied constant $c$ may be different between upper and lower estimate.
The parameter $\beta$ is called walk dimension.
In case of volume comparability $V(x,t) \cong t^\alpha$, e.g. if $A$ is the Laplace operator on the Sierpinski Gasket \cite[Section 7.11]{DuOS}, we can apply our Corollaries \ref{cor-HI-positive} and \ref{cor-Hoermander} to deduce that $A$ has a $\Hor^\beta_2$ calculus on $L^p(\Omega;Y)$ for $1 < p < \infty$ and $Y$ a UMD lattice, with $\beta > \alpha(p,p_Y,q_Y) \cdot d + \frac12$.

For a discussion of many further examples where Gaussian estimates as in \eqref{equ-GE-prelims} are satisfied, we refer to \cite[Section 7]{DuOS}.
Hence in all these cases, Theorem \ref{thm-HI-Kunstmann-Ullmann} and Corollary \ref{cor-Hoermander} are applicable and we obtain for the operators $A$ a bounded $\Hor^\beta_2$ calculus on $L^p(\Omega;L^s(\Omega'))$ with $1 < p,s < \infty$ and $\beta > \bigl( \max(\frac1p , \frac1s , \frac12) - \min (\frac1p, \frac1s, \frac12) \bigr) \cdot d + \frac12$.

\subsection{Dispersive estimates}

In this subsection, we indicate in which situations of the preceding subsection there is a dispersive estimate
\begin{equation}
\label{equ-dispersive}
\| \exp(itA) \|_{L^1(\Omega) \to L^\infty(\Omega)} \leq C |t|^{-\frac{d}{2}} \quad (t \in \R)
\end{equation}
available, so that Corollary \ref{cor-GE-Hoermander} 2. is applicable, and we can deduce a $\Hor^\beta_2$ calculus on $L^p(\Omega;Y)$ for $\beta > \frac{d}{2} + \frac12$.
This is a smaller differentiation order, hence a better result than what we had obtained in the preceding subsection, in case that $\alpha(p,p_Y,q_Y) > \frac12$, e.g. if $p$ is close to $\infty$ and $Y$ is an $L^s(\Omega')$ space with $s$ close to $1$.

\paragraph{Schr\"odinger operators}

Throughout the paragraph, we assume $\Omega = \R^d$ and $A = - \Delta + V$ a Schr\"odinger operator with positive locally integrable potential.

First, consider the case $d = 1$.
Then if $\int_\R V(x) (1 + |x|) \,dx < \infty$, if there is no resonance at zero energy and if there are no bound states (which implies that the spectral projection $P_{ac}(A)$ onto the absolutely continuous spectral subspace is the identity), then according to 
\cite[Theorem 1]{GoS}, $A$ satisfies \eqref{equ-dispersive}.
Consequently, $A$ has a $\Hor^\beta_2$ calculus on $L^p(\R;Y)$ for any $\beta > \frac12 + \frac12 = 1$.

Second, consider the case $d = 3$.
Then if $V(x) \leq C ( 1 + |x| )^{-b}$ for some $b > 3$ and all $x \in \R^3$, if $0$ is neither an eigenvalue of $A$ nor a resonance, and if there are no bound states, then according to \cite[Theorem 2]{GoS}, \eqref{equ-dispersive} holds.
Consequently, $A$ has a $\Hor^\beta_2$ calculus on $L^p(\R^3;Y)$ for any $\beta > \frac32 + \frac12 = 2$.

Next, consider $d \in \N$ an arbitrary odd value.
Then if $V \in C^{\frac{d-3}{2}}(\R^d)$ for $d \in \{ 5 , 7 \}$, if $V(x) \leq c (1 + |x|)^{-b}$ for some $b > \frac{3d + 5}{2}$ and for $1 \leq j \leq \frac{d-3}{2}$, $|\nabla^j V(x)| \leq c (1 + |x|)^{-a}$ for some $a > 3$ for $d = 5$ and for some $a > 8$ for $d = 7$, if $0$ is not an eigenvalue of $A$ and if there are no bound states, then according to \cite[Theorem 1.1]{ErGr}, \eqref{equ-dispersive} holds.
Consequently, $A$ has a $\Hor^\beta_2$ calculus on $L^p(\R^d;Y)$ for any $\beta > \frac{d}{2} + \frac12$.

Now, if $V$ is of the form $V(x_1,\ldots,x_d) = W(x_1) + W(x_2) + \ldots + W(x_d)$ with $W : \R \to \R_+$ such that $\int_\R W(x) (1 + |x|)^2 \,dx < \infty$, then according to \cite[Corollary 1.6]{Pier}, \eqref{equ-dispersive} holds.
Consequently, $A$ has a $\Hor^\beta_2$ calculus on $L^p(\R^d;Y)$ for any $\beta > \frac{d}{2} + \frac12$.

\paragraph{Stratified Lie groups}

We refer to the recent work \cite{BFG} for a study when \eqref{equ-dispersive} or a stronger estimate holds in the case that $\Omega = G$ is a  2-step stratified Lie group with further properties and $A = - \Delta$ is the Laplace-Beltrami operator.

\subsection{Generalised Gaussian estimates}
\label{subsec-GGE}

In the recent past, several operators with generalised Gaussian estimates \eqref{equ-GGE} for some $p_0 > 1$ have been studied.
In these cases we will obtain according to Theorems \ref{thm-HI-Kunstmann-Ullmann} and \ref{thm-Hoermander} that $A$ has a $\Hor^\beta_2$ calculus on $L^p(\Omega;L^s(\Omega'))$ for $p_0 < p,s < p_0'$ and 
\begin{equation}
\label{equ-beta-GGE}
\beta > \Biggl( \max\biggl(\frac1p , \frac1s , \frac12\biggr) - \min \biggl(\frac1p, \frac1s, \frac12\biggr) \Biggr) \cdot d + \frac12 .
\end{equation}

\paragraph{Elliptic operators in divergence form}

Suppose that $\Omega = \R^d$ and $A$ is given by
\[ A f = \sum_{|\gamma|,|\delta|= m } (-1)^{|\delta|} \partial^\delta(a_{\gamma \delta}\partial^\gamma f) , \]
where $a_{\gamma \delta} \in L^\infty(\Omega;\R)$.
We suppose that the form $\mathfrak{a}$ associated with $A$, given by
\[ \mathfrak{a}(f,g) = \int \sum_{|\gamma|,|\delta| = m } a_{\gamma \delta} (x) \partial^{\gamma}f(x) \overline{\partial^\delta g(x)} \,dx \]
gives rise to a self-adjoint operator and satisfies the ellipticity condition
\[ \mathfrak{a}(f,f) \geq \eta \bigl\| ( - \Delta )^{\frac{m}{2}} f \bigr\|_2^2 \quad (f \in W^{m,2}(\R^d) ) \]
for some $\eta > 0$.
Then according to \cite[Section 3 a) (iii)]{KuUl}, \eqref{equ-GGE} holds with $m$ replaced by $2m$ and $p_0 = p_1'$, where $p_1 = \frac{2d}{d - 2m}$ for $d > 2m$, and $p_1 = \infty$ if $d < 2m$.
Consequently, $A$ has a $\Hor^\beta_2$ calculus on $L^p(\R^d;L^s(\Omega'))$ with $p_0 < p,s < p_0'$ and $\beta$ given by \eqref{equ-beta-GGE}.

\paragraph{Schr\"odinger operators with singular potentials}

Suppose again that $\Omega = \R^d$, with $d \geq 3$, and that $A = - \Delta + V$ is a Schr\"odinger operator.
We suppose that $V = V^+ - V^-$ with $V^+,V^- : \R^d \to \R_+$ and that $V^+$ is locally integrable and $V^-$ belongs to the pseudo-Kato class \cite{KPS}.
A typical example is $V(x)  = - \frac{c}{|x|^2}$ for a certain range of $c > 0$ \cite{KPS,KuUl}.
Then $A$ is self-adjoint, and according to \cite[Section 3 (c) (ii)]{KuUl}, \eqref{equ-GGE} holds for some $p_0 > 1$.
Consequently, $A$ has a $\Hor^\beta_2$ calculus on $L^p(\R^d;L^s(\Omega'))$ for any $p_0 < p,s < p_0'$ and $\beta$ as in \eqref{equ-beta-GGE}.

We refer to \cite[Section 2]{Bl},\cite[Section 3]{KuUl} and the references therein for detailed explanations of the two preceding paragraphs and more examples.

\section{Concluding remarks}

In Theorems \ref{thm-HI-Kunstmann-Ullmann} and \ref{thm-HI-extrapolation}, Proposition \ref{prop-HI-diffusion} and Corollary \ref{cor-HI-positive}, we gave some sufficient conditions, when $A$ has an $\HI$ calculus on $L^p(\Omega;Y)$.
Nevertheless, it would be interesting to know whether generalised Gaussian estimates and self-adjointness of the semigroup $T_t$ imply already themselves that $A$ has an $\HI$ calculus (and thus a $\Hor^\beta_2$ calculus) on $L^p(\Omega;Y)$ provided that $Y$ is a $p_0$-convex and $p_0'$-concave UMD lattice.
Already the case of classical Gaussian estimates and self-adjointness is open here (then the convexity and concavity assumption on $Y$ is void).

Another question is whether Theorems \ref{thm-Hoermander-intro}, \ref{thm-Hoermander-square-intro} and \ref{thm-dispersive-intro} hold for $Y$ being an intermediate UMD space, that is, $Y = [L^2(\Omega'),Z]_\theta$ for some further UMD space $Z$ and $\theta \in (0,1)$, or even for $Y$ being any UMD space.
Then, we suspect that the convexity and concavity notions, which only make sense for lattices, have to be replaced by Rademacher type and cotype.
For an $\HI(\Sigma_\omega)$ functional calculus result on $L^p(\Omega;Y)$ with $Y$ an intermediate UMD space and an estimate for the angle $\omega < \frac{\pi}{2}$, we refer to \cite[Theorem 1.6]{BCRM} with a particular Bessel operator $A$, and \cite[Theorem 4]{Xu15} for regular contractive and analytic semigroups.

A further question is whether a version of Theorem \ref{thm-HI-extrapolation} holds for generalised instead of classical Gaussian estimates.

We finally remark that the question about optimal exponents $\beta,q$ in $\Hor^\beta_q$ calculus with
\[ \|f \|_{\Hor^\beta_q} = |f(0)| + \sup_{R > 0} \|\phi f(R\cdot)\|_{W^\beta_q(\R)} \]
similar to Definition \ref{defi-Hoermander-class}
is even open in the scalar case $L^p(\Omega)$ within the class of all self-adjoint semigroups with Gaussian estimates.
Moreover, for $|\frac1p - \frac12| < \frac{1}{d+1}$ and $q = 2$, the best $\beta$ seems to be unknown even in the pure Laplacian case on $\Omega = \R^d$ see \cite[Subsection 8.1]{KrW3}.

\section{Acknowledgments}

The first and third author are financially supported by the grant ANR-18-CE40-0021 of the French National Research Agency ANR (project HASCON). The first author is financially supported by the grant ANR-18-CE40-0035  (project REPKA) and
the third author is financially supported by the grant ANR-17-CE40-0021 (project Front).

\vspace{0.2cm}

\footnotesize{
\noindent Luc Deleaval \\
\noindent
Laboratoire d'Analyse et de  Math\'ematiques Appliqu\'ees (CNRS UMR 8050)\\
Universit\'e Paris-Est Marne la Vall\'ee \\
5, Boulevard Descartes, Champs sur Marne\\
77454 Marne la Vall\'ee Cedex 2\\
luc.deleaval@u-pem.fr\hskip.3cm
}

\vspace{0.2cm}

\footnotesize{
\noindent Mikko Kemppainen \\
\noindent
mikko.kullervo.kemppainen@gmail.com\hskip.3cm
}

\vspace{0.2cm}
\footnotesize{
\noindent Christoph Kriegler (corresponding author)\\
\noindent
Laboratoire de Math\'ematiques Blaise Pascal (CNRS UMR 6620)\\
Universit\'e Clermont Auvergne\\
63 000 Clermont-Ferrand, France\\
christoph.kriegler@math.univ-bpclermont.fr\hskip.3cm
}
\end{document}